\documentclass[12pt, reqno]{amsart}
\usepackage{amsmath, amsthm, amscd, amsfonts, amssymb, graphicx, color,tikz}
\usepackage[bookmarksnumbered, colorlinks, plainpages]{hyperref}
\usepackage{mathrsfs}
\usepackage[utf8]{inputenc}
\usepackage{enumerate}
\usepackage{mathtools}
\usepackage{stmaryrd}

\newtheorem{theorem}{Theorem}[section]
\newtheorem{lemma}[theorem]{Lemma}
\newtheorem{proposition}[theorem]{Proposition}
\newtheorem{corollary}[theorem]{Corollary}
\newtheorem{fact}[theorem]{Fact}
\theoremstyle{definition}
\newtheorem{definition}[theorem]{Definition}
\newtheorem{example}[theorem]{Example}

\newtheorem{remark}[theorem]{Remark}
\newtheorem{question}[theorem]{Question}

\numberwithin{equation}{section}

\newcommand{\veps}{\varepsilon}
\newcommand{\CC}{\mathbb C}
\newcommand{\RR}{\mathbb R}
\newcommand{\NN}{\mathbb N}
\newcommand{\ZZ}{\mathbb Z}

\newcommand{\essequal}{\varphi^{-1}(\mathcal B)=_{\mathrm{ess}}\mathcal B}
\newcommand{\sbt}{\,\begin{picture}(-1,1)(-1,-3)\circle*{3}\end{picture}\ }
\newcommand{\vertiii}[1]{{\left\vert\kern-0.25ex\left\vert\kern-0.25ex\left\vert #1 
    \right\vert\kern-0.25ex\right\vert\kern-0.25ex\right\vert}}

\def\Lomu{L_0(\Omega,\mu)}
\def\Lpmu{L_p(\Omega,\mu)}
\def\L2mu{L_2(\Omega,\mu)}
\def\X{\mathcal X}
\def\od{\mathfrak o}
\def\trans{\mathfrak t}
\def\odinv{\mathfrak p}

\allowdisplaybreaks[4]

\begin{document}
\setcounter{page}{1}

\title[Composition operators]
{On the dynamics of composition operators: supercyclicity, odometers and translations}
\date{\today}

\author[F. Bayart]{Frédéric Bayart}
\address{Laboratoire de Math\'ematiques Blaise Pascal UMR 6620 CNRS, Universit\'e Clermont Auvergne, Campus universitaire des C\'ezeaux, 3 place Vasarely, 63178 Aubi\`ere Cedex, France.}
\email{frederic.bayart@uca.fr}
\author[\'E. Matheron]{\'Etienne Matheron}
\address{Univ. Artois, UR 2462 - Laboratoire de Math\'{e}matiques de Lens (LML)\\ F-62300 Lens, France}
\email{etienne.matheron@univ-artois.fr}


\subjclass[2010]{}

\keywords{}

\begin{abstract}
We study the dynamical properties of composition operators acting on Banach spaces of measurable functions. In particular, we study in some detail the composition operators induced by odometers, which  allows us to give a variety of new examples and counter-examples. We also get general statements about supercyclicity and frequent hypercyclicity of  
composition operators on $L_p\,$-$\,$spaces. 
\end{abstract}
\maketitle

\section{Introduction}

\subsection{Linear dynamics}
Linear dynamics,  the study of the dynamical properties of linear operators, has been the object of extensive research over the past forty years. We refer to \cite{BM09} and \cite{GePeBook} for detailed 
presentations of the area, and to \cite{Gil20} for a survey of more recent developments. Here, we just recall the few basic definitions which will be needed in this paper. We fix a complex Banach space $X$ and $T\in\mathfrak L(X)$, a bounded linear operator on $X$. 

The first definitions are related to the behaviour of $T$ with respect to the nonempty open subsets of $X$. We say that $T$ is {\bf topologically transitive} if for all nonempty open sets $U,V\subset X$, there exists $n\in\mathbb N$ such that $T^n(U)\cap V\neq\varnothing$; and that $T$ is {\bf topologically mixing} if for all $U,V\subset X$ nonempty open, there exists $n_0\in\mathbb N$ such that $T^n(U)\cap V\neq\varnothing$  for all $n\geq n_0$.

The next definitions are related to the orbits of $T$. We say that $T$ is  {\bf hypercyclic} if there exists $x\in X$ with a dense $T\,$-$\,$orbit, \textit{i.e.} $O(x,T):=\{T^nx:\ n\geq 0\}$ is dense in $X$. This can happen only if $X$ is separable, and when it is so, hypercyclicity is equivalent to topological transitivity. More generally, given a set $\Gamma\subset \CC$, we say that $T$ is {\bf $\Gamma\,$-$\,$supercyclic} if there exists $x\in X$ such that $\Gamma\cdot O(x,T)=\{zT^n x:\ z\in\Gamma,\ n\geq 0\}$ is dense in $X$. The vector $x$ is then called a $\Gamma$-supercyclic vector for $T.$ So  ``$\{1\}\,$-$\,$supercyclic'' means ``hypercyclic''; and $\CC\,$-$\,$supercyclic operators are simply called supercyclic.

Hypercyclic vectors have ``big'' orbits. At the other extreme, a vector $u\in X$ is called a {\bf periodic point} for $T$ if there exists $d\geq 1$ such that $T^d(u)=u.$ 
 The operator $T$ is said to be {\bf chaotic} if it is hypercyclic with a dense set of periodic points.

Finally, we say that $T$ is {\bf frequently hypercyclic} (resp. \textbf{$\mathcal U\,$-$\,$frequently hypercyclic}) if there exists $x\in X$ such that, for every nonempty open set $V\subset X$, the set
$\mathcal N_T(x,V):=\{n\in\NN:\ T^n x\in V\}$ 
has positive lower  density (resp. positive upper density). 

\smallskip
An important line of investigation in linear dynamics is the study of some concrete classes of operators, with a twofold aim: to find examples and counterexamples for various dynamical  properties and  the possible implications between them, and to understand better these classes of operators.
In this paper, we intend to follow this plan for \emph{composition operators} acting on Banach spaces of measurable functions, with a strong emphasis on composition operators  induced by \emph{odometers}.  The study of this class of operators has been pioneered in \cite{BDDD22}, and pursued very recently in \cite{DGV}. Let us first introduce the general framework of composition operators on Lebesgue spaces. 

\subsection{Composition operators}

Let $(\Omega,\mathcal B,\mu)$ be a $\sigma$-finite measure space (with $\mu\neq 0$), and let $\varphi:\Omega\to\Omega$ be a measurable transformation. We assume that the transformation $\varphi$ is \textbf{nonsingular}, which means that  $\mu(B)=0$ implies $\mu(\varphi^{-1}(B))=0$, for any $B\in\mathcal B$; in other words, the image measure $\mu\varphi^{-1}$ is absolutely continuous with respect to $\mu$. We say that $(\Omega,\mathcal B,\mu,\varphi)$ is a \emph{nonsingular measurable system}.

Denoting by $\Lomu$ the linear space of all measurable functions $f:\Omega\to\CC$, where two functions are identified if they are $\mu\,$-$\,$almost everywhere equal, the nonsingularity assumption ensures that the operator $C_\varphi:\Lomu\to \Lomu,\ f\mapsto f\circ \varphi$ is well defined. This operator $C_\varphi$ is called the  {\bf composition operator} with symbol $\varphi.$ It is well-known that $C_\varphi$ is bounded on $\Lpmu$, $1\leq p<\infty$ if and only if there exists some constant $K<\infty$ such that 
\begin{equation}\label{eq:continuitycompositionoperator}
\mu(\varphi^{-1}(B))\leq K\mu(B)\quad \textrm{ for all }B\in\mathcal B.
\end{equation}
 In that case $\|C_\varphi\|=\|f_\varphi\|_\infty^{1/p}$, where $f_\varphi$ is the Radon-Nykodim derivative  of the measure $\mu\varphi^{-1}$ with respect to $\mu$ (see  \cite[Theorem 2.1.1]{SinMan93}).
 
 We warn the reader that the definition of nonsingularity varies in the literature. In the present paper, we have chosen the definition which is most commonly used in the study of composition operators, whereas in many references on dynamical systems, a transformation $\varphi$ is said to be nonsingular when $\mu(\varphi^{-1}(B))=0$ \emph{if and only if} $\mu(B)=0$.


The nonsingular system 
$(\Omega,\mathcal B,\mu,\varphi)$ is said to be \textbf{invertible} if the transformation $\varphi$ is bijective, and if $\varphi^{-1}$ is measurable and nonsingular. (Note that if $(\Omega,\mathcal B)$ is a standard Borel space, then the measurablility of $\varphi^{-1}$ is for free.) 

\begin{definition}
The transformation $\varphi:(\Omega,\mathcal B,\mu)\to(\Omega,\mathcal B,\mu)$ is said to be
\begin{enumerate}
\item[-] {\bf conservative} if for each $B\in\mathcal B$ with $\mu(B)>0$ there exists $n\geq 1$ such that 
$\mu(B\cap \varphi^{-n}(B))>0$;
\item[-] {\bf dissipative} if there exists $W\in\mathcal B$ 
such that $\Omega=\bigcup_{n\in\ZZ}\varphi^n(W)$ mod$\,\mu$ and  $W$ is a {\bf wandering set} for $\varphi,$ \textit{i.e.} $\varphi^i(W)\cap\varphi^j(W)=\varnothing$ for all $i,j\in\ZZ,$ $i\neq j.$
\end{enumerate} 
\end{definition}

For example, if the measure $\mu$ is finite and $\varphi$ is measure-preserving, then $\varphi$ is conservative by Poincar\'e's recurrence Theorem; whereas if $\Omega=\ZZ$ and $\mu$ is the counting measure, then the shift map $\sigma(i)=i+1$ is dissipative. 

The importance of these notions stems from the fact that any invertible nonsingular system $(\Omega,\mathcal B,\mu,\varphi)$ can be decomposed into a conservative part and a dissipative part: there exist two {\rm (}possibly empty{\rm )} $\varphi\,$-$\,$invariant measurable sets $C$ and $D$ such that $C\cap D=\varnothing,$ $\Omega=C\cup D$, $\varphi_{|C}$ is conservative and $\varphi_{|D}$ is dissipative. This is (a special case of) {\bf Hopf's decomposition Theorem};  see e.g. \cite[Theorem 2.19]{Hawk} or \cite[Section 1.3]{Kr}.


\smallskip As it turns out, the study of composition operators induced by a nonsingular transformation seems to be easier when the transformation $\varphi$ is dissipative, especially if the technical assumption of being of \emph{bounded distorsion} is added.
The dynamical properties of $C_\varphi$ are then comparable to that of an associated weighted shift; see e.g.
\cite{DDM22} or \cite{DP21}. In the present paper, we investigate the composition operators induced by well-known examples of \emph{conservative} transformations, namely odometers. Before giving the relevant definitions, we briefly review some known results concerning the dynamics of general composition operators on $L_p\,$-$\,$spaces. 

\subsection{Dynamical properties of composition operators}

Topologically transitive and mixing composition operators on $\Lpmu$ have been completely characterized in \cite{BDP} (see also \cite{GG25}). These characterizations, which hold in fact in a context more general than $L_p\,$-$\,$spaces, will be recalled in Section \ref{sec:supercyclic}. 

\smallskip
Regarding \emph{supercyclicity} of composition operators, the situation is less clear. Recall first that $\RR\,$-$\,$supercyclicity and $\RR_+$-$\,$supercyclicity are always equivalent by \cite{BBP02}; and that for operators $T$ such that $T^*$ has no eigenvalue, supercyclicity and $\RR_+$-$\,$supercyclicity are equivalent by \cite{LeMu04}. In \cite{DM24}, 
$\mathbb R\,$-$\,$supercyclic composition operators on $L_p$ are characterized, and supercyclic composition operators on $L_p$ are characterized in the dissipative case. In Section \ref{sec:supercyclic}
of the present paper,  we will give a characterization of supercyclic composition operators in a context more general than $L_p\,$-$\,$spaces. This will imply in particular that supercyclicity and $\RR_+$-$\,$supercyclicity are always equivalent for composition operators on $L_p$, and that when $\mu(\Omega)<\infty$, supercyclicity on $\Lpmu$ is in fact equivalent to hypercyclicity. The latter result has been  obtained independently in \cite{DGV}.

\smallskip
As for \emph{frequent hypercyclicity}, again the situation is well understood for composition operators induced by  dissipative systems of bounded distorsion, see \cite{DP21}. Moreover it is also shown in \cite{DP21} that there is no hope to apply one of the main tools for proving frequent hypercyclicity (namely, the Frequent Hypercyclicity Criterion) in the conservative context, the case which interests us here. However, composition operators induced by odometers have plenty of periodic vectors (see Lemma \ref{periodicgeneral} below); and as shown in \cite{GMM21}, the periodic vectors can be very useful to check that a given operator is frequently hypercyclic or $\mathcal U$-frequently hypercyclic. We use this idea to give, in Sections \ref{sec:fhc} and \ref{sec:ufhc}, sufficient conditions for a general composition operator to be frequently hypercyclic or $\mathcal U$-frequently hypercyclic, which will prove to be rather efficient in the odometer context.

\subsection{Odometers}\label{subsec:odometers}

Let $(m_i)_{i\geq 1}$ be a sequence of integers with $m_i\geq 2$ for all $i\geq 1.$ Define $\Omega:=\prod_{i=1}^{\infty}\Omega_i$, where $\Omega_i=\mathbb Z/m_i\mathbb Z$. We endow $\Omega$ with the product topology, which turns it into a compact metrizable space. One possible compatible metric is given by $d(x,y):=2^{-i(x,y)}$, where $i(x,y)$ is the least index $i$ such that $x_i\neq y_i.$ 

The space $\Omega$ is a topological abelian group when addition is performed coordinatewise. However, we will consider another operation on $\Omega$, namely \emph{addition with carrying to the right}, which will be denoted by 
$\pmb +$ (boldface) and is defined as follows: for $x,y\in\Omega$ and $i\geq 1,$ 
\[ (x\pmb+y)_i=x_i+y_i+\veps_{i-1}\ [m_i]\] where the sequence $(\veps_i)_{i\geq 0}$ is defined inductively by
$\veps_0=0$ and, for $i\geq 1,$
\[\veps_i=\left\{
\begin{array}{ll}
0&\textrm{if }x_i+y_i+\veps_{i-1}<m_i\\
1&\textrm{otherwise.}
\end{array}\right.\]
Here, of course, we identify $\Omega_i=\mathbb Z/m_i\mathbb Z$ with $\llbracket 0,m_i-1\rrbracket$.

In other words, $(x\pmb+y)_1=x_1+y_1\ [m_1]$ and, for $i\geq 2$, $(x\pmb +y)_i=x_i+y_i\ [m_i]$ or $x_i+y_i+1\ [m_i]$, depending on whether one has to perform a carry due to the computation of $(x\pmb +y)_{i-1}$. This is indeed a (commutative) group law on $\Omega$ with neutral element $0=(0,0,0,\dots)$. If $x\in\Omega$, then $y=\pmb-x$ can be computed inductively: $y_1=0$ if $x_1=0$ and $y_1=m_1-x_1$ if $x_1>0$; and for $i\geq 2$, $y_{i}=m_{i}-(x_{i}+1)$ if the computation of $(x+y)_{i-1}$ has led to a carry, and otherwise $y_i=0$ if $x_i=0$ and $y_i=m_i-x_i$ if $x_i>0$. The group operations are clearly continuous, so $(\Omega, \pmb +)$ is a compact abelian group.

We consider the map $\od:\Omega\to\Omega$ defined as follows: 
\[ \od(x):=x\pmb+a\qquad\hbox{where $a=(1,0,0,0,\dots)$}.\]
In other words, $\od$ is the translation by $a$ in the group $(\Omega,\pmb+)$. By definition of the addition with carry $\pmb+$, we have $\od(m_1-1,m_2-1,\dots)=(0,0,\dots)$ and, for any $x\neq (m_1-1,m_2-1,\dots)$,
$$(\od (x))_i=\left\{
\begin{array}{ll}
0&\textrm{if }i<l(x)\\
x_i+1&\textrm{if }i=l(x)\\
x_i&\textrm{if }i>l(x)
\end{array}\right.
$$
where $l(x)$ is the smallest index $l$ such that $x_l\neq m_l-1.$ In other words, if $x=(m_1-1,\dots m_{l-1}-1, x_l, x_{l+1}, x_{l+2},\dots )$ with $x_l< m_l-1$ (where the initial sequence of $m_i-1$'s may be empty), then $\od(x)=(0,\dots ,0, x_l+1, x_{l+1}, x_{l+2}, \dots)$. The map $\od$ is a homeomorphism of $\Omega$; and in fact an isometry for the distance $d$ defined above since for any $l\geq 1$, the first $l$ coordinates of $\od(x)$ depends only on the first $l$ coordinates of $x$. 
For future reference, we quote the following basic fact concerning the iterates of $\od$. 
\begin{fact}\label{addition} Let $M_1=1$ and $M_i=m_1\cdots m_{i-1}$ for $i\geq 2$. Any integer $k\geq 0$ can be uniquely written as a finite sum $k=\sum_{i\geq 1} k_i M_i$, where $0\leq k_i\leq m_i-1$ for all $i$; and with this notation, we have for all $x\in \Omega$:
\[ \od^k(x)=x\pmb+(k_1, k_2, \dots ).\]
\end{fact}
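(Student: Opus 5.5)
The plan has two parts: uniqueness of the mixed-radix expansion, then the identity $\od^k(x)=x\pmb+(k_1,k_2,\dots)$.

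\emph{Existence and uniqueness of the expansion.} Since $M_{i+1}=m_iM_i$, a standard induction gives that every integer $0\le k<M_{n+1}$ has a unique expansion $k=\sum_{i=1}^n k_iM_i$ with $0\le k_i\le m_i-1$. Concretely, $k_1$ is the residue of $k$ modulo $m_1$, and one recurses on $(k-k_1)/m_1$ with the shifted sequence $(m_{i+1})_{i\ge1}$. Uniqueness also follows from counting: the map $(k_1,\dots,k_n)\mapsto\sum k_iM_i$ goes from a set of cardinality $m_1\cdots m_n=M_{n+1}$ into $\llbracket 0,M_{n+1}-1\rrbracket$. This map is into, because
\[\sum_{i\le n}(m_i-1)M_i=\sum_{i\le n}(M_{i+1}-M_i)=M_{n+1}-1.\]
So it is onto if and only if it is one-to-one. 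Since $M_n\to\infty$, every $k$ falls in some such range, and padding with zeros gives a finitely supported sequence $(k_i)$.

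\emph{The identity for $\od^k$.} Let $\kappa:\NN\to\Omega$ send $k$ to $(k_1,k_2,\dots,0,0,\dots)$. Because $\od$ is translation by $a$ in the group $(\Omega,\pmb+)$, associativity gives $\od^k(x)=x\pmb+(a\pmb+\cdots\pmb+a)$, with $k$ copies of $a$. It therefore suffices to show $\kappa(k)=k\cdot a$ in $(\Omega,\pmb+)$, where $k\cdot a$ denotes the $k$-fold sum of $a$. This reduces to the recursion
\[\kappa(k+1)=\kappa(k)\pmb+a,\]
with $\kappa(0)=0$, followed by induction on $k$. For the recursion, let $l$ be the least index with $k_l<m_l-1$; it exists because $\kappa(k)$ has finite support. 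The explicit description of $\od$ gives
\[\kappa(k)\pmb+a=(0,\dots,0,k_l+1,k_{l+1},\dots).\]
The integer this sequence represents is
\[k-\sum_{i<l}(m_i-1)M_i+M_l=k-(M_l-1)+M_l=k+1,\]
using the telescoping identity above. Its digits are admissible, so by uniqueness of the expansion it equals $\kappa(k+1)$.

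The only step needing care is the carry computation in the recursion. It is fully handled by the telescoping identity $\sum_{i<l}(m_i-1)M_i=M_l-1$, so I do not expect a genuine obstacle.
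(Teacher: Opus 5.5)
The paper gives no proof of this statement: it is quoted as a ``basic fact'' about the iterates of $\od$. So there is no argument of the authors to compare with. Your proof is correct and complete.

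The mixed-radix expansion is handled properly. The residue recursion gives existence, and together with the counting argument (a map between sets of the same size $M_{n+1}$) it gives uniqueness. The telescoping identity $\sum_{i<l}(m_i-1)M_i=M_l-1$ correctly accounts for the carries in $\kappa(k+1)=\od(\kappa(k))$. The edge case $k=0$ also works: there $l=1$, the sum is empty, and one gets $\kappa(1)=a$.

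The one external ingredient is associativity of $\pmb+$. You use it to write $\od^k(x)=x\pmb+(a\pmb+\cdots\pmb+a)$. This is legitimate, since the paper asserts that $\pmb+$ is a group law. Be aware, though, that associativity carries essentially the same content as the Fact itself.

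If you wanted a fully self-contained argument, use the map $x\mapsto\sum_{i\le n}x_iM_i \bmod M_{n+1}$. It identifies the first $n$ coordinates of $\Omega$ with $\ZZ/M_{n+1}\ZZ$, and turns $\pmb+$ into ordinary addition modulo $M_{n+1}$, which is exactly what carrying means. Under this identification, $\od^k$ adds $k$ and $x\mapsto x\pmb+(k_1,k_2,\dots)$ also adds $k$. Checking this for every $n$ proves associativity and the Fact at once.
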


\smallskip
Now, for each $i\geq 1,$ we consider a probability measure $\mu_i$  on $\Omega_i$.  Writing $\mu_i(j)$ instead of $\mu_i(\{ j\})$, we assume that $\mu_i(j)>0$ for all $j\in \Omega_i$, 
and we denote by $\mu$ the product measure $\otimes_{i\geq 1}\mu_i.$ We will always assume that the measure $\mu$ is \emph{non-atomic}, which means in the present context that $\mu(\{ x\})=0$ for every $x\in\Omega$.  Equivalently, \[ \prod_{i=1}^{\infty}\max\bigl\{\mu_i(j):\ j\in \Omega_i\bigr\}=0.\]

Denoting by $\mathcal B$ the Borel $\sigma$-algebra of $\Omega$, the system $(\Omega,\mathcal B,\mu,\od)$ is called an {\bf odometer}. It is well known that any odometer $(\Omega,\mathcal B,\mu,\od)$ is nonsingular, invertible and conservative;  see \cite[Theorem 9.9]{Hawk}. Since this is not obvious, and since nonsingularity is essential for the composition operator $C_\od$ to be well-defined, we will outline a proof below.

\smallskip
Let us now discuss the boundedness of the composition operator $C_\od$ on $\Lpmu$, $1\leq p<\infty$. 

Given $B_1\subset \Omega_1,\dots,B_n\subset \Omega_n,$ we denote by $[B_1,\dots,B_n]$ the {\bf cylinder set} 
\[ [B_1,\dots,B_n]:=\{x\in\Omega:\ x_1\in B_1,\dots, x_n\in B_n\}.\]
If $B_1=\{x_1\},\dots,B_n=\{x_n\}$, we simply write $[x_1,\dots,x_n]$, and  we say that  $[x_1,\dots,x_n]$ is a {\bf basic cylinder set}.
Observe that if $[x_1,\dots,x_n]$ is a basic cylinder set and $(x_1,\dots ,x_n)\neq (0,\dots ,0)$, then 
$$\od^{-1}([x_1,\dots,x_n])=[m_1-1,\dots,m_{l(x)-1}-1,x_{l(x)}-1,x_{l(x)+1},\dots,x_n]$$
where $l(x)$ is the smallest index $l$ such that $x_l>0$; whereas
$$\od^{-1}([0,\dots,0])=[m_1-1,\dots,m_n-1].$$
Hence, there exists some constant $K<\infty$ such that $\mu(\od^{-1}(B))\leq K\mu(B)$ for all basic cylinder sets $B$ if and only if
\begin{equation}\label{eq:continuityodometer}
\sup_{l\geq 1}\left[ \prod_{i=1}^{l-1}\frac{\mu_i(m_i-1)}{\mu_i(0)}\times \sup_{j\in \Omega_l} \frac{\mu_l(j-1)}{\mu_l(j)}\right]<\infty,
\end{equation}
where $j-1$ has to be understood as $m_l-1$ if $j=0.$ Since every open subset of $\Omega$ is a countable disjoint union of basic cylinder sets, and since (for a given $K$) the family of all sets $B\in\mathcal B$ for which inequality \eqref{eq:continuitycompositionoperator} holds true is closed under decreasing countable intersections, it follows from the outer regularity of the measures $\mu$ and $\mu\od^{-1}$ that \eqref{eq:continuityodometer} characterizes the boundedness of $C_\od$ on $\Lpmu$.

\medskip

In Section \ref{sec:odometers}, we study in some detail the dynamics of composition operators induced by odometers on $L_p$, in the spirit of  \cite{BDDD22} and \cite{DGV}. Let us recall two of the main results of \cite{BDDD22}, which  relate hypercyclicity and mixing of $C_\od$ to the asymptotic behaviour of the sequence $(\eta_i)_{i\geq 1}$ defined by  
\[ \eta_i:=\max\bigl\{\mu_i(j):\ j\in \Omega_i\bigr\}.\]


\begin{theorem}\label{EmmaUdayan} Assume that $C_\od$ is bounded on $\Lpmu$, $1\leq p<\infty$.
\begin{enumerate}
\item[\rm (1)] If $\limsup_{i\to\infty}  \eta_i=1$, then $C_\od$ is hypercyclic.
\item[\rm(2)] If $\eta_i\to 1$ as $i\to\infty$, then $C_\od$ is topologically mixing; and 
the converse is true provided the sequence $(m_i)$ is bounded.
\end{enumerate}
\end{theorem}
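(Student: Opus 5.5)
We prove both parts by approximating arbitrary functions by \emph{cylinder functions}, i.e.\ functions depending only on the first $n$ coordinates, which are dense in $\Lpmu$. The main tool is Fact~\ref{addition}. Set $M_{n+1}=m_1\cdots m_n$, so that $\od^{M_{n+1}}$ leaves the first $n$ coordinates unchanged, and a function $f$ depending on the first $n$ coordinates satisfies $C_\od^{M_{n+1}}f=f$; in particular such functions are periodic vectors. For a general $k=\sum_i k_iM_i$, the first $n$ coordinates of $\od^k(x)$ are those of $x$ shifted by $(k_1,\dots,k_n)$ with carries, and coordinate $n+1$ is moved by $k_{n+1}$ plus a possible carry. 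The plan is to exploit the fact that $\mu_i$ is nearly concentrated at one point $j_i$ whenever $\eta_i$ is close to $1$.

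\emph{Part (1).} We verify topological transitivity. Fix cylinder functions $f,g$ depending on the first $n$ coordinates and $\veps>0$. Choose $N>n$ with $\eta_N$ close to $1$, and let $j_N$ be the point with $\mu_N(j_N)=\eta_N$. We look for a vector $h$ that is close to $f$ and satisfies $C_\od^k h$ close to $g$ for a suitable $k$, which is a multiple of $M_N$ plus a tuned digit at level $N$.

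The construction is as follows. Put $h=f+u$, where $u$ is supported on the cylinder $\{x_N=j_N-s\}$, a set of small measure. Choose $k=sM_N$ (mod $M_{N+1}$, adjusted so that no carry disturbs levels $<N$). Then $\od^k$ maps $\{x_N=j_N-s\}$ onto $\{x_N=j_N\}$ and fixes the first $N-1$ coordinates; only a carry beyond level $N$ can occur, and it affects coordinates $>N$.

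Take $u(x)=g(x_1,\dots,x_n)-f(x_1,\dots,x_n)$ on $\{x_N=j_N-s\}$ and $0$ elsewhere. Since $u$ is supported on a set of measure $\le 1-\eta_N$, we get $\|u\|_p\le \|g-f\|_\infty(1-\eta_N)^{1/p}$, which is small. On the other hand, $C_\od^k h=f\circ\od^k+u\circ\od^k$, and because $f$ is $M_N$-periodic this equals $f+u\circ\od^k$. The function $u\circ\od^k$ equals $g-f$ on $\{x_N=j_N\}$, a set of measure $\eta_N$, and $0$ elsewhere, apart from the carry issue. Hence $C_\od^k h$ differs from $g$ only on $\{x_N\neq j_N\}$, where the difference is $f-g$ bounded in sup norm on a set of measure $1-\eta_N$. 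So both errors are small.

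\emph{Main obstacle.} The delicate point is the carry into levels $>N$ when $j_N-s+s\ge m_N$. This does not matter, because $u$ depends only on coordinates $\le n$ and on $x_N$. What does matter is the bound on the density of $\mu\od^{-k}$ needed to control $\|C_\od^k u\|$, and this is bypassed since $u\circ\od^k$ is computed explicitly. Boundedness of $C_\od$ is then only needed to make hypercyclicity equivalent to transitivity on the separable space.

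\emph{Part (2).} If $\eta_i\to1$, the same construction works for \emph{every} large $k$. Write $k=\sum k_iM_i$. For $k\ge M_{N}$, let $N'$ be the top nonzero digit level of $k$. Decompose $k=k'+k''$, where $k'<M_{n+1}$ and $k''$ is a multiple of $M_{n+1}$. Replace $g$ by $g\circ\od^{-k'}$, still a cylinder function at level $n$ with the same sup norm; this reduces to the case $k\equiv0 \pmod{M_{n+1}}$.

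Now choose a level $N\in(n,N']$ where $k$ has a nonzero digit. Since the lower digits may add a carry, we instead work in the group: $\od^k$ is translation by an element whose first nonzero coordinate after $n$ sits at some level $N>n$. We define $u$ on the cylinder $\{x\in\Omega: (x\pmb+\kappa)_N=j_N\}$, where $\kappa=(k_i)$. This set has measure at most $1-\eta_N$ when $\kappa_N\neq0$, up to carry from the lower levels, which are zero modulo the reduction. As $k\to\infty$ we get $N\to\infty$ for $k\in M_{n+1}\ZZ$ in the relevant sense, except when $N$ stays bounded with large higher digits. To handle that, we use the lowest nonzero digit level above $n$. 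It may stay bounded, so instead we pick, among levels $n<i\le N'$, any level $i$ with $\kappa_i\ne0$ and $\eta_i$ close to $1$. Since $\eta_i\to1$, the top level $N'\to\infty$ works.

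For the converse with $(m_i)$ bounded, suppose $\eta_{i}\le 1-\delta$ along a subsequence. Take $k=M_{i}$ and test mixing with $U,V$ near indicator functions $\mathbf 1_{[0]}$-type cylinder functions. Since $\od^{M_i}$ only moves coordinates $\ge i$, and each $\mu_i(j)$ is bounded below because $m_i$ is bounded and $\eta_i\le1-\delta$, the density of $\mu\od^{-M_i}$ is bounded above and below uniformly. Then $\|C_\od^{M_i}h - h\|$ cannot be made both close to $f$ and close to $g$ with $f$ far from $g$. I expect this converse estimate to be the hardest part.
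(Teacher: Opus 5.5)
Part (1) works once a direction slip is corrected. Part (2) has a genuine gap in the forward direction, and your argument for the converse fails.

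\emph{Part (1).} The function $u\circ\od^k$ is supported on $\od^{-k}(\mathrm{supp}\,u)$, not on $\od^{k}(\mathrm{supp}\,u)$. With $k=sM_N$ and $u$ on $\{x_N=j_N-s\}$, it therefore lives on $\{x_N=j_N-2s\}$. Put $u$ on $\{x_N=j_N+s\}$ instead. Then $C_\od^kh-g=(f-g)\mathbf 1_{\{x_N\neq j_N\}}$, and both errors are at most $\|f-g\|_p(1-\eta_N)^{1/p}$. Corrected this way, your route is valid and more elementary than the paper's. The paper obtains (1) as a special case of Corollary~\ref{cor:hc}, via Hoeffding's inequality and Corollary~\ref{lem:topologicallytransitive}. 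You are simply checking condition (ii) of Corollary~\ref{lem:topologicallytransitive} by hand, with $B=\{x_N=j_N+s\}$.

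\emph{Forward half of (2).} Your bound ``measure at most $1-\eta_N$'' is justified only at the lowest nonzero digit level above $n$, where no carry can enter. At the top level $N'$, which you end up using, the digits between $n$ and $N'$ can send a carry into level $N'$. When $k_{N'}=m_{N'}-1$, that carry gives $(\od^kx)_{N'}=x_{N'}$. For example, take $k=\sum_{n<i\le N'}(m_i-1)M_i$. Every $x$ with $x_{n+1}\ge 1$ then has $(\od^kx)_i=x_i$ for $n+2\le i\le N'$. Hence $\mu\bigl(\od^k(\{x_i=j_i\})\bigr)\ge \eta_i\,\mu_{n+1}(\llbracket 0,m_{n+1}-2\rrbracket)$, in particular for $i=N'$, and this need not be small.

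The missing idea is to bring in level $N'+1$, where $k$ has digit $0$. Let $A=\{x_{N'}=j_{N'},\ x_{N'+1}=j_{N'+1}\}$. Then $\od^k(A)\cap A=\varnothing$: either coordinate $N'$ moves, or $k_{N'}+\epsilon=m_{N'}$ and the resulting carry adds $1$ at level $N'+1$. Set $S=\od^k(A)$, so that $\mu(S)\le 1-\eta_{N'}\eta_{N'+1}$ and $\od^{-k}(S)=A$. Then $h=f+(g\circ\od^{-k}-f)\mathbf 1_S$ gives both errors at most $(\|f\|_\infty+\|g\|_\infty)(1-\eta_{N'}\eta_{N'+1})^{1/p}$, and no reduction modulo $M_{n+1}$ is needed. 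This is exactly the role of $D_{l+1}$ in the paper's proof of Theorem~\ref{thm:mixing}.

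\emph{The converse.} This is where the proposal breaks down. Your claim that every $\mu_i(j)$ is bounded below is false: $\eta_i\le 1-\delta$ only controls the largest atom. The claimed uniform density bound fails as well. Take $m_i=3$ and $\mu_i=(\tfrac12,\tfrac12-2^{-i-1},2^{-i-1})$. Then \eqref{eq:continuityodometer} holds and $\eta_i=\tfrac12$. Yet testing on $\{x_i=2\}$ gives $\|C_\od^{M_i}\|^p\ge \mu_i(1)/\mu_i(2)=2^i-1$, so there is no rigidity along $(M_i)$ to exploit. Your final sentence is also not an argument as it stands.

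What works is the following. Since $m_i\le M$ and $\eta_i\le1-\delta$, there are $a<b$ in $\Omega_i$ with $\mu_i(a),\mu_i(b)\ge c:=\delta/M$. For $k=(b-a)M_i$, $\od^k$ changes $x_i=a$ into $b$ and leaves every other coordinate alone; there is no carry because $b<m_i$. Hence if $B\cap\od^k(B)=\varnothing$, the projections $\widehat{B_a},\widehat{B_b}$ of the slices $B\cap\{x_i=a\}$ and $B\cap\{x_i=b\}$ are disjoint, which gives $\mu(\Omega\setminus B)\ge c$.

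Now suppose $\|h-4\cdot\mathbf 1\|_p<\delta'$ and $\|C_\od^kh+4\cdot\mathbf 1\|_p<\delta'$. The set $B=\{|h-4|<1\}\cap\{|h\circ\od^k+4|<1\}$ satisfies $B\cap\od^k(B)=\varnothing$ and $\mu(\Omega\setminus B)<2\delta'^p$. So mixing fails along $k=(b-a)M_{i_s}$ as soon as $2\delta'^p\le c$.

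This is the paper's route for the converse. The necessity half of Theorem~\ref{thm:mixing} gives $\kappa_i\to1$, and the remark following it shows that bounded $(m_i)$ together with $\kappa_i\to 1$ forces $\eta_i\to1$.
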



In the present paper, using probabilistic arguments, we shall provide another sufficient condition for hypercyclicity, which will allow us to completely characterize the hypercyclic composition operators induced by odometers with the same measure on each component.  We will also characterize topological mixing for $C_\od$ without any assumption on the sequence $(m_i)$, and this will enable us to give interesting new examples. Finally we shall  investigate frequent hypercyclicity, which  will lead in particular to examples of composition operators which are frequently hypercyclic, chaotic and topologically mixing, as well as examples that are frequently hypercyclic and chaotic but \emph{not} topologically mixing.




\subsection{Odometers, continued} For the sake of completeness, we outline a proof of the fact that any odometer $(\Omega,\mathcal B,\mu,\od)$ with a non-atomic measure $\mu$ is nonsingular, invertible and conservative. More precisely, we prove the following lemma, where $0=(0,0,\dots)\in\Omega$.

\begin{lemma} Let $\mu$ be an arbitrary product probability measure on $\Omega=\prod_{i=1}^\infty \Omega_i$. The system $(\Omega,\mathcal B,\mu,\od)$ is nonsingular if and only if $\mu(\{ 0\})>0$ or $\mu(\{ \od^{-1}(0)\})=0$; and it is $($nonsingular and$\,)$ invertible if and only if  $\mu(\{ 0\})=0=\mu( \{ \od^{-1}(0)\}$ or $\mu(\{ 0\})\,\mu( \{ \od^{-1}(0)\})>0$. Finally, if the measure $\mu$ is non-atomic then $(\Omega,\mathcal B,\mu,\od)$ is conservative.
\end{lemma}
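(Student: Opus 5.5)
Set $\bar 1:=\od^{-1}(0)=(m_1-1,m_2-1,\dots)$ and $\Omega_0:=\Omega\setminus\{0,\bar 1\}$, or more precisely the complement of the two orbits $\{\od^k(0):k\in\ZZ\}$ and $\{\od^k(\bar 1)\}$, which are the same $\ZZ$-orbit of $0$. The idea is to compare $\mu\od^{-1}$ and $\mu$ cylinder by cylinder, away from the single exceptional point $0$ where the carry runs to infinity.

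\emph{Basic cylinders.} The displayed formula for $\od^{-1}([x_1,\dots,x_n])$ shows that for a basic cylinder $B=[x_1,\dots,x_n]$ with $(x_1,\dots,x_n)\neq(0,\dots,0)$ we have $\mu(\od^{-1}B)=r_n(x)\,\mu(B)$, where $r_n(x)=\prod_{i<l}\frac{\mu_i(m_i-1)}{\mu_i(0)}\cdot\frac{\mu_l(x_l-1)}{\mu_l(x_l)}$ depends only on $l=l(x)$ and $x_l$. In particular $r_n(x)$ is stable in $n$ once $n\ge l(x)$. Now take $x\neq 0$ in $\Omega$ and let $B_n(x)$ be the cylinder of its first $n$ coordinates. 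The ratio $\mu(\od^{-1}B_n(x))/\mu(B_n(x))$ is eventually constant, say equal to $\rho(x)\in(0,\infty)$ (all $\mu_i(j)>0$), and $\rho$ is locally constant on $\Omega\setminus\{0\}$. Hence $\mu\od^{-1}$ and $\rho\,\mu$ agree on all cylinders contained in $\Omega\setminus\{0\}$. Since the open set $\Omega\setminus\{0\}$ is a countable disjoint union of such cylinders, a $\pi$--$\lambda$ argument gives $\mu\od^{-1}(E)=\int_E\rho\,d\mu$ for every Borel $E\subset\Omega\setminus\{0\}$. At the point $0$ itself, $\mu\od^{-1}(\{0\})=\mu(\{\bar 1\})$.

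\emph{Nonsingularity.} By the density formula, $\mu(E)=0$ with $0\notin E$ forces $\mu(\od^{-1}E)=0$. So nonsingularity fails only through $E=\{0\}$, and it holds if and only if $\mu(\{0\})>0$ or $\mu(\{\bar 1\})=0$. For invertibility, $\od$ is a homeomorphism, so $\od^{-1}$ is measurable, and we need $\od^{-1}$ to be nonsingular too. By the symmetric argument applied to $\od^{-1}$, this holds iff $\mu(\{\bar 1\})>0$ or $\mu(\{0\})=0$. Note that $\rho>0$ everywhere, so $\mu\ll\mu\od^{-1}$ off the point as well. Combining the two conditions gives exactly: either both masses vanish or both are positive.

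\emph{Conservativity.} Assume $\mu$ is non-atomic, and let $\mu(B)>0$. I would approximate $B$ by a basic cylinder: by Lebesgue density (martingale convergence along the cylinder filtration), there is a cylinder $C=[x_1,\dots,x_n]$ with $\mu(B\cap C)>(1-\veps)\mu(C)$. Then $\od^{M_{n+1}}$ maps $C$ onto itself, and it acts on $C$ as the odometer on the tail coordinates $i>n$, with adding $1$ in coordinate $n+1$ (Fact~\ref{addition}). Suppose no $k\ge1$ has $\mu(B\cap\od^{-k}B)>0$. Then the sets $\od^{-kM_{n+1}}(B\cap C)$, $k\ge 0$, are pairwise $\mu$-disjoint subsets of $C$. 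The main obstacle is to contradict this without a finite invariant measure, since $\mu$ is only quasi-invariant. I would use the tail odometer on $C$ together with uniform control of the Radon--Nikodym derivative on finitely many iterates. Pass to a deeper cylinder $C'\subset C$ where $B$ has density $>1-\veps$. The iterates $\od^{-jM_{n'+1}}$ permute the $m_{n'+1}$ cylinders $[x_1',\dots,x'_{n'},j]$ cyclically. Their ratios $\rho$ are constant on each, so a fixed proportion of each is hit. Summing the densities of the disjoint images inside $C$ with comparable ratios then exceeds $\mu(C)$, which is the contradiction.

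An alternative I would fall back on is Hawkins' argument \cite[Theorem 9.9]{Hawk}. One shows that $\sum_k \frac{d\mu\od^{-k}}{d\mu}=\infty$ a.e., using that $\rho_k(x)$ equals $\prod\mu_i(y_i)/\mu_i(x_i)$ with $y=\od^k x$, and that along $k=M_n$-type returns these ratios are $\ge \min$ over finitely many words. Non-atomicity is used to ensure $0$ (and its orbit) is null, so the density formula holds a.e. Conservativity then follows from the Halmos recurrence criterion.
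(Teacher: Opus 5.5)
Your nonsingularity and invertibility argument is correct and is essentially the paper's. Your $\rho$ is the paper's density $h$, which is locally constant off the point $0$. The paper checks $\int_\Omega fh\,d\mu=\int_\Omega f\circ\od\,d\mu$ for cylinder functions by exhausting $\Omega\setminus\{0\}$ with the sets $E_l$, whereas you use a $\pi$--$\lambda$ argument on cylinders avoiding $0$. Both handle the point $0$ separately and treat $\od^{-1}$ symmetrically.

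\textbf{The gap is in conservativity.} The decisive step (``summing the densities of the disjoint images \dots\ exceeds $\mu(C)$'') is asserted, not proved, and the one-level mechanism you describe does not give a contradiction in general.
\begin{itemize}
\item Suppose $\mu_{n'+1}(0)=1-\delta$ with $\delta$ small, and $B\cap C'$ is essentially the child $[x'_1,\dots,x'_{n'},0]$. Then $B$ has density about $1-\delta$ in $C'$. But the images $\od^{-jM_{n'+1}}(B\cap C')$, $1\le j<m_{n'+1}$, lie in the other children, whose total measure is only $\delta\,\mu(C')$. So they sit disjointly inside $C'$ and nothing is contradicted.
\item These images involve a borrow into the tail coordinates, so the Radon--Nikodym derivative is not constant on them.
\item There are no ``comparable ratios'' to invoke: nothing in the lemma bounds $\mu_i(j\pm1)/\mu_i(j)$ uniformly in $i$.
\item Above all, your main argument never uses non-atomicity, and it is indispensable. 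If $\mu(\{x\})>0$, then $\{x\}$ is a wandering set of positive measure, since $\od^k(x)\neq x$ for all $k\geq 1$.
\end{itemize}
The fallback has the same defect. Lower bounds on individual ratios by a minimum over finitely many words do not force the series in Halmos' criterion to diverge. Also, non-atomicity is needed for recurrence itself, not merely to make the orbit of $0$ null.

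There are two ways to close the gap.

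\emph{Follow the paper.} It proves ergodicity and then quotes the fact that an ergodic nonsingular transformation of a non-atomic space is conservative \cite[Proposition 1.2.1]{Aar}. By Fact \ref{addition}, $\od^{-K_{n,x}}(x)=(0,\dots,0,x_{n+1},x_{n+2},\dots)$ with $K_{n,x}=\sum_{i\le n}M_ix_i$. Hence every $\od$-invariant function is tail-measurable, so a.e.\ constant by Kolmogorov's $0$--$1$ law.

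\emph{Repair your direct idea} by comparing two \emph{distinct} cylinders of the same level, rather than the children of a single cylinder.
\begin{itemize}
\item Take $C=[x_1,\dots,x_n]$ in which $B$ has relative density $>1-\veps$, with $\veps<1/4$.
\item The level-$N$ subcylinders of $C$ in which $B$ has relative density $>1/2$ carry a proportion $>1-2\veps$ of $\mu(C)$.
\item Each such subcylinder carries a proportion at most $\prod_{i=n+1}^N\eta_i$, which tends to $0$ as $N\to\infty$ precisely by non-atomicity.
\item So for $N$ large there are two of them, $[z]$ and $[z']$, with $K(z)<K(z')$, where $K(z)=\sum_{i\le N}z_iM_i$.
\item Let $k=K(z')-K(z)$. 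Then $\od^k$ maps $[z]$ onto $[z']$ by changing only the first $N$ coordinates, since there is no carry beyond position $N$.
\item Hence $\od^k$ carries the normalized restriction of $\mu$ to $[z]$ onto that of $[z']$.
\item Therefore $\od^{-k}(B\cap[z'])$ and $B\cap[z]$ both have relative density $>1/2$ in $[z]$, and so $\mu\bigl(B\cap\od^{-k}(B)\bigr)>0$.
\end{itemize}
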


\begin{proof} If $\mu(\{ 0\})=0$ and $\mu(\{\od^{-1}(0)\})>0$ then $(\Omega,\mathcal B,\mu,\od)$ is obviously singular. Conversely, assume that $\mu(\{ \od^{-1}(0)\})=0$ or $\mu(\{ 0\})>0$, and let us show that  $(\Omega,\mathcal B,\mu,\od)$ is nonsingular. We have to prove that the measure $\mu \od^{-1}$ is absolutely continuous with respect to $\mu$, which we do  by directly exhibiting the Radon-Nikodym derivative. 

Let $\odinv=\od^{-1}$. For any $x\neq 0= (0,0,\dots)$ in $\Omega$, denote again by $l(x)$ the smallest index $l$ such that $x_l>0$. Then \[\odinv(x)=(m_1-1,\dots m_{l(x)-1}-1, x_{l(x)}-1, x_{l(x)+1}, x_{l(x)+2},\dots ).\] It follows that 
\[ h(x):= \prod_{i=1}^\infty \frac{\mu_i\bigl( \odinv(x)_i\bigr)}{\mu_i(x_i)}\]
is well defined for all $x\neq 0$ since the product is finite in that case. It is also well-defined for $x=0$ if $\mu(\{ 0\})>0$, namely $h(0)=\frac{\mu (\{ \odinv(0)\})}{\mu(\{ 0\})}\cdot$  In any case, $h(x)$ is well-defined $\mu\,$-$\,$almost everywhere, and 
\[ \forall x\neq 0\;:\; h(x)=\prod_{i=1}^{l(x)-1}  \frac{\mu_i\bigl( m_i-1\bigr)}{\mu_i(x_i)}\times \frac{\mu_{l(x)}(x_{l(x)}-1)}{\mu_{l(x)}(x_{l(x)})}\cdot\]
So $h$ is a real-valued, nonnegative mesurable function. We show that $\mu\od^{-1}=h\mu$. 

We introduce some notation. For any $l\geq 1$, let $\Omega_{| l}=\prod_{i=1}^l \Omega_i$; and for $x\in \Omega$, let $x_{| l}=(x_1,\dots ,x_l)\in \Omega_{| l}$. Since the $l$ first coordinates of $\od(x)$ or $\odinv(x)$ depend only on the $l$ first coordinates of $x$, the maps $\od$ and $\odinv$ induce bijections 
$\od_l$ and $\odinv_l$  of $\Omega_{| l}$ onto itself, namely $\od_l(x_{| l})=\od(x)_{| l}$ and $\odinv_l(x_{| l})=\odinv(x)_{| l}$ for all $x\in\Omega$. Moreover, if we define 
\[ E_l=\{ x\in\Omega\,:\, l(x)\leq l\}=\{ x\in\Omega\,:\, x_{| l}\neq (0,\dots 0)\},\]
then \[ h(x)=\prod_{i=1}^l \frac{\mu_i\bigl( \odinv(x)_i\bigr)}{\mu_i(x_i)}=\frac{\mu_{| l}(\{ \odinv_l(x_{| l})\})}{\mu_{| l}(\{ x_{| l}\})}\quad\hbox{for all $x\in E_l$},\]
where $\mu_{| l}=\otimes_{i=1}^l \mu_i$. In particular, we may write $h(x)=h(x_{| l})$. 

Now let us show that $\mu\od^{-1}=h\mu$. For that, it is enough to show that if $f:\Omega\to \RR$ is a nonnegative measurable function depending on finitely many coordinates, then 
\[ \int_\Omega f h\, d\mu=\int_\Omega f\circ\od \, d\mu.\] 

Assume that $f(x)$ depends only on the first $N$ coordinates of $x$. Then, for any $l\geq N$, we may write $f(x)=f(x_{| l})$; and we have
\begin{align*}
\int_{E_l} fh\, d\mu&=\int_{E_l} f(x_{| l})h(x_{| l})\, d\mu(x)\\
&=\sum_{u\in \Omega_{| l}\setminus\{ 0_{| l}\}} f(u)h(u)\, \mu_{| l}(\{ u\})\\
&=\sum_{u\in \Omega_{| l}\setminus\{ 0_{| l}\}} f(u)\, \mu_{| l}(\{ \odinv_l(u)\})\\
&=\sum_{v\in \Omega_{| l}\setminus\{ \odinv(0)_{| l}\}} f(\od_l(v)) \mu_{| l}(\{ v\})\\
&=\int_{F_l} f\circ\od\, d\mu,
\end{align*}
where $F_l=\odinv(E_l)=\{ y\in\Omega\,:\, y_{| l}\neq (m_1-1,\dots ,m_l-1)\}$. 

Since the sequence $(E_l)$ is increasing with $\bigcup_{l\geq 1} E_l=\Omega\setminus\{ 0\}$, so that $\bigcup_{l\geq 1} F_l=\Omega\setminus\{ \odinv(0)\}$, 
it follows that \[ \int_{\Omega\setminus\{ 0\}} f h\, d\mu=\int_{\Omega\setminus\{ \odinv(0)\}} f\circ\od \, d\mu.\] 
Now, recall that we are assuming that either $\mu(\{ 0\})=0=\mu(\{\odinv(0)\}$ or $\mu(\{ 0\})>0$. If $\mu(\{ 0\})=0=\mu(\{\odinv(0)\}$, we directly get $\int_\Omega  f h\, d\mu=\int_{\Omega} f\circ\od \, d\mu$. If $\mu(\{ 0\})>0$, then $\int_{\{ 0\}} fh\, d\mu=f(0)h(0)\mu(0)=(f\circ\od)(\odinv(0))\mu(\{ \odinv(0)\})=\int_{\{ \odinv(0)\}} f\circ\od\, d\mu$; so we also get $\int_\Omega f h\, d\mu=\int_{\Omega} f\circ\od \, d\mu$.

\smallskip Since the map $\od:\Omega\to\Omega$ is a homeomorphism, it is bimeasurable; and one shows in exactly the same way as above that $\odinv=\od^{-1}$ is nonsingular if $\mu( \{ \od^{-1}(0)\}>0$ or $\mu(\{ 0\})=0$. Hence, 
$(\Omega,\mathcal B,\mu,\od)$ is nonsingular and invertible if $\mu(\{ 0\})=0=\mu( \{ \od^{-1}(0)\}$ or $\mu(\{ 0\})\,\mu( \{ \od^{-1}(0)\})>0$.

\smallskip Finally, let us show that $(\Omega, \mathcal B,\mu, \od)$ is conservative if the measure $\mu$ is non-atomic. Since the system is nonsingular and $\mu$ is non-atomic, it is enough to show that the transformation $\od$ is \emph{ergodic} with respect to $\mu$, \textit{i.e.} that any $\od\,$-$\,$invariant measurable function is  $\mu\,$-$\,$almost everywhere constant; see \cite[Proposition 1.2.1]{Aar}.

For $n\geq 1$ and $x\in\Omega,$ let us set
\[ K_{n,x}=\sum_{i=1}^n M_ix_i,\]
where $M_1=1$ and $M_i=m_1\cdots m_{i-1}$ if $i\geq 2$. 
Then   $\od^{-K_{n,x}}(x)=(0,\dots,0,x_{n+1},\dots)$ by Fact \ref{addition}. It follows that if $f:\Omega\to\mathbb R$ is a measurable $\od\,$-$\,$invariant function, then $f(x)=f_n(\sigma^n(x))$ for all $x\in\Omega$, where $f_n(y)=f(0,\dots ,0, y_1,y_2,\dots)$ and $\sigma:\Omega\to\Omega$ is the backward shift. Hence, any $\od\,$-$\,$invariant measurable function is $\sigma^{-n}\mathcal (\mathcal B)$-measurable for all $n\geq 1$.  Now, since $\mu$ is a product measure, we have 
$\bigcap_{n\geq 1}\sigma^{-n}(\mathcal B)=\{\varnothing,\mathcal B\}$ mod $\mu$ 
by Kolmogorov's $0\,$-$\,1$ law.   
So every $\od\,$-$\,$invariant measurable function is indeed $\mu\,$-$\,$a.e. constant.
\end{proof}

\begin{remark}
In \cite{BDDD22}, the measure $\mu$ is not required to be non-atomic. We won't use this assumption either, but we prefer to keep it in order to emphasize that we are working with a conservative system. Note that in some of the examples given in  \cite{BDDD22}, the measure $\mu$ does have atoms; see  e.g. \cite[Theorem 3.2]{BDDD22}. We'll have to find similar examples with a non-atomic measure.
\end{remark}

\subsection{Diagonal translation operators}\label{subsec:sumoperators}
We shall also investigate the following variant of composition operators induced by odometers. We keep the same notation for $\Omega=\prod_{i\geq 1}\Omega_i$ and the product measure $\mu=\otimes_{i\geq 1}\mu_i$, but this time we consider the usual, coordinatewise addition on $\Omega$, which we denote by $+$. We take $a:=(1,1,\dots )$ and consider the ``diagonal'' translation $\trans:\Omega\to\Omega$ defined by 
\[ \trans(x)= a+x=(x_i+1)_{i\geq 1}.\]

The main difference with the odometer map $\od$ is that the measure $\mu\trans^{-1}$ is again a product measure, namely $\mu\trans^{-1}=\otimes_{i\geq 1} \widetilde\mu_i$ where $\widetilde\mu_i(j)=\mu_i(j-1)$, $j\in\Omega_i$. By Kakutani's theorem \cite{Kak}, it follows that $\trans:(\Omega,\mathcal B,\mu)\to (\Omega,\mathcal B,\mu)$ is nonsingular if and only if 
\[\prod_{i=1}^\infty \left( \sum_{j\in\Omega_i} \sqrt{\mu_i(j)\mu_i(j-1)}\right)>0.\]

As for boundedness of $C_\trans$, note that if $[x_1,\dots,x_n]$ is a basic cylinder set, then 
\[\trans^{-1}([x_1,\dots,x_n])=[x_1-1,\dots,x_n-1].\]
Hence, there exists some constant $K$ such that $\mu(\trans^{-1}(B))\leq K\mu(B)$ for all basic cylinder sets $B$ if and only if 
\begin{equation}\label{eq:continuity}
\prod_{i=1}^{\infty}\sup_{j\in\Omega_i}\frac{\mu_i(j-1)}{\mu_i(j)}<\infty;
\end{equation}
and by the same argument as for odometers, this condition characterizes the boundedness of $C_{\trans}$ on $\Lpmu$, $1\leq p<\infty$. Composition operators $C_{\trans}$ will be called \textbf{diagonal translation operators}.

The dynamics of $C_{\trans}$ will be sometimes simpler to analyze than that of an odometer, because we do not have to handle carries (compare for instance the boundedness conditions \eqref{eq:continuityodometer} and \eqref{eq:continuity}).
Observe nevertheless that if the sequence $(m_i)$ is bounded, then there exists an integer $N$ such that $\trans^N=\textrm{Id}$ and thus $C_{\trans}$ cannot be hypercyclic. Hence we need the sequence $(m_i)$ to be unbounded if we want to get ``interesting''  examples.

\subsection{Periodic points} It is easy to check directly (by looking at cylinder sets) that odometers $C_\od$ and translation operators $C_\trans$ have dense sets of periodic points. This can also be deduced from the following lemma. Recall that a topological space is said to be zero-dimensional if it has a basis consisting of clopen sets.

\begin{lemma}\label{periodicgeneral} Let $\Omega$ be a compact, zero-dimensional topological group, and let $a\in \Omega$. Let also $\mu$ be a Borel probability measure on $\Omega$, and assume that the left translation $\tau_a(x)=ax$ induces a bounded composition operator on $\Lpmu$, $1\leq p<\infty$. Then, the periodic points of $C_{\tau_a}$ are dense in $\Lpmu$.
\end{lemma}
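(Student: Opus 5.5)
The plan is to build periodic points from indicator functions of cosets of open subgroups. Since $\Omega$ is compact and zero-dimensional, it has a neighbourhood basis of the identity consisting of open (hence closed, finite index) \emph{normal} subgroups. This is the standard fact that compact totally disconnected groups are profinite. I would quote it, or argue as follows: a compact open neighbourhood $U$ of $e$ contains an open subgroup $H$, and the normal core $\bigcap_{g}gHg^{-1}$ is a finite intersection because $H$ has finite index.

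Fix such an open normal subgroup $H$. The quotient $\Omega/H$ is a finite group of order $N$, so $a^N\in H$. Since $H$ is normal, for every coset $gH$ we get
\[
\tau_a^{-N}(gH)=a^{-N}gH=g(g^{-1}a^{-N}g)H=gH .
\]
Hence $\mathbf 1_{gH}\circ\tau_a^N=\mathbf 1_{gH}$, i.e.\ $C_{\tau_a}^N\mathbf 1_{gH}=\mathbf 1_{gH}$. So every linear combination of indicators of cosets of $H$ is a periodic point of $C_{\tau_a}$, with period dividing $N$. One could avoid normality by working with right cosets $Hg$, but normal subgroups keep the bookkeeping simplest.

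It remains to show that the span $\mathcal P$ of all such indicators, taken over all open normal subgroups $H$, is dense in $\Lpmu$. First, $\mathcal P$ is a linear space, because any two open normal subgroups contain their intersection, which is again open and normal, and cosets of a smaller subgroup refine cosets of a larger one. Every clopen set is a finite union of cosets of some open normal $H$. Indeed, if $V$ is clopen, compactness and the basis property give an open normal $H$ with $VH=V$: for each $x\in V$ pick $H_x$ with $xH_xH_x\subset V$, cover $V$ by finitely many $xH_x$, and intersect the corresponding $H_x$. So $\mathcal P$ contains all indicators of clopen sets.

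The step needing the most care is density of clopen indicators in $\Lpmu$ for an arbitrary Borel probability $\mu$. I would use regularity of finite Borel measures on the compact metrizable space. If metrizability is not given, I would use the inner and outer regularity available on compact groups, or restrict to Baire sets. For a Borel set $B$ and $\varepsilon>0$, choose $K\subset B\subset U$ with $K$ compact, $U$ open and $\mu(U\setminus K)<\varepsilon$. By compactness and zero-dimensionality, $K$ is covered by finitely many clopen sets inside $U$, whose union $V$ is clopen and satisfies $K\subset V\subset U$. Then $\|\mathbf 1_B-\mathbf 1_V\|_p^p\le\mu(U\setminus K)<\varepsilon$. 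Since simple functions are dense in $\Lpmu$ for $1\le p<\infty$, this finishes the proof.
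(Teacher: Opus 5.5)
Your proof is correct and follows essentially the same route as the paper: write a clopen set as a finite union of cosets of a small open subgroup $H$ (van Dantzig), observe that some power of $a$ lies in $H$ so that these cosets are $\tau_a^{-d}$-invariant, and conclude from the density of clopen indicators. The differences are cosmetic: the paper uses right cosets $Hb$, so it never needs your normal cores; it finds $d$ with $a^{-d}\in H$ from an accumulation point of $(a^n)$ rather than by Lagrange in $\Omega/H$; and it gets density via Stone--Weierstrass rather than by approximating $K\subset B\subset U$. One caveat applies to both proofs: the density step needs $\mu$ to be regular, which holds when $\Omega$ is metrizable (the case used later) but \emph{not} for every Borel probability measure on a non-metrizable compact group, so your fallback to ``regularity available on compact groups'' should be replaced by an explicit metrizability or regularity assumption.
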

\begin{proof} Let us denote by $\mathcal C$ the family of all clopen subsets of $\Omega$. Since $\Omega$ is zero-dimensional, $\mathbf E:={\rm span}\,\bigl( \mathbf 1_C:\; C\in\mathcal C\bigr)$ is dense in $\mathcal C(\Omega)$, the space of continuous functions on $\Omega$, by the Stone-Weierstrass Theorem. Hence, $\mathbf E$ is also dense in $\Lpmu$. So we only need to show that any $f\in\mathbf E$ is a periodic point of $C_{\tau_a}$; and for that, it is enough to prove the following: given $C_1,\dots ,C_r\in\mathcal C$, one can find an integer $d\geq 1$ such that $a^{-d} C_j=C_j$ for $j=1,\dots ,r$. Moreover, replacing $\Omega$ by $\Omega^r$ and $a$ by $(a,\dots ,a)\in\Omega^r$ and considering $C_1\times \cdots \times C_r$, we may in fact assume that $r=1$, \textit{i.e.} we have only one clopen set $C$.

Let us first observe that for any neighbourhood $V$ of $e$, the unit element of $\Omega$, one can find an integer $d\geq 1$ such that $a^{-d}\in V$. Indeed, the sequence $(a^{n})_{n\geq 0}$ has an accumulation point by compactness of $\Omega$, so one can find $n\neq n'$ such that $a^{n-n'}\in V\cap V^{-1}$.

 Now, by a classical result of van Dantzig \cite{vD} (see \cite{Wes} for a proof in English), $\Omega$ has a neighbourhood basis at $e$ consisting of clopen subgroups. Hence, by compactness, the clopen set $C$ is a finite union of right translates of clopen subgroups, say $C=\bigcup_{k=1}^r H_k b_k$. 
If we choose $d\geq 1$ such that  $a^{-d}\in H:=\bigcap_{k=1}^rH_k$, then $a^{-d}C=\bigcup_{k=1}^ra^{-d}H_kb_k=\bigcup_{k=1}^rH_kb_k=C.$  

\end{proof}

\subsection{Organization of the paper}

In Section \ref{sec:supercyclic}, we characterize the supercyclic 
composition operators in a general context, extending the results of \cite{DM24}, and we derive some consequences. In Sections \ref{sec:fhc} and \ref{sec:ufhc}, we obtain sufficient conditions for a composition operator to be frequently hypercyclic or $\mathcal U$-frequently hypercyclic. These conditions are based on the existence of many periodic vectors, in the spirit of 
 \cite{GMM21}. Sections \ref{sec:odometers} and \ref{sec:sumoperator} are devoted to a  thorough study of  composition 
operators induced by odometers and of diagonal translation operators.


\section{Supercyclic composition operators} \label{sec:supercyclic}
\subsection{Framework} In this section, 
we consider a $\sigma$-finite measure space $(\Omega,\mathcal B,\mu)$ and  
 a   Banach space  $(\mathcal X,\|\cdot\|)$ with $\X\subset\Lomu$. 
  We fix  once and for all a nonsingular transformation $\varphi:(\Omega,\mathcal B,\mu)\to (\Omega,\mathcal B,\mu)$, and we assume that $C_\varphi$ defines a bounded operator on $\mathcal X.$ 
 
 \smallskip In \cite{BDP} and  \cite{GG25},  a set of four natural conditions on $\mathcal X$ is introduced to study the dynamical properties of $C_\varphi$ acting on $\X$. The framework
of \cite{GG25}  is \textit{a priori} more general than that of \cite{BDP}, so this is the one we will use. Note however that, as observed in \cite{GG25}, condition (C1) below is equivalent to condition (H3) in \cite{BDP}; and that if $\mathcal X$ is a  \textbf{lattice}, \textit{i.e} ($\vert f\vert \leq \vert g\vert$ and $g\in \mathcal X$) implies ($f\in\mathcal X$ and $\Vert f\Vert \leq \Vert g\Vert$), then condition (C2) below is easily seen to be equivalent to condition (H4) in \cite{BDP}. The four conditions given in \cite{GG25} read as follows.

\smallskip
\begin{itemize}
\item[(H1)] For any $E\in\mathcal B$ with finite measure, the indicator function ${\mathbf 1}_E\in\mathcal X.$
\item[(H2)] ${\rm span}\bigl\{ \mathbf 1_E\,:\, E\in\mathcal B,\ \mu(E)<\infty\bigr\}$ is dense in $\mathcal X$.
\item[(C1)] $\mu\bigl( \vert f\vert \geq 1\bigr)\to 0$ as  $\Vert f\Vert\to 0$, $f\in\mathcal X$.
\item[(C2)] $\Vert \mathbf 1_E\Vert\to 0$ as $\mu(E)\to 0$, $E\in\mathcal B.$   
\end{itemize}

\smallskip
In more abstract terms, (C1) means that the inclusion map $\mathcal X\xhookrightarrow{}\Lomu$ is continuous when $\Lomu$ is endowed with the topology of (global) convergence in measure. It can be formulated as follows: there is a function $\kappa_1:\RR_+\to [0,\infty]$ such that $\mu\bigl( \vert f\vert \geq 1\bigr)\leq \kappa_1(\Vert f\Vert)$ for all $f\in\X$ and $\kappa_1(t)\to 0$ as $t\to 0$. 

\smallskip
As for (C2), upon assuming (H1), it means that the map $E\mapsto \mathbf 1_E$ from $\mathcal B_f:=\{ E\in\mathcal B\,:\, \mu(E)<\infty\}$ into $\X$ is continuous at $\varnothing$, when $\mathcal B_f$ is  endowed with the (pseudo$\,$-)metric $d(A,B)=\mu(A\Delta B)$. Since $\Vert \mathbf 1_A-\mathbf 1_B\Vert  \leq  \Vert \mathbf 1_{A\setminus B}\Vert +\Vert\mathbf 1_{B\setminus A}\Vert$ for all $A,B\in\mathcal B_f$, this is equivalent to continuity on the whole of $\mathcal B_f.$ Observe also that (C2) can be formulated as follows: there is a function $\kappa_2:\RR_+\to [0,\infty]$ such that $\Vert \mathbf 1_E\Vert \leq \kappa_2\bigl( \mu(E)\bigr)$ for all $E\in\mathcal B_f$  and $\kappa_2(t)\to 0$ as $t\to 0$.
  
  \smallskip
The following remark  will not be needed, but it seems worth mentioning. Recall that a normed space $\X\subset\Lomu$ is said to have an  \emph{order-continuous norm} if, whenever $(f_n)_{n\in\NN}$ is a pointwise decreasing sequence in $\X$ such that $f_n\to 0$ almost everywhere, it follows that $\Vert f_n\Vert\to 0$. For example, $\Lpmu$ has an order continuous norm if $1\leq p<\infty$, but $L_\infty(\Omega,\mu)$ does not.
\begin{remark} Let  $\X\subset \Lomu$ be a normed space, and assume that $\X$ is a lattice.
\begin{enumerate}
\item If $\X$ is complete and $\mu\bigl( \vert f\vert\geq 1\bigr)<\infty$ for every $f\in\X$, then $\X$ satisfies (C1).
\item If $\X$ satisfies (H1) and has an order-continuous norm, then $\X$ satisfies (C2).
\end{enumerate} 
\end{remark}
\begin{proof} (1) Towards a contradiction, assume that $\X$ does not satisfy (C1). Then, one can find a sequence $(f_k)\subset\X$ and $\delta >0$ such that $\sum_{k=0}^\infty \Vert f_k\Vert <\infty$ and yet  $\mu\bigl( \vert f_k\vert \geq 1\bigr)\geq \delta$ for all $k\geq 0$. 
Let us set $B_k=\{ \vert f_k\vert \geq 1\}$, then $0\leq \mathbf 1_{B_k}\leq \vert f_k\vert$, so $\Vert \mathbf 1_{B_k}\Vert\leq \Vert f_k\Vert$ by the lattice property, and hence the series $\sum \mathbf 1_{B_k}$ converges to some $g\in\X$ since $\mathcal X$ is complete. By the lattice property again, we have $\sum_{k=0}^n \mathbf 1_{B_k}\leq g$ for all $n\geq 0$: indeed, setting $g_p=\sum_{k=0}^p \mathbf 1_{B_k}$, we have $g_p\geq g_n$ for all $p\geq n$, so $\vert g_p-g_n\vert=g_p-g_n$, hence $\vert g-g_n\vert =g-g_n$ since $g_p-g_n\to g-g_n$ and  $\vert g_p-g_n\vert \to \vert g-g_n\vert$ in $\X$. Let $A=\bigcup_{k\geq 0} B_k$. Since $\mathbf1_{\bigcup_{k=0}^n B_k}\leq \sum_{k=0}^n \mathbf 1_{B_k}\leq g$ for all $n\geq 0$, we have $\mathbf 1_A\leq g$. Hence $\mathbf 1_A\in \X$, so that $\mu(A)=\mu\bigl( \mathbf 1_A\geq 1\bigr)<\infty$. Moreover, $\Vert \mathbf 1_A\Vert \leq \Vert g\Vert \leq \sum_{k=0}^\infty \Vert \mathbf 1_{B_k}\Vert \leq \sum_{k=0}^\infty \Vert f_k\Vert$. Similarly, if we set $A_n=\bigcup_{k\geq n} B_k$, then $\Vert \mathbf 1_{A_n}\Vert \leq \sum_{k=n}^\infty \Vert f_k\Vert$ for all $n\geq 0$. By the lattice property (again), it follows that $\Vert \mathbf 1_{\bigcap_{n\geq 0} A_n}\Vert =0$, so that $\mu\big( \bigcap_{n\geq 0} A_n\bigr)=0$. But $A_n\supset B_n=\{ \vert f_n\vert \geq 1\}$, so $\mu(A_n)\geq \delta$ for all $n$. Since the sequence $(A_n)$ is decreasing and $\mu(A_0)=\mu(A)<\infty$, this implies that $\mu\big( \bigcap_{n\geq 0} A_n\bigr)\geq \delta$, a contradiction. 

(2) Assume that $\X$ does not satisfy (C2). Then, one can find $\delta >0$ and a sequence $(E_k)\subset\mathcal B$ such that $\sum_{k=0}^\infty \mu(E_k)<\infty$ and $\Vert \mathbf 1_{E_k}\Vert \geq\delta$ for all $k\geq 0$. For $n\geq 0$, let $F_n=\bigcup_{k\geq n} E_k$. Then $\mu(F_n)<\infty$, so $\mathbf 1_{F_n}\in\X$ by (H1). Morevover, $F=\bigcap_{n\geq 0} F_n$ is such that $\mu(F)=0$. By order-continuity, it follows that  $\mathbf 1_{F_n}\to 0$ in $\X$. But $\mathbf 1_{F_n}\geq \mathbf 1_{E_n}$, so $\Vert \mathbf 1_{F_n}\Vert \geq \delta$ for all $n$ by the lattice property, a contradiction\end{proof}

\smallskip In order to study supercyclicity of composition operators, we will need stronger versions of (C1) and (C2), that we call (C1') and (C2').

\begin{itemize}
\item[(C1')] $\mu\bigl( \vert f\vert \geq 1\bigr)\, \mu\bigl( \vert g\vert \geq 1\bigr)\to 0$ as $\Vert f\Vert\Vert g\Vert\to 0$. Equivalently, there exists a function $\kappa_1:\RR_+\to [0,\infty]$ such that 
$\mu\bigl( \vert f\vert\geq 1\bigl)\leq \kappa_1(\Vert f\Vert)$ for all $f\in\X$, and $\kappa_1(u)\kappa_1(v)\to 0$ whenever $uv\to 0$. 

\smallskip\item[(C2')]   $\Vert \mathbf 1_E\Vert\Vert \mathbf 1_F\Vert\to 0$ as $\mu(E)\mu(F)\to 0$. Equivalently, there exists a function $\kappa_2:\RR_+\to [0,\infty]$ such that $\Vert \mathbf 1_E\Vert \leq \kappa_2\bigl( \mu(E)\bigr)$ for all $E\in\mathcal B_f$ and $\kappa_2(u)\kappa_2(v)\to 0$ as $uv\to 0$.
\end{itemize}

For example, $\Lpmu$, $1\leq p<\infty$ satisfies (C1') with $\kappa_1(t)=t^p$ 
  by Markov's inequality; and it satisfies (C2')  with $\kappa_2(t)=t^{1/p}$. This implies in particular that if 
$\mathcal X$ embeds continuously in $\Lpmu$ for some $1\leq p<\infty$, then $\mathcal X$ satisfies (C1') with $\kappa_1(t)= \bigl( \Vert \imath\Vert t\bigr)^p$
, 
where $\imath$ is the inclusion map $\mathcal X\hookrightarrow{}\Lpmu$; and that  if $\Lpmu$ embeds continuously in $\mathcal X$ for some $1\leq p<\infty$, then $\X$ satisfies (C2'). 

\smallskip
\begin{remark}\label{CC'} Assume that $\mu(\Omega)<\infty$. Then (C1) is equivalent to (C1'), and (C2) is equivalent to (C2') if $\X$ is a lattice satisfying (H1).
\end{remark}
\begin{proof} We have $\mu\bigl( \vert f\vert \geq 1\bigr)\mu\bigl( \vert g\vert \geq 1\bigr)\leq \mu(\Omega)\times \min \Bigl[ \mu\bigl( \vert f\vert \geq 1\bigr), \mu\bigl( \vert g\vert \geq 1\bigr)\Bigr]$ for any $f,g\in\X$, so (C1) implies (C1'). And if $\X$ is a lattice, then $\Vert \mathbf 1_E\Vert\,\Vert \mathbf 1_F\Vert\leq \Vert \mathbf 1\Vert \times \min\bigl( \Vert \mathbf 1_E\Vert,\Vert \mathbf 1_F\Vert\bigr)$ for any $E,F\in\mathcal B$, so (C2) implies (C2').
\end{proof}

\smallskip
\begin{example}\label{Orlicz} Let $\Psi:\RR_+\to \RR_+$ be a nontrivial Orlicz function, and let $\X=L_\Psi(\Omega,\mathcal B,\mu)$ be the associated Orlicz space, endowed with the Luxemburg norm
\[ \Vert f\Vert=\inf\left\{ a>0\,:\, \int_\Omega \Psi\left(\frac{\vert f\vert}{a}\right) d\mu\leq 1\right\}.\]
\begin{enumerate}
\item[\rm (i)] The space $\X$ satisfies (C1) and (C2), hence (C1') and (C2') if $\mu(\Omega)<\infty$.
\item[\rm (ii)] The space $\X$ satisfies  {\rm (C1')} if $\Psi(x)\Psi(y)\to \infty$ as $xy\to \infty$, and it satisfies {\rm (C2')}  if $\Psi^{-1}(x)\Psi^{-1}(y)\to\infty$ as $xy\to\infty$.
\end{enumerate}
\end{example}
\begin{proof} The nontriviality assumption means that there exists $\alpha\geq 0$ such that $\Psi$ is strictly increasing on $[\alpha,\infty)$, so that $\Psi^{-1}:[0,\infty)\to [\alpha, \infty)$ is well defined. Note also that the second part of (i) follows from Remark \ref{CC'}.

 If $f\in L_\Psi$ then, by Markov's inequality, 
\[ \mu\bigl( \vert f\vert \geq 1\bigr)\leq \mu\left[ \Psi\left(\frac{\vert f\vert}{\Vert f\Vert}\right)\geq \Psi\left(\frac1{\Vert f\Vert}\right)\right]\leq \frac1{\Psi(1/\Vert f\Vert)}\cdot\]
This shows that $L_\Psi$ satisfies (C1); and that it satisfies  (C1') if $\kappa_1(t)=1/\Psi(1/t)$ 
 is such that $\kappa_1(u)\kappa_1(v)\to 0$ as $uv\to 0$, \textit{i.e.} $\Psi(x)\Psi(y)\to\infty$ as $xy\to\infty$.

If $E\in\mathcal B$ has finite measure then, fo any $a>0$, 
\[ \int_\Omega \Psi\left(\frac{\mathbf 1_E}{a}\right) d\mu =\Psi\left(\frac1a\right) \mu(E),\]
so that 
\[ \Vert \mathbf 1_E\Vert=\frac1{\Psi^{-1}\bigl(1/\mu(E)\bigr)}\cdot\]
This shows that $L_\Psi$ satisfies (C2), and that it satisfies  (C2') if $\Psi^{-1}(x)\Psi^{-1}(y)\to\infty$ as $xy\to \infty$.
\end{proof}

\subsection{Transitivity and mixing} The characterizations of topological transitivity and mixing for $C_\varphi$ obtained in \cite{BDP} rely on ``runaway like'' properties of the transformation $\varphi:(\Omega,\mathcal B,\mu)\to (\Omega,\mathcal B,\mu)$. Following \cite{GG25}, let us say that $\varphi$ satisfies the {\bf Hypercyclic Runaway Condition} (HRC) if for any $A\in\mathcal B$ with finite measure and for every $\veps>0,$ there exist  $n\geq 1$ and $B\in\mathcal B$ such that $B\subset A,$ $\mu(A\backslash B)<\veps$,  $\mu^*(\varphi^n(B))<\veps$ and $\mu(\varphi^{-n}(B))<\veps.$ Here, $\mu^*(E)$ denotes the ``outer measure'' of a set $E\subset\Omega$, \textit{i.e.} $\mu^*(E)=\inf\,\bigl\{ \mu(F)\,:\, F\supset E\,,\, F\in\mathcal B\bigr\}$. 
Similarly, we say that $\varphi$ satisfies the {\bf Mixing Runaway Condition} (MRC) if for any $A\in\mathcal B$ with finite measure and for every $\veps>0,$ there exists  $n_0\geq 1$ such that for all $n\geq n_0,$ one can find $B\in\mathcal B$ such that $B\subset A,$ $\mu(A\backslash B)<\veps$,  $\mu^*(\varphi^n(B))<\veps$ and $\mu(\varphi^{-n}(B))<\veps.$
Finally, we say that $\varphi^{-1}(\mathcal B)$ is {\bf essentially equal} to $\mathcal B$, and we write $\essequal$, if for any $A\in\mathcal B,$ there exists $B\in\mathcal B$ such that $\mu\bigl(A\Delta \varphi^{-1}(B)\bigr)=0.$ The following results are proved in \cite{BDP,GG25}.

\begin{theorem} Assume that the Banach space $\X\subset\Lomu$ satisfies conditions {\rm (H1), (H2), (C1), (C2)}. Then, 
 $C_\varphi:\X\to\X$ is topologically transitive if and only if $\essequal$ and $\varphi$ satisfies {\rm (HRC)}; and 
 $C_\varphi$ is topologically mixing if and only if $\essequal$ and $\varphi$ satisfies {\rm (MRC)}.
\end{theorem}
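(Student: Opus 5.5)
We will prove the two equivalences in parallel, since the mixing case is the transitivity case with ``there exists $n$'' replaced by ``for all $n\geq n_0$''. Throughout we use the standard reformulation: $C_\varphi$ is topologically transitive (resp. mixing) on $\X$ if and only if for all $f,g$ in a dense set and every $\veps>0$ there exist some $n$ (resp. all large $n$) and $h\in\X$ with $\Vert h-f\Vert<\veps$ and $\Vert C_\varphi^n h-g\Vert<\veps$. By (H1) and (H2), we may take $f,g$ to be simple functions supported on a common set $A$ of finite measure.

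For the sufficiency part, assume $\essequal$ and (HRC). We first note that $\essequal$ passes to all iterates, which gives $\varphi^{-n}(\mathcal B)=_{\rm ess}\mathcal B$. Given $\veps$, we apply (HRC) to $A$ to obtain $n$ and $B\subset A$. We then write $g\mathbf 1_B=\tilde g\circ\varphi^n$ for some simple $\tilde g$; after intersecting, we may assume $\tilde g$ is supported in a measurable hull $F$ of $\varphi^n(B)$, with $\mu(F)<2\veps$. We set $h=f\mathbf 1_{B\setminus\varphi^{-n}(B)}+\tilde g\mathbf 1_{F}$. Then $h-f$ is a combination of indicators of sets of small measure ($A\setminus B$, $\varphi^{-n}(B)$, and $F$), with coefficients bounded by $\Vert f\Vert_\infty,\Vert g\Vert_\infty$, so (C2) makes $\Vert h-f\Vert$ small. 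Similarly, $C_\varphi^n h-g$ is controlled by $g\mathbf 1_{A\setminus B}$ and by $f\mathbf 1_{\cdot}\circ\varphi^n$ supported in $\varphi^{-n}(B\setminus\varphi^{-n}B)$. The latter term needs care, and we plan to estimate it via boundedness of $C_\varphi^n$ on a carefully chosen piece. An alternative is to choose $h=f\mathbf 1_{B'}+\tilde g$ with $B'=B\setminus(\varphi^{-n}(B)\cup F)$, so that $f\mathbf 1_{B'}\circ\varphi^n$ is supported on $\varphi^{-n}(B')\subset\varphi^{-n}(A)$. Instead of bounding that term directly, we would use the symmetric roles of $\varphi^n(B)$ and $\varphi^{-n}(B)$ in (HRC), refining $B$ so that $\varphi^n(B)\cap B$ and $\varphi^{-n}(B)\cap B$ are negligible. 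This is exactly what the two smallness conditions buy.

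For the necessity part, we first show $\essequal$. If it fails, some $A$ of finite measure is at positive $\mu$-distance from $\varphi^{-1}(\mathcal B)$. Then every element of the range of $C_\varphi$ is $\varphi^{-1}(\mathcal B)$-measurable, and using (C1) we show that $\mathbf 1_A$ is not approximable by $\varphi^{-1}(\mathcal B)$-measurable functions in $\X$. Hence ${\rm range}(C_\varphi)$ is not dense, which contradicts transitivity, since a transitive operator has dense range. Next, for (HRC), we take $A$ and $\veps$, and use transitivity with the two open sets $U=\{\Vert h-\mathbf 1_A\Vert<\delta\}$ and $V=\{\Vert k\Vert<\delta\}$. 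We also use $U'=\{\Vert h\Vert<\delta\}$ and $V'=\{\Vert k-\mathbf 1_A\Vert<\delta\}$, and we need these conditions simultaneously, for a single $n$. For that we use transitivity on $\X\oplus\X$, noting that a transitive operator with a dense set of such points is weakly mixing. Actually, we avoid weak mixing by taking a single $h$ close to $\mathbf 1_A$ with $C^n_\varphi h$ close to $2\cdot\mathbf 1_A$, say. Then (C1) converts norm estimates into measure estimates: $B=\{|h-1|<1/2\}\cap A$ has $\mu(A\setminus B)$ small, while $C^n_\varphi h$ being near $2$ on $A$ forces $\varphi^n(B)$ to lie mostly outside $A$ relative to the small set where $h$ is large. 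From this we extract $\mu^*(\varphi^n B)$ and $\mu(\varphi^{-n}B)$ small. The mixing case is identical, with ``for all $n\geq n_0$''.

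The main obstacle is this extraction of (HRC) from a single orbit estimate, in particular the outer-measure condition on $\varphi^n(B)$. We expect to have to combine two approximations, with $h$ near $\mathbf 1_A$ and $C_\varphi^n h$ near $0$, together with $k$ near $0$ and $C_\varphi^n k$ near $\mathbf 1_A$. Making these hold for the same $n$ requires weak mixing, which we would obtain from the Bès--Peris criterion using the fact that the range is dense.
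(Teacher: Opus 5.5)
\textbf{The gap: necessity of (HRC) when $C_\varphi$ is only transitive.} Your final plan needs $h\approx\mathbf 1_A$, $C_\varphi^nh\approx 0$ and $k\approx 0$, $C_\varphi^nk\approx\mathbf 1_A$ for the \emph{same} $n$. You propose to get this from weak mixing ``via B\`es--Peris and density of the range''. That implication is false. Every hypercyclic operator has dense range, yet there are hypercyclic operators that are not weakly mixing. B\`es--Peris identifies weak mixing with the Hypercyclicity Criterion, not with dense range. For composition operators transitivity does imply weak mixing, but only as a consequence of the theorem you are proving together with Remark~\ref{HRC}, so using it here would be circular.

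\textbf{The missing idea.} Make one vector carry both pieces of information by prescribing two \emph{different} values on $A$. The paper cites the literature for this theorem, but this is exactly the proof of Theorem~\ref{thm:sc1} with $\lambda=1$. Take $h$ and $n\geq1$ with $\Vert h-4\cdot\mathbf 1_A\Vert<\delta$ and $\Vert h\circ\varphi^n+4\cdot\mathbf 1_A\Vert<\delta$, and put $B:=A\cap\{\vert h-4\vert<1\}\cap\{\vert h\circ\varphi^n+4\vert<1\}$.
\begin{itemize}
\item By (C1), $\mu(A\setminus B)\leq 2\kappa_1(\delta)$.
\item On $\varphi^{-n}(B)$ one has $\vert h\circ\varphi^n-4\vert<1$, so $\varphi^{-n}(B)\subset\{\vert h\circ\varphi^n+4\cdot\mathbf 1_A\vert\geq 1\}$.
\item On $\varphi^n(B)$ one has $\vert h+4\vert<1$, so $\varphi^n(B)$ lies in the \emph{measurable} set $\{\vert h-4\cdot\mathbf 1_A\vert\geq1\}$. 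This settles the outer-measure condition.
\end{itemize}
Your variant with targets $\mathbf 1_A$ and $2\cdot\mathbf 1_A$ works equally well, but only if the condition $\vert C_\varphi^nh-2\vert<1/2$ is built into $B$. With $B=A\cap\{\vert h-1\vert<1/2\}$ alone you know nothing about $h$ on $\varphi^n(B)$. The part of $B$ where $C_\varphi^nh$ is far from $2$ has small measure, but its image under $\varphi^n$ need not have small outer measure, so it must be excluded from $B$ from the start.

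\textbf{The rest is essentially fine.} Your two-approximation argument would already give (MRC) in the mixing case, since mixing supplies both approximations for every $n\geq n_0$. The necessity of $\essequal$ via (C1) and density of the range is the Whitley-type argument the paper invokes.

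Your sufficiency argument is the route the paper indicates in Remark~\ref{HRC} and the proof of Theorem~\ref{sc2}: $h=f\mathbf 1_B+\tilde g\mathbf 1_F$ is $\alpha(f)+\beta(g)$ in a Hypercyclicity-Criterion argument, which in fact yields weak mixing. The term you say ``needs care'' does not. $(f\mathbf 1_B)\circ\varphi^n$ is a fixed finite combination of indicators of subsets of $\varphi^{-n}(B)$, so (C2), with $\kappa_2$ taken nondecreasing, bounds it directly. No estimate on $\Vert C_\varphi^n\Vert$ is needed (none is available uniformly in $n$), and you do not need to remove $\varphi^{-n}(B)$ from $B$.
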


\smallskip
When the measure $\mu$ is finite and the transformation $\varphi$ is one-to-one and bimeasurable (\textit{i.e.} $\varphi(B)\in\mathcal B$ for every $B\in\mathcal B$), these characterizations have the following simpler formulations.

\begin{corollary}\label{lem:topologicallytransitive} Assume that $\X$ satisfies  {\rm (H1), (H2), (C1), (C2)}, that $\mu(\Omega)<\infty$, and that the transformation $\varphi$ is  one-to-one and bimeasurable.
\begin{enumerate}
\item[\rm (1)] $C_\varphi$ is topologically transitive if and only if $\varphi$ satisfies any of the following three equivalent conditions:

\smallskip\begin{enumerate}
\item[\rm (i)] for every $\veps>0,$ there exist $n\in\NN$ and $B\in\mathcal B$ such that 
$\mu(\Omega\backslash B)< \veps$ and $B\cap \varphi^n(B)=\varnothing;$
\item[\rm (ii)] for every $\veps>0$, there exist $n\in\NN$ and $B\in\mathcal B$ such that 
$\mu(B)< \veps$ and $ \mu(\Omega\backslash \varphi^{-n}(B))< \veps;$
\item[\rm (ii')] for every $\veps>0$, there exist $n\in\NN$ and $B\in\mathcal B$ such that 
$\mu(\Omega\backslash B)< \veps$ and $ \mu( \varphi^{n}(B))< \veps.$
\end{enumerate}

\smallskip \item[\rm (2)] $C_\varphi$ is topologically mixing if and only if $\varphi$ satisfies any of the following three equivalent conditions:

\smallskip
\begin{enumerate}
\item[\rm (i)] for every $\veps>0,$ there exists $n_0\in\NN$ such that, for all $n\geq n_0,$ there exists $B\in\mathcal B$ with
$\mu(\Omega\backslash B)<\veps$ and  $B\cap \varphi^n(B)=\varnothing;$
\item[\rm (ii)] for every $\veps>0$, there exists $n_0\in\NN$ such that, for all $n\geq n_0$, there exists $B\in\mathcal B$ such that 
$\mu(B)<\veps$ and $ \mu(\Omega\backslash \varphi^{-n}(B))< \veps;$
\item[\rm (ii')] for every $\veps>0$, there exists $n_0\in\NN$ such that, for all $n\geq n_0$, there exists $B\in\mathcal B$ such that 
$\mu(\Omega\backslash B)< \veps$ and $ \mu(\varphi^{n}(B))< \veps.$
\end{enumerate}
\end{enumerate}
\end{corollary}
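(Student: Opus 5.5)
The plan is to derive Corollary \ref{lem:topologicallytransitive} from the preceding theorem. First we check that the hypotheses give $\essequal$, so that only (HRC)/(MRC) remain. Then we show that, when $\mu(\Omega)<\infty$ and $\varphi$ is injective and bimeasurable, (HRC) (resp. (MRC)) is equivalent to each of (i), (ii), (ii'), with the same $n$ (resp. for all $n\geq n_0$).

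\emph{Essential equality.} Since $\varphi$ is one-to-one and bimeasurable, for any $A\in\mathcal B$ the set $B:=\varphi(A)$ is in $\mathcal B$. Injectivity gives $\varphi^{-1}(B)=A$ exactly, so $\essequal$ holds automatically. Bimeasurability also lets us replace $\mu^*(\varphi^n(B))$ by $\mu(\varphi^n(B))$ throughout.

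\emph{Runaway conditions versus (i), (ii), (ii').} Since $\mu(\Omega)<\infty$, (HRC) applied to $A=\Omega$ gives, for each $\veps$, some $n$ and $B$ with $\mu(\Omega\setminus B)<\veps$, $\mu(\varphi^n(B))<\veps$ and $\mu(\varphi^{-n}(B))<\veps$. This already yields (ii'). For (ii), take $B'=\varphi^n(B)$: then $\mu(B')<\veps$, and by injectivity $\varphi^{-n}(B')=B$, so $\mu(\Omega\setminus\varphi^{-n}(B'))<\veps$.

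For (i), set $B''=B\setminus\varphi^n(B)$. Then $B''\cap\varphi^n(B'')\subset (B\setminus\varphi^n(B))\cap\varphi^n(B)=\varnothing$, and $\mu(\Omega\setminus B'')<2\veps$.

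Conversely, each of (i), (ii), (ii') should imply (HRC) for every finite-measure $A$, i.e. every $A\in\mathcal B$. Given $A$, we intersect with the good set. From (ii') with $\delta$ we get $B$ with $\mu(\Omega\setminus B)<\delta$ and $\mu(\varphi^n(B))<\delta$. Set $B_1=A\cap B$; then $\mu(A\setminus B_1)<\delta$ and $\mu(\varphi^n(B_1))<\delta$.

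We still need $\mu(\varphi^{-n}(B_1))$ small. Here nonsingularity and absolute continuity are used. Since $\mu\varphi^{-n}\ll\mu$ and $\mu\varphi^{-n}$ is a finite measure ($\mu(\varphi^{-n}(\Omega))\le\mu(\Omega)$), for every $\veps$ there is $\delta$ such that $\mu(E)<\delta$ implies $\mu(\varphi^{-n}(E))<\veps$. The difficulty is that $\delta$ depends on $n$, and $n$ is chosen after $\delta$. This uniformity issue is the main obstacle.

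To get around it, use both sides. From (ii) we get $B$ small with $\varphi^{-n}(B)$ large, i.e. $\Omega\setminus\varphi^{-n}(B)$ small. Combining with injectivity, $\varphi^n(\Omega\setminus\varphi^{-n}(B))\supset\varphi^n(\Omega)\setminus B$ lives mostly off $B$. Then take $G=\Omega\setminus(B\cup\varphi^{-n}(\ldots))$ appropriately. The idea is to reproduce the equivalence (i)$\Leftrightarrow$(ii)$\Leftrightarrow$(ii') in symmetric form: (i) gives $D$ with $\mu(\Omega\setminus D)<\veps$ and $D\cap\varphi^n(D)=\varnothing$. Then $\varphi^n(D)\subset\Omega\setminus D$ has measure $<\veps$, and $\varphi^{-n}(D)\subset\Omega\setminus D$ (by injectivity, $x\in D\cap\varphi^{-n}(D)$ would give $\varphi^n x\in D\cap\varphi^n(D)$), so it also has measure $<\veps$.

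So (i) yields all three smallness requirements at once, with no absolute-continuity argument needed. Intersecting with $A$ then gives (HRC). Thus I would organize the proof as: (HRC) $\Rightarrow$ (ii') $\Rightarrow$ (i) $\Rightarrow$ (HRC) and (HRC) $\Rightarrow$ (ii) $\Rightarrow$ (i). The implication (ii') $\Rightarrow$ (i) uses $D=B\setminus\varphi^n(B)$ as above. For (ii) $\Rightarrow$ (i), take $D=\varphi^{-n}(B)\setminus B$, so that $\varphi^n(D)\subset B$ is disjoint from $D$. The mixing case is identical with "for all $n\geq n_0$".
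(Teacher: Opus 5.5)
Your proof is correct, but it reaches the main equivalence by a different route from the paper. The paper does not derive the corollary from the preceding theorem. It simply quotes Corollaries 1.3 and 2.2 of \cite{BDP} for the equivalence between (i) and transitivity (resp.\ mixing). It then proves the equivalences among (i), (ii), (ii') by hand, with the same sets you use: $\varphi^n(B)$ for (ii)$\Leftrightarrow$(ii'), the inclusion $\varphi^n(B)\subset\Omega\setminus B$ for (i)$\Rightarrow$(ii'), and $\varphi^{-n}(B)\setminus B$ for (ii)$\Rightarrow$(i).

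You instead make the corollary a genuine consequence of the theorem as stated, in two steps:
\begin{enumerate}
\item Injectivity plus bimeasurability give $\varphi^{-1}(\varphi(A))=A$, so $\essequal$ holds for free.
\item A set $D$ as in (i) satisfies both $\varphi^n(D)\subset\Omega\setminus D$ and $\varphi^{-n}(D)\subset\Omega\setminus D$. Intersecting with $A$ therefore gives the two-sided runaway condition (HRC) at once. This neatly sidesteps the non-uniform absolute-continuity obstacle you correctly identified in trying to go from (ii') to (HRC) directly.
\end{enumerate}
Since every conversion is done for a fixed $n$, the argument transfers verbatim to (MRC) in the mixing case.

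The paper's route is shorter given the literature. Yours is self-contained relative to the stated theorem, and it shows exactly where each hypothesis enters: $\mu(\Omega)<\infty$ lets you take $A=\Omega$, injectivity gives $\varphi^{-n}(\varphi^n(B))=B$, and bimeasurability removes $\mu^*$.

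Two minor points for a final write-up. First, the inclusion $\varphi^{-n}(D)\subset\Omega\setminus D$ holds for any map: if $x\in D\cap\varphi^{-n}(D)$, then $\varphi^n(x)\in D\cap\varphi^n(D)$. So the phrase ``by injectivity'' there is unnecessary. Second, the abandoned sentence about a set $G=\Omega\setminus(B\cup\varphi^{-n}(\ldots))$ should be deleted, since your final chain (HRC)$\Rightarrow$(ii')$\Rightarrow$(i)$\Rightarrow$(HRC), (HRC)$\Rightarrow$(ii)$\Rightarrow$(i) does not use it.
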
 
\begin{proof}
(1) The equivalence of (i) and topological transitivity is \cite[Corollary 1.3]{BDP}. The equivalence of (ii) and (ii') follows by taking $\widetilde B=\varphi^n (B)$.


The implication (i)$\implies$(ii') is obvious. Conversely if (ii) is satisfied, let $\veps>0$, and choose $n\in\NN$ and $B\in\mathcal B$ such that 
$\mu(\Omega\backslash \varphi^{-n}(B))< \veps/2$ and $\mu(B)< \veps/2.$ Set $\widetilde B:=\varphi^{-n}(B)\backslash B.$ 
Then $\mu\bigl(\Omega\backslash\widetilde B\bigr)< \veps$ and $\varphi^n\bigl(\widetilde B\bigr)\cap\widetilde B\subset B\backslash B=\varnothing.$ 
This shows that (ii)$\implies$(i).

\smallskip
(2) The equivalence of (i) and topological mixing is \cite[Corollary 2.2]{BDP}; and the equivalence of (i), (ii) and (ii') is proved as above.
\end{proof}

\subsection{Supercyclicity: preliminary facts}
To prove that supercyclic composition operators on $L_p$ are in fact $\RR_+$-$\,$supercyclic, 
we will need the following lemma.

\begin{lemma}\label{lem:dynamics}
Let $X$ be a separable Banach space, let $T\in\mathfrak L(X)$ and let $\theta_0\in(0,\pi)$. Define 
$\Gamma_0=\bigl\{z\in\CC^*:\ \arg(z)\in[-\theta_0,\theta_0]\bigr\}$. If $x\in X$ is a supercyclic vector for $T$, 
then it is a $\Gamma_0\,$-$\,$supercyclic vector for $T$. 
\end{lemma}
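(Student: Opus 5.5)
The plan is to reduce the statement to a rotation argument on the circle, using finitely many rotations that cover the unit circle by rotates of the sector $\Gamma_0$. Let $x$ be supercyclic for $T$. Choose an integer $N$ and angles $\alpha_1,\dots,\alpha_N$ with $\alpha_k = 2\pi k/N$ such that $2\pi/N<\theta_0$. Then the rotated sectors $e^{i\alpha_k}\Gamma_0$ cover $\CC^*$. Hence
\[ \CC\cdot O(x,T)\setminus\{0\}\subset \bigcup_{k=1}^N e^{i\alpha_k}\,\Gamma_0\cdot O(x,T). \]
The set on the left is dense in $X$: $X$ has no isolated points, so removing $0$ does not affect density. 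A finite union of sets is dense only if one of them is dense in some nonempty open set, but this gives only somewhere density, not density. So I will instead run a Baire-category / connectedness style argument in the spirit of the Bourdon–Feldman somewhere-dense-orbit theorem, or the classical proof of León-Saavedra–Müller that $\mathbb T\cdot O(x,T)$ dense implies $O(x,T)$ dense.

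Concretely, set $A_k=\overline{e^{i\alpha_k}\Gamma_0\cdot O(x,T)}$. These are closed, their union is $X$, and each $A_k$ satisfies $T(A_k)\subset A_k$ and $\Gamma_0 A_k\subset A_k$ up to the enlargement of the sector. To handle the enlargement, I will use $\Gamma_0$ with angle $\theta_0$ for the target but work with a smaller sector of angle $\theta_0/2$ for the covering. Then $\Gamma_{\theta_0/2}\cdot\Gamma_{\theta_0/2}\subset\Gamma_0$, which gives the multiplicative closure needed. By Baire, some $A_k$ has nonempty interior. Since $e^{i\alpha_k}$ is an isometric scalar commuting with $T$, the set $B=\overline{\Gamma_0\cdot O(x,T)}$ also has nonempty interior $U$.

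Next I need to show $B=X$. The set $U$ is open and $\Gamma_0$-invariant in the cone sense. Since $x$ is supercyclic, the set $\CC\cdot O(x,T)$ meets $U$, so $\lambda T^m x\in U$ for some $\lambda$ and $m$. Then for each $n$, $T^n(\lambda T^mx)\in \lambda\, O(x,T)$, and I want to compare this with $B$. The key step is to define
\[ S=\{\omega\in\mathbb T:\ \omega B\cap U'\neq\varnothing \text{ for suitable open sets}\}, \]
or more cleanly $G=\{\omega\in\mathbb T:\ \omega B\subset B\}$, and show that $G$ is a closed subsemigroup of $\mathbb T$ containing an arc around $1$. A closed subsemigroup containing a nontrivial arc is all of $\mathbb T$. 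Then $\mathbb T B= B$, so $B\supset\overline{\CC^*\cdot O(x,T)}=X$, as desired.

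The main obstacle is proving that $G$ contains a neighbourhood of $1$, or at least is large. For this I would mimic the León-Saavedra–Müller argument. For $y\in U$, use supercyclicity to find $\lambda_j T^{n_j}x\to$ points of $U$. Show that for $\omega$ in the arc of half-width $\theta_0$ and $z\in\Gamma_0$ near the boundary, the points $\omega z T^nx$ still lie in $\overline{\Gamma_0 O(x,T)}$ after composing with elements of $O(x,T)$ that land in $U$. Here I use that $T^n(U)$ meets $U$ projectively, and that interior points of $B$ can be approached from $\Gamma_{\theta_0/2}\cdot O$. If that direct route fails, the fallback is to cite the known result that for $T$ supercyclic, the set of $\omega\in\mathbb T$ such that $\Gamma\cdot O(x,T)$ and $\omega\Gamma\cdot O(x,T)$ have the same closure is a closed subgroup. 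A proper closed subgroup of $\mathbb T$ is finite, and a finite subgroup cannot accommodate an arc of positive length. Either way, the conclusion is that $B=X$, i.e. $x$ is a $\Gamma_0$-supercyclic vector.
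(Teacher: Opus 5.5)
Your first step is correct and is the same as the paper's: covering $\CC^*$ by finitely many rotates of $\Gamma_0$ and applying Baire's theorem to the closed sets $\overline{e^{i\alpha_k}\Gamma_0\cdot O(x,T)}$ shows that $B=\overline{\Gamma_0\cdot O(x,T)}$ has nonempty interior. The gap is everything after that.

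Showing that $G=\{\omega\in\mathbb T:\ \omega B\subset B\}$ contains an arc around $1$ is equivalent to the conclusion $B=X$. You give no mechanism for it. The paragraph about mimicking Le\'on-Saavedra--M\"uller is not an argument. The fallback is circular: the stabilizer $\{\omega:\ \omega B=B\}$ is trivially a closed subgroup of $\mathbb T$, and the whole problem is to show that it is infinite.

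More seriously, your argument uses no information about $T^*$, and no argument of this type can work. The passage from ``nonempty interior'' to ``dense'' fails in general. Let $S$ be a hypercyclic operator on a separable Banach space $Y$ with hypercyclic vector $y$ (e.g. twice the backward shift on $\ell_2$). Let $X=Y\oplus\CC$, $T=S\oplus 1$ and $x=(y,1)$. Then $\CC\cdot O(x,T)=\{(\lambda S^ny,\lambda)\}$ is dense: approximate $(u,c)$ with $c\neq 0$ by $(cS^ny,c)$. So $x$ is supercyclic. But $\overline{\Gamma_0\cdot O(x,T)}=Y\times(\Gamma_0\cup\{0\})$, which has nonempty interior and is not $X$ because $\theta_0<\pi$; here $G=\{1\}$. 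The obstruction is the functional $\phi(u,c)=c$, which satisfies $T^*\phi=\phi$ and maps $\Gamma_0\cdot O(x,T)$ onto $\Gamma_0$.

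So the statement is false as formulated, and the paper's proof does not rescue it. The paper splits according to $\sigma_p(T^*)$.

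\begin{itemize}
\item If $\sigma_p(T^*)=\varnothing$, the Le\'on-Saavedra--M\"uller theorem gives that $x$ is $\RR_+$-supercyclic. Since $(0,\infty)\subset\Gamma_0$, one is done; in this case you should simply invoke that theorem.
\item If $\sigma_p(T^*)=\{\alpha\}$, the paper applies a Bourdon--Feldman-type connectedness theorem to $\mathcal T=\{\lambda T^n:\ \lambda\in\Gamma_0,\ n\ge 0\}$ and $\mathcal S=\{P(T):\ P(\alpha)\neq 0\}$. The example above has $\alpha=1$ and satisfies every property checked in the paper: $\mathcal T\subset\mathcal S$, dense ranges, commutation, $\mathcal S\cdot x$ connected and dense, and $\overline{\mathcal T\cdot x}$ with nonempty interior. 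Yet the conclusion fails. This is presumably because the Bourdon--Feldman mechanism needs $\overline{\mathcal T\cdot x}$ to be invariant under $\mathcal T$, which is false here since $\Gamma_0\cdot\Gamma_0\not\subset\Gamma_0$.
\end{itemize}

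In general, if $T^*\phi=\alpha\phi$ with $\phi\neq 0$, applying $\phi$ shows that $\Gamma_0$-supercyclicity of $x$ forces $\bigcup_{n\ge 0}\alpha^n\Gamma_0$ to be dense in $\CC$. This is a genuine condition on $\arg\alpha$, and it fails for instance when $\alpha>0$. A correct version of the lemma therefore needs $\sigma_p(T^*)=\varnothing$ or such an angular condition. Consequently, the use of the lemma in the proof of Theorem~\ref{thm:sc1} needs another justification when $\sigma_p(C_\varphi^*)\neq\varnothing$, which is precisely the case the authors say they cannot exclude.
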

\begin{proof}
We shall use a variant of the classical Bourdon-Feldman theorem \cite{BoFe03}, 
which was proved in \cite[Theorem 3.11]{BM09}. Since $T$ is (implicitely) assumed to be supercyclic, we have either $\sigma_p(T^*)=\varnothing$
or $\sigma_p(T^*)=\{\alpha\}$ for some $\alpha\in\CC^*.$ In the first case, we already know by  \cite{LeMu04} that $x$ is $\RR_+$-$\,$supercyclic and there is nothing to do. So let us assume that 
$\sigma_p(T^*)=\{\alpha\}$. Let 
\begin{align*}
\mathcal T&=\{\lambda T^n:\ \lambda\in\Gamma_0,\ n\geq 0\},\\
\mathcal S&=\{P(T):\ P\in\CC[X],\ P(\alpha)\neq 0\},\\
\mathcal S\cdot x&=\{P(T)x:\ P\in\CC[X],\ P(\alpha)\neq 0\}.
\end{align*}

We observe that $\mathcal T\subset\mathcal S,$ that each $S\in\mathcal S$ has dense range and commutes with 
every $A\in\mathcal T$, and that   $\mathcal S\cdot x$ is connected and dense in $X.$ Therefore, by \cite[Theorem 3.11]{BM09}, to prove
that $\overline{\Gamma_0 \cdot O(x,T)}=X$,  \textit{i.e.} $\overline{\mathcal T\cdot x}=X$, we just have to show that $\overline{\Gamma_0 \cdot O(x,T)}$ has nonempty interior. Now, it is clear that there exists some integer $r\geq 1$ such that 
$$\CC^*\cdot O(x,T)=\Gamma_0 \cdot O(x,T)\cup (e^{2i\theta_0}\Gamma_0) \cdot O(x,T)\cup\cdots\cup (e^{2i r\theta_0}\Gamma_0)\cdot O(x,T).$$
Since $\overline{\CC^*\cdot O(x,T)}=X$, it follows that  $\overline{\Gamma_0 \cdot O(x,T)}$ has nonempty interior.
\end{proof}


We will also need a geometrical lemma.
\begin{lemma}\label{lem:geometric}
Let $\lambda\in\CC^*$
with $\arg(\lambda)\in[-\pi/3,\pi/3]$ and let $z\in \mathbb C.$
\begin{itemize}
\item[(a)] If $ |\lambda z+4|\leq 1$, then $ \Re e(z)\leq {-1}/{|\lambda|}.$
\item[(b)] If $ |z-4|\leq 1$, then $\Re e(\lambda z)\geq |\lambda|.$
\end{itemize}
\end{lemma}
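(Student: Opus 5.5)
Both parts reduce to elementary plane geometry in polar form. Write $\lambda=|\lambda|e^{i\alpha}$ with $|\alpha|\leq\pi/3$, so that $\cos\alpha\geq 1/2$. The basic fact we use is that the closed disc $D(4,1)$ is contained in the sector $\{w:\ |\arg w|\leq \arcsin(1/4)\}$, and that every $w\in D(4,1)$ satisfies $|w|\geq 3$ and $\Re e(w)\geq 3$.

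For (b), let $z$ satisfy $|z-4|\leq 1$, and write $z=|z|e^{i\beta}$. Then $|z|\geq 3$ and $|\beta|\leq\arcsin(1/4)$. We have
\[
\Re e(\lambda z)=|\lambda|\,|z|\cos(\alpha+\beta),
\]
with $|\alpha+\beta|\leq \pi/3+\arcsin(1/4)$. Since $\arcsin(1/4)<\pi/6$ (because $\sin(\pi/6)=1/2>1/4$), we get $|\alpha+\beta|<\pi/2$. More precisely,
\[
\cos(\alpha+\beta)\geq\cos\bigl(\pi/3+\arcsin(1/4)\bigr)=\tfrac12\cdot\tfrac{\sqrt{15}}4-\tfrac{\sqrt3}2\cdot\tfrac14,
\]
which is about $0.484-0.217=0.267$. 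Hence $\Re e(\lambda z)\geq 3\cdot 0.267\,|\lambda|>|\lambda|$. To avoid decimals, it is enough to check that $3\bigl(\sqrt{15}-\sqrt3\bigr)/8\geq 1$, i.e.\ $\sqrt{15}-\sqrt3\geq 8/3$; this holds since $\sqrt{15}>3.87$ and $\sqrt3<1.74$.

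For (a), set $w=-\lambda z$. The hypothesis $|\lambda z+4|\leq 1$ is exactly $|w-4|\leq 1$. Then $z=-w/\lambda=-\bar\lambda w/|\lambda|^2$, so
\[
\Re e(z)=-\frac{\Re e(\bar\lambda w)}{|\lambda|^2}.
\]
Now $\bar\lambda$ also has argument in $[-\pi/3,\pi/3]$, so applying (b) to $\bar\lambda$ and $w$ gives $\Re e(\bar\lambda w)\geq|\lambda|$. Therefore $\Re e(z)\leq -1/|\lambda|$, which proves (a).

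The only real care needed is in the numerical angle estimate in (b), i.e.\ verifying $3\cos\bigl(\pi/3+\arcsin(1/4)\bigr)\geq1$; everything else is bookkeeping.
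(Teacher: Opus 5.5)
There is a genuine gap in part (b). Your reduction of (a) to (b) is fine: with $w=-\lambda z$ one has $|w-4|=|\lambda z+4|\le 1$, $\Re e(z)=-\Re e(\bar\lambda w)/|\lambda|^2$, and $\bar\lambda$ satisfies the same argument condition. The problem is that the final numerical inequality in (b) is false. You correctly obtain $\cos\bigl(\pi/3+\arcsin(1/4)\bigr)=(\sqrt{15}-\sqrt3)/8\approx 0.267$. However, $3\cdot 0.267\approx 0.80$, which is less than $1$. Your check ``$\sqrt{15}-\sqrt3\ge 8/3$'' also fails: $\sqrt{15}-\sqrt3\approx 2.14$, while $8/3\approx 2.67$, and your own bounds $3.87$ and $1.74$ only give $\sqrt{15}-\sqrt3>2.13$. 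So your argument only proves $\Re e(\lambda z)\ge 0.80\,|\lambda|$.

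Sharper numerics cannot rescue this, because the lemma is sharp. Take $\arg\lambda=\pi/3$ and $z=4-e^{-i\pi/3}$: then $|z-4|=1$ and $\Re e(\lambda z)=4|\lambda|\cdot\tfrac12-|\lambda|=|\lambda|$ exactly. In your estimate, the minimal modulus $|z|=3$ is attained at $z=3$, but the extremal argument $\arcsin(1/4)$ is attained at the tangent point, where $|z|=\sqrt{15}$. Bounding $|z|$ and $\arg z$ separately combines two worst cases that never occur at the same point, so it necessarily falls strictly below the sharp constant.

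The fix is to keep the disc structure. Write $z=4+u$ with $|u|\le 1$, so that $\Re e(\lambda z)=4\,\Re e(\lambda)+\Re e(\lambda u)\ge 4|\lambda|\cos\alpha-|\lambda|\ge 2|\lambda|-|\lambda|=|\lambda|$. This is essentially the paper's argument: the paper treats (a) directly in this way and says (b) is similar. Combined with your reduction of (a) to (b), it completes the proof.
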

\begin{proof}
In case (a), write $\lambda=|\lambda|e^{i\theta}$ with $\theta\in [-\pi/3,\pi/3]$, and $\lambda z+4=\rho e^{i\alpha}$ with $\rho\in[0,1].$ Then 
$$z=\frac 1{|\lambda|}\bigl(-4e^{-i\theta}+\rho e^{i(\alpha-\theta)}\bigr),$$
so that
$$\Re e(z)\leq \frac{1}{|\lambda|}\times (-2+1)=-\frac{1}{|\lambda|}\cdot$$

The proof of (b) is similar.
\end{proof}

\smallskip Finally, we will make use of the following version of the Supercyclicity Criterion. Recall that a subset $\mathcal D$ of a Banach space $X$ is said to be \emph{total} if ${\rm span}(\mathcal D)$ is dense in $X$.
\begin{lemma}\label{SCC} Let $X$ be a separable Banach space, and let $T\in\mathfrak L(X)$. Assume that there exist two total sets $\mathcal D_1 , \mathcal D_2\subset X$, a sequence of integers $(n_k)_{k\geq 0}$ and, for each $k\geq 0$, maps $\alpha_k:\mathcal D_1\to X$ and $\beta_k:\mathcal D_2\to X$ such that \begin{enumerate}
\item[\sbt] $\alpha_k(x)\to x$  for all $x\in \mathcal D_1$ and  $T^{n_k} \beta_k(y)\to  y$ for all $y\in \mathcal D_2$;
\item[\sbt]  $\Vert T^{n_k}\alpha_k(x)\Vert\, \Vert \beta_k(y)\Vert\to 0$ for all $(x,y)\in\mathcal D_1\times \mathcal D_2$.
\end{enumerate}

Then $T$ is $\RR_+$-$\,$supercyclic.
\end{lemma}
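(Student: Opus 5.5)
The plan is to build an $\RR_+$-supercyclic vector by a Baire category argument, mimicking the standard proof of the Supercyclicity Criterion. We work in $\RR_+\cdot\mathrm{span}$-type sets.

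Since $X$ is separable, we fix a countable base $(V_j)$ of nonempty open sets. It suffices to show that for any nonempty open sets $U,V\subset X$ there exist $n\geq 0$ and $t>0$ with $tT^n(U)\cap V\neq\varnothing$. Indeed, the set of $\RR_+$-supercyclic vectors is then
\[
\bigcap_j\bigcup_{n,\,t>0}T^{-n}\bigl(t^{-1}V_j\bigr),
\]
which is a countable intersection of open sets. Each of these open sets is dense, by the above property applied to $U$ arbitrary and $V=V_j$. By Baire's theorem the intersection is nonempty.

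To get the property, we first extend the hypotheses to finite linear combinations. We define $\alpha_k$ on $\mathrm{span}(\mathcal D_1)$ and $\beta_k$ on $\mathrm{span}(\mathcal D_2)$ by choosing a representation $x=\sum_i a_ix_i$ with $x_i\in\mathcal D_1$ and setting $\alpha_k(x):=\sum_i a_i\alpha_k(x_i)$, and similarly $\beta_k(y):=\sum_l b_l\beta_k(y_l)$. Then $\alpha_k(x)\to x$ and $T^{n_k}\beta_k(y)\to y$ by linearity of $T^{n_k}$. By the triangle inequality,
\[
\Vert T^{n_k}\alpha_k(x)\Vert\,\Vert\beta_k(y)\Vert\leq\sum_{i,l}|a_i||b_l|\,\Vert T^{n_k}\alpha_k(x_i)\Vert\,\Vert\beta_k(y_l)\Vert\to0.
\]
By totality, we pick $x\in U\cap\mathrm{span}(\mathcal D_1)$ and $y\in V\cap\mathrm{span}(\mathcal D_2)$, and set $a_k:=\Vert T^{n_k}\alpha_k(x)\Vert$ and $b_k:=\Vert\beta_k(y)\Vert$, so that $a_kb_k\to0$.

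The key step is the choice of the scalars. We take $t_k>0$ and consider
\[
u_k:=\alpha_k(x)+t_k^{-1}\beta_k(y).
\]
Then $t_kT^{n_k}u_k=t_kT^{n_k}\alpha_k(x)+T^{n_k}\beta_k(y)$. We need $t_k^{-1}b_k\to0$ and $t_ka_k\to0$. If $a_kb_k>0$, the choice $t_k:=\sqrt{b_k/a_k}$ gives both quantities equal to $\sqrt{a_kb_k}\to0$. The degenerate cases are handled separately: if $a_k=0$, we take $t_k$ large, namely $t_k:=k(b_k+1)$; if $b_k=0$ and $a_k>0$, we take $t_k:=1/(k a_k)$. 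In every case $u_k\to x$ and $t_kT^{n_k}u_k\to y$. So for large $k$ we have $u_k\in U$ and $t_kT^{n_k}u_k\in V$, which is the required property.

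The main obstacle is this balancing of scalars, in particular keeping $t_k$ strictly positive and finite when $a_k$ or $b_k$ vanishes. The extension to spans is routine, but it requires noting that we never need $\alpha_k$ or $\beta_k$ to be well-defined as maps on the span: a fixed representation suffices.
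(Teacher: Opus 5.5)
Your proof is correct and follows essentially the same route as the paper: extend the hypotheses to finite linear combinations, pick $x\in U$ and $y\in V$ in the respective spans, choose positive scalars $t_k$ so that $t_k^{-1}\beta_k(y)\to 0$ and $t_kT^{n_k}\alpha_k(x)\to 0$, and conclude by Baire's theorem. The differences are cosmetic. You fix a representation instead of assuming $\mathcal D_1,\mathcal D_2$ linearly independent, and you prove the Birkhoff transitivity step inline rather than citing it. Your $t_k$ are explicit, which in the degenerate cases requires $k\geq 1$; this is harmless, since only large $k$ matter.
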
 
\begin{proof} Assuming, as we may, that the sets $\mathcal D_1$ and $\mathcal D_2$ are linearly independent, we can extend the maps $\alpha_k$ and $\beta_k$ by linearity to ${\rm span}(\mathcal D_1)$ and ${\rm span}(\mathcal D_2)$, and the extended maps still satisfy the above conditions. So, we may assume that $\mathcal D_1$ and $\mathcal D_2$ are in fact dense in $X$. Now, let $U$ and $V$ be two nonempty open sets in $X$. Choose $x\in\mathcal D_1\cap U$ and $y\in \mathcal D_2\cap V$. Since $\Vert T^{n_k}\alpha_k(x)\Vert\, \Vert \beta_k(y)\Vert\to 0$, we may find a sequence of positive real numbers $(\lambda_k)$ such that $\lambda_k T^{n_k}\alpha_k(x)\to 0$ and $\lambda_k^{-1} \beta_k(y)\to 0$. Then $z_k:= \alpha_k(x)+\lambda_k^{-1}\beta_k(y)\in U$ and $\lambda_kT^{n_k} z_k\in V$ if $k$ is large enough. By Birkhoff's transitivity theorem, it follows that $T$ is $\RR_+$-$\,$supercyclic.
\end{proof}

\begin{remark} Since $T\oplus T$ satisfies the hypotheses of the lemma if $T$ does, the ``correct'' conclusion of the lemma should be that $T\oplus T$ is $\RR_+$-$\,$supercyclic. As shown in \cite{BBP2}, $T\oplus T$ is supercyclic if and only if $T$ satisfies any of the known versions of the Supercyclicity Criterion.
\end{remark}

\subsection{Supercyclicity: results} From now on, we assume that the Banach space $\X\subset\Lomu$ is separable. Let us introduce the condition on $\varphi$ which will characterize supercyclicity.

\begin{definition}
We say that the transformation $\varphi:(\Omega,\mathcal B,\mu)\to (\Omega,\mathcal B,\mu)$ satisfies the {\bf Supercyclic Runaway Condition} (SRC) if for every $A\in\mathcal B$ with finite measure and 
for every $\veps>0,$ there exist $B\in\mathcal B$ and $n\geq 1$ such that $B\subset A,$ $\mu(A\backslash B)<\veps$ and $\mu^*(\varphi^n(B))\mu(\varphi^{-n}(B))<\veps.$
\end{definition}

\smallskip

We are now ready to prove one half of the characterization of supercyclic composition operators.
\begin{theorem}\label{thm:sc1}
Suppose that $\mathcal X$ satisfies conditions {\rm (H1)} and {\rm (C1')}. If $C_\varphi$ is supercyclic, then $\essequal$ and $\varphi$ satisfies {\rm (SRC)}.
\end{theorem}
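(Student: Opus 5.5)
The plan is to adapt the necessity half of the transitivity characterization of \cite{BDP,GG25}, using Lemma \ref{lem:dynamics} and Lemma \ref{lem:geometric} to handle complex scalars. Let $x\in\X$ be a supercyclic vector for $C_\varphi$. By Lemma \ref{lem:dynamics} with $\theta_0=\pi/3$, $x$ is a $\Gamma_0$-supercyclic vector, where $\Gamma_0=\{\lambda\in\CC^*:\ |\arg\lambda|\leq\pi/3\}$.

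\emph{Essential equality.} Let $A\in\mathcal B$; by $\sigma$-finiteness we may assume $\mu(A)<\infty$, so $\mathbf 1_A\in\X$ by (H1). Supercyclicity gives $\lambda_k\in\CC^*$ and $n_k\geq 0$ with $\lambda_k C_\varphi^{n_k}x\to\mathbf 1_A$ in $\X$. If $n_k=0$ for infinitely many $k$, then $\mathbf 1_A\in\overline{\CC x}$, so $\CC x$ would be dense, which is absurd unless $\dim\X\le 1$; this trivial case is handled separately. Otherwise, passing to a subsequence, we may assume $n_k\geq1$. By (C1) (which follows from (C1') by taking $g$ with $\Vert g\Vert$ fixed), $\lambda_k\, x\circ\varphi^{n_k}\to \mathbf 1_A$ in measure, hence a.e.\ along a further subsequence. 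Each function $\lambda_k x\circ\varphi^{n_k}$ is $\varphi^{-1}(\mathcal B)$-measurable, so $\mathbf 1_A$ agrees a.e.\ with a $\varphi^{-1}(\mathcal B)$-measurable function. Taking $B'$ to be the set where that function is $\geq 1/2$ and writing $B'=\varphi^{-1}(B)$ gives $\mu(A\Delta\varphi^{-1}(B))=0$.

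\emph{(SRC).} Fix $A$ of finite measure and $\veps>0$. Choose a vector $y=\lambda C_\varphi^{n}x$ close to $-4\cdot\mathbf 1_A$ (with $\lambda\in\Gamma_0$), and then a vector $\lambda' C_\varphi^{m}x$ with $\lambda'\in\Gamma_0$ close to $4\cdot\mathbf 1_A$. More efficiently, use transitivity-type reasoning: by density of $\Gamma_0\cdot O(x,C_\varphi)$ and continuity of $C_\varphi^{n}$, first pick $u=\lambda_1C_\varphi^{p}x$ with $\Vert u-4\mathbf 1_A\Vert=\delta$ small, and then $\lambda_2C_\varphi^{n}u$ with $\Vert \lambda_2 C_\varphi^{n}u+4\mathbf 1_A\Vert\leq\delta$. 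We have $\lambda_2 \in\Gamma_0$ up to replacing $\Gamma_0$ by the analogous sector, since $\Gamma_0\Gamma_0^{-1}$ is a $2\pi/3$-sector; to keep the angle $\leq\pi/3$, start Lemma \ref{lem:dynamics} with $\theta_0=\pi/6$. Here $n\geq1$ after discarding trivial cases. Set
\[
B=A\cap\{|u-4|\leq 1\}\cap\varphi^{-n}\bigl(\{|\lambda_2 u\circ\varphi^{n}+4|\leq1\}\bigr),
\]
suitably arranged so that, by (C1), $\mu(A\setminus B)$ is controlled by $\kappa_1$ of small norms. On $B$, Lemma \ref{lem:geometric}(b) gives $\Re e(\lambda_2u)\geq|\lambda_2|$, while on $\varphi^{n}(B)$-type points Lemma \ref{lem:geometric}(a) gives $\Re e(u)\leq -1/|\lambda_2|$. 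Thus $\varphi^{-n}(B)\subset\{|\lambda_2u\circ\varphi^n-0|\ge |\lambda_2|\}$ lies outside $A$ up to the bad set, giving
\[
\mu(\varphi^{-n}(B))\leq\kappa_1\bigl(\Vert \lambda_2C_\varphi^n u+4\mathbf 1_A\Vert/(3|\lambda_2|)\bigr),
\]
while $\varphi^{n}(B)\subset\{|u-4\mathbf 1_A|\geq 1/|\lambda_2|\}$ gives
\[
\mu^*(\varphi^n(B))\leq\kappa_1\bigl(|\lambda_2|\,\Vert u-4\mathbf 1_A\Vert\bigr).
\]
The product of the two arguments is $\lesssim\delta^2$, independent of $\lambda_2$, so (C1') makes $\mu^*(\varphi^n(B))\mu(\varphi^{-n}(B))<\veps$.

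The main obstacle is this last bookkeeping step: making the inclusions between $B$, $\varphi^{\pm n}(B)$ and the level sets precise (in particular, using real parts to separate $u$ from $u\circ\varphi^n$ and to obtain the conditions "$\geq 1$" after rescaling by $|\lambda_2|^{\pm1}$), and ensuring that the uncontrolled scalar $|\lambda_2|$ cancels in the product. This cancellation is exactly why (C1') rather than (C1) is needed, and why the sector restriction from Lemma \ref{lem:dynamics} is essential for applying Lemma \ref{lem:geometric}.
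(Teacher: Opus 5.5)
Your strategy is the paper's: restrict the scalars to a sector via Lemma~\ref{lem:dynamics}, produce $u\approx 4\cdot\mathbf 1_A$ and $\lambda_2\,u\circ\varphi^n\approx -4\cdot\mathbf 1_A$, and cut $A$ down to a set $B$ on which both approximations hold pointwise. Then Lemma~\ref{lem:geometric} and (C1') are combined so that $|\lambda_2|$ cancels. Your $\theta_0=\pi/6$ device (taking $\lambda_2=\lambda'/\lambda_1$ with $q>p$) is a fine way to justify the step the paper compresses into one appeal to Lemma~\ref{lem:dynamics}. Your direct proof of $\essequal$ is the Whitley argument the paper cites.

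However, the set $B$ you display is wrong. You apply $\varphi^{-n}$ to a set already written in terms of $u\circ\varphi^n$, so your third condition is $|\lambda_2\,u\circ\varphi^{2n}+4|\le 1$. Put $D:=\{|\lambda_2u\circ\varphi^n+4|\le1\}$. You know $\mu(A\setminus D)$ is small, but you have no control on $\mu(A\setminus\varphi^{-n}(D))$; typically it is close to $\mu(A)$, since $\varphi^n$ sends $A\cap D$ into $\{\Re e\, u\le -1/|\lambda_2|\}$, away from where $u\approx 4$. Also, for $y=\varphi^n(x)$ with $x$ in your $B$, Lemma~\ref{lem:geometric}(a) only gives information about $u(\varphi^n(y))$, not $u(y)$, so the inclusion $\varphi^n(B)\subset\{|u-4\cdot\mathbf 1_A|\ge 1/|\lambda_2|\}$ is not obtained. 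You need $B=A\cap\{|u-4|\le 1\}\cap D$. Note that $D=\varphi^{-n}(\{|\lambda_2 u+4|\le1\})$; use one description, not both.

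With the corrected $B$ the argument goes through, and the bookkeeping you flag as the main obstacle is short. First, $A\setminus B\subset\{|u-4\cdot\mathbf 1_A|\ge1\}\cup\{|\lambda_2u\circ\varphi^n+4\cdot\mathbf 1_A|\ge1\}$, so $\mu(A\setminus B)\le 2\kappa_1(\delta)$. Next, do \emph{not} split according to whether a point lies in $A$. That would add a term $\kappa_1(\delta)$ to the bound on $\mu(\varphi^{-n}(B))$, and its product with $\kappa_1(|\lambda_2|\delta)$ is uncontrolled because $|\lambda_2|$ may be huge. Instead use real parts at every point:
\begin{enumerate}
\item[(i)] for $x\in\varphi^{-n}(B)$, Lemma~\ref{lem:geometric}(b) gives $|\lambda_2u(\varphi^nx)+4\cdot\mathbf 1_A(x)|\ge\Re e\bigl(\lambda_2u(\varphi^nx)\bigr)\ge|\lambda_2|$;
\item[(ii)] for $y=\varphi^n(x)$ with $x\in B$, Lemma~\ref{lem:geometric}(a) gives $|u(y)-4\cdot\mathbf 1_A(y)|\ge-\Re e\,u(y)\ge1/|\lambda_2|$.
\end{enumerate}
This yields $\mu(\varphi^{-n}(B))\le\kappa_1(\delta/|\lambda_2|)$ and $\mu^*(\varphi^n(B))\le\kappa_1(|\lambda_2|\delta)$, whose product is small by (C1'). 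The factor $3$ in your bound is unjustified, but it is not needed.

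Finally, in the essential-equality step, ``$\mathbf 1_A\in\overline{\CC x}$, so $\CC x$ would be dense'' is a non sequitur. Instead note that $\CC x$ is nowhere dense when $\dim\X\ge2$. Hence $\CC\cdot\{C_\varphi^nx:\,n\ge1\}$ is still dense, and you may take all $n_k\ge1$ from the start.
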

\begin{proof}
Since $C_\varphi$ is supercyclic, it has dense range and therefore $\essequal$, by  \cite[Lemma 1]{Whi}. In \cite{Whi}, the result is proved for $\X=L_2$, but the proof relies only on the fact that any convergent sequence in $\X$ has a subsequence converging almost everywhere, which is guaranteed by (C1). 

Let us fix $A\in\mathcal B$ with finite measure and $\varepsilon>0$.  We need to find $B\subset A$ and $n\in\NN$ in accordance with (SRC).

Let $\delta\in(0,1)$ to be chosen later. 
Since $C_\varphi$ is supercyclic, it follows from Lemma \ref{lem:dynamics} that there exist $f\in\mathcal X,$ $n\in\NN$ and $\lambda\in \CC^*$ with $\arg(\lambda)\in[-\pi/3,\pi/3]$ such that 
$$\|f-4\cdot {\mathbf 1}_A\|<\delta\quad\textrm{ and }\quad\|\lambda f\circ\varphi^n +4\cdot {\mathbf 1}_A\|<\delta.$$
We set 
$$C:=\{x\in\Omega:\ |f(x)-4|<1\}\;,\; \ D:=\{x\in\Omega:\ |\lambda f\circ\varphi^n(x)+4|<1\},$$
and 
\[ B:=A\cap C\cap D.\]

Observe that 
\begin{align*}
A\backslash B&\subset (A\backslash C)\cup (A\backslash D)\\
& \subset \bigl\{ |f -4\cdot {\mathbf 1_A}|\geq 1\bigr \}\cup \bigl\{|\lambda f\circ\varphi^n +4\cdot {\mathbf 1_A} |\geq 1\bigr\}.
\end{align*}
Hence, if $\kappa_1$ is the function given by (C1'), which may be assumed to be nondecreasing, we get
\[ \mu(A\setminus B)\leq 2\kappa_1(\delta)
.\]

We now show that 
\[ \varphi^{-n}(B)\subset\bigl \{ \ |\lambda f\circ\varphi^n +4\cdot{\mathbf 1}_A |\geq|\lambda|\bigr\}\quad{\rm and}\quad \varphi^n(B)\subset \bigl\{  |f -4\cdot {\mathbf 1}_A |\geq 1/{|\lambda|}\bigr\}.\]

Indeed, if $x\in\varphi^{-n}(B)$, then  $\varphi^n(x)\in C,$ \textit{i.e.} $\vert f\circ\varphi^n(x)-4\vert <1$.  By Lemma \ref{lem:geometric}, it follows that 
$$\Re e(\lambda f\circ\varphi^n(x))\geq |\lambda|,$$
so that 
\begin{align*}
|\lambda f\circ\varphi^n(x)+4\cdot{\mathbf 1}_A(x)|&\geq \Re e(\lambda f\circ\varphi^n(x)+4\cdot{\mathbf 1}_A(x))\\
&\geq \Re e(\lambda f\circ\varphi^n(x))\\
&\geq |\lambda|.
\end{align*}

Similarly, 
if $y\in\varphi^n(B)$, then $y=\varphi^n(x)$ for some $x\in D$, so  $\vert \lambda f(y)+4\vert<1$. Hence 
$\Re e(f(y))\leq -1/|\lambda|$ by Lemma \ref{lem:geometric}, so that 
\begin{align*}
|f(y)-4\cdot {\mathbf 1}_A(y)|&\geq -\Re e(f(y)-4\cdot {\mathbf 1}_A(y))\\
&\geq1/{|\lambda|}.
\end{align*}

By (C1'), it follows that 
\[ \mu\bigl( \varphi^{-n}(B)\bigr)\, \mu^*\bigl( \varphi^n(B)\bigr)\leq \kappa_1\bigl( \delta/ \vert\lambda\vert\bigr)\,\kappa_1\bigl( \vert\lambda\vert\,\delta\bigr);\]
and hence we get (SRC) for $\varepsilon$ if $\delta$ is small enough. 
\end{proof}

The converse of Theorem \ref{thm:sc1} reads as follows.
\begin{theorem}\label{sc2}
Suppose that $\mathcal X$ satisfies {\rm (H2)} and {\rm (C2')}. If $\essequal$ and $\varphi$ satisfies {\rm (SRC)}, then $C_\varphi\oplus C_\varphi$ is $\mathbb R_+$-$\,$supercyclic.
\end{theorem}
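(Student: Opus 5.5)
We plan to apply Lemma \ref{SCC} to $T=C_\varphi\oplus C_\varphi$ acting on $\X\oplus\X$. As total sets we take $\mathcal D_1=\mathcal D_2=\mathcal D$, where
\[
\mathcal D=\{(\mathbf 1_A,0),(0,\mathbf 1_A):\ A\in\mathcal B_f\}.
\]
By (H2), $\mathcal D$ is total in $\X\oplus\X$. Since Lemma \ref{SCC} only requires convergence for each pair of elements of $\mathcal D$ separately, we cannot use a single sequence $(n_k)$ tailored to just one set. The first step is therefore to enumerate pairs of sets. We expect, however, to need only a countable dense family: $\X$ is separable, and by (C2) (which follows from (C2') together with (H1), implicitly needed) the map $E\mapsto\mathbf 1_E$ is continuous for the pseudometric $\mu(E\Delta F)$. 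So we may replace $\mathcal D$ by indicators of sets $A_j$ ranging over a countable family $\mathcal A$ whose indicators still span a dense subspace. At stage $k$ we handle the finitely many sets $A_1,\dots,A_k$ simultaneously by applying (SRC) to the single finite-measure set $A=A_1\cup\dots\cup A_k$ with $\veps=\veps_k\to0$. This yields $B_k\subset A$ and $n_k$ with $\mu(A\setminus B_k)<\veps_k$ and $\mu^*(\varphi^{n_k}(B_k))\,\mu(\varphi^{-n_k}(B_k))<\veps_k$.

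Next we define the maps. For $\alpha_k$ we take $\alpha_k(\mathbf 1_{A_j})=\mathbf 1_{A_j\cap B_k}$, which tends to $\mathbf 1_{A_j}$ by (C2) since $\mu(A_j\setminus B_k)<\veps_k$. Then
\[
C_\varphi^{n_k}\alpha_k(\mathbf 1_{A_j})=\mathbf 1_{\varphi^{-n_k}(A_j\cap B_k)},
\]
whose measure is at most $\mu(\varphi^{-n_k}(B_k))$. For $\beta_k$ we need $g$ with $g\circ\varphi^{n_k}\approx\mathbf 1_{A_j}$. Here we use $\essequal$: iterating it, $\varphi^{-n}(\mathcal B)=_{\rm ess}\mathcal B$, so there is $E_{j,k}$ with $\varphi^{-n_k}(E_{j,k})=A_j\cap B_k$ a.e. Replacing $E_{j,k}$ by its intersection with a measurable hull $F_k\supset\varphi^{n_k}(B_k)$ with $\mu(F_k)\le\mu^*(\varphi^{n_k}(B_k))+\veps_k\,$, we may assume $\mu(E_{j,k})\le\mu(F_k)$; this still works since $\varphi^{-n_k}(F_k)\supset B_k$. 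We then set $\beta_k(\mathbf 1_{A_j})=\mathbf 1_{E_{j,k}}$, so that $C_\varphi^{n_k}\beta_k(\mathbf 1_{A_j})=\mathbf 1_{A_j\cap B_k}\to\mathbf 1_{A_j}$. The direct-sum versions are defined componentwise.

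The product condition then follows from (C2'):
\[
\Vert\mathbf 1_{\varphi^{-n_k}(A_i\cap B_k)}\Vert\,\Vert\mathbf 1_{E_{j,k}}\Vert\le\kappa_2\bigl(\mu(\varphi^{-n_k}(B_k))\bigr)\,\kappa_2\bigl(\mu(F_k)\bigr)\to0,
\]
because the product of the arguments is at most $\veps_k+\veps_k\mu(\varphi^{-n_k}B_k)$. We expect this step to be the main obstacle: $\mu(\varphi^{-n_k}B_k)$ might be unbounded. To handle it, we would first try to bound $\mu(\varphi^{-n_k}B_k)$. If $C_\varphi$ is bounded this is not obviously possible, so instead we choose $\veps$ in the application of (SRC) adaptively, and argue that if $\mu(\varphi^{-n}B)$ is huge, the hull of $\varphi^nB$ is correspondingly tiny. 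Alternatively, we can shrink $B$ so that the two quantities are balanced. With the mixed pairs (first versus second component) treated identically, Lemma \ref{SCC} gives $\RR_+$-supercyclicity of $C_\varphi\oplus C_\varphi$.
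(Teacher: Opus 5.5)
Your strategy is the paper's. You apply Lemma \ref{SCC} to indicator functions, let $\alpha_k$ cut $\mathbf 1_A$ down to the set produced by (SRC), obtain $\beta_k$ by pulling back through $\varphi^{-n_k}(\mathcal B)=_{\rm ess}\mathcal B$ and intersecting with a measurable set containing $\varphi^{n_k}(B_k)$, and finish with (C2'). Your reduction to a countable family and to the unions $A_1\cup\dots\cup A_k$ is correct (separability alone gives the countable family), but it is unnecessary. The paper applies (SRC) to an exhausting sequence $E_k\uparrow\Omega$ of sets of finite measure, which gives $\mu(S\setminus F_k)\le 2^{-k}+\mu(S\setminus E_k)\to0$ for every $S$ of finite measure at once.

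The gap is the last step, which you leave open. With the additive slack $\mu(F_k)\le\mu^*(\varphi^{n_k}(B_k))+\veps_k$ you only get the bound $\veps_k+\veps_k\,\mu(\varphi^{-n_k}(B_k))$, and this bound really can blow up. For instance, take the shift $\varphi(i)=i+1$ on $\ZZ$ with $\mu(\{i\})=2^{|i|}$ for $i<0$ and $\mu(\{i\})=3^{-i}$ for $i\geq 0$, which satisfies all the hypotheses. There, any $B_k\ni 0$ forces $\veps_k>\mu(\{n_k\})\,\mu(\{-n_k\})=(2/3)^{n_k}$, hence $\veps_k\,\mu(\varphi^{-n_k}(B_k))>(4/3)^{n_k}\to\infty$. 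The remedies you list (adaptive $\veps$, balancing by shrinking $B$, bounding $\mu(\varphi^{-n_k}(B_k))$) are not carried out, and none is needed.

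The obstacle is self-inflicted. The infimum defining $\mu^*$ is attained: intersect a minimizing sequence of measurable supersets. So take $F_k\supset\varphi^{n_k}(B_k)$ with $\mu(F_k)=\mu^*(\varphi^{n_k}(B_k))$, exactly as the paper does with $\widetilde F_k$. Alternatively, since $F_k$ is chosen after $B_k$ and $n_k$ are known, allow a slack of at most $\veps_k/(1+\mu(\varphi^{-n_k}(B_k)))$. Either way, $\mu(\varphi^{-n_k}(A_i\cap B_k))\,\mu(E_{j,k})\le\mu(\varphi^{-n_k}(B_k))\,\mu(F_k)<2\veps_k$ uniformly in $i,j$. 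Then (C2'), in its set form or with $\kappa_2$ taken nondecreasing, gives $\Vert C_\varphi^{n_k}\alpha_k(\mathbf 1_{A_i})\Vert\,\Vert\beta_k(\mathbf 1_{A_j})\Vert\to0$. With this one-line correction your proof goes through.
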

\begin{proof}
Let $\mathcal D$ be the (total) subset of $\X$ consisting of all functions $\mathbf 1_D$ where $D\in\mathcal B$ has finite measure. We show that the assumptions of Lemma \ref{SCC} are satisfied with $\mathcal D_1=\mathcal D=\mathcal D_2$. 

\smallskip Let $(E_k)_{k\geq 0}$ be an increasing sequence of measurable sets with finite measure such that $\bigcup_{k\geq 0} E_k=\Omega$. By (SRC), one can find measurable sets $F_k\subset E_k$  and integers $n_k$ such that $\mu( E_k\setminus F_k)<2^{-k}$ and $\mu\bigl( \varphi^{-n_k}(F_k)\bigr) \mu^*\bigl(\varphi^{n_k}(F_k)\bigr)<2^{-k}$. We also choose measurable sets $\widetilde F_k$ such that $\varphi^{n_k}(F_k)\subset \widetilde F_k$ and $\mu(\widetilde F_k)=\mu^*\bigl( \varphi^{n_k}(F_k)\bigr)$.  

\smallskip We observe that 
\begin{equation}\label{debile} \mu(S\setminus F_k)\to 0\quad\hbox{for any $S\in\mathcal B$ with finite measure}.\end{equation}
Indeed, this is clear since $\mu(S\setminus F_k)=\mu\bigl(S\cap (E_k\setminus F_k)\bigr)+\mu(S\setminus E_k)\leq 2^{-k}+\mu(S\setminus E_k)$.

\smallskip Now, we define maps $\alpha_k, \beta_k:\mathcal D\to \X$ as follows. 

\begin{enumerate}
\item[-] For any $\mathbf 1_A\in\mathcal D$, we set
\[ \alpha_k(\mathbf 1_A)=\mathbf 1_{A\cap F_k}.\]

\item[-] For any $\mathbf 1_B\in \mathcal D$, we choose a sequence $(B_k)\subset \mathcal B$ such that $\mathbf 1_{B}=\mathbf 1_{\varphi^{-n_k}(B_k)}$ almost everywhere for every $k\geq 0$, which is possible since $\essequal$, and we set 
\[ \beta_k(\mathbf 1_B)=\mathbf 1_{B_k\cap \widetilde F_k}.\]
\end{enumerate}

Let $\kappa_2:\RR_+\to[0,\infty]$ be the function given by (C2'). We may assume that $\kappa_2$ is nondecreasing. 

For any $\mathbf 1_A\in \mathcal D$, we have 
\[ \Vert \mathbf 1_A-\alpha_k(\mathbf 1_A)\Vert =\Vert \mathbf 1_{A\setminus F_k}\Vert \leq \kappa_2\bigl( \mu(A\setminus F_k)\bigr),\]
so that $\alpha_k(\mathbf 1_A)\to \mathbf 1_A$ as $k\to\infty$ by (\ref{debile}).

For any $\mathbf 1_B\in \mathcal D$, we have
\[ C_\varphi^{n_k}\beta_k(\mathbf 1_B)= \mathbf 1_{\varphi^{-n_k}(B_k)\cap \varphi^{-n_k}(\widetilde F_k)}=\mathbf 1_{B\cap \varphi^{-n_k}(\widetilde F_k)}.\]
Since $\varphi^{-n_k}(\widetilde F_k)\supset F_k$ and $\kappa_2$ is nondecreasing, it follows that 
\[ \Vert \mathbf 1_B-C_\varphi^{n_k}\beta_k(\mathbf 1_B)\Vert \leq \kappa_2\bigl( \mu(B\setminus F_k)\bigr);\]
and hence $C_\varphi^{n_k}\beta_k(\mathbf 1_B)\to \mathbf 1_B$ by (\ref{debile}).

Finally, for any $\mathbf 1_A, \mathbf 1_B\in\mathcal D$, we have 
\begin{align*} \Vert C_\varphi^{n_k}\alpha_k(\mathbf 1_A)\Vert\, \Vert \beta_k(\mathbf 1_B)\Vert &= \Vert \mathbf 1_{\varphi^{-n_k}(A\cap F_k)}\Vert\,\Vert \mathbf 1_{B_k\cap \widetilde F_k}\Vert \\
&\leq \kappa_2\bigl( \mu(\varphi^{-n_k}(F_k)\bigr)\kappa_2\bigl( \mu(\widetilde F_k)\bigr) \\
&= \kappa_2\bigl( \mu(\varphi^{-n_k}(F_k)\bigr) \kappa_2\bigl( \mu^*(\varphi^{n_k}(F_k)\bigr),
\end{align*}
so that $\Vert C_\varphi^{n_k}\alpha_k(\mathbf 1_A)\Vert\, \Vert \beta_k(\mathbf 1_B)\Vert\to 0$. 

By Lemma \ref{SCC},  the proof of Theorem \ref{sc2} is now complete. 
\end{proof}

\smallskip 
\begin{remark}\label{HRC} 
Assuming that $\X$ satisfies (H2) and (C2), it is shown in \cite{GG25} that if $\essequal$ and $\varphi$ satisfies the Hypercyclic Runaway Condition (HRC), then $C_\varphi$ is weakly mixing. This result can be proved in exactly the same way as Theorem \ref{sc2}, by showing that $C_\varphi$ satisfies the appropriate version of the Hypercyclicity Criterion, \textit{i.e.}  Lemma \ref{SCC} where the condition ``$\Vert T^{n_k}\alpha_k(x)\Vert\, \Vert \beta_k(y)\Vert\to 0$'' is replaced by ``$\Vert T^{n_k}\alpha_k(x)\Vert\to 0$ and  $\Vert \beta_k(y)\Vert\to 0$''.
\end{remark}

Putting the two previous theorems together, we have obtained the following result.
\begin{corollary}
Suppose that $\mathcal X$ satisfies {\rm (H1)}, {\rm (H2)}, {\rm (C1')}, {\rm (C2')}. Then, the following are equivalent:
\begin{itemize}
\item[(i)] $C_\varphi$ is supercyclic.
\item[(ii)] $C_\varphi\oplus C_\varphi$ is $\RR_+$-$\,$supercyclic.
\item[(iii)] $\essequal$ and $\varphi$ satisfies {\rm (SRC)}.
\end{itemize}
\end{corollary}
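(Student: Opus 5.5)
The plan is to close the cycle (i)$\Rightarrow$(iii)$\Rightarrow$(ii)$\Rightarrow$(i), invoking the two preceding theorems for the nontrivial steps.

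For (i)$\Rightarrow$(iii), we apply Theorem \ref{thm:sc1}. It needs only (H1) and (C1'), both of which are among our hypotheses. It yields $\essequal$ together with (SRC).

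For (iii)$\Rightarrow$(ii), we apply Theorem \ref{sc2}. It needs (H2) and (C2'), which are again assumed. Given $\essequal$ and (SRC), it gives that $C_\varphi\oplus C_\varphi$ is $\RR_+$-$\,$supercyclic.

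The remaining implication (ii)$\Rightarrow$(i) is the elementary one. Suppose $(f,g)$ is an $\RR_+$-$\,$supercyclic vector for $C_\varphi\oplus C_\varphi$. Then
\[\bigl\{\lambda (C_\varphi^n f, C_\varphi^n g):\ \lambda\geq 0,\ n\geq 0\bigr\}\]
is dense in $\X\oplus\X$. Let $\pi_1:\X\oplus\X\to\X$ be the projection onto the first coordinate; it is continuous and surjective. Projecting the dense set above shows that $\{\lambda C_\varphi^n f\}$ is dense in $\X$. So $f$ is an $\RR_+$-$\,$supercyclic vector for $C_\varphi$, and hence a fortiori a supercyclic vector.

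There is no real obstacle here. The only point to check is that the hypotheses of Theorems \ref{thm:sc1} and \ref{sc2} are exactly covered by the assumptions (H1), (H2), (C1'), (C2'). We also use that $\X$ is separable, which is in force throughout this subsection. As a by-product, the argument shows that supercyclicity of $C_\varphi$ is equivalent to its $\RR_+$-$\,$supercyclicity.
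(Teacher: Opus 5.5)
Your proposal is correct and matches the paper's approach: (i)$\Rightarrow$(iii) by Theorem~\ref{thm:sc1}, (iii)$\Rightarrow$(ii) by Theorem~\ref{sc2}, and (ii)$\Rightarrow$(i) by projecting onto the first coordinate. The paper treats that last step as evident, and your projection argument fills it in correctly.
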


\begin{remark} By \cite{LeMu04}, another way of showing that $C_\varphi$ is $\RR_+$-$\,$supercyclic as soon as it is supercyclic would be to prove that $C_\varphi^*$ has no eigenvalue. We don't know how to do that.
\end{remark}

\subsection{Examples}
We first show that on a finite measure space, supercyclic and hypercyclic composition operators are the same. For $L_p\,$-$\,$spaces, this was obtained independently in \cite{DGV}.
\begin{corollary}
Assume that $\mu(\Omega)<\infty$ and that $\mathcal X$ satisfies {\rm (H1)}, {\rm (H2)}, {\rm (C1)}, {\rm (C2)}. Then, the following are equivalent:
\begin{itemize}
\item[\rm (i)] $C_\varphi$ is supercyclic.
\item[\rm (ii)] $C_\varphi$ is weakly mixing.
\item[\rm (iii)] $\essequal$ and for every $\veps>0,$ there exists $B\in\mathcal B$ and $n\in\NN$ such that $\mu(\Omega\backslash B)<\veps$ and $B\cap \varphi^n(B)=\varnothing.$
\end{itemize}
\end{corollary}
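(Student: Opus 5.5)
The plan is to pass through the general characterizations already proved. Since $\mu(\Omega)<\infty$, Remark \ref{CC'} upgrades (C1) to (C1'). For (C2'), Remark \ref{CC'} requires $\X$ to be a lattice, which is not assumed here, so that route is not directly available. Instead I will use only (C1') in one direction, and the weak mixing result of \cite{GG25} recalled in Remark \ref{HRC} (which needs only (H2), (C2)) in the other. The cycle of implications I intend is (i)$\Rightarrow$(iii)$\Rightarrow$(ii)$\Rightarrow$(i), the last being trivial since weakly mixing operators are hypercyclic, hence supercyclic.

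\emph{(i)$\Rightarrow$(iii).} By Theorem \ref{thm:sc1}, using (H1) and (C1'), supercyclicity gives $\essequal$ and (SRC). Apply (SRC) with $A=\Omega$, which is allowed since $\mu(\Omega)<\infty$. This gives $B$ with $\mu(\Omega\setminus B)<\veps$ and $\mu^*(\varphi^n(B))\,\mu(\varphi^{-n}(B))<\veps$.

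The key point is that one of the two factors cannot be small. Since $\varphi^{-n}(B)\supset \varphi^{-n}(B)$ trivially gives nothing, I will use nonsingularity instead: if $\mu(\varphi^{-n}(B))$ were tiny, then $\varphi^{-n}(\Omega\setminus B)$ would be almost everything. I must be careful, because a small set $\Omega\setminus B$ can have a large preimage under a merely nonsingular map. So I prefer the cleaner alternative of using $\varphi^n(B)\supset \varphi^n(\varphi^{-n}(B)\cap B)$.

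Concretely, I aim for the dichotomy that either $\mu(\varphi^{-n}(B))\ge \sqrt\veps$ or $\mu^*(\varphi^n(B))\ge\sqrt\veps$ fails, giving two cases.
\begin{itemize}
\item[Case 1:] $\mu^*(\varphi^n(B))<\sqrt\veps$. Then $B'=B\setminus\varphi^n(B)$ (up to a measurable hull) satisfies $\mu(\Omega\setminus B')<\veps+\sqrt\veps$ and $B'\cap\varphi^n(B')=\varnothing$.
\item[Case 2:] $\mu(\varphi^{-n}(B))<\sqrt\veps$. Then $B'=B\setminus \varphi^{-n}(B)$ satisfies $\varphi^n(B')\cap B'=\varnothing$, because $x\in B'$ with $\varphi^n(x)\in B'\subset B$ would force $x\in\varphi^{-n}(B)$.
\end{itemize}
In both cases the complement of $B'$ has measure at most $\veps+\sqrt\veps$, which yields (iii).

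In Case 1 one checks that $\varphi^n(B')\cap B'\subset\varphi^n(B)\cap B'=\varnothing$, so the set stays disjoint. The measurability of $\varphi^n(B)$ is the annoying point, since $\varphi$ is not assumed bimeasurable. I will handle it by replacing $\varphi^n(B)$ with a measurable hull $\widetilde F\supset\varphi^n(B)$ of the same measure and setting $B'=B\setminus\widetilde F$.

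\emph{(iii)$\Rightarrow$(ii).} I will show that (iii) implies (HRC), and then invoke Remark \ref{HRC}. Given $A$ and $\veps$, take $B$ from (iii) and set $B_A=A\cap B$. Then $\varphi^n(B_A)\subset\varphi^n(B)\subset\Omega\setminus B$, so $\mu^*(\varphi^n(B_A))<\veps$. Likewise $\varphi^{-n}(B_A)\subset\varphi^{-n}(B)\subset\Omega\setminus B$, since $x\in B$ with $\varphi^n(x)\in B$ contradicts disjointness, so $\mu(\varphi^{-n}(B_A))<\veps$. Finally $\mu(A\setminus B_A)<\veps$, which establishes (HRC).

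The main obstacle I expect is the measurability issue in Case 1 of (i)$\Rightarrow$(iii), together with checking that the hypotheses match exactly (only (C1') is needed there, via Remark \ref{CC'}).
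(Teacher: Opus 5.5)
Your proof is correct and is essentially the paper's argument. Because $\mu(\Omega)<\infty$, (C1) upgrades to (C1'), so Theorem \ref{thm:sc1} gives (SRC); you apply it with $A=\Omega$ and split according to which factor is small, removing a measurable hull of $\varphi^n(B)$ in one case and $\varphi^{-n}(B)$ in the other. Then (iii) gives (HRC) and hence weak mixing by Remark \ref{HRC}, and (ii)$\Rightarrow$(i) is immediate. The exploratory paragraph about nonsingularity and $\varphi^n(\varphi^{-n}(B)\cap B)$ is never used (its opening claim that ``one of the two factors cannot be small'' is backwards) and should simply be deleted.
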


\begin{proof} Note that (iii) clearly implies the Hypercyclic Runaway Condition (HRC). 
So one just has to show that (i)$\implies $(iii), since $\X$ satisfies (H2) and (C2) -- see Remark \ref{HRC}.

Assume that $C_\varphi$ is supercyclic, and let $\veps>0.$ Since $\X$ satisfies (H1) and (C1), which is equivalent to (C1') because $\mu(\Omega)<\infty$, the transformation $\varphi$ must satisfy (SRC). So there exist $B'\in\mathcal B$ and $n\in\NN$ such that $\mu(\Omega\backslash B')<\veps/2$ and 
$\mu^*(\varphi^n(B'))\mu(\varphi^{-n}(B'))<\veps^2/4.$ Choose a measurable set $C$  such that $\varphi^n(B')\subset C$ and 
$\mu(C)\mu(\varphi^{-n}(B'))<\veps^2/4$. Then either $\mu(C)<\veps/2$ or $\mu(\varphi^{-n}(B'))<\veps/2.$ Let $B=B'\backslash C$ in the first case, and $B=B'\backslash \varphi^{-n}(B')$ in the second case. 
Then $\mu(\Omega\backslash B)<\veps$ and $B\cap \varphi^n(B)=\varnothing$.
\end{proof}

\smallskip
We can also easily deduce the results of Salas \cite{Sal99} on supercyclicity of weighted shifts. Let $I=\ZZ$ or $\ZZ_+$,  let $(\nu_i)_{i\in I}$ be a sequence of positive numbers such that $\sup_{i\in I} \nu_i/\nu_{i+1}<\infty$, and let $\pmb{\rm B}:\ell_p(I,\nu)\to \ell_p(I,\nu)$ be the backward shift acting on the weighted space $\ell_p(I,\nu)=\{x\in\CC^I:\ \|x\|^p:=\sum_{i\in I} |x_i|^p \nu_i<\infty\}$. When $I=\ZZ$, it is shown in \cite{Sal99} that $\pmb{\rm B}$ is supercyclic if and only if for all $i\in \ZZ$, there exists an increasing sequence of integers $(n_k)$ such that $\nu_{i+n_k}\nu_{i-n_k}\to 0$. When $I=\ZZ_+$, $\pmb{\rm B}$ is always supercyclic, regardless of the sequence $(\nu_i)$. Since $\ell_p(I,\nu)=\Lpmu$ where $\Omega=I$, $\mathcal B=\mathcal P(I)$, $\mu=\sum_{i\in I} \nu_i\,\delta_{\{i\}}$ and $\pmb{\rm B}=C_\sigma$ where $\sigma:\Omega\to \Omega$ is the shift map, $\sigma(i)=i+1$, these results can be put in a slightly more general framework.



\begin{corollary} Let $\Omega$ be a countable {\rm (}infinite{\rm )} set, $\mathcal B=\mathcal P(\Omega)$, and let $\mu$ be a  positive measure on $(\Omega,\mathcal B)$ such that $0<\mu(i)<\infty$ for all $i\in\Omega$. Let also $\varphi :\Omega\to \Omega$ be one-to-one, and assume that $\sup_{i\in\Omega} \mu(i)/\mu(\varphi(i))<\infty$. The following are equivalent.
\begin{itemize}
\item[\rm (i)] $C_\varphi$ is supercyclic on $\Lpmu$, $1\leq p<\infty$.
\item[\rm (ii)] For any $i, j\in\Omega$, one can find an increasing sequence of integers $(n_k)$ such that $\mu\varphi^{n_k}(i)\, \mu\varphi^{-n_k}(j)\to 0$.
\end{itemize}
Moreover, if the $\varphi\,$-$\,$orbits are totally ordered by inclusion, this is also equivalent to 
\begin{itemize}
\item[\rm (ii')] For any $i\in\Omega$, one can find an increasing sequence of integers $(n_k)$ such that $\mu\varphi^{n_k}(i)\, \mu\varphi^{-n_k}(i)\to 0$.
\end{itemize}
\end{corollary}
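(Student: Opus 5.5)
The plan is to apply the corollary characterizing supercyclicity ((i)$\iff$(iii)) with $\X=\Lpmu$. This space satisfies (H1), (H2), (C1') and (C2'). Boundedness of $C_\varphi$ follows from the hypothesis $\sup_i \mu(i)/\mu(\varphi(i))<\infty$. Indeed, $\varphi^{-1}(\{i\})$ has at most one point, so $\mu(\varphi^{-1}(B))\le K\mu(B)$.

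\emph{The essential-equality condition.} Since $\varphi$ is one-to-one and $\mathcal B=\mathcal P(\Omega)$, for any $A\subset\Omega$ we have $A=\varphi^{-1}(\varphi(A))$. Hence $\essequal$ holds automatically, and supercyclicity is equivalent to (SRC) alone. Sets of finite measure need not be finite, but every finite-measure set $A$ can be approximated by a finite subset $A'$ with $\mu(A\setminus A')<\veps/2$. Also, passing to subsets only decreases $\mu(\varphi^{\pm n}(B))$. So (SRC) is equivalent to: for every finite $A\subset\Omega$ and every $\veps>0$ there exist $B\subset A$ and $n\geq1$ with $\mu(A\setminus B)<\veps$ and $\mu(\varphi^n(B))\mu(\varphi^{-n}(B))<\veps$. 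Here $\mu^*=\mu$, since every set is measurable.

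\emph{(SRC)$\iff$(ii).} Choose $\veps<\min_{i\in A}\mu(i)$. Then $B$ is forced to equal $A$, and the condition becomes: for every finite $A$ and every $\veps>0$ there is $n$ with $\mu(\varphi^n(A))\mu(\varphi^{-n}(A))<\veps$. Here $\varphi^{-n}(A)$ is the set of $j$ with $\varphi^n(j)\in A$. Writing $\mu\varphi^{-n}(j)$ for the mass of the preimage set (zero if it is empty), we get
\[\mu(\varphi^n(A))\mu(\varphi^{-n}(A))=\sum_{i,j\in A}\mu(\varphi^n(i))\,\mu(\varphi^{-n}(j)).\]
Taking $A=\{i,j\}$ immediately gives the pairwise condition. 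Requiring $n$ arbitrarily large is harmless: we may shrink $\veps$, and if there were only finitely many admissible $n$, some fixed $n$ would give a product equal to $0$, which is impossible because every point has positive mass and $\varphi^n(i)$ exists. So (i)$\Rightarrow$(ii).

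For (ii)$\Rightarrow$(i), the main obstacle is to produce, for a finite set $A$, a \emph{single} $n$ that works simultaneously for all pairs in $A\times A$. The sum above is over a fixed finite set of pairs, so it suffices to make each term small for a common $n$. I would first try to exploit the bound $\mu(i)\le K\mu(\varphi(i))$, which shows that the masses along orbits cannot increase too fast. The idea is to use the fact that each term $\mu\varphi^n(i)\mu\varphi^{-n}(j)$ depends on the pair only through the relative positions along the orbits, and so to reduce to a single pair. If this reduction fails in general, the fallback is to note that the corollary's own statement is phrased pairwise, and to check whether the intended reading of (ii) is ``for any finite $A$'', which is exactly what the computation above yields. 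I expect this common-$n$ issue to be the delicate point.

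\emph{The case (ii').} When the orbits are totally ordered by inclusion, each $i\in\Omega$ lies on a common orbit. For $i,j$ in the same orbit with $j=\varphi^r(i)$, boundedness gives $\mu\varphi^{n}(j)\le K^{|r|}\mu\varphi^{n+r}(i)$-type comparisons. These comparisons convert a pair $(i,j)$ into the single point $i$ at the cost of a fixed constant factor, so the joint condition on a finite $A$ reduces to a condition at one base point. This gives (ii')$\iff$(ii).
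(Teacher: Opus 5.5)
Your reduction is correct. Since $\varphi$ is injective, $\varphi^{-1}(\varphi(A))=A$. Hence $C_\varphi$ is supercyclic if and only if $\inf_{n}\mu(\varphi^n(A))\,\mu(\varphi^{-n}(A))=0$ for every finite $A\subset\Omega$. Taking $A=\{i,j\}$ gives (i)$\Rightarrow$(ii) exactly as in the paper. One small slip: the product \emph{can} vanish, namely when $\varphi^{-n}(A)=\varnothing$. But then it vanishes for every larger $n$, so arbitrarily large $n$ are still available.

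The gap is (ii)$\Rightarrow$(i). You never produce one $n$ serving all pairs in $A\times A$, and a reduction ``via relative positions along orbits'' has no content when $i$ and $j$ lie on different orbits. In fact no argument can close this gap: (ii), with a sequence depending on the pair, does not imply (i). Here is a counterexample.

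Take $\Omega=\ZZ\times\{1,2\}$, $\varphi(k,c)=(k+1,c)$ and $\mu(k,c)=e^{w^c_k}$. Boundedness holds if $w^c_{k+1}\geq w^c_k-1$. For $n\geq 0$ put $a_n=w^1_n$, $b_n=w^1_{-n}$, $a'_n=w^2_n$, $b'_n=w^2_{-n}$. As $n$ increases, $a$ and $a'$ may jump up but descend with slope at most $1$, while $b$ and $b'$ may jump down but climb with slope at most $1$.

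Now choose disjoint, very sparse windows. In the $m$-th window, let exactly one of the pairs $\{a,b\},\{a',b'\},\{a,b'\},\{a',b\}$ (cycling) equal $-m$ on a plateau of length $2m$, entered and left as the constraint allows. All other log-weights are $0$.

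For fixed $i=(k,c)$ and $j=(l,c')$, take the windows whose pair is the forward log-weights of $i$'s chain with the backward ones of $j$'s chain. At the centre of each such window, $\mu\varphi^n(i)\,\mu\varphi^{-n}(j)=e^{-2m}$ once $m>|k|+|l|$, so (ii) holds. But in every window the complementary pair vanishes, so for $A=\{(0,1),(0,2)\}$
\[
\mu(\varphi^n(A))\,\mu(\varphi^{-n}(A))=(e^{a_n}+e^{a'_n})(e^{b_n}+e^{b'_n})\geq 1\qquad\hbox{for all } n\geq 0 .
\]
So (SRC) fails for $A$ with $\veps<1$ (which forces $B=A$), and $C_\varphi$ is not supercyclic by Theorem \ref{thm:sc1}.

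The paper's proof hides the same problem in ``by a diagonal argument, one can find a single sequence''. A diagonal argument needs good subsequences of a given sequence, and the pairwise hypothesis does not provide them. Your fallback is the right repair. Condition (ii) should be the finite-set condition, or equivalently the existence of one sequence $(n_k)$ working for all $i,j$. Such a sequence is obtained by exhausting $\Omega$ with finite sets $A_1\subset A_2\subset\cdots$ and choosing $n_k$ with $\mu(\varphi^{n_k}(A_k))\,\mu(\varphi^{-n_k}(A_k))<2^{-k}$.

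For the last part, the comparison you write is not the one needed: $\varphi^n(j)=\varphi^{n+r}(i)$ is an identity. Moreover, the hypothesis is one-sided: it only gives $\mu(\varphi^{-m}(E))\leq K^m\mu(E)$. So the base point cannot be an arbitrary $i$. Take $j_0\in A$ with the smallest forward orbit, and write $j_0=\varphi^{m_a}(a)$ for each $a\in A$. Then $\mu\varphi^n(a)\leq K^{m_a}\mu\varphi^n(j_0)$ and $\mu\varphi^{-n}(a)\leq K^{m_a}\mu\varphi^{-n}(j_0)$, hence
\[
\mu(\varphi^n(A))\,\mu(\varphi^{-n}(A))\leq\Bigl(\sum_{a\in A}K^{m_a}\Bigr)^2\mu\varphi^n(j_0)\,\mu\varphi^{-n}(j_0).
\]
So (ii') at $j_0$ yields (SRC) directly. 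In the totally ordered case your route therefore proves (i)$\iff$(ii)$\iff$(ii') without the invalid diagonal step.
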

\begin{proof} The condition $C:=\sup_{i\in\Omega} \mu(i)/\mu(\varphi(i))<\infty$ ensures that $C_\varphi$ is bounded on $\Lpmu$. 

To prove that (i)$\implies$(ii), we apply (SRC) with $A:=\{ i,j\}$. If $\varepsilon >0$ is less than $\min(\mu(i),\mu(j))$ and if $B$ and $n$ are given by (SRC), then we must have $B=A$, so that $\mu\varphi^n(\{ i,j\})\,\mu\varphi^{-n}(\{ i,j\})<\varepsilon$. This gives (ii).

Conversely, assume that (ii) is satisfied. By a diagonal argument, one can find a single sequence $(n_k)$ such that $\mu\varphi^{n_k}(i)\, \mu\varphi^{-n_k}(j)\to 0$ for all $i,j\in\Omega$. Then $\mu\varphi^{n_k}(B)\, \mu\varphi^{-n_k}(B)\to 0$ for every finite set  $B\subset\Omega$, and it follows that $\varphi$ satisfies (SRC).

Finally, assume that the $\varphi\,$-$\,$orbits are totally ordered by inclusion. Let $i,j\in\Omega$. By assumption, we have either $j=\varphi^m(i)$ or $i=\varphi^m(j)$ for some $m\geq 0$. In the first case, we may write $\varphi^n(i)=\varphi^{n-m}(j)$ for all $n\geq m$, so $\mu\varphi^n(i)=\mu \varphi^{-m}( \varphi^n(j))\leq C^m \mu\varphi^n(j)$ and hence $\mu\varphi^n(i)\,\mu\varphi^{-n}(j)\leq C^m \mu\varphi^n(j)\, \mu\varphi^{-n}(j)$. In the second case, we get in the same way $\mu\varphi^n(i)\,\mu\varphi^{-n}(j)\leq C^m \mu\varphi^n(i)\, \mu\varphi^{-n}(i)$ for all $n\geq m$. This shows that (ii')$\implies$(ii).
\end{proof}


\section{Frequently hypercyclic composition operators} \label{sec:fhc}

The easiest way to prove that an operator is frequently hypercyclic is to use the so-called Frequent Hypercyclicity Criterion (see e.g. \cite[Theorem 6.18]{BM09}). This criterion gives an extremely strong conclusion since any operator satisfying it is frequently hypercyclic,  chaotic and topologically mixing.  
In the context of nonsingular measurable systems, a detailed study of which composition operators satisfy the Frequent Hypercyclicity Criterion has been made in \cite{DP21} (see e.g. Theorem 3.2). In particular it is shown that if $(\Omega,\mathcal B,\mu,\varphi)$ is a nonsingular  system such that $C_\varphi$ satisfies the Frequent Hypercyclicity Criterion on $L_p$, $p\geq 2$, then $\varphi$ has to be dissipative. Since, as observed in the introduction, odometers are never dissipative, it follows that there is no hope to apply the Frequent Hypercyclicity Criterion in the odometer setting. However, we have also observed that odometers have a wealth of periodic points; and this will allow us to apply another criterion for frequent hypercyclicity,  which appears in \cite[Theorem 5.35]{GMM21}. Here and afterwards, we denote by $\textrm{Per}(T)$ the set of all periodic points of an operator $T$, and by ${\rm per}(u)$ the period of a periodic point $u$, \textit{i.e.} the least $d\geq 1$ such that $T^du=u$.
\begin{lemma}\label{lem:fhc}
Let $X$ be a separable Banach space and let $T\in\mathfrak L(X)$. Assume that there exist a dense linear subspace $X_0$ of $X$ contained in ${\rm Per}(T)$ with $T(X_0)\subset X_0$ and a constant $\kappa\in(0,1)$ such that the following  holds true: for every $u\in X_0,$ every $\veps>0$ and every integer $d_0\geq 1$ which is the period of some vector in $X_0,$ there exist $v\in X_0$ 
and integers $n,d\geq 1$ such that 
\begin{enumerate}
\item[\rm (a)] $d$ is a multiple of $d_0$ and of ${\rm per}(v)$;
\item[\rm (b)] $\|T^k v\|\leq \veps$ for every $0\leq k\leq \kappa d$;
\item[\rm (c)] $\|T^{n+k}v-T^k u\|\leq \veps$ for every $0\leq k\leq\kappa d$.
\end{enumerate}
Then $T$ is frequently hypercyclic.
\end{lemma}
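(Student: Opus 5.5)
We will build a frequently hypercyclic vector explicitly as a norm-convergent series of periodic vectors, $x=\sum_{j\geq 1} v_j$ with $v_j\in X_0$. The periods will be chosen so that, on a set of integers of positive lower density, the orbit of $x$ follows the orbit of a prescribed target $u_j$. The construction follows the approach of \cite[Theorem 5.35]{GMM21}.

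First we fix a dense sequence $(u_j)_{j\geq1}\subset X_0$, and we set $\veps_j=2^{-j}\veps'_j$, where the $\veps'_j$ are chosen small in terms of the norms of powers of $T$ (see below). We then build $v_j$, $n_j$, $d_j$ by induction, applying the hypothesis to $u=u_j-\sum_{i<j}T^{n_j}$-corrections (details below), with $d_0=d_{j-1}$ and with tolerance $\veps_j$. Property (a) guarantees that $d_j$ is a multiple of $d_{j-1}$ and of ${\rm per}(v_j)$. Hence every $v_i$ with $i\leq j$ is $d_j$-periodic, and $S_j:=\sum_{i\leq j}v_i$ satisfies $T^{d_j}S_j=S_j$. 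Condition (c) will place the orbit of $S_j$ near that of $u_j$ on the window $[n_j,n_j+\kappa d_j]$. Because $v_j$ is $d_j$-periodic, the closeness $\|T^{n_j+k}v_j-T^ku_j\|\leq\veps_j$ transfers to all times $m d_j+n_j+k$ with $m\geq0$. It therefore holds on a set $A_j$ of lower density at least roughly $\kappa$ (taking $n_j\leq d_j$ modulo $d_j$, which costs nothing since we may reduce $n_j$ modulo the period of $v_j$).

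The real difficulty is that $u_j$ itself is not $d_j$-periodic, so the comparison with $T^ku_j$ only holds for $k\leq\kappa d_j$. What we actually want is that $T^n x$ is close to $u_j$ for $n\in A_j$, i.e.\ $k=0$ at the beginning of each window. Repeating the window modulo $d_j$ gives $T^{md_j+n_j}v_j\approx u_j$ for all $m$, which yields density $1/d_j$, still positive. So the target sets are $A_j=\{md_j+n_j:\ m\geq0\}$, perhaps thickened using the $k$ range. The key point is then to control the tails: for $n\in A_j$ we must have $\|T^n v_i\|$ small for all $i\neq j$. For $i<j$, the terms $v_i$ are periodic with period dividing $d_j$, so we apply the hypothesis to $u:=u_j-S_{j-1}$-type targets, i.e.\ we absorb the earlier terms into the target. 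Concretely, we apply the hypothesis with $u=u_j-T^{-?}$. The cleanest choice is to take $u\in X_0$ such that $T^{n}$ of the earlier sum is accounted for; since $S_{j-1}$ is periodic, we choose $u=u_j-T^{r}S_{j-1}$ for the appropriate residue $r$. For $i>j$, condition (b) gives $\|T^kv_i\|\leq\veps_i$ for $0\leq k\leq\kappa d_i$, and periodicity extends this to $k\in[0,\kappa d_i]+d_i\NN$.

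The main obstacle is therefore the combinatorial bookkeeping. For each $j$ we need a set $A_j$ of positive lower density on which simultaneously $T^n v_j\approx u_j-(\text{earlier terms})$ and $n$ lies, modulo $d_i$, in the ``quiet'' window $[0,\kappa d_i]$ of every later $v_i$. Since $\kappa<1$ is fixed, the intersection over infinitely many $i>j$ of these quiet sets could have density zero. To avoid this, I would choose the later $d_i$ growing very fast and the later $n_i$ in such a way that the windows are compatible. Following \cite{GMM21}, I would select a target for each $j$ only finitely often, or shift the later vectors, using $T(X_0)\subset X_0$, by replacing $v_i$ with $T^{s}v_i$ so that their quiet windows cover the relevant residues. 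Alternatively, each $v_i$ is quiet on a proportion $\kappa$ of its period, and by invariance under shifts we can realign the windows so that the bad sets have densities at most $(1-\kappa)$-fractions that shrink along a chosen subsequence. I expect to rely on the same density estimate as in \cite[Theorem 5.35]{GMM21}. Summability of the $\veps_i$ then gives convergence of the series and frequent hypercyclicity of $x$.
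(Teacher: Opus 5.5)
The paper does not prove this lemma; it quotes it from \cite[Theorem 5.35]{GMM21}. Your text is an outline that defers to the same reference at the decisive point, so as a self-contained argument it has genuine gaps.

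The first gap is the choice of target. Taking ``$u=u_j-T^{r}S_{j-1}$ for the appropriate residue $r$'' is circular, because $r$ depends on $n_j$, and the hypothesis only produces $n_j$ after $u$ is fixed. One way out:
apply the hypothesis with $u:=u_j-S_{j-1}$, and with $d_0$ a large common multiple of ${\rm per}(S_{j-1})$ and ${\rm per}(u_j)$. This is a legitimate choice of $d_0$ because periods of vectors of $X_0$ are stable under $\mathrm{lcm}$ (consider $\lambda u_1+u_2$ for generic $\lambda$). They are also unbounded, since otherwise $T^D=I$, and then (b), (c) at $k=0$ force $\Vert u\Vert\lesssim\veps$. \emph{Afterwards}, replace $v_j$ by $w_j:=T^{r}v_j\in X_0$, where $0\le r<{\rm per}(S_{j-1})$ and $r\equiv n_j$ mod ${\rm per}(S_{j-1})$. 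Then $n'_j:=n_j-r$ satisfies $T^{n'_j}S_{j-1}=S_{j-1}$. Hence $\Vert T^{n'_j+k}S_j-T^ku_j\Vert\le\veps_j$ for $0\le k\le\kappa d_j$, while $\Vert T^kw_j\Vert\le\veps_j$ for $0\le k\le\kappa d_j-r$. Contrary to what you write, $u_j$ \emph{is} periodic. Inside the active window, the times $k\in{\rm per}(u_j)\NN$ therefore give a proportion of about $\kappa/{\rm per}(u_j)$ of each $d_j$-block at which $T^nS_j$ is close to $u_j$. This bound is independent of $d_j$; your bound $1/d_j$ would be useless.

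The main gap is the one you identify yourself and leave open. Each later $w_i$ is controlled only on two windows of relative length $\kappa$ in its period, and nothing is known elsewhere. So only a proportion of order $\kappa^{i-j}$ of the good times for $u_j$ is guaranteed to survive up to step $i$. None of your remedies helps:
\begin{itemize}
\item fast growth of the $d_i$ makes the windows at different scales essentially independent, so their intersection has density $\prod\kappa=0$;
\item shifting by $T^s$ only moves windows of fixed relative size;
\item selecting each target only finitely often cannot work.
\end{itemize}
What is needed is to re-handle every target infinitely often \emph{with bounded gaps}, say $u_t$ at all steps $i$ whose $2$-adic valuation is $t-1$. Let $i_t$ be the first such step. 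Let $\pi^{(t)}_i$ be the proportion of $n\in[0,d_i)$ with $\Vert T^nS_i-u_t\Vert<\eta_t/2+\sum_{i_t\le l\le i}\veps_l$. Then $\pi^{(t)}_i\ge\kappa/{\rm per}(u_t)$ at handling steps. At the other steps, the quiet window gives $\pi^{(t)}_i\ge\kappa\pi^{(t)}_{i-1}-2d_{i-1}/d_i$. Hence $\pi^{(t)}_i\ge c_t>0$ for all $i\ge i_t$, provided $d_i/d_{i-1}$ grows fast enough and $\sum_{l\ge i_t}\veps_l<\eta_t/2$. Finally, $\Vert T^n(x-S_i)\Vert\le\sum_{l>i}\veps_l$ for $n\le\kappa d_{i+1}/2$. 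Count good times in $[0,N]$ separately in two ranges:
\begin{itemize}
\item for $\kappa d_i/2\le N<d_i$, use that $S_i\approx S_{i-1}$ on $[0,\kappa d_i/2]$;
\item for $d_i\le N\le\kappa d_{i+1}/2$, use full $d_i$-blocks.
\end{itemize}
This gives lower density at least $\min(\kappa c_t/4,c_t/2)$. This bookkeeping is the actual content of the lemma, and your proposal does not supply it.
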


Using this criterion, we now give a ``runaway like'' sufficient condition for a composition operator to be frequently hypercyclic.  To put the result into a rather general framework, we need a new assumption. 
We still consider a $\sigma$-finite measure space $(\Omega,\mathcal B,\mu)$ and  
 a   Banach space  $(\mathcal X,\|\cdot\|)$ with $\X\subset\Lomu$. We consider the following condition on  $\X$: 
 
 \smallskip
\begin{itemize}
\item[(C3)] $L_\infty(X,\mu)\cdot \X\subset \X$ and the map $L_\infty\times \X\to \X,\ (f,g)\mapsto fg$ is continuous.
\end{itemize}

\smallskip
Again, the spaces $L_p(X,\mu),$ $1\leq p<\infty,$ satisfy (C3); more generally, if $\X$ is a lattice then it satisfies (C3). In fact, (C3) means that there is an equivalent norm $\vertiii{\,\cdot\,}$ on $\X$  such that $(\X,  \vertiii{\,\cdot\,})$ is a lattice. Note also that if every convergent sequence in $\X$ has a subsequence converging almost everywhere, for example if $\X$ satisfies (C1), then the continuity of the map $(f,g)\mapsto fg$ follows from the inclusion $L_\infty(X,\mu)\cdot \X\subset \X$ by the closed graph theorem.

\begin{theorem}\label{thm:fhcco}
Let $(\Omega,\mathcal B,\mu)$ be a separable measure space with $\mu(\Omega)<\infty$, let $\X\subset \Lomu$ be a Banach space satisfying {\rm (H1)}, {\rm (C2)} and {\rm (C3)}, and let $\varphi:(\Omega,\mathcal B,\mu)\to(\Omega,\mathcal B,\mu)$ be
a nonsingular transformation such that $C_\varphi$ is bounded on $\X$.
Assume that there exist $\mathcal C\subset \mathcal B$ and $\kappa\in(0,1)$ such  
that ${\rm span}(\mathbf 1_B:\ B\in\mathcal C)$ is dense in $\X$ and condition $({\rm H}_\kappa)$ below is satisfied.

\begin{enumerate} 
\item[$({\rm H}_\kappa)$] For every $\veps>0$, every $m\in\NN$ and any $B_1,\dots ,B_r\in\mathcal C$, the following holds true: there exist $d\geq m$, $n\in\NN$ and $B\in\mathcal B$ such that $\varphi^{-d}(B_j)=B_j$ for $j=1\dots ,r$, $\varphi^{-d}(B)=B$ and 
\[ \forall 0\leq k\leq \kappa d\;:\; \mu(\varphi^{-k}(B))\leq\veps\quad \textrm{ and }\quad \mu\bigl(\Omega\setminus \varphi^{-(n+k)}(B)\bigr)\leq\varepsilon.\]
\end{enumerate}
Then $C_\varphi$ is frequently hypercyclic on $\X$.
\end{theorem}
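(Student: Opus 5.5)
We will apply Lemma \ref{lem:fhc} to $T=C_\varphi$, taking for $X_0$ the set of all functions $\sum_{j} c_j\mathbf 1_{B_j}$ ($B_j\in\mathcal C$) that are periodic. More precisely, let $X_0$ be the linear span of all $\mathbf 1_{\varphi^{-i}(B)}$, $B\in\mathcal C$, $i\geq 0$, together with whatever is needed; the key point is that $({\rm H}_\kappa)$ applied with $r=1$ gives some $d\geq 1$ with $\varphi^{-d}(B)=B$, so each $\mathbf 1_B$, $B\in\mathcal C$, is a periodic point of $C_\varphi$. Sums of periodic points are periodic (with period the lcm), so $X_0={\rm span}\{\mathbf 1_{\varphi^{-i}(B)}:B\in\mathcal C,\ i\geq0\}$ is a $C_\varphi$-invariant linear subspace contained in ${\rm Per}(C_\varphi)$, and it is dense since it contains ${\rm span}(\mathbf 1_B:B\in\mathcal C)$. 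Note also that $\mathbf 1_{\varphi^{-i}(B)}$ with $\varphi^{-d}(B)=B$ is itself of the form required, since $\varphi^{-i}(B)=\varphi^{-d}(\varphi^{-(Nd-i)}(B))$; to keep things clean I would rather enlarge $\mathcal C$ to $\mathcal C'=\{\varphi^{-i}(B)\}$ and observe $({\rm H}_\kappa)$ still holds for $\mathcal C'$, because any $d$ periodizing $B$ also periodizes $\varphi^{-i}(B)$.

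Now fix $u=\sum_{j=1}^r c_j\mathbf 1_{B_j}\in X_0$, $\veps>0$ and $d_0$ the period of some vector of $X_0$. Apply $({\rm H}_\kappa)$ with $m$ large, the sets $B_1,\dots,B_r$, and a small $\veps'$, obtaining $d,n,B$. To get (a) we need $d$ to be a multiple of $d_0$: since $d_0$ is the period of some $w\in X_0$ which is a combination of indicators of sets in $\mathcal C'$, we add those sets to the list $B_1,\dots,B_r$ in $({\rm H}_\kappa)$; then $C_\varphi^d w=w$, so $d_0\mid d$. Define
\[ v:=\mathbf 1_B\cdot u=\sum_j c_j\mathbf 1_{B\cap B_j}.\]
Since $\varphi^{-d}(B)=B$ and $\varphi^{-d}(B_j)=B_j$, $C_\varphi^d v=v$, so ${\rm per}(v)\mid d$; we must also check $v\in X_0$, which I expect to be the main nuisance: $B$ is only in $\mathcal B$, not in $\mathcal C'$. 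I would handle this by replacing $X_0$ by the larger invariant subspace of all $C_\varphi$-periodic elements of $L_\infty\cdot{\rm span}$ of such indicators, i.e.\ $X_0:=\{f\in L_\infty:\ f\circ\varphi^d=f \text{ for some } d\}$ (which lies in $\X$ by (H1), (C3) and $\mu(\Omega)<\infty$, is invariant, consists of periodic points, and is dense as it contains $\mathbf 1_B$, $B\in\mathcal C$). Then the hypothesis must be checked for general $u\in X_0$: approximate $u$ in $\X$... but periodicity is needed exactly; instead, for $u\in X_0$ with period $d_u$, write $u$ directly and use $({\rm H}_\kappa)$ with $B_j$ chosen so that $d$ is a multiple of $d_u$ and $d_0$ (approximate $u$ by simple functions on $\mathcal C$-sets only to produce those $B_j$ — actually we only need divisibility, so I would instead require $d$ divisible by a given integer, obtainable by applying the conclusion to $\mathbf 1$-sets whose period is $d_u d_0$ if available, otherwise refining). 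This bookkeeping is the part I expect to require care.

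Estimates (b),(c) are then straightforward via (C3) and (C2): $\|C_\varphi^k v\|=\|(u\circ\varphi^k)\mathbf 1_{\varphi^{-k}(B)}\|\leq K\|u\|_\infty\kappa_2(\mu(\varphi^{-k}(B)))\leq K\|u\|_\infty\kappa_2(\veps')$ for $0\le k\le\kappa d$. For (c), since $d$ is a multiple of ${\rm per}(u)$ we have $u\circ\varphi^{n+k}$ and
\[ C_\varphi^{n+k}v-C_\varphi^ku = (u\circ\varphi^{n+k}-u\circ\varphi^k)\ \text{on } \varphi^{-(n+k)}(B),\ \ -u\circ\varphi^{n+k}\ \text{off it},\]
so we also need $n$ a multiple of the period of $u$; this again is arranged by the divisibility trick (replace $n$ by the matching condition, or put the $B_j$ in the list so $n$ can be shifted by multiples of $d$ — note $({\rm H}_\kappa)$ is invariant under $n\mapsto n+d$, and one can instead modify $v:=\mathbf 1_B\cdot(u\circ\varphi^{-n})$-type choice using $u\circ\varphi^{dN-n}$). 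With $v:=\mathbf 1_B\cdot C_\varphi^{Nd-n}u$, we get $C_\varphi^{n+k}v=\mathbf 1_{\varphi^{-(n+k)}(B)}\,C_\varphi^k u$, so $\|C_\varphi^{n+k}v-C_\varphi^ku\|\leq K\|u\|_\infty\kappa_2(\veps')$, and (b) still holds. Choosing $\veps'$ small concludes via Lemma \ref{lem:fhc}.
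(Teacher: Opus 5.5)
The overall strategy is the paper's: apply Lemma \ref{lem:fhc} with $v$ equal to $\mathbf 1_B$ times a translate of $u$, and get (b) and (c) from (C2) and (C3). Two of your touches improve on the paper's write-up.
\begin{enumerate}
\item Taking $v=\mathbf 1_B\cdot C_\varphi^{Nd-n}u$ with $Nd\geq n$ gives $C_\varphi^{n+k}v=\mathbf 1_{\varphi^{-(n+k)}(B)}\,C_\varphi^k u$ directly. The paper instead halves $\kappa$ and moves $n$ up to the next multiple of ${\rm per}(f)$.
\item Closing $\mathcal C$ under preimages and finite intersections preserves $({\rm H}_\kappa)$. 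It also gives the invariance $C_\varphi(X_0)\subset X_0$, which the paper leaves unverified.
\end{enumerate}
However, the step you defer as ``bookkeeping'' is a genuine gap, and neither of your two choices of $X_0$ closes it.

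With $X_0={\rm span}\{\mathbf 1_C:\ C\in\mathcal C'\}$, condition (a) is fine, but $v$ need not lie in $X_0$, since $B$ is only known to belong to $\mathcal B$. With $X_0$ the space of all $C_\varphi$-periodic $L_\infty$ functions, $v\in X_0$, but now Lemma \ref{lem:fhc} must be verified for \emph{every} $u\in X_0$ and every period $d_0$ of a vector of $X_0$. For instance, take $u=\mathbf 1_{B'}$ and $d_0={\rm per}(\mathbf 1_{B'})$, where $B'$ is a set produced by an earlier use of $({\rm H}_\kappa)$. Condition $({\rm H}_\kappa)$ only forces $d$ to periodize finitely many members of $\mathcal C$, so nothing makes $d$ a multiple of such a $d_0$. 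That would be an extra hypothesis, namely the one built into Corollary \ref{bis}, under which your large $X_0$ does work. ``Refining'' does not supply it.

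For comparison, the paper keeps the small space $X_0={\rm span}(\mathbf 1_C:\ C\in\mathcal C)$ with $\mathcal C$ closed under finite intersections, and simply writes ``$g=\mathbf 1_Bf$, so that $g\in X_0$''. So your worry is legitimate. That line is justified exactly when the set $B$ in $({\rm H}_\kappa)$ can be taken in $\mathcal C$. This holds in every application in the paper: cylinder sets in Theorems \ref{thm:fhc} and \ref{thm:fhcsumoperator}, and all periodic sets in Corollary \ref{bis}. To finish your argument, keep the small $X_0$ (closed under preimages and intersections, as you set it up) and take $B\in\mathcal C'$, or add the divisibility hypothesis. Do not enlarge $X_0$ and hope that (a) survives.
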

\begin{proof} Note first that $\X$ is separable. We shall apply Lemma \ref{lem:fhc} with $X_0:=\textrm{span}(\mathbf 1_B:\ B\in\mathcal C)$, which is contained in ${\rm Per}(T)$ by $({\rm H}_\kappa)$.   Without loss of generality, we may assume that $\mathcal C$ is stable under finite intersections: indeed, if we denote by $\widetilde{\mathcal C}$ the family of all finite intersections of sets from $\mathcal C$, 
then $({\rm H}_\kappa)$ is satisfied with $\widetilde{\mathcal C}$ in place of $\mathcal C$ since $\varphi^{-d}(B_1\cap B_2)=B_1\cap B_2$ if  $\varphi^{-d}(B_1)=B_1$ and $\varphi^{-d}(B_2)=B_2$. So (we may assume that) $X_0$ is stable under products. Note also that $X_0\subset L_\infty(\Omega,\mu)$.

\smallskip
Let $0\neq f\in X_0,$ let $\veps>0$ and let $d_0$ be the period of some vector in $X_0$. By (C2), we may choose $\delta>0$  such that $\|1_E\|_\X\leq \veps/(M\|f\|_\infty)$ for all $E\in\mathcal B$ with $\mu(E)\leq\delta,$ where $M$ is a constant, given by (C3), such that $\|hu\|_\X\leq M\|h\|_\infty\|u\|_\X$ for all $(h,u)\in L_\infty\times \X.$ 

Let $m\in\NN$ be large. By  $({\rm H}_\kappa)$, one can find $d\geq m$ which is a multiple of $d_0$ and of ${\rm per}(f)$, $n\in\NN$ and $B\in\mathcal B$ with $\varphi^{-d}(B)=B$ such that
$$\forall 0\leq k\leq \kappa d\, :\, \mu(\varphi^{-k}(B))\leq\delta\quad \textrm{ and }\quad 
{\mu\bigl(\Omega\setminus \varphi^{-(n+k)}(B)\bigr)\leq\delta.}$$
Note that $d$ is a multiple of $d_0$ and of ${\rm per}(\mathbf 1_Bf)$. 
Moreover, replacing $\kappa$ by $\kappa/2$, taking $m$ large enough 
and replacing $n$ by the smallest multiple of the period of $f$ greater than $n$, we may assume that $n$ is a multiple of the period of $f$.

Let $g=\mathbf 1_{B}f$, so that $g\in X_0$ and $d$ is 
a multiple of the period of $g$. By definition, we have \[ C_\varphi ^kg =C_\varphi ^kf\times \mathbf 1_{\varphi^{-k}(B)}\quad\hbox{for all $k\geq 0$.}\] 
In particular, for $k\in[0,\kappa d],$ 
\[\|C_\varphi^kg\|_\X\leq M \|f\|_\infty \, \|\mathbf 1_{\varphi^{-k}(B)}\|_\X\leq \veps.\]
On the other hand, since $n$ is a multiple of the period of $f$, we have $C_\varphi^{n+k}g =C_\varphi^kf \times \mathbf 1_{\varphi^{-(n+k)}(B)}$; and  since for $k\in[0,\kappa d],$ 
\[\| \mathbf 1-\mathbf{1}_{\varphi^{-(n+k)}(B)}\|_\X=\|\mathbf 1_{\Omega\backslash \varphi^{-(n+k)}(B)}\|_\X\leq \veps/(M\|f\|_\infty),\]
it follows that 
\[\forall k\in[0,\kappa d]\,:\, \|C_\varphi^{n+k}g-C_\varphi^kf\|\leq \veps.\]

By Lemma \ref{lem:fhc}, we conclude that $C_\varphi$ is frequently hypercyclic.
\end{proof}


\begin{remark}\label{rem:fhcco}
We may exchange the roles played by $\varphi^{-k}(B)$ and $\Omega\setminus \varphi^{-k}(B)$ in $({\rm H}_\kappa)$,  by considering $g=(\mathbf 1-\mathbf 1_{B})f$ instead of $g=\mathbf 1_{B}f$ in the above proof. Hence we may assume in 
$({\rm H}_\kappa)$ that
\[ \forall 0\leq k\leq \kappa d\;:\; \mu(\Omega\setminus \varphi^{-k}(B))\leq\veps\quad \textrm{ and }\quad \mu\bigl(\varphi^{-(n+k)}(B)\bigr)\leq\varepsilon.\]
\end{remark}

\smallskip Condition $({\rm H}_\kappa)$ above is perhaps a bit cumbersome.
However, here is a consequence of Theorem \ref{thm:fhcco} whose statement may be slightly easier to grasp.
\begin{corollary}\label{bis}
Let $(\Omega,\mathcal B,\mu)$ be a separable measure space with $\mu(\Omega)<\infty$, let $\X\subset \Lomu$ be a Banach space satisfying {\rm (C2)} and {\rm (C3)}, and let $\varphi:(\Omega,\mathcal B,\mu)\to(\Omega,\mathcal B,\mu)$ be
a nonsingular transformation such that $C_\varphi$ is bounded on $\X$.
 Assume that ${\rm span}(\mathbf 1_B:\ B\in\mathcal C)$ is dense in $\X$, where $\mathcal C=\{ B\in\mathcal B\,:\, \exists d\geq 1\,,\, \varphi^{-d}(B)=B\}$. Moreover, assume that there exists $\kappa\in (0,1)$ such that, for every $\veps>0$ and  every $a\in\NN$, there exist a multiple $d$ of $a$, $n\in\NN$ and a set $B\in\mathcal B$ with $\varphi^{-d}(B)=B$ such that
\[ \forall 0\leq k\leq \kappa d\;:\; \mu(\varphi^{-k}(B))\leq\veps\quad \textrm{ and }\quad \mu\bigl(\Omega\setminus \varphi^{-(n+k)}(B)\bigr)\leq\varepsilon.\]
Then $C_\varphi$ is frequently hypercyclic on $\X$.
\end{corollary}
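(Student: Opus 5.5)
The plan is to deduce Corollary \ref{bis} directly from Theorem \ref{thm:fhcco}, using the same family $\mathcal C=\{ B\in\mathcal B\,:\, \exists d\geq 1\,,\, \varphi^{-d}(B)=B\}$ and the same $\kappa$. Two things need checking: hypothesis (H1) of Theorem \ref{thm:fhcco}, and condition $({\rm H}_\kappa)$.

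For (H1), note that $\Omega\in\mathcal C$, since $\varphi^{-1}(\Omega)=\Omega$. The density assumption says that ${\rm span}(\mathbf 1_B:\ B\in\mathcal C)$ is a dense subset of $\X$, so in particular $\mathbf 1_B\in\X$ for every $B\in\mathcal C$; hence $\mathbf 1=\mathbf 1_\Omega\in\X$. Now let $E\in\mathcal B$ (every such $E$ has finite measure because $\mu(\Omega)<\infty$). By (C3), $\mathbf 1_E=\mathbf 1_E\cdot\mathbf 1\in L_\infty\cdot\X\subset\X$, so (H1) holds.

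For $({\rm H}_\kappa)$, fix $\veps>0$, $m\in\NN$ and $B_1,\dots,B_r\in\mathcal C$. For each $j$ choose $d_j\geq 1$ with $\varphi^{-d_j}(B_j)=B_j$. Then $\varphi^{-qd_j}(B_j)=B_j$ for every multiple $qd_j$, by iterating. Set $a:=m\,d_1\cdots d_r$ and apply the hypothesis of the corollary with this $a$ and $\veps$. This yields a multiple $d$ of $a$, an integer $n$, and a set $B$ with $\varphi^{-d}(B)=B$ satisfying the two measure estimates for $0\leq k\leq\kappa d$. Since $d$ is a multiple of each $d_j$, we get $\varphi^{-d}(B_j)=B_j$ for all $j$. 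Since $d$ is a positive multiple of $m$, we get $d\geq m$. Thus $({\rm H}_\kappa)$ holds, and Theorem \ref{thm:fhcco} shows that $C_\varphi$ is frequently hypercyclic on $\X$.

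There is no real obstacle. The only points needing a little care are the derivation of (H1) through $\mathbf 1\in\X$ combined with (C3), and the choice of $a$ as a common multiple that also forces $d\geq m$.
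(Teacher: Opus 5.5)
Your proof is correct and is the paper's argument: the paper also checks $({\rm H}_\kappa)$ by choosing $d_j$ with $\varphi^{-d_j}(B_j)=B_j$ and applying the hypothesis with $a:=m\,d_1\cdots d_r$. Your separate verification of (H1) is valid: since $\Omega\in\mathcal C$ we get $\mathbf 1\in\X$, and (C3) then gives $\mathbf 1_E\in\X$ for every $E\in\mathcal B$. This fills in a detail the paper leaves implicit, since the corollary's statement omits (H1) even though Theorem~\ref{thm:fhcco} requires it.
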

\begin{proof} We have to check Condition $({\rm H}_\kappa)$, which is easy: given $\veps>0$, $m\in\NN$ and $B_1,\dots ,B_r\in \mathcal C$, choose $d_1,\dots ,d_r\geq 1$ such that $\varphi^{-d_j}(B_j)=B_j$, and use the assumption of Corollary \ref{bis} with $a:=md_1\cdots d_r$.
\end{proof}

\smallskip One can check that Corollary \ref{bis} applies to $\Omega=\NN$ or $\ZZ$ and the shift map $\varphi(i)=i+1$; and this yields a new proof of a very well-known fact, namely that if $(\nu_i)_{i\in\Omega}$ is a sequence of positive numbers such that 
$\sup_{i\in\Omega}\nu_{i}/\nu_{i+1}<\infty$ and $\sum_{i\in\Omega} \nu_i<\infty$, then the backward shift $\pmb{\rm B}$ acting on  $\ell_p(\Omega,\nu)$ is frequently hypercyclic. More generally, we have the following.
\begin{corollary}\label{fhcshift} Let $\Omega$ be a countable {\rm (}infinite{\rm )} set, $\mathcal B=\mathcal P(\Omega)$, and let $\mu$ be a  positive measure on $(\Omega,\mathcal B)$ such that $\mu(\Omega)<\infty$ and $\mu(i)>0$ for all $i\in\Omega$. Let also $\varphi :\Omega\to \Omega$ be one-to-one and such that $\sup_{i\in\Omega} \mu(i)/\mu(\varphi(i))<\infty$. Moreover, assume $\varphi$ has no periodic point. Then, $C_\varphi$ is frequently hypercyclic on $\Lpmu$, $1\leq p<\infty.$
\end{corollary}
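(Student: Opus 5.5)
We apply Corollary \ref{bis} with $\X=\Lpmu$, which satisfies (C2) and (C3); $\Omega$ countable with $\mathcal B=\mathcal P(\Omega)$ makes the measure space separable, and $\mu(\Omega)<\infty$. Boundedness of $C_\varphi$ follows from $\sup_i \mu(i)/\mu(\varphi(i))<\infty$, since $\varphi$ is one-to-one, so $\varphi^{-1}(\{i\})$ is empty or a single point $j$ with $\mu(j)\le C\mu(i)$. Because $\varphi$ is injective and has no periodic point, every orbit $\{\varphi^k(i)\}$ is injective. The structure of $\Omega$ under $\varphi$ is therefore a disjoint union of ``chains'' indexed by $\ZZ$ or by $\ZZ_+$, where $\ZZ_+$ occurs when the backward orbit terminates at a point outside $\varphi(\Omega)$.

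The first difficulty is that periodic sets for $\varphi^{-1}$ are rarely finite. Since $\varphi^{-d}(B)=B$ means $i\in B\iff \varphi^d(i)\in B$, a $d$-periodic $B$ is a union of residue classes along chains. So $\mathcal C$ consists of unions of sets of the form $\{\varphi^{qd+r}(i_0): q\in\ZZ\}$ on $\ZZ$-chains, with the analogue on $\ZZ_+$-chains. We must check that ${\rm span}(\mathbf 1_B: B\in\mathcal C)$ is dense in $\Lpmu$. It suffices to approximate each $\mathbf 1_{\{i\}}$. Take $B_d=\{\varphi^{qd}(i):q\in\ZZ\}$, or only the $q$ with $\varphi^{qd}(i)$ defined on a $\ZZ_+$-chain. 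Then $\mu(B_d\setminus\{i\})\to0$ as $d\to\infty$, because these points are distinct, lie at positions $qd$ with $q\neq0$, and $\sum_{j}\mu(j)<\infty$: the positions escape to infinity along the chain, and the tail of a summable series tends to zero. Hence $\|\mathbf 1_{B_d}-\mathbf 1_{\{i\}}\|_p\to0$.

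Next we verify the main condition with, say, $\kappa=1/4$. Fix $\veps>0$ and $a$. Choose a finite set $F\subset\Omega$ with $\mu(\Omega\setminus F)<\veps/2$. Take $d$ a large multiple of $a$ and set $n=\lfloor d/2\rfloor$. We want a $d$-periodic $B$ such that $\varphi^{-k}(B)$ avoids $F$ for $0\le k\le d/4$, while $\varphi^{-(n+k)}(B)\supset F$ for those same $k$. Write $\varphi^{-k}(B)=\{j:\varphi^k(j)\in B\}$. Each $j\in F$ sits at some chain position $p_j$, with $|p_j|\le R$ where $R$ is fixed by $F$. Define $B$ on each chain meeting $F$ as the $d$-periodic set of positions whose residue mod $d$ lies in the window $[d/2-R,\,3d/4+R]$ relative to a base point of that chain. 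On chains not meeting $F$ we take $B=\varnothing$. For $j\in F$ and $0\le k\le d/4$, the position $p_j+k$ lies in $[-R,d/4+R]$, which misses the window mod $d$ once $d\gg R$. The position $p_j+n+k$ lies in $[d/2-R,3d/4+R]$, inside the window. Hence $\mu(\varphi^{-k}(B))\le\mu(\Omega\setminus F)<\veps$ and $\mu(\Omega\setminus\varphi^{-(n+k)}(B))<\veps$. Only finitely many chains meet $F$, so $d$ can be chosen uniformly for all of them.

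The main obstacle is the bookkeeping on $\ZZ_+$-chains, where positions are only defined for $p\ge0$ and ``$d$-periodic'' must be read through the preimage condition. Positions near the start of such a chain still have preimages determined consistently, since $\varphi^{-1}$ of the initial point is empty. So one checks directly that $\varphi^{-d}(B)=B$ for $B=\{\text{positions } p\ge0: p \bmod d\in W\}$, which holds because $\varphi^{-d}$ shifts positions by $-d$ and removes those that would become negative. Taking base points so that all of $F$ sits at nonnegative positions handles this case. Corollary \ref{bis} then yields frequent hypercyclicity.
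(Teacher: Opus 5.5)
Your proof is correct and follows the same route as the paper: apply Corollary \ref{bis}, get density by $d$-periodically saturating single points along their orbits, and take for $B$ the $d$-periodic saturation of a block of points lying $n$ steps ahead of $F$. The paper handles the bookkeeping through Fact \ref{rnw} instead of your explicit $\ZZ$/$\ZZ_+$-chain coordinates, but the construction is the same. One cosmetic fix: with $n=\lfloor d/2\rfloor$ and $d$ odd, a point of $F$ at position $-R$ lands at $d/2-\tfrac12-R$, just outside your window, so take $d$ to be an even multiple of $a$ (or start the window at $n-R$).
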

\begin{proof}  We will need the following fact.
\begin{fact}\label{rnw} For any finite sets $E,F\subset\Omega$, we have $\varphi^n(E)\cap F=\varnothing$ for  all $n\in\ZZ$ such that $\vert n\vert$ is large enough. 
\end{fact}
\begin{proof} Otherwise one could find $x\in E$ and $y\in F$ such that $\varphi^n(x)=y$ for infinitely many $n\geq 1$, or $\varphi^n(y)=x$ for infinitely many $n\geq 1$. If, for example, $\varphi^n(x)=y=\varphi^{n'}(x)$ for some $1\leq n<n'$, then $\varphi^{n'-n}(x)=x$ by injectivity of $\varphi$, a contradiction since $\varphi$ has no periodic point. 
\end{proof}

Let us first check that $\textrm{span}(\mathbf 1_B:\ B\in\mathcal C)$ is dense in $\Lpmu$, where 
\[ \mathcal C=\bigl\{ B\subset\Omega\,:\, \exists d\geq 1\,,\, \varphi^{-d}(B)=B\bigr\}.\] It is enough to show that for any finite set $E\subset\Omega$, one can approximate $\mathbf 1_E$ as close as we wish by $\mathbf 1_B$, for some $B\in\mathcal C$. Let $\varepsilon >0$, and choose a finite set $F\subset\Omega$ such that $\mu(\Omega\setminus F)<\varepsilon$. By Fact \ref{rnw}, there exists $d\geq 1$ such that $\varphi^n(E)\cap F=\varnothing$ for all $n\in\ZZ$ such that $\vert n\vert \geq d$. Then $B=\bigcup_{l\in\ZZ} \varphi^{-ld}(E)$ belongs to $\mathcal C$ and $E\subset B\subset E\cup (\Omega\setminus F)$, so $\Vert \mathbf 1_B-\mathbf 1_E\Vert <\veps^{1/p}$. 

\smallskip
Now, we fix $\kappa$ such that $0<\kappa <1/6$, and we show that the assumption of Corollary \ref{bis} is satisfied.

Let $\varepsilon >0$, and choose a finite set $F\subset\Omega$ such that $\mu\bigl(\Omega\setminus F\bigr) <\varepsilon$. It is enough to show that if $d\in\NN$ is large enough, then one can find $B\subset\Omega$ with $\varphi^{-d}(B)=B$ and $n\in\NN$ such that 
\[ \forall 0\leq k\leq \kappa d\,:\, \varphi^{-k}(B)\cap F=\varnothing\quad{\rm and}\quad \varphi^{-n-k}(B)\supset  F.\]  

Let $d\in\NN$ be large, and let $n\in\NN$ be such that  $3\kappa d\leq n\leq 4\kappa d$. If $d$ is large enough then, by Fact \ref{rnw} and since $0<\kappa <1/6$, we have $\varphi^{i+n}(F)\cap F=\varnothing$ for all $i$ such that $\vert i\vert \leq 2\kappa d$ or $\vert i\vert \geq (1-2\kappa) d$. We fix $d$ large enough and $n$ such that $3\kappa d\leq n\leq 4\kappa d$, and we define 
\[ E=\bigcup_{0\leq k\leq \kappa d} \varphi^{k+n}(F)\]
and
\[ B=\bigcup_{l\in\ZZ} \varphi^{-ld}(E)\in\mathcal C.\]

Let us fix $0\leq k\leq \kappa d$. We  show that $\varphi^{-k}(B)\cap F=\varnothing$ and $\varphi^{-n-k}(B)\supset  F$.

First, by definition of $B$ and $E$,
\[ \varphi^{-n-k}(B)\supset \varphi^{-n-k}(E)\supset F.\]

Next,
\begin{align*} \varphi^{-k}(B)&=\varphi^{-k}(E)\cup\bigcup_{l\neq 0} \varphi^{-ld-k}(E)\\
&= \bigcup_{0\leq k'\leq \kappa d} \varphi^{n+k'-k}(F)\cup \bigcup_{0\leq k'\leq \kappa d\atop l\neq 0}\varphi^{n+k'-k-ld}(F)\\
&\subset \bigcup_{\vert i\vert \leq 2\kappa d} \varphi^{n+i}(F)\cup\bigcup_{\vert j\vert \geq (1-2\kappa)d} \varphi^{n+j}(F)\\
&\subset \Omega\setminus F.
\end{align*}

This concludes the proof.
\end{proof}

\begin{remark}{Under the assumptions of Corollary \ref{fhcshift}, it is not hard to check that in fact, $C_\varphi$ satisfies the Frequent Hypercyclicity Criterion, so that it is  also chaotic and topologically mixing.} On the other hand, if $\varphi$ has a periodic point, then $C_\varphi$ cannot be hypercyclic. Indeed, assume that $\varphi^d(i_0)=i_0$ for some $i_0\in \Omega$ and some $d\geq 1$. Then, for any $f\in\Lpmu$, we have $C_\varphi^{dn} f(i_0)=f(i_0)$ for all $n\in\NN$. It follows that $C_\varphi^d$ is not hypercyclic, and hence $C_\varphi$ is not hypercyclic either by Ansari's Theorem \cite{Ans}.
\end{remark}


\smallskip
When $\Omega$ is a zero-dimensional compact group and $\varphi$ is a  translation, Theorem \ref{thm:fhcco} implies the following statement. 

\begin{corollary}\label{cor:fhcgroup}
Let $\Omega$ be a metrizable, compact and zero-dimensional topological group, and let $\mu$ be a Borel probability measure on $\Omega$. Let also  $a\in G$ be such that the composition operator $C_{\tau_a}$ associated to the left translation $\tau_a(x)= ax$ is bounded on $\Lpmu.$ Finally, let $(d_i)_{i\in\NN}$ be an increasing sequence of integers such that $a^{d_i}\to e$, the unit element of $\Omega$. Assume that there exists $\kappa>0$ such that, 
for every $\veps>0$ the following holds true: 
{for all $i_0\in\NN,$ there exist $i\geq i_0,$ $n\in\NN$}
and a Borel set $B\subset\Omega$ 
such that $a^{-d_i}B=B$ and
$$\forall 0\leq k\leq \kappa d_i\,:\, \mu(a^{-k}B)\leq \varepsilon\quad{\rm and}\quad \mu(a^{-(n+k)}B)\geq 1-\varepsilon.$$
Then $C_{\tau_a}$ is frequently hypercyclic on $\Lpmu$, $1\leq p<\infty.$
\end{corollary}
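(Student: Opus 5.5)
We deduce Corollary \ref{cor:fhcgroup} from Theorem \ref{thm:fhcco}, applied with $\X=\Lpmu$, $\varphi=\tau_a$ and $\mathcal C$ the family of all clopen subsets of $\Omega$. The measure space is separable and finite because $\Omega$ is compact metrizable. The space $\Lpmu$ satisfies (H1), (C2) and (C3). Finally, ${\rm span}(\mathbf 1_C:\ C\in\mathcal C)$ is dense in $\Lpmu$ by the Stone--Weierstrass argument from the proof of Lemma \ref{periodicgeneral}. So the whole task is to verify $({\rm H}_\kappa)$, possibly with a smaller $\kappa$. Note also that the sign convention in the hypothesis matches $({\rm H}_\kappa)$ exactly: $\varphi^{-k}(B)=a^{-k}B$, and $\mu(a^{-(n+k)}B)\geq 1-\veps$ means $\mu(\Omega\setminus\varphi^{-(n+k)}(B))\leq\veps$.

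Fix $\veps>0$, $m\in\NN$ and clopen sets $B_1,\dots,B_r$. The key step is to find a clopen subgroup $H$ such that $a^{-d}B_j=B_j$ for all $j$ whenever $a^{-d}\in H$. As in the proof of Lemma \ref{periodicgeneral}, van Dantzig's theorem and compactness let us write each $B_j$ as a finite union of right translates $H_kb_k$ of clopen subgroups. Taking $H$ to be the intersection of all the $H_k$ gives $a^{-d}B_j=B_j$ whenever $a^{-d}\in H$.

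Since $a^{d_i}\to e$ and $H$ is an open subgroup, we have $a^{-d_i}=(a^{d_i})^{-1}\in H$ for all $i\geq i_0$, for some $i_0$; we may also assume $d_{i_0}\geq m$. The hypothesis, applied with this $i_0$ and with $\veps$, gives $i\geq i_0$, $n$ and $B$ with $a^{-d_i}B=B$ and the two required measure estimates for $0\leq k\leq\kappa d_i$. Setting $d:=d_i\geq m$, we get $\varphi^{-d}(B_j)=B_j$ and $\varphi^{-d}(B)=B$, so $({\rm H}_\kappa)$ holds. If $\kappa\geq 1$, we simply replace it by $\min(\kappa,1/2)$, which keeps the estimates valid.

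The only real point is getting periodicity of the $B_j$ along the specific subsequence $(d_i)$, rather than for some unspecified $d$. This is handled by the open-subgroup argument combined with $a^{d_i}\to e$. The step needing a bit of care is that the hypothesis requires $i\geq i_0$ for \emph{every} $i_0$, and this is exactly what lets us force both $a^{-d_i}\in H$ and $d_i\geq m$ at the same time.
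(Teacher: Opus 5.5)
Your proposal is correct and follows the paper's own proof. Like the paper, you apply Theorem \ref{thm:fhcco} with $\mathcal C$ the clopen sets and verify $({\rm H}_\kappa)$ through the van Dantzig clopen-subgroup argument from Lemma \ref{periodicgeneral} combined with $a^{d_i}\to e$; you just spell out details the paper leaves implicit, namely choosing $i_0$ so that both $a^{-d_i}\in H$ and $d_i\geq m$, and shrinking $\kappa$ below $1$.
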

\begin{proof} Let $\mathcal B$ be the Borel $\sigma$-algebra of $\Omega$. Since $\Omega$ is a Polish space, the  probability space $(\Omega,\mathcal B,\mu)$ is separable. 

Let us denote by $\mathcal C$ the family of all clopen subsets of $\Omega$.  Since $\Omega$ is zero-dimensional, $\textrm{span}(\mathbf 1_{C}:\, C\in\mathcal C)$ is dense in  $\Lpmu$. We want to apply Theorem \ref{thm:fhcco} with the family $\mathcal C$, so we need to check that Condition $({\rm H}_\kappa)$ is satisfied. For that, we only need to show that if $C\subset \Omega$ is a clopen set, then $a^{-d_i}C=C$ provided $i\in\NN$ is large enough; which follows from the proof of Lemma \ref{periodicgeneral}. 
\end{proof}


\section{$\mathcal U$-frequently hypercyclic operators} \label{sec:ufhc}

\subsection{A criterion for $\mathcal U$-frequent hypercyclicity} In this section, we state a general result allowing to prove that an operator with many periodic vectors is $\mathcal U$-frequently hypercyclic. We even extend it to a more restrictive property, namely \emph{hereditary} $\mathcal U$-frequent hypercyclicity. An operator $T\in \mathfrak L(X)$ is said to be hereditarily $\mathcal U$-frequently hypercyclic if for any countable family $(V_q)_{q\in\NN}$ of nonempty open subsets of $X,$ for any family $(B_q)_{q\in\NN}$ of subsets of $\NN$ with positive upper density, there exists $x\in X$ such that $\mathcal N_T(x,V_q)\cap B_q$ has positive upper density for all $q\in\NN.$ This notion was introduced and studied in \cite{hfhc}.

\begin{proposition}\label{prop:ufhcbis}
Let $X$ be a separable Banach space and let $T\in\mathfrak L(X)$ be such that ${\rm Per}(T)$ is dense in $X$.
\begin{enumerate}
\item[\rm (a)] $T$ is $\mathcal U$-frequently hypercyclic if and only if, for any neighbourhood $W$ of $0$ in $X$, there exists $\alpha>0$ such that the following holds true: for every open set $\mathcal O\neq\varnothing$ in $X$ and any $N\in\NN$, one can find $x\in\mathcal O$ and an integer $m\geq N$ such that $\# \mathcal N_T(x,W)\cap\llbracket 1,m\rrbracket\geq \alpha m$. 
   
   \smallskip
    \item[\rm (b)] The following assertions are equivalent:
    
    \smallskip
    \begin{enumerate}
    \item[\rm (i)] For any nonempty open set $V\subset X$ and any $B\subset\NN$ with positive upper density, there exists
    $\alpha_{V,B}>0$ such that 
    \[ \left\{u\in X:\ \overline{\rm dens}\bigl(\mathcal N_T(u,V)\cap B\bigr)\geq \alpha_{V,B}\right\}\]
    is dense in $X.$
    \item[\rm (ii)] For any set $A\subset \NN$ with positive upper density and any neighbourhood $W$ of $0$ in $X$,  there exists $\alpha>0$ such that: for every open set $\mathcal O\neq\varnothing$ in $X$ and any $N\in\NN$, one can find $x\in\mathcal O$ and $m\geq N$ such that $\# \mathcal N_T(x,W)\cap A\cap \llbracket 1,m\rrbracket\geq \alpha m$. 
    \end{enumerate}

\smallskip Moreover, these assertions imply that $T$ is hereditarily $\mathcal U$-frequently hypercyclic.
\end{enumerate}
\end{proposition}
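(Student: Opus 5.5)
For (a), the necessity is the easy direction. Assume $x_0$ is a $\mathcal U$-frequently hypercyclic vector and let $W$ be a neighbourhood of $0$. I would set $\alpha:=\frac12\overline{\rm dens}\,\mathcal N_T(x_0,W')$, where $W'$ is a smaller neighbourhood with $W'+W'\subset W$. Given $\mathcal O$ and $N$, I first pick $k$ with $T^kx_0\in\mathcal O$; such $k$ exists because $x_0$ is hypercyclic. Shifting the orbit by $k$ does not change upper density, so $x=T^kx_0$ works, with infinitely many $m\ge N$ realising $\#\mathcal N_T(x,W)\cap\llbracket1,m\rrbracket\ge\alpha m$. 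This step does not even use periodic points.

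For sufficiency, I would build a $\mathcal U$-frequently hypercyclic vector by a Baire category argument. Fix a countable basis $(V_q)$ of nonempty open sets. For each $q$ choose a periodic vector $u_q\in V_q$, with period $d_q$, and a neighbourhood $W_q$ of $0$ with $u_q+W_q\subset V_q$. Since the set of neighbourhoods is directed, I would shrink $W_q$ to $W'_q$ so that $\sup_{j\le d_q}\|T^j\|\cdot W'_q\subset W_q$. Let $\alpha_q$ be the constant the hypothesis gives for $W'_q$. Define
\[G_{q,N}=\bigl\{y:\ \exists m\ge N,\ \#\mathcal N_T(y,V_q)\cap\llbracket1,m\rrbracket>\tfrac{\alpha_q}{2d_q}m\bigr\}.\]
This set is open, because it is a union over $m$ of sets determined by finitely many iterates with strict inequality. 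The key step is density. Given $\mathcal O$, apply the hypothesis to $\mathcal O-u_q$ to get $x$ with $x+u_q\in\mathcal O$ and many $n\le m$ with $T^nx\in W'_q$. For each such $n$, let $n'$ be the next multiple of $d_q$ after $n$, so $n'-n<d_q$. Then
\[T^{n'}(x+u_q)=u_q+T^{n'-n}T^nx\in u_q+W_q\subset V_q.\]
The map $n\mapsto n'$ is at most $d_q$-to-one, which gives the counting with loss factor $1/d_q$. The vector $y=x+u_q$ lies in $G_{q,N}$ for a slightly larger $m$. Finally, any $y\in\bigcap_{q,N}G_{q,N}$ satisfies $\overline{\rm dens}\,\mathcal N_T(y,V_q)\ge\alpha_q/(2d_q)>0$ for every $q$, which proves (a).

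For (b), the implication (i)$\Rightarrow$(ii) I would prove like the necessity part of (a). Given $A$ and $W$, take a periodic $u$ and a set $\mathcal O'$ near it. Apply (i) with $V$ a small ball around $0$ and $B=A$. For $x$ in the dense set from (i), $x\in\mathcal O$ and $\mathcal N_T(x,V)\cap A$ has upper density at least $\alpha_{V,A}$, which yields infinitely many suitable $m$. This direction is straightforward.

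For (ii)$\Rightarrow$(i), I would run the same translate-by-a-periodic-vector trick, with one extra ingredient. To keep the indices inside $B$, I use the residue class modulo $d$ on which $B$ has positive upper density. Replace $B$ by $A=\{n:\ n+r\in B\ \text{for some suitable}\ r<d\}$, or more concretely pick $r$ with $\overline{\rm dens}\bigl(B\cap(d\NN+r)\bigr)>0$. Then apply (ii) to $A=B\cap(d\NN+r)$ with the vector $u'=T^{-r}$-shifted periodic point. Concretely, choose the periodic $u'$ with $T^ru'=u$, namely $u'=T^{d-r}u$. Then $T^n(x+u')\in V$ exactly when $n\in A$ and $T^nx\in W$, with no loss at all. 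Density of the resulting set in (i) follows from letting $\mathcal O$ vary, and the constant is independent of $\mathcal O$, as required. The last assertion, hereditary $\mathcal U$-frequent hypercyclicity, then follows from the same Baire argument as in (a): use the sets
\[G_{q,N}=\bigl\{y:\ \exists m\ge N,\ \#\mathcal N_T(y,V_q)\cap B_q\cap\llbracket1,m\rrbracket>\tfrac{\alpha_{V_q,B_q}}2m\bigr\}.\]
These are open, and dense by (i). The main obstacle I expect is the bookkeeping in this last step, namely getting a constant that is uniform in $\mathcal O$ while keeping the sets open.
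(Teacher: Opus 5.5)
Your proposal is correct and follows essentially the paper's proof. In (a) you translate by a periodic vector and lose a factor $1/d$; you work directly on finite windows via the at-most-$d$-to-one map $n\mapsto$ next multiple of $d$, whereas the paper first passes to upper density and then picks a good residue class. In (b) you use a residue class of $B$ modulo $d$ together with the shifted periodic vector $T^{d-r}u$, exactly as the paper does.

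The one step you skip is in (ii)$\Rightarrow$(i). ``Letting $\mathcal O$ vary'' only shows that each open set $\bigcup_{m\ge N}\bigl\{y:\ \#\mathcal N_T(y,V)\cap B\cap\llbracket 1,m\rrbracket\geq\alpha m\bigr\}$ is dense. You still need the Baire intersection over $N$ (as in your part (a), and as the paper states explicitly) to obtain a dense set of $y$ with $\overline{\rm dens}\bigl(\mathcal N_T(y,V)\cap B\bigr)\geq\alpha$.
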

\begin{proof}
(a) Assume that $T$ is $\mathcal U$-frequently hypercyclic and let $y$ be a $\mathcal U$-frequently hypercyclic vector for $T.$ Let $W$ be a neighbourhood of $0$.  Let $\beta>0$ be such that $ \overline{\rm dens}\bigl(\mathcal N_T(y,W)\bigr)>\beta$ and set $\alpha=\beta/2.$ Let $\mathcal O\neq\varnothing$ be an open subset of $X$ and let $N\in\NN.$ There exists $n\in\NN$ such that $T^n y\in\mathcal O.$ Set $x=T^n y.$ There exists $m_1>\max(N+n,2n/\beta)$ such that 
\[\# \left\{k\in \llbracket 1,m_1\rrbracket:\ T^k y\in W\right\}\geq \beta m_1.\]
Then, if we set $m=m_1-n,$ 
\begin{align*}
\# \left\{k\in \llbracket 1,m\rrbracket:\ T^k x\in W\right\}&\geq \beta m_1-n\\
&\geq \alpha m.
\end{align*}

To prove the converse, note first that, by the Baire category theorem and since any set of the form $\{ x\in X:\, \# \mathcal N_T(x,W)\cap\llbracket 1,m\rrbracket\geq \alpha m\}$ is open in $X$, the assumption concerning $W$ and $\alpha$ implies that  the set $\{ x\in X:\, \overline{\rm dens}\bigl(\mathcal N_T(x,W)\bigr)\geq \alpha\}$ is dense in $X$. 

It suffices to prove that for any nonempty open set $V$, there exists $\alpha_V>0$ such that the $G_\delta$ set
\[ G_{V}:=\left\{u\in X:\ \overline{\rm dens}\bigl(\mathcal N_T(u,V)\bigr)\geq \alpha_{V}\right\}\]
is dense in $X$. Indeed, picking $(V_p)$ a countable basis of nonempty open subsets of $X$, any vector in the dense $G_\delta$ set $\bigcap_{p} G_{V_p}$ will be a $\mathcal U$-frequently hypercyclic vector for $T.$

So let $V$ be a nonempty open subset of $X$. Let $v\in V$ be a periodic point of $T$ with period $d$, and let $W_0$ be a neighbourhood of $0$ such that $v+W_0\subset V$. Choose a neighbourhood $W$ of $0$ such that $T^i(W)\subset W_0$ for $i=0,\dots ,d$, and let $\alpha>0$  be associated with this choice of $W$. Finally, let $\alpha_V:= \alpha/d$. 

Let $U$ be a nonempty open subset of $X$: we need to find $u\in U$ such that   $\overline{\rm dens}\bigl(\mathcal N_T(u,V)\bigr)\geq \alpha_{V}$. By assumption,
one can find $x$ in the nonempty open set $U-v$ such that $\overline{\rm dens}\bigl(\mathcal N_T(x,W)\bigr)\geq \alpha$. Then, one can find $i\in\llbracket 1,d\rrbracket$ such that $\overline{\rm dens}\bigl(\mathcal N_T(x,W)\cap (d\NN-i)\bigr)\geq \alpha_V$, so that $\overline{\rm dens}\bigl((\mathcal N_T(x,W)+i)\cap d\NN\bigr)\geq \alpha_V$. Now, if $k\in (\mathcal N_T(x,W)+i)\cap d\NN$ then, 
writing $x=u-v$ with $u\in U$ and $k=l+i$ with $l\in \mathcal N_T(x,W)$, we get
\[ T^k u=T^i(T^l x)+v\in W_0+v\subset V.\]
So  $\overline{\rm dens}\bigl(\mathcal N_T(u,V)\bigr)\geq \alpha_{V}$, as required.

\smallskip
(b) That (i) implies hereditary $\mathcal U$-frequent hypercyclicity  follows from the Baire category theorem. Moreover, it is clear that (i) implies (ii). 
Conversely, assume that (ii) holds true. As above,  note that the assumption concerning $A$, $W$ and $\alpha$ in (ii) implies that the set $\{ x\in X:\, \overline{\rm dens}\bigl(\mathcal N_T(x,W)\cap A\bigr)\geq \alpha\}$ is dense in $X$.

Let $V\subset X$ be nonempty and open, let $B\subset\NN$ with positive upper density and let $v$ be a periodic vector belonging to $V$. Let $W$ be  a neighbourhood of $0$ such that $v+W\subset V$ and let $d$ be the period of $v$. For $i=0,\dots,d-1,$ let $B_i=\{n\in B:\ n\equiv i\ [d]\}.$ Then at least one $B_i$, that we will call $B_p$, has positive upper density. Let us take $A=B_p$ and apply (ii) to get some $\alpha=\alpha_{V,B}>0$. 

Let $U$ be a nonempty open subset of $X$. By assumption, one can find $x\in U-T^{d-p}v$ such that 
$\overline{\rm dens}\bigl(\mathcal N_T(x,W)\cap B_p\bigr)\geq \alpha_{V,B} $. 
Write $x=u-T^{d-p}v$ with $u\in U$. Then, for any $k\in\mathcal N_T(x,W)\cap B_p$, we get \[ T^ku=T^k x+ v\in v+W\subset V,\]
so that $\overline{\rm dens}\bigl(\mathcal N_T(u,V)\cap B\bigr)\geq \alpha_{V,B}$. This shows that (i) is satisfied.
\end{proof}

\begin{remark}
The proof of (i)$\implies$(ii) in (b) as well as the ``only if'' implication in (a) do not use the density of $\textrm{Per}(T)$.
\end{remark}

\subsection{Composition operators} From Proposition \ref{prop:ufhcbis}, we now deduce a result regarding $\mathcal U$-frequent hypercyclicity of general composition operators.
\begin{theorem}\label{thm:ufhcgeneral}
Let $(\Omega,\mathcal B,\mu)$ be a separable measure space with $\mu(\Omega)<\infty$, let $\X\subset \Lomu$ be a Banach space satisfying {\rm (H1)}, {\rm (C2)} and {\rm (C3)}, and let $\varphi:(\Omega,\mathcal B,\mu)\to(\Omega,\mathcal B,\mu)$ be
a nonsingular transformation such that $C_\varphi$ is bounded on $\X$.
Let $\mathcal C:=\{B\in\mathcal B:\ \exists d\geq 1,\ \varphi^{-d}(B)=B\}$ and let us assume
that ${\rm span}(\mathbf 1_B:\ B\in\mathcal C)$ is dense in $\X$.
\begin{enumerate}
\item[\rm (1)] Assume that there exists $\alpha>0$
such that the following holds true: for any $\veps>0$ and $N\in\NN$, there exist $B\in\mathcal B$ and  an integer $m\geq N$ such that $\mu(B)\leq \veps$ and 
$\#\bigl\{k\in\llbracket 1,m\rrbracket:\ \mu(\Omega\backslash \varphi^{-k}(B))\leq \veps\bigr\}\geq \alpha m.$ 
Then $C_\varphi$
is $\mathcal U$-frequently hypercyclic.
\item[\rm (2)] Assume that for any set $A\subset\NN$ with positive upper density,
there exists $\alpha>0$
such that: for every $\veps>0$ and $N\in\NN$, there exist $B\in\mathcal B$ and $m\geq N$ such that $\mu(B)\leq \veps$ and 
{$\#\bigl\{k\in\llbracket 1,m\rrbracket:\ k\in A\textrm{ and }\mu(\Omega\backslash \varphi^{-k}(B))\leq \veps\bigr\}\geq \alpha m.$ }
Then $C_\varphi$
is hereditarily $\mathcal U$-frequently hypercyclic.
\end{enumerate}
\end{theorem}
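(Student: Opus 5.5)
We apply Proposition \ref{prop:ufhcbis}: part (a) for (1) and part (b)(ii) for (2). The first task is to check that ${\rm Per}(C_\varphi)$ is dense in $\X$. If $B\in\mathcal C$ with $\varphi^{-d}(B)=B$, then $C_\varphi^d\mathbf 1_B=\mathbf 1_{\varphi^{-d}(B)}=\mathbf 1_B$. Finite linear combinations of such indicators are periodic: take the least common multiple of the periods. By assumption their span is dense, so periodic points are dense. Let $X_0={\rm span}(\mathbf 1_B:\ B\in\mathcal C)$. As in the proof of Theorem \ref{thm:fhcco}, $X_0$ is closed under products, since $\mathcal C$ is stable under finite intersections (use the common multiple of the periods). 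Also $X_0\subset L_\infty$.

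For (1), fix a neighbourhood $W$ of $0$, say $W\supset\{h:\ \|h\|_\X<\eta\}$, and take $\alpha$ to be the constant of the hypothesis. Let $\mathcal O\neq\varnothing$ be open and $N\in\NN$. Pick $f\in X_0\cap\mathcal O$ with period $d_f$, and a radius $\rho>0$ with $f+\{\|h\|<\rho\}\subset\mathcal O$. We look for $x=\mathbf 1_{\Omega\setminus E}\,f$ for a suitable set $E$ of small measure. By (C3) and (C2), $\|x-f\|\le M\|f\|_\infty\|\mathbf 1_E\|<\rho$ once $\mu(E)$ is small. Moreover $C_\varphi^k x=C_\varphi^kf\cdot\mathbf 1_{\Omega\setminus\varphi^{-k}(E)}$. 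Here $\|C_\varphi^k f\|_\infty\le\max_{0\le j<d_f}\|C_\varphi^jf\|_\infty=:L$, which is finite because $f$ is periodic and takes finitely many values. Hence $\|C_\varphi^kx\|\le ML\,\kappa_2(\mu(\Omega\setminus\varphi^{-k}(E)))$. So we need $E$ with $\mu(E)$ small and $\mu(\Omega\setminus\varphi^{-k}(E))$ small for at least $\alpha m$ values of $k\le m$. This is exactly the hypothesis applied with $B=E$ and a small $\veps$, chosen so that $\kappa_2(\veps)$ is below $\min(\rho/(M\|f\|_\infty),\ \eta/(ML))$, with $m\ge N$. Then $x\in\mathcal O$ and $\#\mathcal N_T(x,W)\cap\llbracket 1,m\rrbracket\ge\alpha m$. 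Proposition \ref{prop:ufhcbis}(a) now gives $\mathcal U$-frequent hypercyclicity.

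Two small points need care here. First, $\mathcal O$ need not meet $X_0$ at a vector whose sup norm is controlled uniformly. This is harmless: the constants $\veps$ may depend on $f$, but $\alpha$ must not, and in the argument above $\alpha$ indeed does not. Second, $x$ need not lie in $X_0$. This is also harmless, since Proposition \ref{prop:ufhcbis}(a) only requires some $x\in\mathcal O$.

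For (2), the argument is identical. Given $A$ of positive upper density and $W$, take the $\alpha$ provided for $A$, and count only those $k\in A\cap\llbracket1,m\rrbracket$ with $\mu(\Omega\setminus\varphi^{-k}(B))\le\veps$. This verifies (b)(ii) of Proposition \ref{prop:ufhcbis}, which yields hereditary $\mathcal U$-frequent hypercyclicity.

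The only real obstacle is the uniform bound on $\|C_\varphi^kf\|_\infty$ over all $k$, which periodicity of $f$ resolves. Apart from that, we must make sure the multiplication estimate (C3) is applied in the form $\|hu\|\le M\|h\|_\infty\|u\|$ with the roles arranged correctly. That is, we write $C_\varphi^kx=C_\varphi^kf\cdot\mathbf 1_{\Omega\setminus\varphi^{-k}(E)}$, with $C_\varphi^kf$ as the $L_\infty$ factor and the indicator as the $\X$ factor. Its norm is then controlled by (C2) through $\mu(\Omega\setminus\varphi^{-k}(E))$.
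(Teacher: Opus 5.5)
Your proposal is correct and follows essentially the paper's proof. You verify that periodic points are dense, then check the condition of Proposition \ref{prop:ufhcbis}(a) (resp. (b)(ii)) using the test vector $f(\mathbf 1-\mathbf 1_B)$, the same $\alpha$ for every $W$, and the estimates from (C2) and (C3). The sup-norm bound you call the main obstacle is actually automatic: the paper takes any bounded $f\in\mathcal O$ and uses $\|C_\varphi^k f\|_\infty\le\|f\|_\infty$, which holds for every nonsingular $\varphi$, so the periodicity of $f$ is not needed there.
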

\begin{proof}
We only prove (1), the proof of (2) being completely similar. 

Observe first that  ${\rm Per}(C_\varphi)$ is dense in $\X$. We show that the assumption of Proposition \ref{prop:ufhcbis} (a) is satisfied with the same $\alpha >0$ working for any $W$, namely $\alpha$ given by (1) above.
Let $W$ be a neighbourhood of $0$ in $\X$ and $\delta>0$ such that  $W\supset\overline B(0,\delta)$. Let also $\mathcal O$ be a nonempty open subset of $\X$
and let $N\in\NN$. We need to find  $g\in \mathcal O$ and $m\geq N$ such that $\#\bigl\{ k\in\llbracket 1,m\rrbracket:\, \Vert C_\varphi^k g\Vert\leq\delta\bigr\}\geq \alpha m$.  

Let $0\neq f\in\mathcal O\cap L_\infty$, and choose $\eta>0$ such that $\overline B(f,\eta)\subset \mathcal O$.  Let also $M>0$ be  such that $\|hu\|_\X\leq M \|h\|_\infty\,\|u\|_\X$ for any $(h,u)\in L_\infty\times\X$.  
We consider $\veps>0$ such that $\|\mathbf 1_E\|< \min(\delta,\eta)/(M\|f\|_\infty)$ for all $E\in\mathcal B$ with $\mu(E)\leq \veps.$

By assumption one may find $B\in\mathcal B$ and $m\geq N$ such that $\mu(B)\leq \veps$
and 
$$\#\bigl\{k\in\llbracket 1,m\rrbracket:\ \mu(\Omega\backslash \varphi^{-k}(B))\leq\veps\bigr\}\geq\alpha m.$$
We set $g:=f(\mathbf 1-\mathbf 1_B)$ and first observe that 
$$\|g-f\|_\X\leq M \|f\|_\infty\cdot \|\mathbf 1_B\|_\X < M\|f\|_\infty\times \frac{\eta}{M\|f\|_{\infty}} = \eta.$$
Hence, $g\in\mathcal O.$ Moreover, for any $k\geq 0$ such that $\mu(\Omega\backslash \varphi^{-k}(B))\leq \veps$, we have 
\[ \Vert C_\varphi^k g\Vert_\X\leq M \Vert C_\varphi^k f\Vert_\infty\,\Vert \mathbf 1-\mathbf 1_{\varphi^{-k}(B)}\Vert_\X\leq M \leq \delta.\]
Hence,
$\#\left\{k\in\llbracket 1,m\rrbracket:\ \|C_\varphi^k g\|\leq \delta\right\}\geq \alpha m.$
\end{proof}
\begin{remark}\label{hum}
We could exchange the roles of  $B$ and $\varphi^{-k}(B)$ in (1) by requiring that $\mu(\Omega\backslash B)\leq \veps$ and $\#\bigl\{k\in\llbracket 1,m\rrbracket:\ \mu(\varphi^{-k}(B))\leq \veps\bigr\}\geq \alpha m.$ The proof is almost identical, taking now $g:=f\mathbf 1_B.$ 
\end{remark}

When we work with a translation in a zero-dimensional compact group, the statement admits a simplified form: it is not necessary to assume that  ${\rm span}(\mathbf 1_B:\ B\in\mathcal C)$ is dense in $\Lpmu$, since this is automatically true (see the proof of Lemma \ref{periodicgeneral}).
\begin{corollary}\label{cor:ufhcgroup}
Let $\Omega$ be a metrizable, compact zero-dimensional group, and let $\mu$ be a Borel probability measure on $\Omega$. Let also $a\in G$, and assume that the composition operator $C_{\tau_a}$ associated to the left translation $x\mapsto ax$ is bounded on $\Lpmu$, $1\leq p<\infty.$
\begin{enumerate}
    \item[\rm (1)] Suppose that there exists $\alpha>0$
such that the following holds true: for every $\veps>0$, one can find a Borel set $B\subset\Omega$ and an arbitrarily large integer $m$ such that $\mu(B)\leq \veps$ and 
$\#\left\{k\in\llbracket 1,m\rrbracket:\ \mu(a^{-k}B)\geq 1-\veps\right\}\geq \alpha m.$ 
Then $C_{\tau_a}$ is $\mathcal U$-frequently hypercyclic.
\item[\rm (2)] Suppose that for any set $A\subset \NN$ with $\overline{\rm dens}(A)>0$, 
there exists $\alpha>0$
such that:  for every $\veps>0$, one can find   $B\subset\Omega$ and an arbitrarily large $m$ such that $\mu(B)\leq\veps$ and 
$\#\left\{k\in\llbracket 1,m\rrbracket:\ k\in A\textrm{ and }\mu(a^{-k}B)\geq 1-\veps\right\}\geq \alpha m.$
Then $C_{\tau_a}$ is hereditarily $\mathcal U$-frequently hypercyclic.
\end{enumerate}
\end{corollary}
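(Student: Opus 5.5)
This is a direct application of Theorem \ref{thm:ufhcgeneral}, with $\X=\Lpmu$, $\varphi=\tau_a$ and $\mathcal C=\{B\in\mathcal B:\ \exists d\geq 1,\ \tau_a^{-d}(B)=B\}$. The plan is to check its hypotheses one by one and then observe that the dynamical conditions in (1) and (2) are literally the ones stated in the corollary.

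\emph{Structural hypotheses.} Since $\Omega$ is a compact metrizable space, it is Polish, so $(\Omega,\mathcal B,\mu)$ is a separable probability space; in particular $\mu(\Omega)<\infty$. The space $\Lpmu$ with $1\leq p<\infty$ satisfies (H1), (C2) (with $\kappa_2(t)=t^{1/p}$) and (C3), as already noted in the paper. The map $\tau_a$ is nonsingular, because $C_{\tau_a}$ is assumed bounded on $\Lpmu$, which forces $\mu(\tau_a^{-1}(B))\leq K\mu(B)$.

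\emph{Density of ${\rm span}(\mathbf 1_B:\ B\in\mathcal C)$.} This is the only step with real content, and it is already done in the proof of Lemma \ref{periodicgeneral}. There, every clopen set $C$ satisfies $a^{-d}C=C$ for some $d\geq 1$, using van Dantzig's theorem and the fact that some power $a^{-d}$ lies in a prescribed clopen subgroup. Hence every clopen set belongs to $\mathcal C$. By Stone--Weierstrass and zero-dimensionality, the span of indicators of clopen sets is dense in $\mathcal C(\Omega)$, and therefore dense in $\Lpmu$. So the span over $\mathcal C$ is dense as well.

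\emph{The dynamical conditions.} Since $\mu$ is a probability measure, $\mu(\Omega\setminus\tau_a^{-k}(B))\leq\veps$ is equivalent to $\mu(a^{-k}B)\geq 1-\veps$. Also, "an arbitrarily large $m$" means that for each $N$ we can take $m\geq N$. With these two translations, assumption (1) of the corollary is exactly condition (1) of Theorem \ref{thm:ufhcgeneral}, which gives $\mathcal U$-frequent hypercyclicity. In the same way, assumption (2) is exactly condition (2) of that theorem, which gives hereditary $\mathcal U$-frequent hypercyclicity.

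No real obstacle is expected. The only point needing care is the periodicity of clopen sets, and that is borrowed directly from the proof of Lemma \ref{periodicgeneral}.
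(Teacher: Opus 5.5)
Your proposal is correct and matches the paper's argument: the paper obtains the corollary by applying Theorem \ref{thm:ufhcgeneral} with $\varphi=\tau_a$. As you do, it notes that the density of ${\rm span}(\mathbf 1_B:\ B\in\mathcal C)$ holds automatically, since by the proof of Lemma \ref{periodicgeneral} every clopen set $C$ satisfies $a^{-d}C=C$ for some $d\geq 1$.
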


\smallskip One may ask if there is a converse to Theorem \ref{thm:ufhcgeneral} under some assumptions of $\mathcal X.$ 
We do not know if this is true, but one can observe the following.

\begin{remark} Assume that $\mu(\Omega)<\infty$ and that $\X$ satisfies {\rm (H1)} and {\rm (C1)}. 
	If $C_\varphi$ is $\mathcal U$-frequently hypercyclic then, for every $\veps>0$, there exists $\alpha>0$ such that: for any $\delta>0$, there exists $B\in\mathcal B$ such that \[ \mu(B)\leq \delta\qquad{\rm and}\qquad \overline{\rm dens} \bigl( \bigl\{k\in\NN:\ \mu(\Omega\backslash \varphi^{-k}(B))\leq\veps\bigr\}\bigr)\geq\alpha.\]
\end{remark}
\begin{proof}
Let $\veps>0$, and choose $\eta>0$ such that $\|\psi\|\leq\eta\implies \mu\bigl( |\psi|\geq 1\bigr)<\veps.$
Let $f$ be a $\mathcal U$-frequently hypercyclic vector for $C_\varphi,$ let $V:=\{g\in\X:\ \Vert g-\pmb 4\Vert<\eta\}$ and let $\alpha:=\overline{\rm dens}\bigl(\mathcal N_{C_\varphi}(f,V)\bigr).$
Let $\delta>0$ and let $\eta'>0$ be such that 
$\|\psi\|\leq\eta'\implies \mu\bigl( |\psi|\geq 1\bigr)<\delta.$
Replacing $f$ by some $f\circ\varphi^k,$ which does not change the value of $\alpha$, we may assume $\|f-\pmb 2\|<\eta'.$ We then set $B:=\{x\in\Omega:\ |f(x)-4|<1\}.$ Since $B\subset\{x:\ |f(x)-2|\geq 1\},$ we get $\mu(B)\leq \delta.$ Moreover, if $k\in\mathcal N_{C_\varphi}(f,V)$ then, since
\[\Omega\backslash \varphi^{-k}(B)\subset\bigl\{x\in\Omega:\ |f\circ\varphi^k(x)-4|\geq 1\bigr\}\qquad{\rm and}\qquad \|f\circ\varphi^k -\pmb 4\|<\eta,\]
we get $\mu(\Omega\backslash \varphi^{-k}(B))\leq\veps.$ So $\overline{\rm dens} \bigl( \bigl\{k\in\NN:\ \mu(\Omega\backslash \varphi^{-k}(B))\leq\veps\bigr\}\bigr)\geq\alpha$.
\end{proof}

\subsection{$\mathcal F$-hypercyclicity} In this section, we prove an abstract version of Proposition \ref{prop:ufhcbis}. To state it, we need to recall a few definitions. In what follows, we denote by $2^\NN$ the space of all subsets of $\NN$, identified with the Cantor space.

 A \emph{Furstenberg family} is a family $\mathcal F\subset 2^\NN$ which is hereditary upward for inclusion and such that all sets in $\mathcal F$ are nonempty. An operator $T\in\mathfrak L(X)$ is said to be \pmb{$\mathcal F$}\textbf{-hypercyclic} if there exists $x\in X$ such that $\mathcal N_T(x,V)\in \mathcal F$ for every open set $V\neq\varnothing$. \emph{Hereditary} $\mathcal F$-hypercyclicity is defined in the obvious way.
 
 A Furstenberg family $\mathcal F\subset 2^\NN$ is said to be \emph{right translation-invariant} if $A\in\mathcal F\implies A+n\in\mathcal F$ for every $n\in\NN$; and \emph{left translation-invariant} if $A\in\mathcal F\implies A-n\in\mathcal F$ for every $n\in\NN$, where $A-n=\{ j\in\NN\,:\, j+n\in A\}$.
 
 Note that any Furstenberg family $\mathcal F\subset 2^\NN$ can be written in an artificial way as a union of $G_\delta$ (in fact, closed) Furstenberg families; namely, $\mathcal F=\bigcup_{A\in \mathcal F} \mathcal F_A$ with $\mathcal F_A:=\{ B\in 2^\NN:\, B\supset A\}$. A Furstenberg family is  \textbf{upper} in the sense of \cite{KarlAntonio} precisely when it can be written as a union of {left translation-invariant} $G_\delta$ Furstenberg families. For example, the family $\mathcal F$ of all sets with positive upper density is upper, since $\mathcal F=\bigcup_{r>0} \mathcal F_r$ where $\mathcal F_r=\{ A\subset\NN\,:\, \overline{\rm dens} (A)\geq r\}$.
 
 A Furstenberg family $\mathcal F$ is said to be \emph{partition-regular} if, whenever $A_1,\dots ,A_d\subset\NN$ are such that $A_1\cup\cdots \cup A_d\in\mathcal F$, it follows that at least one $A_i\in\mathcal F$; and $\mathcal F$ is said to partition-regular \emph{with respect to arithmetic progressions} if this holds at least when the sets $A_i$ are arithmetic progressions of the same step $d$ (for any $d$). 
 
\begin{proposition}\label{prop:f-hc}
Let $T\in\mathfrak L(X)$ be such that the periodic vectors of $T$ are dense in $X$. Let also $\mathcal F\subset 2^\NN$ be a Furstenberg family written as $\mathcal F=\bigcup_{r\in R} \mathcal F_r$, where each $\mathcal F_r$ is a $G_\delta$ Furstenberg family. 
\begin{enumerate}
\item[\rm (1)] Assume that each family $\mathcal F_r$ is right translation-invariant and that for any neighbourhood $W$ of $0$ in $X$ and every $d\geq 1$, there exists $r\in R$ such that: for every nonempty open set $U\subset X$, one can find $x\in U$ and $i\in\llbracket 0, d\llbracket$ such that $\mathcal N_T(x,W)\cap (i+d\ZZ_+)\in\mathcal F_r$. Then $T$ is $\mathcal F$-hypercyclic.
 \item[\rm (2)] Assume that $\mathcal F$ is partition regular with respect to  arithmetic progressions and that, for any neighbourhood $W$ of $0$ in $X$ and every   $A\in\mathcal F$, there exists $r$ such that  $\left\{ x\in X:\, \mathcal N_T(x,W)\cap A\in\mathcal F_r\right\}$ 
    is dense in $X$. 
Then $T$ is hereditarily $\mathcal F$-hypercyclic. 
\end{enumerate}
\end{proposition}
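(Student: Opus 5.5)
The plan is to copy the proof of Proposition \ref{prop:ufhcbis}, replacing the upper density bounds by membership in the $G_\delta$ families $\mathcal F_r$, and to use the Baire category theorem throughout.

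\emph{Step 1 (genericity).} Fix a nonempty open set $V$ and a parameter $r$. Since $\mathcal F_r$ is $G_\delta$ in $2^\NN$, there are open sets $\mathcal U_p$ with $\mathcal F_r=\bigcap_p \mathcal U_p$. Each $\mathcal U_p$ is a union of basic cylinders, and a basic cylinder prescribes finitely many memberships and non-memberships. Because $\mathcal F_r$ is hereditary upward, one can arrange that only positive conditions are involved, that is, conditions of the form ``contains a finite set $F$''. With this arrangement, $\{u\in X:\ \mathcal N_T(u,V)\in\mathcal F_r\}$ is a countable intersection of sets of the form
\[ \bigcup_F\ \bigcap_{k\in F}\{u:\ T^k u\in V\}. \]
These sets are open, so the whole set is $G_\delta$. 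I expect this to be the main technical point to justify carefully. To get the monotone reduction, replace $\mathcal U_p$ by its upward closure $\{B:\ \exists A\in\mathcal U_p,\ A\subset B\}$. This set is still open, and the intersection over $p$ is still $\mathcal F_r$, because $\mathcal F_r$ is upward closed.

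\emph{Step 2, proof of (1).} Let $(V_q)$ be a countable basis of open sets. For each $q$, I will find $r_q$ such that $G_q=\{u:\ \mathcal N_T(u,V_q)\in\mathcal F_{r_q}\}$ is dense. Then any $u\in\bigcap_q G_q$ is $\mathcal F$-hypercyclic. To choose $r_q$, pick a periodic $v\in V_q$ of period $d$ and a neighbourhood $W_0$ of $0$ with $v+W_0\subset V_q$. Then choose a neighbourhood $W$ with $T^j(W)\subset W_0$ for $0\le j\le d$, and let $r=r_q$ be given by the hypothesis for $W$ and $d$. Given $U$, the hypothesis applied to the open set $U-v$ gives $x\in U-v$ and $i\in\llbracket 0,d\llbracket$ with $\mathcal N_T(x,W)\cap(i+d\ZZ_+)\in\mathcal F_r$. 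Pick $j\in\llbracket 1,d\rrbracket$ with $i+j\in d\ZZ_+$, and set $u=x+v\in U$. For $l$ in that set, $T^{l+j}u=T^j(T^l x)+T^{l+j}v\in W_0+v\subset V_q$. Hence $\mathcal N_T(u,V_q)\supset (\mathcal N_T(x,W)\cap(i+d\ZZ_+))+j$, which lies in $\mathcal F_r$ by right translation invariance and heredity. Thus $G_q$ is dense.

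\emph{Step 3, proof of (2).} Given $(V_q)$ and sets $B_q\in\mathcal F$, fix $q$, pick periodic $v\in V_q$ of period $d$, and choose $W$ with $v+W\subset V_q$. Split $B_q$ into its $d$ residue classes modulo $d$. By partition regularity with respect to arithmetic progressions, some class $A=B_q\cap(p+d\ZZ_+)$ belongs to $\mathcal F$. The hypothesis gives $r$ with $\{x:\ \mathcal N_T(x,W)\cap A\in\mathcal F_r\}$ dense. For $x$ in the set $U-T^{d-p}v$, put $u=x+T^{d-p}v$. For $k\in A$ we have $k\equiv p$ modulo $d$, so $T^k T^{d-p}v=v$ and therefore $\mathcal N_T(u,V_q)\cap B_q\supset\mathcal N_T(x,W)\cap A\in\mathcal F_r$.

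Hence $\{u:\ \mathcal N_T(u,V_q)\cap B_q\in\mathcal F_r\}$ is dense. It is $G_\delta$ by Step 1, applied with the extra requirement $F\subset B_q$. By Baire, the intersection over $q$ is nonempty, which gives hereditary $\mathcal F$-hypercyclicity. No translation invariance is needed in (2), since no shift of index occurs there.
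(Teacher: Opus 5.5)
Your Steps 2 and 3 follow the paper's argument. In (1) you make the same choices: a periodic $v\in V$ of period $d$, the neighbourhoods $W_0$ and $W$, and a shift of $\mathcal N_T(x,W)\cap(i+d\ZZ_+)$ by the complementary residue, handled by right translation-invariance. In (2) you use the same splitting of $B_q$ into residue classes mod $d$ and the same perturbation by $T^{d-p}v$.

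The paper simply asserts that $G_V$ and $G_{V,B}$ are $G_\delta$. Your Step 1 goes beyond it, and the step you flagged as delicate is wrong as written. Replacing each $\mathcal U_p$ by its upward closure $\{B:\ \exists A\in\mathcal U_p,\ A\subseteq B\}$ does give open upward-closed sets containing $\mathcal F_r$. However, their intersection can be strictly larger than $\mathcal F_r$. Upward closedness of $\mathcal F_r$ only gives the inclusion of $\mathcal F_r$ into the intersection, because the witnesses $A_p\subseteq B$ depend on $p$. For example, take $\mathcal F_r=\{B:\ 0\in B\}$, $\mathcal U_1=\mathcal F_r\cup\{B:\ 0\notin B,\ 1\notin B\}$ and $\mathcal U_2=\mathcal F_r\cup\{B:\ 0\notin B,\ 1\in B\}$. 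Then $\mathcal U_1\cap\mathcal U_2=\mathcal F_r$. Yet the upward closure of $\mathcal U_1$ is all of $2^\NN$, since it contains $\varnothing$, and that of $\mathcal U_2$ contains $\{1\}$. So $\{1\}\notin\mathcal F_r$ lies in both.

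The repair is to shrink rather than enlarge. Set $\mathcal V_p:=\{B:\ C\in\mathcal U_p\ \text{for every}\ C\supseteq B\}$, the largest upward-closed subset of $\mathcal U_p$.
\begin{itemize}
\item Since $\mathcal F_r$ is upward closed, $\mathcal F_r\subseteq\mathcal V_p\subseteq\mathcal U_p$, so $\bigcap_p\mathcal V_p=\mathcal F_r$ trivially.
\item $\mathcal V_p$ is open. Its complement is the downward closure of the closed set $2^\NN\setminus\mathcal U_p$, and this is closed by compactness of $2^\NN$: if $B_k\subseteq C_k$ with $C_k\notin\mathcal U_p$ and $B_k\to B$, pass to a subsequence with $C_k\to C$; then $C\notin\mathcal U_p$ and $B\subseteq C$.
\end{itemize}
With these $\mathcal V_p$ the rest of your Step 1 is correct. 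An open upward-closed set is a union of cones $\{B:\ F\subseteq B\}$ with $F$ finite. Hence the preimages under $u\mapsto\mathcal N_T(u,V)$, or $u\mapsto\mathcal N_T(u,V)\cap B_q$, are unions of finite intersections of the open sets $T^{-k}(V)$, and $G_V$, $G_{V,B}$ are indeed $G_\delta$. Compactness of $2^\NN$ is the ingredient your argument was missing.
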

\begin{proof}



$(1)$  It suffices to prove that for any nonempty open set $V$, there exists $r=r_V$ such that 
 the $G_\delta$ set 
\[ G_{V}:=\left\{u\in X:\ \mathcal N_T(u,V)\in\mathcal F_{r}\right\}\]
is dense in $X$. Indeed, picking $(V_p)$ a basis of nonempty open subsets of $X,$ any vector in the dense $G_\delta$ set $\bigcap_{p} G_{V_p}$  
will be a $\mathcal F$-hypercyclic 
vector for $T.$

So let $V$ be a nonempty open subset of $X$. Let $v\in V$ be a periodic point of $T$ with period $d$, and let $W_0$ be a neighbourhood of $0$ such that $v+W_0\subset V$. Choose a neighbourhood $W$ of $0$ such that $T^i(W)\subset W_0$ for $i=1,\dots ,d$, and let $r=r_V$  be associated with this choice of $W$. 

Let $U$ be a nonempty open subset of $X$: we need to find $u\in U$ such that   $N_T(u,V)\in \mathcal F_r$. By assumption,
one can find $i\in\llbracket 1,d\rrbracket$ and $x$ in the nonempty open set $U-v$ such that $\mathcal N_T(x,W)\cap (d\NN-i)\in\mathcal F_r$. Then $(\mathcal N_T(x,W)+i)\cap d\NN\in\mathcal F_r$ by right-translation-invariance of $\mathcal F_r$. Now, if $k\in (\mathcal N_T(x,W)+i)\cap d\NN$ then, 
writing $x=u-v$ with $u\in U$ and $k=l+i$ with $l\in \mathcal N_T(x,W)$, we get
\[ T^k u=T^i(T^l x)+v\in W_0+v\subset V.\]
So  $\mathcal N_T(u,V)\in\mathcal F_r$, as required.

\smallskip
$(2)$ It is enough to show that for any nonempty open set $V$ and for any $B\in\mathcal F$, 
there exists $r=r_{V,B}>0$ such that the $G_\delta$ set 
\[ G_{V,B}:=\left\{u\in X:\ \mathcal N_T(u,V)\cap B\in\mathcal F_{r}\right\}\]
is dense in $X$. Picking  any sequence $(V_q)$ of nonempty open subsets of $X$ and any seuence $(B_q)\subset\mathcal F$, every vector $x$ in the dense $G_\delta$-set $\bigcap_{q}G_{V_q,B_q}$ will then be such that $\mathcal N(x,V_q)\cap B_q\in\mathcal F$ for all $q\in\NN$.

So let $V$ and $B$ as above and let $v$ be a periodic vector belonging to $V$. Let $W$ be a neighbourhood of $0$ such that $v+W\subset V$, and let $d$ be the period of $v$. For $i=0,\dots,d-1,$ let $B_i:=B\cap (i+d\ZZ_+)$. Since $\mathcal F$ is partition regular with respect to arithmetic progressions, there exists some $i_0$ such that $B_{i_0}\in \mathcal F$. We apply the assumption with $A:= B_{i_0}$ to get some $r=r_{V,B}$. 

Let $U$ be any nonempty open subset of $X$; we need to find $u\in U$ such that   $N_T(u,V)\cap B\in \mathcal F_r$. By assumption, one can find $x\in U-T^{d-i_0}v$ such that $\mathcal N_T(x,W)\cap B_{i_0}\in\mathcal F_r$. Now, writing $x=u-T^{d-i_0}v$ with $u\in U$, we have $\mathcal N_T(x,W)\cap B_{i_0} \subset \mathcal N_T(u,V)$: indeed, if $k\in \mathcal N_T(x,W)\cap B_{i_0}$, then 
\[ T^ku=T^kx+ T^{k-i_0+d}v=T^kx+v\in v+W\subset V.\]
So we get  $\mathcal N_T(u,V)\cap B_{i_0}\in\mathcal F_r$, and hence $\mathcal N_T(u,V)\cap B\in\mathcal F_r$.
\end{proof}

\smallskip  Proposition \ref{prop:ufhcbis} is a special case of Proposition \ref{prop:f-hc}. Indeed, let $\mathcal F$ be the family of all sets $A\subset\NN$ with positive upper density. Then  $\mathcal F=\bigcup_{r>0} \mathcal F_r$ where each 
$\mathcal F_r:=\{ A\subset\NN\,:\, \overline{\rm dens} (A)\geq r\}$ is $G_\delta$ and translation-invariant. Moreover, $\mathcal F$ is partition-regular by subadditivity of upper density. Let us show that (a) in Proposition \ref{prop:ufhcbis} follows from (1) above. Given $W$ and $d$,  choose $\alpha >0$ associated with $W$ according to the assumption in (a), and let $r:=\alpha/d$. By the Baire category theorem and since any set of the form $\{ x\in X:\, \# \mathcal N_T(x,W)\cap\llbracket 0,m\rrbracket\geq \alpha m\}$ is open in $X$, the  set $\{ x\in X:\, \overline{\rm dens}\bigl( \mathcal N_T(x,W)\bigr)\geq\alpha\}$ is dense in $X$. So we are done by subadditivity of upper density. Similarly, (b) in Proposition \ref{prop:ufhcbis} follows from (1) above.

\smallskip Here is another consequence of Proposition \ref{prop:f-hc}. This is not new, see \cite{GMM21}.

\begin{corollary} Let $T\in\mathfrak L(X)$ be such that ${\rm Per}(T)$ is dense in $X$.
\begin{enumerate}
\item[\rm (a)] Assume that for any neighbourhood $W$ of $0$ and any open set $U\neq\varnothing$, we have $\mathcal N_T(U,W)\neq\varnothing$. Then $T$ is hypercyclic.
    \item[\rm (b)] Assume that for any neighbourhood $W$ of $0$ and any open set $U\neq\varnothing$, the set $\mathcal N_T(U,W)$ is cofinite.  
Then $T$ is topologically mixing.
\end{enumerate}
\end{corollary}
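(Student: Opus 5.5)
The plan is to derive both parts from Proposition \ref{prop:f-hc}, each time choosing a Furstenberg family $\mathcal F$ and a decomposition $\mathcal F=\bigcup_r\mathcal F_r$ that fit the hypotheses.

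For (a), take $\mathcal F$ to be the family of all infinite subsets of $\NN$. We realize it as a single $G_\delta$ family, $\mathcal F=\mathcal F_0$, since $\mathcal F_0=\bigcap_{N}\{A:\ A\cap\llbracket N,\infty\llbracket\neq\varnothing\}$ and each of these sets is open in $2^\NN$. The family is right translation-invariant. Observe that $\mathcal F$-hypercyclicity is just hypercyclicity: if every nonempty open $V$ is visited infinitely often, the orbit is dense; conversely, a dense orbit in an infinite-dimensional space visits each open set infinitely often, and we may assume $X\neq\{0\}$ is infinite-dimensional or else handle the trivial case separately.

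It remains to check hypothesis (1) of Proposition \ref{prop:f-hc}. Fix $W$, $d$, and a nonempty open $U$; we need $x\in U$ and $i$ with $\mathcal N_T(x,W)\cap(i+d\ZZ_+)$ infinite. The key step is a Baire argument. For each $N$, the set $G_N=\{x:\ \exists n\geq N,\ n\in d\ZZ_+,\ T^nx\in W'\}$ is open for $W'$ open. We show it is dense: given $U'$, pick a periodic vector $p\in U'$ of period $q$. The hypothesis applied to the open set $U'-p$ and a small neighbourhood $W''$ gives some $n$ with $T^n(U'-p)\cap W''\neq\varnothing$. This provides one return, but we need returns along $d\ZZ_+$ with arbitrarily large $n$.

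To obtain arbitrarily large returns, apply the hypothesis to $T^{M}$-shifted sets. Replace $U'$ by $T^{-M}$-preimages of $U'$, which are open and nonempty when $T$ has dense range, and dense range follows from the hypothesis. Alternatively, a cleaner route: take $y\in U'$ with $T^n(y-p)\in W''$. Then $T^{n+kqd}(y-p+p)$ is close to $T^{kqd}$ of something, which is not quite right. Instead, apply the hypothesis to the open set $(U'-p)\cap T^{-1}(\cdots)$.

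This is the main obstacle: upgrading ``$\mathcal N_T(U,W)\neq\varnothing$'' to infinitely many returns in a fixed residue class. I would handle it by exploiting the periodic vectors to shift residues, exactly as in the proof of Proposition \ref{prop:f-hc}. Concretely, given a return time $n$ for $x\in U-p$, the vector $u=x+p$ has $T^{n+jq}u\approx T^{jq}p'$ only when $T^n x$ is small under all subsequent iterates. Since that fails in general, I would instead choose $W$ so small that $T^i W\subset W_0$ for $i<dq$. Then a single return $n$ for $x$ yields a return in the class $\equiv 0\ [d]$ within a bounded shift. Largeness of $n$ comes from applying the hypothesis to $U\cap\{x:\ T^jx\notin W\text{ for }j<N\}$, which is open and nonempty for generic choices.

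For (b), take $\mathcal F$ to be the cofinite sets, written as $\bigcup_N\mathcal F_N$ with $\mathcal F_N=\{A\supset\llbracket N,\infty\llbracket\}$, which is closed. Since $\mathcal F_N$ is not right translation-invariant, I would bypass Proposition \ref{prop:f-hc} and argue directly. Fix $U,V$ and pick a periodic $v\in V$ of period $d$, with $v+W_0\subset V$. Choose $W$ with $T^iW\subset W_0$ for $i\leq d$. Since $\mathcal N_T(U-v,W)$ is cofinite, for all large $k$ there is $x\in U-v$ with $T^kx\in W$, and then $T^k(x+v)\in W_0+T^kv$. Because $T^kv$ ranges over the finitely many periodic translates of $v$, I would choose $v_k=T^{d-(k\bmod d)}v$ instead, so that $T^k(x+v_k)\in v+W_0\subset V$. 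Here $x+v_k\in U$ requires $U-v_k$ in place of $U-v$. Since there are only finitely many such sets, all hypotheses hold simultaneously for large $k$, so $\mathcal N_T(U,V)$ is cofinite and $T$ is mixing.
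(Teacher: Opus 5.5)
Your part (b) is correct, but part (a) has a genuine gap, at exactly the step you call the main obstacle. By choosing $\mathcal F=[\NN]^\infty$, hypothesis (1) of Proposition~\ref{prop:f-hc} forces you to find, in every nonempty open $U$, a \emph{single} vector $x$ with $T^nx\in W$ for infinitely many $n$. The assumption of (a) only gives one pair $(x,n)$ per open set, and none of your attempts bridges this:
\begin{itemize}
\item Applying the assumption to $T^{-M}(U')$ gives $y$ with $T^My\in U'$ and $T^ny\in W''$ for some $n$ that may be smaller than $M$. This says nothing about the orbit of $T^My$.
\item The set $U\cap\{x:\ T^jx\notin W \text{ for } j<N\}$ can be empty, for instance if $U$ is a small ball around $0$. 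Even when it is not empty, one application of the assumption gives one late return of one point, not infinitely many returns of the same point.
\item Choosing $W$ with $T^iW\subset W_0$ only moves a return time into a prescribed residue class; it produces no additional return times.
\end{itemize}

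The missing idea is short. Take $W$ open, so that $G_N:=\bigcup_{n\ge N}T^{-n}(W)$ is open. It is also dense: given $U'$, pick a neighbourhood $W'$ of $0$ with $T^jW'\subset W$ for $0\le j\le N$. Any $x\in U'$ and $n$ with $T^nx\in W'$ then satisfy $T^{n+N}x\in W$. By Baire, $\bigcap_N G_N$ is a dense set of vectors returning infinitely often to $W$, and pigeonhole gives the residue class.

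Simpler still, the paper takes $\mathcal F:=2^\NN\setminus\{\varnothing\}$, which is open (hence $G_\delta$) and right translation-invariant. With this family, $\mathcal F$-hypercyclicity is just hypercyclicity, and hypothesis (1) is immediate: pick $x\in U$ and $n$ with $T^nx\in W$, and let $i$ be the residue of $n$ mod $d$.

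For (b), your direct argument is correct and takes a different route from the paper. The paper applies part (2) of Proposition~\ref{prop:f-hc} to $[\NN]^\infty$, using that mixing is hereditary $[\NN]^\infty$-hypercyclicity. It checks the density hypothesis by a Baire category argument over the closed sets $\{x:\ \mathcal N_T(x,W)\cap A\subset F_n\}$.

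You instead fix a periodic $v\in V$ of period $d$ with $v+W_0\subset V$, and use cofiniteness of the finitely many sets $\mathcal N_T(U-T^{d-r}v,W_0)$, $0\le r<d$. For large $k\equiv r$ mod $d$ this gives $x\in U-T^{d-r}v$ with $T^kx\in W_0$. Then $u=x+T^{d-r}v\in U$ and $T^ku=T^kx+v\in V$. This is the same periodic-shift trick used inside the proof of Proposition~\ref{prop:f-hc}(2), but it needs no Baire argument and yields cofiniteness of $\mathcal N_T(U,V)$ at once. Your auxiliary $W$ with $T^iW\subset W_0$ is not needed there. The paper's route mainly serves to illustrate the abstract proposition.
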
 
\begin{proof} (a) Apply (1) above with $\mathcal F:=2^\NN\setminus\{\varnothing\}$. 

\smallskip
(b) We apply (2) above with  $\mathcal F=[\NN]^\infty$, the family of all infinite subsets of $\NN$, which is $G_\delta$ and partition regular. Note that topological mixing is equivalent to hereditary $\mathcal F$-hypercyclicity. 
{
Let $W$ be a neighbourhood of $0$ and let $A\in [\NN]^\infty$. Towards a contradiction with (2), assume that there exists a nonempty open set $U_0$ such that $\mathcal N_T(x,W)\cap A$ is finite for all $x\in U_0$. Let $(F_n)_{n\in\NN}$ be an enumeration of all finite subsets of $\NN.$ Then, the sets $\mathbf F_n:=\{x\in X:\ \mathcal N_T(x,W)\cap A\subset F_n\}$ are closed in $X$, and $U_0\subset \bigcup_{n\in\NN} \mathbf F_n$. 
By the Baire category theorem, it follows that one can find a finite set $F$ and a nonempty open set $U\subset U_0$ such that $\mathcal N_T(x,W)\cap A\subset F$ for all $x\in U$. In other words, $\mathcal N_T(U,V)\cap A\subset F$, a contradiction since $A$ is infinite and $\mathcal N_T(U,V)$ is cofinite. 
}
\end{proof}

\begin{remark} Part (a) can be slightly strengthened, as follows: $T$ is hypercyclic as soon as (i) $\mathcal N_T(U,W)\neq\varnothing$ for any $U, W$ as in (a), and (ii) for any open set $V\neq \varnothing$, one can find $z\in V$ such that $\mathcal N_T(z,V)$ has bounded gaps (we do not assume that ${\rm Per}(T)$ is dense).
\end{remark}
\begin{proof} Let $U,V$ be nonempty open sets in $X$. Choose a nonempty open set $V'\subset V$ and a neighbourhood $W$ of $0$ such that $V'+W\subset V$. By (ii), one can find $z\in V'$ and $d\in\NN$ such that $\mathcal N_T(z,V')$ has gaps of length at most $d$. Let $U':=U-z$, and let $W'$ be a neighbourhood of $0$ such that $T^i(W')\subset W$ for $i=0,\dots ,d$. By (i), one can find $x\in U'$ and $n\in\NN$ such that $T^nx\in W'$. Taking $i\leq d$ such that $i+n\in \mathcal N_T(z,V')$, we see that $u:= z+x\in U$ and $T^{i+n}u\in V'+T^iW'\subset  V$. Hence $\mathcal N_T(U,V)\neq\varnothing$, and $T$ is topologically transitive.
\end{proof}


\section{Composition operators induced by odometers} \label{sec:odometers}
\subsection{Notation} In this section, we use the notation of Subsection \ref{subsec:odometers}. We fix once and for all an odometer $(\Omega,\mathcal B,\mu,\od)$, where 
\[ \Omega=\prod_{i\geq 1} \ZZ/m_i\ZZ=:\prod_{i\geq 1} \Omega_i,\] 
$\mathcal B$ is the Borel $\sigma$-algebra of $\Omega$ 
and $\mu=\otimes_{i\geq 1} \mu_i$ is a product probability measure. We consider the operator $C_\od$ acting on $\Lpmu$, $1\leq p<\infty$; so we are assuming that the boundedness condition (\ref{eq:continuityodometer}) is satisfied. We also assume that the measure $\mu$ is non-atomic, which means that 
\[ \prod_{i=1}^\infty \eta_i=0\]
where
\[ \eta_i=\max\bigl( \mu_i(j)\,:\, j\in\Omega_i\bigr).\]

Finally, we recall that the periodic points of $C_\od$ are dense in $\Lpmu$, so that $C_\od$ is chaotic as soon as it is hypercyclic: this follows from Lemma \ref{periodicgeneral}; or, in a more elementary way, one just need to observe that if $C=[C_1,\dots ,C_n]\subset \Omega$ is an arbitrary cylinder set and $d:=\prod_{i=1}^{n} m_i$, then $\od^{-d}(C)=C$ since $\od^d(x)_i=x_i$ for every $x\in\Omega$ and $i=1,\dots ,n$ by Fact \ref{addition}.





\subsection{Hypercyclicity}


The next theorem gives a simple sufficient condition for hypercyclicity of $C_\od$. The proof relies on a probabilistic argument. For $i\geq 1,$ let us set 
\[\theta_i:=\sup\bigl\{\mu_i(D)-\mu_i(D+k):\ D\subset \Omega_i, \ k\in\ZZ
\bigr\},\]
where  addition is understood in $\Omega_i=\mathbb Z/m_i\mathbb Z$.

\begin{theorem}\label{thm:hc}
Assume that 
there exists an increasing sequence of positive integers $(i_s)_{s\geq 1}$ such that
\begin{enumerate}
\item[\rm (a)] $\displaystyle \sum_{s=1}^{\infty}\prod_{i=i_s+1}^{i_{s+1}-1}\mu_i(m_i-1)<\infty$;
\item[\rm (b)] $\displaystyle \frac 1n\left(\sum_{s=1}^n\theta_{i_s}\right)^{2}\to\infty$ as $n\to\infty$.
\end{enumerate}
Then $C_\od$ is hypercyclic on $\Lpmu$, $1\leq p<\infty$.
\end{theorem}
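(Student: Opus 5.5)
By Corollary \ref{lem:topologicallytransitive}(1)(ii), it is enough to find, for every $\veps>0$, an integer $n$ and a Borel set $B\subset\Omega$ with
\[ \mu(B)<\veps\qquad\textrm{and}\qquad \mu\bigl(\Omega\setminus\od^{-n}(B)\bigr)<\veps . \]
The space $\Lpmu$ satisfies (H1), (H2), (C1), (C2), the measure is finite, and $\od$ is a homeomorphism. The plan is to take $B$ to be a "deviation" event for a sum of independent bounded random variables, and $n$ a shift that moves the mean far enough that the shifted event becomes typical.

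\textbf{Construction.} For each $s$, the definition of $\theta_{i_s}$ provides $D_s\subset\Omega_{i_s}$ and $k_s$ with
\[ \mu_{i_s}(D_s)-\mu_{i_s}(D_s+k_s)\ \ (\textrm{or nearly}) =\theta_{i_s}. \]
Put $X_s(x)=\mathbf 1_{D_s}(x_{i_s})$. Under $\mu$ the $X_s$ are independent Bernoulli variables, and
\[ S_N=\sum_{s\le N}X_s,\qquad \mathbb E S_N=\sum_{s\le N}\mu_{i_s}(D_s),\qquad \operatorname{Var}S_N\le N . \]
Choose $n=\sum_{s\le N}k_sM_{i_s}$, with $0\le k_s<m_{i_s}$. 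By Fact \ref{addition}, $\od^{n}(x)=x\pmb+(\ldots)$ adds $k_s$ at coordinate $i_s$.

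\textbf{Carries.} A carry produced at coordinate $i_s$ propagates to $i_{s+1}$ only if every coordinate strictly between them equals $m_i-1$. By (a) and Borel--Cantelli, off a set of small measure this happens for no $s\ge s_0$. On the first $s_0$ blocks we can simply restrict to $s>s_0$ in the sum. Even so, a carry can still change coordinate $i_s$ by $+1$; this is controlled by counting only those $s$ for which no carry comes in from below, an event of small probability. So, outside a set $E$ with $\mu(E)$ small, $(\od^n x)_{i_s}=x_{i_s}+k_s$ for all relevant $s$.

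\textbf{Deviation and conclusion.} The variables $Y_s=\mathbf 1_{D_s}\bigl(x_{i_s}+k_s\bigr)$ are again independent, with
\[ \mathbb E\sum_{s\le N} Y_s=\mathbb E S_N-\sum_{s\le N}\theta_{i_s}. \]
Let
\[ B=\Bigl\{S_N\le \mathbb E S_N-\tfrac12\sum_{s\le N}\theta_{i_s}\Bigr\}. \]
By Chebyshev,
\[ \mu(B)\le \frac{4N}{\bigl(\sum_{s\le N}\theta_{i_s}\bigr)^2}\longrightarrow 0 \]
by (b). Off $E$, $\od^{-n}(B)$ is the event that $\sum_{s\le N}Y_s$ lies below $\mathbb E\sum_{s\le N}Y_s+\tfrac12\sum_{s\le N}\theta_{i_s}$, whose complement has the same Chebyshev bound. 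Hence $\mu(\Omega\setminus\od^{-n}B)\le \mu(E)+o(1)$.

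The main obstacle is the carry step: making rigorous that $\od^n$ acts coordinatewise at the positions $i_s$ up to a small exceptional set, uniformly in $N$. This is exactly where (a) is used, and the first $s_0$ blocks must be discarded with care, possibly by replacing $n$ with a variant that avoids those indices.
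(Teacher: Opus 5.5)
Your proposal is essentially the paper's proof. It uses the same indicators $\mathbf 1_{D_s}(x_{i_s})$ and the same shift $n=\sum_s k_sM_{i_s}$. Carries are excluded off the gap events $\{x_i=m_i-1\ \text{for}\ i_s<i<i_{s+1}\}$ by a union bound on a tail of (a) after discarding the first indices, so Borel--Cantelli is not needed. Off these events no carry enters any $i_s$ at all, which makes your ``even so'' caveat unnecessary. Concentration then comes from (b). The paper uses criterion (i) of Corollary \ref{lem:topologicallytransitive} with the large set $B_X\cap B_Y$ and Hoeffding's inequality; your criterion (ii) with Chebyshev works equally well.

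One slip needs fixing. $Y_s=\mathbf 1_{D_s}(x_{i_s}+k_s)$ has mean $\mu_{i_s}(D_s-k_s)$, not $\mu_{i_s}(D_s+k_s)$. So you should choose $k_s$ with $\mu_{i_s}(D_s)-\mu_{i_s}(D_s-k_s)=\theta_{i_s}$. This is allowed because $k$ ranges over all of $\ZZ$ in the definition of $\theta_i$, and the supremum is attained, so no ``nearly'' is needed.
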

\begin{proof}
Let $\veps>0.$ Since for all $N\geq 0,$
$$\frac 1n\left(\sum_{s=N+1}^{N+n} \theta_{i_s}\right)^{2}\xrightarrow{n\to\infty}\infty,$$
we can assume, forgetting the first $N$ terms of $(i_s)$ for some large enough $N$,  that we have selected an integer $n\geq 1$ such that 
\[ \sum_{s=1}^{n-1} \prod_{i=i_s+1}^{i_{s+1}-1}\mu_i(m_i-1)<\veps\qquad{\rm and} \qquad \exp\left[-\frac 2{9n}\left(\sum_{s=1}^n  \theta_{i_s}\right)^2\,\right]<\veps.\]
The reason for putting the strange factor $2/9$ will become clear in a few lines.

For $s=1,\dots,n,$ choose $D_{i_s}\subset \Omega_{i_s}$ and $k_{i_s}\in\llbracket 0, m_{i_s}-1\rrbracket$ such that \[ \theta_{i_s}=\mu_{i_s}(D_{i_s})-\mu_{i_s}(D_{i_s}+k_{i_s}).\]
We consider the random variables $X_s$ and $Y_s$ defined on $(\Omega,\mathcal B,\mu)$ as follows:
\[X_s(x):=\left\{
\begin{array}{ll}
1&\textrm{if }x_{i_s}\in D_{i_s},\\
0&\textrm{otherwise}
\end{array}
\right.\quad \textrm{ and }\quad
Y_s(x):=\left\{
\begin{array}{ll}
1&\textrm{ if }x_{i_s}\in D_{i_s}+k_{i_s},\\
0&\textrm{ otherwise.}
\end{array}
\right.\]
Note that $X_1,\dots ,X_n$ are independent Bernoulli variables with $\mathbb E (X_s)=\mu_{i_s}(D_{i_s})$; and $Y_1,\dots ,Y_n$ are independent Bernoulli variables with $\mathbb E (Y_s)=\mu_{i_s}(D_{i_s} +k_{i_s})$.

\smallskip Let us define 
\begin{align*}
B_X&:=\left\{x\in\Omega:\ \sum_{s=1}^n X_s(x) \geq \sum_{s=1}^n \left(\mu_{i_s}(D_{i_s})-\frac{\theta_{i_s}}3\right)\right\},\\
B_Y&:=\left\{x\in\Omega:\ \sum_{s=1}^n Y_s(x) \leq \sum_{s=1}^n \left(\mu_{i_s}(D_{i_s}+k_{i_s})+\frac{\theta_{i_s}}3\right)\right\}
\end{align*}
and, for $s=1,\dots,n-1,$ 
$$E_s:=\{x\in\Omega:\ x_i=m_i-1\textrm{ for  }i=i_s+1,\dots,i_{s+1}-1\}.$$
Let finally 
\begin{align*}
B&:=\big(B_X\cap B_Y\big)\backslash \bigcup_{s=1}^{n-1} E_s\,, \\{\rm and}\qquad 
k&:=\sum_{s=1}^n k_{i_s}\prod_{i=1}^{i_s-1}m_i\,.
\end{align*}

We claim that $\mu(B)>1-3\veps$ and that $B\cap \od^k(B)=\varnothing,$ which will show that $C_\od$ is hypercyclic by Corollary \ref{lem:topologicallytransitive}.

\smallskip
Let $x\in B$. By Fact \ref{addition}, we have $\od^k(x)=x\pmb+(0,\dots ,k_{i_1},0\dots ,k_{i_2},0\dots ,k_{i_n},0\dots)$. Hence $(\od^k(x))_{i_s}=x_{i_s}+k_{i_s}\ (\!\!\!\!\mod m_{i_s})$ for $s=1;$ and this remains true for $s=2,\dots,n$ since $x$ does not belong to $E_{s-1}$: indeed, there exists $i\in(i_{s-1},i_s)$ such that $x_i\neq m_i-1,$ which prevents
the occurence of a carry   at the $i_s$-th position. So we have $Y_s(\od^k(x))=X_s(x)$ for $s=1,\dots ,n$. Since $x\in B_X$, it follows that
\begin{align*}
\sum_{s=1}^n Y_s(\od^k(x))&=\sum_{s=1}^n X_s(x)\\
&\geq \sum_{s=1}^n \left(\mu_{i_s}(D_{i_s})-\frac{\theta_{i_s}}3\right)\\
&>\sum_{s=1}^n  \left(\mu_{i_s}(D_{i_s}+k_{i_s})+\frac{\theta_{i_s}}3\right), 
\end{align*}
so that $\od^k(x)\not\in B_Y$. This shows that $B\cap \od^k(B)=\varnothing.$ 

By definition of $B_X$, we have
\begin{align*}
\mu(B_X)&= 1-\mu\left(\sum_{s=1}^n X_s< \sum_{s=1}^n \left(\mu_{i_s}(D_{i_s})-\frac{\theta_{i_s}}3\right)\right)\\
&=1-\mu \left(\sum_{s=1}^n \bigl(X_s-\mathbb E(X_s)\bigr)<-\frac 13\sum_{s=1}^n \theta_{i_s}\right).
\end{align*}
Since $X_1,\dots ,X_n$ are independent Bernoulli variables, it follows, by Hoeffding's inequality (see e.g. \cite{BLM}), that
$$\mu(B_X)\geq 1-\exp\left(-\frac 2{9n}\left(\sum_{s=1}^n \theta_{i_s}\right)^2\right);$$
and the choice of $n$ ensures that $\mu(B_X)>1-\veps.$ Similarly, $\mu(B_Y)>1-\veps.$ Since
$$\sum_{s=1}^{n-1}\mu(E_s)=\sum_{s=1}^{n-1}\prod_{i=i_s+1}^{i_{s+1}-1}\mu_i(m_i-1)<\veps,$$
we conclude that $\mu(B)>1-3\varepsilon$.
\end{proof}

Theorem \ref{thm:hc} allows us to prove a strengthened version of \cite[Corollary 3.5]{BDDD22}.
For $i\geq 1,$ we set \[\delta_i:=\min\{\mu_i(j):\ j\in \Omega_i\},\]
and we recall that
\[\eta_i=\max\{\mu_i(j):\ j\in\Omega_i\}.\]

Observe that \[ \theta_i\geq \eta_i-\delta_i.\]
 Indeed, assuming that $\eta_i-\delta_i>0$, choose $j,k\in \Omega_i$ such that $\mu_i(j)=\eta_i$ and $\mu_i(j+k)=\delta_i$, and take $D=\{ j\}$ in the definition of $\theta_i$.

\begin{corollary}\label{cor:hc}
If $\limsup\, (\eta_i-\delta_i)>0$, then $C_\od$ is hypercyclic.
\end{corollary}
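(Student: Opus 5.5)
We will derive the corollary from Theorem \ref{thm:hc}, using the observation just made that $\theta_i\geq \eta_i-\delta_i$. Set $c:=\limsup_i(\eta_i-\delta_i)>0$. The set $I:=\{i:\ \eta_i-\delta_i> c/2\}$ is then infinite. The plan is to choose the sequence $(i_s)$ inside $I$ sparse enough that condition (a) of Theorem \ref{thm:hc} holds. Condition (b) then comes essentially for free, since $\theta_{i_s}\geq c/2$ for all $s$ gives
\[
\frac1n\Bigl(\sum_{s=1}^n\theta_{i_s}\Bigr)^2\geq \frac{c^2}{4}\,n\to\infty .
\]

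For condition (a), note that each factor satisfies $\mu_i(m_i-1)\leq \eta_i$. So it suffices to make $\prod_{i=i_s+1}^{i_{s+1}-1}\eta_i\leq 2^{-s}$. This is where non-atomicity enters: it gives $\prod_{i\geq 1}\eta_i=0$, and since all $\eta_i>0$, every tail product $\prod_{i>N}\eta_i$ vanishes as well. We build the sequence inductively. Suppose $i_s\in I$ has been chosen. Pick $N>i_s$ such that $\prod_{i=i_s+1}^{N}\eta_i\leq 2^{-s}$. Then let $i_{s+1}$ be any element of $I$ with $i_{s+1}>N$, which exists because $I$ is infinite. Since every factor is at most $1$, the product over $(i_s,i_{s+1})$ is at most the product over $(i_s,N]$, hence at most $2^{-s}$. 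Summing over $s$ gives (a).

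With (a) and (b) both verified, Theorem \ref{thm:hc} yields that $C_\od$ is hypercyclic. There is no real obstacle here. The only points to check are that non-atomicity indeed forces the tail products to vanish, which holds because $\prod\eta_i=0$ with positive factors, and that inserting extra factors $\leq 1$ can only decrease the products.
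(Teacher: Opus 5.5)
Your proof is correct and follows the paper's argument. You choose indices where $\eta_i-\delta_i$ is bounded below, so that (b) follows from $\theta_i\geq\eta_i-\delta_i$, and you thin this sequence using $\prod_i\eta_i=0$ and $\mu_i(m_i-1)\leq\eta_i$ to obtain (a); your inductive construction just makes explicit the paper's ``upon extracting a subsequence'' step.
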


\begin{proof} By assumption, there exist $c >0$ and an increasing sequence of integers $(i_s)_{s\geq 1}$ such that $\eta_{i_s}-\delta_{i_s}\geq c$ for all $s\geq 1.$ 
Then, Condition (b) of Theorem \ref{thm:hc} is clearly satisfied
since $\theta_{i_s}\geq \eta_{i_s}-\delta_{i_s}\geq c$ for all $s\geq 1.$ Moreover, since $\prod_{i=1}^\infty \eta_i=0$, we may assume, upon extracting a subsequence of $(i_s)$, that 
$\prod_{i=i_s+1}^{i_{s+1}-1}\eta_i\leq 2^{-s}$ for all 
$s\geq 1$, so that Condition (a) is satisfied as well.
\end{proof}

In particular, we may apply Corollary \ref{cor:hc} as soon as $\limsup\,  \eta_i>1/2$ (whereas in \cite{BDDD22} hypercyclicity was only proved when $\limsup \,\eta_i=1$). 

\smallskip 
We may also conclude that $C_\od$ is hypercyclic  as soon as $\limsup \eta_i>0$ and $\delta_i\to 0$.  For example, this holds for the so-called \emph{Ornstein odometer}.

\begin{example}
Let $(\Omega,\mathcal B,\mu,\od)$ be the Ornstein odometer,  defined by 
$\Omega_i=\llbracket 0,i\rrbracket$, $\mu_i(0)=1/2$ and $\mu_i(j)=1/2i$ for  $1\leq j\leq i$. Then 
 $C_\od$ is (bounded  and) hypercyclic on $\Lpmu$.
\end{example}

\smallskip
Finally, Corollary \ref{cor:hc} allows us to completely characterize the hypercyclic composition operators induced by odometers with the same measure on each component.

\begin{corollary}
Let $N\geq 2$, and le $\nu$ be a probability measure on $\llbracket 0,N-1\rrbracket$. Assume that  $\Omega_i=\llbracket 0,N-1\rrbracket$ for all $i\geq 1$, and that all measures $\mu_i$ are equal to $\nu$. 
Then $C_\od$ is bounded on $L_p$ if and only if $\nu(0)\geq \nu({N-1})$; and when this condition is satisfied, $C_\od$ is hypercyclic on $\Lpmu$ if and only if $\nu$ is not the uniform distribution on $\llbracket 0,N-1\rrbracket$, \textit{i.e.} $\mu$ is not the Haar measure of the compact group $(\Omega,\pmb +)$.
\end{corollary}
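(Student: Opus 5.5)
Three things have to be checked: the boundedness criterion, hypercyclicity when $\nu$ is not uniform (via Corollary \ref{cor:hc}), and non-hypercyclicity when $\nu$ is uniform.

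\emph{Boundedness.} I would specialise the criterion \eqref{eq:continuityodometer} to $\mu_i=\nu$ and $m_i=N$ for all $i$. The quantity to bound is
\[
\left(\frac{\nu(N-1)}{\nu(0)}\right)^{l-1}\sup_{j}\frac{\nu(j-1)}{\nu(j)} .
\]
The second factor is a fixed finite constant, since $\nu(j)>0$ for all $j$. So the supremum over $l\geq 1$ is finite if and only if $\nu(N-1)/\nu(0)\leq 1$, i.e. $\nu(0)\geq \nu(N-1)$.

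\emph{Hypercyclicity when $\nu$ is not uniform.} Then $\eta_i-\delta_i=\max\nu-\min\nu=:c>0$ for every $i$, so $\limsup(\eta_i-\delta_i)>0$. Non-atomicity holds because $\eta_i=\max\nu<1$, as $N\geq 2$ and all weights are positive; hence $\prod\eta_i=0$. Corollary \ref{cor:hc} then gives hypercyclicity directly.

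\emph{Non-hypercyclicity when $\nu$ is uniform.} Here $\mu$ is the Haar measure of $(\Omega,\pmb+)$ and $\od$ is a translation, so $\od$ preserves $\mu$. Consequently $C_\od$ is an isometry of $\Lpmu$, since $\|f\circ\od\|_p=\|f\|_p$. An isometry is never hypercyclic: every orbit stays on a sphere, so it cannot approach both $0$ and vectors of large norm. Alternatively, one can use the criterion of Corollary \ref{lem:topologicallytransitive}(1)(i). It would require $B$ with $\mu(B)>1/2$ and $B\cap\od^n(B)=\varnothing$, which is impossible since $\mu(\od^n(B))=\mu(B)$. Either argument suffices.

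The main obstacle is only the bookkeeping in the boundedness condition, in particular the convention that $j-1$ means $N-1$ when $j=0$, and making sure the inner supremum stays bounded. Everything else follows from earlier results.
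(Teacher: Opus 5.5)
Your proof is correct and follows essentially the same route as the paper. You specialise \eqref{eq:continuityodometer} to obtain $\nu(N-1)\leq\nu(0)$, observe that $C_\od$ is an isometry when $\mu$ is the Haar measure, and otherwise apply Corollary \ref{cor:hc}, since $\eta_i-\delta_i$ is then a positive constant. Your explicit check of non-atomicity and the alternative argument via Corollary \ref{lem:topologicallytransitive} are correct additions; note also that the inner supremum is positive as well as finite, which is what the ``only if'' direction of the boundedness criterion uses.
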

\begin{proof} Since $\mu_i=\nu$ for all $i$, the boundedness condition (\ref{eq:continuityodometer}) for $C_\od$ reduces to 
\[ \sup_{l\geq 1}\left(\frac{\nu(N-1)}{\nu(0)}\right)^{l-1}<\infty,\]
\textit{i.e.} $\nu({N-1})\leq \nu(0)$. 
If $\nu$ is the uniform distribution, then $\od$ is measure-preserving, so $C_\od$ is an isometry and hence it is not hypercyclic. Otherwise, $\eta_i-\delta_i$ is positive and does not depend on $i$, so we may apply Corollary \ref{cor:hc}.
\end{proof}

\smallskip
Let us now discuss an example of a binary odometer with $\lim \eta_i=1/2=\lim \delta_i.$

\begin{example}\label{alpha} Let $\alpha>0$. Let us assume that  $\Omega_i=\{0,1\}$ for all $i\geq 1$,  and that $\mu_i(0)=\frac 12+\frac1{i^\alpha}\cdot$ 
\begin{itemize}
\item[\sbt] if $\alpha\in(0,1/2)$, then $C_\od$ is hypercyclic;
\item[\sbt] if $\alpha\in(1,\infty)$, then $C_\od$ is not hypercyclic.
\end{itemize}
\end{example}
\begin{proof}
We first observe that $C_\od$ is easily seen to be bounded on $L_p$, for any $\alpha>0.$

Suppose that $\alpha\in(0,1/2)$. Choose $\beta>1$ such that $\alpha \beta<1/2$, and let $i_s=\lfloor s^\beta\rfloor$ for all $s\geq 1$. Then 
$i_{s+1}-i_s\sim \beta s^{\beta-1}$ as $s\to\infty$, so  Condition (a) in Theorem \ref{thm:hc} is satisfied since $\mu_i(m_i-1)=\mu_i(1)\leq 1/2$ for all $i\geq 1$ and the series 
$\sum_s 2^{-c s^{\beta-1}}$ 
is convergent for any $c>0$. 
Moreover, since $\varepsilon_i=\frac2{i^\alpha}$ for all $i\geq 1$, we see that
\[\sum_{s=1}^n \theta_{i_s} 
=\sum_{s=1}^n \frac 2 {i_s^\alpha} \sim  c_\beta n^{1-\alpha\beta}\quad\hbox{as $n\to \infty$},\]
which allows us to conclude that Condition (b) in Theorem \ref{thm:hc} is satisfied since $1-\alpha\beta>1/2$. 

Suppose that $\alpha>1$.  Let us observe that for all $i\geq 1,$
\[ \frac{\eta_i }{\delta_i}=\frac{\frac 12+\frac1{i^\alpha}}{\frac 12-\frac 1{i^\alpha}}=1+\frac 4{i^\alpha} +o\left(\frac1{i^\alpha}\right)\quad \hbox{as $i\to\infty$}.\]
In particular, the infinite product $\prod_{i\geq 1} ({\eta_i }/{\delta_i})$ is convergent, which prevents $C_\od$ from being hypercyclic by \cite[Theorem 3.8]{BDDD22}.
\end{proof}

\begin{question}
What happens in Example \ref{alpha} for $\alpha\in[1/2,1]$?
\end{question}

\begin{question}
A look at the proof of  \cite[Theorem  3.8]{BDDD22} reveals that when the infinite product $\prod_{i\geq 1} ({\eta_i }/{\delta_i})$ 
is convergent, the operator $C_\od$ is in fact power-bounded, \textit{i.e.} $\sup_{n\in\NN} \|C_\od^n\|<\infty$ is bounded. Is it true that $C_\od$ is hypercyclic as soon as it is not power-bounded?
\end{question}



\subsection{Mixing}

In \cite{BDDD22}, it is shown that $C_\od$ is topologically mixing if $\lim_{i\to\infty} \eta_i=1,$ and that the converse is true provided the sequence $(m_i)$ is bounded. In this section, our first aim is to get a characterization working even for unbounded sequences $(m_i)$. For this, we need to introduce a new sequence: for $i\geq 1,$ we set 
\[\kappa_i:=\inf_{j\in\llbracket 1,m_i-1\rrbracket}\sup\bigl\{\mu_i(D):\ D\subset \llbracket 0,m_i-1\rrbracket\textrm{ and }(D+j)\cap D=\varnothing\bigr\}.\]
Here (and in the proof of the next theorem),  addition is understood as  addition in $\ZZ_+$, 
  not in $\mathbb Z/m_i\mathbb Z.$

\begin{theorem}\label{thm:mixing}
 $C_\od$ is topologically mixing if and only if $\lim\limits_{i\to\infty}\kappa_i=1.$
\end{theorem}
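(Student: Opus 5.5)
Both directions go through the characterization of topological mixing in Corollary \ref{lem:topologicallytransitive}(2)(i): $C_\od$ is mixing if and only if for every $\veps>0$ there is $n_0$ such that every $n\geq n_0$ admits a Borel set $B$ with $\mu(\Omega\setminus B)<\veps$ and $B\cap\od^n(B)=\varnothing$. The corollary applies because $\mu$ is a probability measure, $\od$ is a homeomorphism and $L_p$ satisfies (H1), (H2), (C1), (C2). Throughout, for each $i$ and each $j\in\llbracket 1,m_i-1\rrbracket$ I use a set $D\subset\llbracket 0,m_i-1\rrbracket$ with $(D+j)\cap D=\varnothing$, addition in $\ZZ_+$. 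By definition of $\kappa_i$ as an infimum of suprema, such a $D$ can be chosen with $\mu_i(D)\geq\kappa_i-\eta$, for any $\eta>0$.

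\emph{Necessity.} Suppose $\kappa_i\not\to 1$. Then there are $c>0$ and infinitely many $i$ with $\kappa_i<1-c$. For such $i$, pick $j_i\in\llbracket 1,m_i-1\rrbracket$ with $\sup\{\mu_i(D):(D+j_i)\cap D=\varnothing\}<1-c$, and put $n_i:=j_iM_i$, so that $n_i\to\infty$. By Fact \ref{addition}, $\od^{n_i}$ adds $j_i$ at coordinate $i$ with no lower digits involved. Hence for $x=(x',t)$, with $t$ the $i$-th coordinate and $x'$ the others, and $t+j_i<m_i$, we get $\od^{n_i}(x',t)=(x',t+j_i)$. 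If $B\cap\od^{n_i}(B)=\varnothing$, then each fibre $B_{x'}=\{t:(x',t)\in B\}$ satisfies $(B_{x'}+j_i)\cap B_{x'}=\varnothing$ in $\ZZ_+$, since sums $\geq m_i$ are automatically outside. Therefore $\mu_i(B_{x'})<1-c$ for every $x'$, and Fubini gives $\mu(B)\leq 1-c$. Taking $\veps<c$ contradicts mixing.

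\emph{Sufficiency.} Assume $\kappa_i\to1$, and write $n=\sum_i k_iM_i$. Let $l=l(n)$ be the index of the top nonzero digit, so $l(n)\to\infty$ as $n\to\infty$. For $x\in\Omega$ and $y=\od^n(x)$, the coordinates below $l$ produce a carry $c\in\{0,1\}$ into position $l$. There are two cases.
\begin{itemize}
\item[$\sbt$] If $x_l+k_l+c<m_l$, then $y_l=x_l+k_l+c$.
\item[$\sbt$] Otherwise $y_l=x_l+k_l+c-m_l$, i.e. $y_l+(m_l-k_l-c)=x_l$.
\end{itemize}
Take disjointness sets $D^{(1)},D^{(2)},D^{(3)},D^{(4)}$ at coordinate $l$ for the shifts $j=k_l$, $k_l+1$, $m_l-k_l$ and $m_l-k_l-1$, keeping only those lying in $\llbracket 1,m_l-1\rrbracket$. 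Let $B=\{x: x_l\in\bigcap_r D^{(r)}\}$, so that $\mu(\Omega\setminus B)\leq 4(1-\kappa_l)+4\eta$. In the first case, $y_l\in D^{(r)}+j$ for $j=k_l$ or $k_l+1$, so $y_l\notin D^{(r)}$. In the second case, $y_l+j\in D^{(r)}$ with $j=m_l-k_l-c$, and disjointness forces $y_l\notin D^{(r)}$. Either way $y\notin B$, so $B\cap\od^n(B)=\varnothing$. Since $\kappa_l\to1$ as $n\to\infty$, this gives the criterion.

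\emph{Main obstacle.} The hardest point is the degenerate shifts. When $k_l=m_l-1$ and $c=1$ we get $j=0$, and the carry leaves $y_l=x_l$, so coordinate $l$ gives no separation. I would first try to handle this by also using coordinate $l+1$. In that situation a carry necessarily enters position $l+1$, where every digit of $n$ is zero, so $y_{l+1}=x_{l+1}+1$ unless $x_{l+1}=m_{l+1}-1$. A set $D$ at coordinate $l+1$ with $(D+1)\cap D=\varnothing$ and $\mu_{l+1}(D)\geq\kappa_{l+1}-\eta$ separates $x$ from $y$ when there is no wrap. The remaining wrap case $x_{l+1}=m_{l+1}-1$ then sends $y_{l+1}$ to $0$, and I would try to control it by excluding it using the shift $j=m_{l+1}-1$ at coordinate $l+1$, in the same way as the second case above. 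Checking that these extra exclusions cost only $O(1-\kappa_{l+1})$ in measure is the step I expect to need the most care.
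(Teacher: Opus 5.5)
Your proof is correct. It uses the same criterion (Corollary~\ref{lem:topologicallytransitive}(2)(i)) and the same test shifts $n=jM_i$ as the paper, but both halves are organised differently.

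\textbf{Necessity.} The paper argues directly. From a set $B$ with $\mu(B)>1-\veps/2$ and $B\cap\od^{k}(B)=\varnothing$, it builds the ``majority'' set $D=\{a:\widehat{\mu_i}(\widehat{B_a})>1/2\}$ and obtains $\kappa_i\geq 1-\veps$, losing a factor $2$. Your fibre argument observes that every section $B_{x'}$ is itself a disjointness set for the shift $j_i$. Fubini then gives $\mu(B)\leq\sup\{\mu_i(D):(D+j_i)\cap D=\varnothing\}$ with no loss, which is both shorter and sharper.

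\textbf{Sufficiency.} The paper uses only the shifts $k_l,k_l+1$ at coordinate $l$. It pushes every wrap-around at coordinate $l$ to coordinate $l+1$ through the carry, using a single set $D_{l+1}$ with $(D_{l+1}+1)\cap D_{l+1}=\varnothing$. You instead absorb the wrap at coordinate $l$ with the reverse shifts $m_l-k_l-c$, and need coordinate $l+1$ only when $k_l=m_l-1$ and $c=1$. The paper's version is more economical: three sets instead of up to six.

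However, the paper's first bullet, ``$(\od^k(x))_{l+1}=x_{l+1}+1$ and $x_{l+1}+1\notin D_{l+1}$'', passes over the case $x_{l+1}=m_{l+1}-1$, where the carry sends that coordinate to $0$. Disjointness of $D_{l+1}$ and $D_{l+1}+1$ in $\ZZ_+$ does not prevent $D_{l+1}$ from containing both $m_{l+1}-1$ and $0$. Your extra set for the shift $m_{l+1}-1$ is exactly what closes this case, so your version is the more careful one.

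\textbf{Your ``main obstacle''.} The sketch works exactly as you stated it. Suppose $x_{l+1}\in E\cap E'$, where $(E+1)\cap E=\varnothing$ and $(E'+(m_{l+1}-1))\cap E'=\varnothing$.
\begin{itemize}
\item[$\sbt$] If $x_{l+1}<m_{l+1}-1$, then $y_{l+1}=x_{l+1}+1\notin E$.
\item[$\sbt$] If $x_{l+1}=m_{l+1}-1$, then $y_{l+1}=0\notin E'$, since otherwise $m_{l+1}-1\in(E'+(m_{l+1}-1))\cap E'$.
\end{itemize}
Both $E$ and $E'$ can be chosen with $\mu_{l+1}$-measure at least $\kappa_{l+1}-\eta$. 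The total excluded measure is therefore at most $4(1-\kappa_l+\eta)+2(1-\kappa_{l+1}+\eta)$. This is small once $l=l(n)$ is large, so no further care is needed.
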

\begin{proof} The proof relies on Corollary \ref{lem:topologicallytransitive}.

\smallskip
Assume first that $C_\od$ is mixing. Given $\veps>0$, we need to find $i_0$ such that $\kappa_i\geq 1-\varepsilon$ for all $i\geq i_0$. By Corollary \ref{lem:topologicallytransitive}, there exists $k_0\in\mathbb N$ such that, for all $k\geq k_0,$ we may find $B\in\mathcal B$ with $\mu(B)>1-\veps/2$ and 
$\od^k(B)\cap B=\varnothing.$ Let $i_0\geq 1$ be such that $m_1\cdots m_{i_0-1}>k_0$; we check that $i_0$ works. 

Let us fix $i\geq i_0,$ and let $j\in\llbracket 1 ,m_i-1\rrbracket$. We set $k:=jm_1\cdots m_{i-1}$ and we choose $B\in\mathcal B$ such that $\mu(B)>1-\veps/2$ and 
$\od^k(B)\cap B=\varnothing.$

For $a\in \Omega_i,$ we set $B_a:=\{x\in B:\ x_i=a\}$, and we denote by $\widehat{\,B_a}$ the projection of $B_a$ onto $\prod_{r\neq i}\Omega_r.$ Let us also denote by $\widehat{\,\mu_i}$ the measure $\otimes _{r\neq i}\mu_r.$  We observe that if $0\leq a\leq m_i-1-j$, then $\widehat{\,B_a}\cap \widehat{\,B_{a+j}}=\varnothing.$ Indeed, otherwise there exists $(x_r)_{r\neq i}\in \prod_{r\neq i}\Omega_r$ such that 
\[ x=(x_1,\dots,x_{i-1},a,x_{i+1},\dots)\in B\quad\textrm{ and }\quad x'=(x_1,\dots,x_{i-1},a+j,x_{i+1},\dots)\in B;\]
but $\od^k(x)=x'$ and this contradicts  $\od^k(B)\cap B=\varnothing.$ 

We now define 
$$D:=\big\{a\in\llbracket 0,m_i-1\rrbracket :\ \widehat{\,\mu_i}\bigl(\widehat{\,B_a}\bigr)> 1/2\bigr\}.$$
Note that if $a,a'\in D$, then $\widehat{\,B_a}\cap \widehat{\,B_{a'}}\neq\varnothing$. Hence $D\cap (D+j)=\varnothing$, since otherwise there exists $a\in D\cap\llbracket 0,m_{i}-1-j\rrbracket$ such that $a'=a+j\in D$ and we know that $\widehat{\,B_a}\cap \widehat{\,B_{a+j}}=\varnothing.$ 
Moreover, 
\begin{align*}
1-\frac{\veps}2\leq \mu(B)&=\sum_{a\in\Omega_i}\, \mu_i(a)\widehat{\,\mu_i}\bigl(\widehat{\, B_a}\bigr)\\
&\leq \mu_i(D)+\sum_{a\in \Omega_i\backslash D}\mu_i(a)\,\widehat{\,\mu_i}\bigl(\widehat{\,B_a}\bigr)\\
&\leq 1-\mu_i (\Omega_i\backslash D)+\frac 12\,\mu_i (\Omega_i\backslash D).
\end{align*}
So we get $\mu_i(\Omega_i\backslash D)\leq \veps$, \textit{i.e.} $\mu_i(D)\geq 1-\varepsilon$. Thus, for any $j\in\llbracket 1 ,m_i-1\rrbracket$ we can find a set $D\subset \llbracket 0,m_i-1\rrbracket$ such that $D\cap (D+j)=\varnothing$ and  $\mu_i(D)\geq1-\veps$; in other words, $\kappa_i\geq 1-\veps.$

\smallskip

Conversely, assume that $\lim_{i\to\infty}\kappa_i=1.$ 
This time, given $\veps>0$, we need to find $k_0\in\NN$ such that, for any $k\geq k_0$, there exists $B\subset\Omega$ such that $\mu(\Omega\setminus B)\leq \varepsilon$ and $\od^k(B)\cap B=\varnothing$. 

By assumption, there exists $i_0\geq 1$ such that the following holds true: for every $i\geq i_0$ and all $j\in\llbracket 1,m_{i_0}-1\rrbracket,$ there exists $D_{i,j}\subset \Omega_i$ with $\mu_i(D_{i,j})\geq 1-\veps/3$ and $(D_{i,j}+j)\cap D_{i,j}=\varnothing.$ We show that $k_0:= \sum_{i=1}^{i_0}m_1\cdots m_{i-1}$ works.

Let $k\geq k_0$. By the choice of $k_0$, one can write $k=\sum_{i=1}^l k_im_1\cdots m_{i-1}$, where $l\geq i_0$,  $k_i\in\llbracket 0,m_i-1\rrbracket$  for all $i$ and $k_l\neq 0.$ By our choice of $i_0,$ one can find $D'_l, D''_l\subset \Omega_l$ such that $\mu_l(D'_l),\mu_l(D''_l)\geq 1-\veps/3$ and $(D'_l+k_l)\cap D'_l=\varnothing=(D''_l+(k_l+1))\cap D''_l$ (if $k_l=m_l-1$, just take $D''_l=\Omega_l$). We then set $D_l:=D'_l\cap D''_l$ and observe that $\mu_l(D_l)\geq 1-2\veps/3.$ We also choose $D_{l+1}\subset \Omega_{l+1}$  such that $(D_{l+1}+1)\cap D_{l+1}=\varnothing$ and $\mu_{l+1}(D_{l+1})\geq 1-\veps/3.$ Now, we define
\[B:=[\Omega_1,\dots \Omega_{l-1}, D_l, D_{l+1}],
\]
and observe that $\mu(B)\geq (1-2\veps/3)(1-\veps/3)\geq 1-\varepsilon$. We claim that $\od^k(B)\cap B=\varnothing.$ 

Let $x\in B$: we need to show that $\od^k(x)\not\in B$. Recall that, by Fact \ref{addition}, we have
\[\od^k(x)=x\pmb +(k_1,\dots ,k_l, 0,0,\dots ).\]
We distinguish several cases.
\begin{itemize}
\item[\sbt] if $x_l\geq m_l-k_l,$ then $(\od^k(x))_{l+1}=x_{l+1}+1$ (there is a carry at the $(l+1)$-th position) and $x_{l+1}+1\notin D_{l+1}$, so $\od^k(x)\not\in B.$
\item[\sbt] if $x_l=m_l-k_l-1$ and if there is a carry at the $l$-th position when we compute $\od^k(x),$ then we are in the same situation.
\item[\sbt] if $x_l=m_l-k_l-1$ and if there is no carry  at the $l$-th position when we compute $\od^k(x),$ then $(\od^k(x))_l=x_l+k_l\notin D_l$ because $x_l\in D_l$ and $D_l\cap(D_l+k_l)=\varnothing$, so  $\od^k(x)\notin B$.
\item[\sbt] if $x_l<m_l-k_l-1,$ then $(\od^k(x))_l=x_l+k_l$ or $x_l+k_l+1$ depending on whether there is a carry over at the $l$-th position of not. In both cases, $\od^k(x)\notin B.$
\end{itemize}

So we have shown that $\od^k(B)\cap B=\varnothing$, which concludes the whole proof.
\end{proof}

\begin{remark} From the definition of $\kappa_i$, it is clear that $\kappa_i\geq \eta_i$ for all $i\geq 1$. Moreover, it can also be seen from the definition of $\kappa_i$ that if the sequence $(m_i)$ is bounded and if $\kappa_i\to 1$ as $i\to\infty$, then $\eta_i\to 1$. So Theorem \ref{thm:mixing} is coherent with (and in fact, formally implies) Theorem \ref{EmmaUdayan} recalled in Section \ref{subsec:odometers}. {On the other hand, if $(m_i)$ is unbounded, it may happen that $\kappa_i\to 1$ and $\eta_i\not\to 1$. It is even possible that $\kappa_i\to 1$ and $\eta_i\to 0$, see \cite[Example 3.3]{BDDD22}.}
\end{remark}
\begin{proof} Assume that $(m_i)$ is bounded, that $\kappa_i\to 1$ and, towards a contradiction, that $\liminf \eta_i<1$. Then, one can find an increasing sequence of indices $(i_s)_{s\geq 1}$, an integer $m$, some $\varepsilon >0$ and $a,b\in \llbracket 0,m-1\rrbracket$ with $a<b$, such that $m_{i_s}=m$  and $\mu_{i_s}(a), \mu_{i_s}(b)>\varepsilon$ for all $s\geq 1$. Let $j:=b-a$. If $s\geq 1$, then any set $D\subset \llbracket 0,m-1\rrbracket$ such that $\mu_{i_s}(D)>1-\varepsilon$ must contain $a$ and $b $, so it cannot satisfy $D\cap (D+j)=\varnothing$. So we have $\kappa_{i_s}\leq 1-\varepsilon$ for all $s\geq 1$, a contradiction since $\kappa_i\to 1$ as $i\to\infty$.
\end{proof}

\smallskip
We can now improve Theorems 3.6 and 3.7 of \cite{BDDD22} by exhibiting, for any sequence of integers $(m_i)$, a measure $\mu$ on $\prod_{i\geq 1} \ZZ/m_i\ZZ$ for which $C_\od$ is hypercyclic but not topologically mixing. In \cite{BDDD22}, this was done when $(m_i)$ does not grow too quickly.

\begin{example}
Let $(m_i)$ be any sequence of integers with $m_i\geq 2$ for all $i\geq 1.$ There exists a sequence of probability measures $(\mu_i)$ such that $C_\od$ is hypercyclic but not topologically mixing on $\Lpmu$.
\end{example}
\begin{proof}
Let us define the measures $\mu_i$. If $m_i=2,$ we set $\mu_i(0):=2/3$ and $\mu_i(1):=1/3.$
Otherwise, if  $m_i$ is even, $m_i=2\beta_i$, we set 
$$\mu_i(j):=\frac{c_i}{2^{\beta_i-1-j}}\qquad\hbox{for  $j=0,\dots,\beta_i-1$}$$
 and 
$$\mu_i(j):=\frac{c_i}{2^{j-\beta_i}}\qquad\hbox{for $j=\beta_i,\dots,2\beta_i-1$},$$
where $c_i>0$ is chosen so that $\mu_i$ is a probability measure.
If $m_i$ is odd, $m_i=2\beta_i+1$, we set 
$$\mu_i(j):=\frac{c_i}{2^{\beta_i-j}}\qquad\hbox{for $j=0,\dots,\beta_i$}$$
and 
$$\mu_i(j):=\frac{c_i}{2^{j-\beta_i}}\qquad\hbox{for $j=\beta_i+1,\dots,2\beta_i$},$$
where $c_i>0$ is again chosen so that $\mu_i$ is a probability measure. {Since $\eta_i\leq 2/3$ for all $i\in\NN,$ the product measure $\mu$ is non-atomic.}

We first observe that $C_\od$ is bounded. Indeed, for every $i\in\NN,$ we have
$$\frac{\mu_i(m_i-1)}{\mu_i(0)}\leq 1$$
and  
$$\frac{\mu_i(j-1)}{\mu_i(j)}\in\{1/2;2\}\qquad\hbox{for all $1\leq j\leq m_i-1$.}$$
So it is clear that  the boundedness condition \eqref{eq:continuityodometer} is satisfied.

We now claim that $C_\od$ is hypercyclic. Indeed, let $i\geq 1$ and assume that $m_i$ is odd, $m_i=2\beta_i+1$. Then 
\[\eta_i-\delta_i=\mu_i(\beta_i)-\mu_i(0)=c_i\left(1-\frac 1{2^{\beta_i}}\right)\geq \frac{c_i}{2}\cdot\]

Now, by definition of $\mu_i$, we have
\[ 2c_i\sum_{j=0}^{\infty}\frac 1{2^j}\geq \sum_{k=0}^{2\beta_i+1}\mu_i(k)=1,\]
so that $c_i\geq 1/4.$ Hence $\eta_i-\delta_i\geq 1/8$. We get a similar lower bound when $m_i$ is even, and hence we may conclude that $C_\od$ is hypercyclic by Corollary \ref{cor:hc}.

To show that $C_\od$ is not mixing, we observe that for each $i\geq 1,$ there exist at least two different integers $a<b \in\llbracket 0,m_i-1\rrbracket$ such that $\mu_i(a),\mu_i(b)\geq c_i/2\geq 1/8$ 
(consider for instance $a=\beta_i,$ $b=\beta_{i+1}$ if $m_i=2\beta_i+1$ is odd). If we set $j:=b-a$, then $\mu(D)\leq 7/8$ for any $D\subset \Omega_i$ such that $D\cap (D+j)=\varnothing$, since $D$ cannot contain both $a$ and $b$. So $\kappa_i\leq 7/8$ for all $i$, and hence  $C_\od$ is not mixing by Theorem \ref{thm:mixing}.
\end{proof}

We also complete the picture by exhibiting, for any sequence of integers $(m_i),$ a sequence of  measures
$(\mu_i)$ such that $C_\od$ is topologically mixing.

\begin{example}\label{ex:mixing}
Let $(m_i)$ be any sequence of integers with $m_i\geq 2$ for all $i\in\mathbb N.$ There exists a sequence of probability measures $(\mu_i)$ such that $C_\od$ is topologically mixing on $\Lpmu.$
\end{example}
\begin{proof} {
For \( i > 1 \) and \( j \in \llbracket 0, m_i - 1\rrbracket \), we set
\[
\mu_i(j) := \left(1 - \frac{1}{i+1}\right) c_i^j = \frac{i}{i+1}\, c_i^j,
\]
where $c_i$ is chosen so that
\[
\sum_{j=0}^{m_i-1} c_i^j = \frac{i+1}{i}\cdot
\]
Observe first that
\[
\frac{1}{i+1} \leq c_i \leq \frac{1}{i},
\]
the ``limit'' cases corresponding to \( m_i = 2 \) and \( m_i = \infty \).
Note also that \( \eta_i = \mu_i(0) =\frac{i}{i+1}\cdot\) In particular $\prod_{i=1}^\infty \eta_i=0,$
so the measure \( \mu \) is non-atomic. We check that \( C_\od \) is bounded on \( L_p(\Omega,\mu) \). Let \( l > 1 \)  and \( j \in \llbracket0, m_i - 1\rrbracket \). Then
\[
\begin{aligned}
\frac{\mu_l(j-1)}{\mu_l(j)} \prod_{i=1}^{l-1} \frac{\mu_i(m_i-1)}{\mu_i(0)}& \leq c_l^{-1} \prod_{i=1}^{l-1} c_i^{m_i-1} \\
& \leq c_l^{-1} \prod_{i=1}^{l-1} c_i \\
& \leq (l+1) \prod_{i=1}^{l-1} \frac{1}{i},
\end{aligned}
\]
so  the boundedness condition (\ref{eq:continuityodometer}) is satisfied. Since \( \eta_i \to  1 \), $C_\od$ is topologically mixing by Theorem \ref{EmmaUdayan}. }
%
%
\end{proof}

{We point out that the previous known examples of topologically mixing odometers were obtained with measures having atoms (see the proof of \cite[Theorem 3.2]{BDDD22}).}


\subsection{Frequent hypercyclity}

In this section, we show that some composition operators induced by odometers are frequently hypercyclic. We need to introduce two new quantities. For $i\in\mathbb N$ and $\kappa\in(0,1),$ we set 
\begin{align*}
\gamma_i:=\sup\bigl\{\gamma\in(0,1):\ \exists D\subset \Omega_i\; \,&\exists j\in\llbracket 1,m_i-1\rrbracket\,:\,\\
 &\mu_i(D)>\gamma\textrm{ and }\mu_i(D+j)<1-\gamma\bigr\}
\end{align*}
where addition is understood in $\ZZ/m_i\ZZ$; and 
\[ \omega_i(\kappa):=\mu_i\bigl(\llbracket m_i-1-\kappa m_i m_{i+1} ,m_i-1\rrbracket \bigl).\]

\smallskip To help digesting these definitions (and also for future reference), we quote the following remark.
\begin{remark}\label{01} Assume that $\Omega_i=\{ 0,1\}$ for all $i\geq 1$. Then $\gamma_i=\max\bigl( \mu_i(0), \mu_i(1)\bigr)$ and $\omega_i(\kappa)=\mu_i(1)=1-\mu_i(0)$ for all $i$ and any $\kappa <1/4$.
\end{remark}

\smallskip

\begin{theorem}\label{thm:fhc}
Assume that there exists $\kappa\in (0,1)$ such that 
\[ \limsup_{i\to\infty} \min(1-\omega_{i-1}(\kappa),\gamma_{i})=1.\]
Then $C_\od$ is frequently hypercyclic on $\Lpmu$, $1\leq p<\infty$.
\end{theorem}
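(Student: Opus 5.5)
The plan is to apply Corollary~\ref{cor:fhcgroup} to the compact, metrizable, zero-dimensional group $(\Omega,\pmb+)$ with $a=(1,0,0,\dots)$, so that $\tau_a=\od$. We use the version of its hypothesis in which the roles of $B$ and $\Omega\setminus B$ are exchanged, as in Remark~\ref{rem:fhcco}; this is legitimate because the corollary is derived from Theorem~\ref{thm:fhcco}. As period sequence we take $d_i:=M_{i+1}=m_1\cdots m_i$. By Fact~\ref{addition}, $a^{M_{i+1}}=(0,\dots,0,1,0,\dots)$ with the $1$ in position $i+1$, so $a^{d_i}\to 0$ as required. We must then show: for every $\veps>0$ and arbitrarily large $i$, there are $n\in\NN$ and a Borel set $B$ with $\od^{-d_i}(B)=B$ such that, for all $0\leq k\leq\kappa d_i$,
\[
\mu\bigl(\Omega\setminus\od^{-k}(B)\bigr)\leq\veps
\qquad\text{and}\qquad
\mu\bigl(\od^{-(n+k)}(B)\bigr)\leq\veps .
\]

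By hypothesis we can choose arbitrarily large $i$ with $\omega_{i-1}(\kappa)<\veps/2$ and $\gamma_i>1-\veps/2$. By the definition of $\gamma_i$, there are $D\subset\Omega_i$ and $j\in\llbracket 1,m_i-1\rrbracket$ with $\mu_i(D)>1-\veps/2$ and $\mu_i(D+j)<\veps/2$. Set $B:=\{x\in\Omega:\ x_i\in D\}$. This set depends only on the first $i$ coordinates, so $\od^{-M_{i+1}}(B)=B$ by Fact~\ref{addition}. Choose $n:=(m_i-j)M_i$, so that adding $n$ adds $-j$ to the $i$-th coordinate modulo $m_i$ and changes nothing below it.

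The key carry estimate is as follows. Let $0\leq k\leq\kappa d_i=\kappa M_{i-1}m_{i-1}m_i$, and write $k=\sum_r k_rM_r$. Then $k_r=0$ for $r\geq i+1$, and the contribution of positions $\leq i-1$, measured in units of $M_{i-1}$, is at most $\kappa m_{i-1}m_i$. Hence, if $x_{i-1}<m_{i-1}-1-\kappa m_{i-1}m_i$, no carry reaches position $i$ when computing $x\pmb+(k_1,\dots,k_{i-1},0,\dots)$. The same holds for $\od^{n+k}(x)$, since $n$ acts only at position $i$. The exceptional set has measure $\omega_{i-1}(\kappa)$.

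I expect the bookkeeping here to be the delicate point. The digit $k_i$ may be nonzero when $\kappa m_i\geq 1$, in which case $\od^k$ also shifts $x_i$ by $k_i$. I would handle this by restricting first to $k<M_i$, where the window is still a positive proportion $\kappa'$ of $M_i$ after adjusting $d$ and $\kappa$, or alternatively by noting that the definition of $\omega_{i-1}$ forces the relevant interval to be all of $\Omega_{i-1}$ (making the hypothesis vacuous) unless $\kappa m_i<1$, so that effectively $k_i=0$.

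Granting the carry estimate, outside the exceptional set we have $(\od^k x)_i=x_i$ and $(\od^{n+k}x)_i=x_i-j$. The first identity gives
\[
\mu\bigl(\Omega\setminus\od^{-k}(B)\bigr)\leq\omega_{i-1}(\kappa)+1-\mu_i(D)<\veps .
\]
For the second, $x_i-j\in D$ exactly when $x_i\in D+j$, so
\[
\mu\bigl(\od^{-(n+k)}(B)\bigr)\leq\omega_{i-1}(\kappa)+\mu_i(D+j)<\veps .
\]
Corollary~\ref{cor:fhcgroup}, in the form of Remark~\ref{rem:fhcco}, then yields that $C_\od$ is frequently hypercyclic on $\Lpmu$.
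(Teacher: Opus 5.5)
Your proof is correct and is essentially the paper's argument. The paper applies Theorem \ref{thm:fhcco} directly with $B=[\Omega_1,\dots,\Omega_{N-1},D_N+j_N]$ and $n=j_NM_N$, and it explicitly remarks that one could instead use Corollary \ref{cor:fhcgroup} with $d_i=m_1\cdots m_i$, which is what you do; your flipped choice $B=\{x_i\in D\}$, $n=(m_i-j)M_i$ is only a cosmetic variant. Your second way of handling the digit $k_i$ is exactly the paper's observation: $\omega_{i-1}(\kappa)<1$ forces $\kappa m_{i-1}m_i<m_{i-1}$, hence $k\leq\kappa d_i<M_i$ and $k_i=0$. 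You should commit to that option and drop the vaguer first alternative.
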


\begin{proof} We apply Theorem \ref{thm:fhcco}, taking as $\mathcal C$  the family of all cylinder sets of $\Omega$. Note that $\textrm{span}(\mathbf 1_B:\ B\in\mathcal C)$ is dense in $\Lpmu$.

In order to check that Condition $({\rm H}_\kappa)$ is satisfied, let us fix $\veps\in(0,1)$, $m\in\NN$, and $B_1,\dots ,B_r\in\mathcal C$.  Let us also set $\delta:=\veps/2.$ 

 Let $N=i$ be a very large integer such that \[ \max(\omega_{N-1}(\kappa),1-\gamma_N)<\delta,\] 
and choose $D_N\subset \Omega_N$ and $j_N\in\llbracket1,m_N-1\rrbracket$ such that $\mu_N(D_N)>1-\delta$ and $\mu_N(D_N+j_N)<\delta.$ We set 
\begin{align*}
B&:=[\Omega_1,\dots,\Omega_{N-1},D_N+j_N],\\
B'&:=[\Omega_1,\dots,\Omega_{N-1},D_N],\\
n&:=j_N\prod_{i=1}^{N-1}m_i \quad{\rm and}\\
d&:=\prod_{i=1}^{N}m_i.
\end{align*}

Observe that $\od^{-d}(B)=B$, and that if $N$ is large enough then $d\geq m$ and $\od^{-d}(B_j)=B_j$ for $j=1,\dots ,r$. Note also that $\od^{-n}(B)=B'.$ 

We show that $\forall 0\leq k\leq \kappa d\,:\, \mu\bigl(\od^{-k}(B)\bigr)\leq \veps$ and $\mu\bigl( \od^{-n-k}(B)\bigr)\geq1-\veps$. In what follows, we fix $0\leq k\leq \kappa d$. Observe that $\kappa m_{N-1}m_N< m_{N-1}$ since $\omega_{N-1}(\kappa)<1$, so that $k\leq \prod_{i=1}^{N-2}m_i \times(\kappa m_{N-1}m_N)< \prod_{i=1}^{N-1}m_i $.

\smallskip Let us first show that $\mu\bigl( \od^{-k}(B)\bigr)\leq\varepsilon$.  If $x\in\od^{-k}(B)$, then either the computation of $\od^k(x)$ has led to a carry at the $N$-th coordinate, or not. If no carry has occured, then $x_N=(\od^k(x))_N\in D_N+j_N$. If a carry has occured, this can be so only if $x_{N-1}\geq  m_{N-1}-1-\kappa m_{N-1}m_N$ since $k\leq \prod_{i=1}^{N-2}m_i \times(\kappa m_{N-1}m_N)$. So we see that 
\begin{align*}
\mu\bigl( \od^{-k}(B)\bigr)&\leq \mu_N(D_N+j_N)+\mu_{N-1} \bigl( \llbracket m_{N-1}-1-\kappa m_{N-1}m_N,m_{N-1}-1\rrbracket\bigr)\\
&\leq \delta +\omega_{N-1}(\kappa)\\
&\leq \varepsilon.
\end{align*}

Now, let us show that  $\mu\bigl( \od^{-n-k}(B)\bigr)\geq1-\veps$, \textit{i.e.} $\mu\bigl( \od^{-k}(B')\bigr)\geq1-\veps$. Observe that if $x\in B'\setminus \od^{-k}(B')$, then the computation of $\od^k(x)$ has led to a carry at the $N$-th coordinate, so that (as above) $x_{N-1}\geq  m_{N-1}-1-\kappa m_{N-1} m_{N}$. It follows that \[\mu\bigl( B'\setminus \od^{-k}(B')\bigr)\leq \mu_{N-1} \bigl( \llbracket m_{N-1}-1-\kappa m_{N-1}m_N,m_{N-1}-1\rrbracket\bigr)=\omega_{N-1}(\kappa),\]
and hence that 
\[ \mu\bigl( \od^{-n-k}(B)\bigr)=\mu\bigl( \od^{-k}(B')\bigr)\geq \mu(B')-\omega_{N-1}(\kappa)=\mu_N(D_N)-\delta\geq 1-\veps.\]

So we have checked Condition $({\rm H}_\kappa)$, which concludes the proof.
\end{proof}

\begin{remark}
Alternatively we could apply Corollary \ref{cor:fhcgroup} with $d_i:=\prod_{j=1}^i m_j,$ but the heart of the proof remains the same.
\end{remark}

\smallskip The easiest way to estimate $\gamma_i$ is to observe that $\gamma_i\geq \eta_i$ (choose $D=\{a\}$ where
$\mu_i(a)=\eta_i$, and $j=1$), which leads to the following corollary:
\begin{corollary}\label{cor:fhc0} If $\limsup_{i\to\infty}\min(1-\omega_{i-1}(\kappa),\eta_{i})=1$ for some $\kappa>0$, 
then $C_\od$ is frequently hypercyclic on $\Lpmu$.
\end{corollary}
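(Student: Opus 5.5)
This is a direct reduction to Theorem \ref{thm:fhc}. It suffices to show that $\gamma_i\geq \eta_i$ for every $i$ (at least whenever $\eta_i$ is close to $1$). Then $\min(1-\omega_{i-1}(\kappa),\gamma_i)\geq \min(1-\omega_{i-1}(\kappa),\eta_i)$. Since every such quantity is at most $1$, the hypothesis $\limsup_{i\to\infty}\min(1-\omega_{i-1}(\kappa),\eta_i)=1$ then gives $\limsup_{i\to\infty}\min(1-\omega_{i-1}(\kappa),\gamma_i)=1$. Note also that $\kappa$ may be assumed to lie in $(0,1)$: $\omega_{i-1}(\kappa)$ is nondecreasing in $\kappa$, so decreasing $\kappa$ keeps the hypothesis true. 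Theorem \ref{thm:fhc} then yields frequent hypercyclicity of $C_\od$ on $\Lpmu$.

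For the inequality $\gamma_i\geq\eta_i$, pick $a\in\Omega_i$ with $\mu_i(a)=\eta_i$, set $D=\{a\}$, and take $j=1\in\llbracket 1,m_i-1\rrbracket$, which is allowed since $m_i\geq 2$. Then $D+j=\{a+1\}\neq D$ in $\ZZ/m_i\ZZ$, so $\mu_i(D+j)\leq 1-\mu_i(a)=1-\eta_i$.

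The only delicate point is that the definition of $\gamma_i$ uses strict inequalities, $\mu_i(D)>\gamma$ and $\mu_i(D+j)<1-\gamma$, and requires $\gamma\in(0,1)$. For every $\gamma<\eta_i$ we have $\mu_i(D)=\eta_i>\gamma$ and $\mu_i(D+j)\leq 1-\eta_i<1-\gamma$, so each such $\gamma\in(0,\eta_i)$ is admissible. Taking the supremum gives $\gamma_i\geq \eta_i$. Non-atomicity of $\mu$ is not even needed here, since $\eta_i<1$ follows from $\mu_i(j)>0$ for all $j$ and $m_i\geq 2$.
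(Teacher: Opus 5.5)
Your proof is correct and matches the paper's argument: the paper also deduces the corollary from Theorem \ref{thm:fhc} via $\gamma_i\geq\eta_i$, taking $D=\{a\}$ with $\mu_i(a)=\eta_i$ and $j=1$. You also note that $\kappa$ can be shrunk into $(0,1)$ because $\omega_{i-1}(\kappa)$ is nondecreasing in $\kappa$; this is a correct detail that the paper leaves implicit.
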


When the sequence $(m_i)$ is bounded, we can simplify the statement.

\begin{corollary}\label{cor:fhc1}
Suppose that  the sequence $(m_i)$ is bounded. Then $C_\od$ is frequently hypercyclic on $\Lpmu$ as soon as \[\limsup_{i\to\infty} \min(1-\mu_{i-1}(m_{i-1}-1),\eta_{i})=1.\] 
\end{corollary}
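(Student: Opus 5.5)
We will deduce this from Corollary \ref{cor:fhc0}. Since $\eta_i$ already appears in the hypothesis, it suffices to find some $\kappa>0$ with $\omega_{i-1}(\kappa)=\mu_{i-1}(m_{i-1}-1)$ for all $i$. Then $\min(1-\omega_{i-1}(\kappa),\eta_i)=\min(1-\mu_{i-1}(m_{i-1}-1),\eta_i)$, and the two limsup conditions coincide.

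Let $M:=\sup_i m_i<\infty$, and choose $\kappa\in(0,1)$ with $\kappa M^2<1$, for instance $\kappa:=1/(2M^2)$. For every $i$ we then have $\kappa m_{i-1}m_i\leq \kappa M^2<1$. By definition, $\omega_{i-1}(\kappa)=\mu_{i-1}\bigl(\llbracket m_{i-1}-1-\kappa m_{i-1}m_i,\,m_{i-1}-1\rrbracket\bigr)$. The left endpoint lies in the open interval $(m_{i-1}-2,\,m_{i-1}-1]$. Reading the integer interval as the set of integers between the two real endpoints, as the proof of Theorem \ref{thm:fhc} implicitly does when it writes $x_{N-1}\geq m_{N-1}-1-\kappa m_{N-1}m_N$, the only integer in it is $m_{i-1}-1$. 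Hence $\omega_{i-1}(\kappa)=\mu_{i-1}(m_{i-1}-1)$ for every $i\geq 2$.

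Substituting this into the hypothesis of Corollary \ref{cor:fhc1} gives $\limsup_{i\to\infty}\min(1-\omega_{i-1}(\kappa),\eta_i)=1$ for this fixed $\kappa>0$. Corollary \ref{cor:fhc0}, or directly Theorem \ref{thm:fhc} using $\gamma_i\geq\eta_i$, then shows that $C_\od$ is frequently hypercyclic on $\Lpmu$.

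The only delicate point is the convention for $\llbracket a,b\rrbracket$ when $a$ is not an integer. If one insists on rounding $a$ down, the set could also contain $m_{i-1}-2$. To avoid this, I would work with the proof of Theorem \ref{thm:fhc} rather than its statement. There, the carry estimate needs only the event $x_{N-1}\geq m_{N-1}-1-\kappa m_{N-1}m_N$. Since $x_{N-1}$ is an integer and $\kappa m_{N-1}m_N<1$, this event is exactly $x_{N-1}=m_{N-1}-1$, whose measure is $\mu_{N-1}(m_{N-1}-1)$.

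We also need $d$ to be a multiple of $\mathrm{per}(B_j)$ and at least $m$. Both hold because $d=\prod_{i\leq N}m_i$ is computed exactly as in the proof of Theorem \ref{thm:fhc}, so boundedness of $(m_i)$ causes no difficulty there.
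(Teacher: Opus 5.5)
Your proposal is correct and is the paper's argument: take $\kappa<1/M^2$ so that $\kappa m_{i-1}m_i<1$, observe that $\omega_{i-1}(\kappa)=\mu_{i-1}(m_{i-1}-1)$, and apply Corollary \ref{cor:fhc0}. Your remark about the integer-interval convention is accurate and harmless, and it is a safeguard the paper leaves implicit: in the carry estimate only the event $x_{N-1}=m_{N-1}-1$ can occur.
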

\begin{proof}
Let $M:=\sup_i m_i$, and take any $\kappa< 1/M^2$. We have $\omega_i(\kappa)=\mu_i(m_i-1)$ for all $i\geq 1$, so Corollary \ref{cor:fhc0} applies.
\end{proof}

\begin{corollary}\label{cor:fhc2}
If  the sequence $(m_i)$ is bounded and  $\lim_{i\to\infty} \eta_i=1,$
 then $C_\od$ is frequently hypercyclic on $\Lpmu$.
\end{corollary}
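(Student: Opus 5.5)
The plan is to deduce Corollary \ref{cor:fhc2} directly from Corollary \ref{cor:fhc1}. Since $(m_i)$ is bounded, Corollary \ref{cor:fhc1} applies, so it suffices to show that
\[ \limsup_{i\to\infty}\min\bigl(1-\mu_{i-1}(m_{i-1}-1),\eta_i\bigr)=1. \]
Because $\eta_i\to 1$, the second term in the minimum tends to $1$. Hence the only thing to check is that $\mu_{i-1}(m_{i-1}-1)\to 0$, at least along a subsequence.

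The key step is to show that the maximal mass $\eta_{i}$ is eventually not carried by the last digit $m_i-1$. I would use the boundedness condition \eqref{eq:continuityodometer}. Suppose, towards a contradiction, that $\mu_i(m_i-1)\geq c>0$ for infinitely many $i$.

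Since $\eta_i\to1$, for $i$ large a single point $a_i$ carries mass at least $\eta_i$. For such $i$, the digit $m_i-1$ can have mass at least $c$ only if $a_i=m_i-1$, and then $\mu_i(m_i-1)\to 1$ along those indices. In that case $\mu_i(0)\leq 1-\eta_i\to 0$, so the ratio $\mu_i(m_i-1)/\mu_i(0)$ tends to infinity along infinitely many $i$.

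Now look at the factor in \eqref{eq:continuityodometer} with $l=i$ and $j=0$, namely
\[ \prod_{r=1}^{i-1}\frac{\mu_r(m_r-1)}{\mu_r(0)}\times\frac{\mu_i(m_i-1)}{\mu_i(0)}. \]
The last factor is unbounded along these $i$. The obstacle is that the preceding product might be small. To handle it, I would instead take $l=i+1$, so that the ratio at index $i$ appears inside the product, and combine this with the $j$-terms. The cleaner route is the following. The sequence of partial products $P_l=\prod_{r<l}\mu_r(m_r-1)/\mu_r(0)$ is bounded by \eqref{eq:continuityodometer}. Moreover, for each $r$ the ratio $\mu_r(j-1)/\mu_r(j)$ is bounded uniformly (also by \eqref{eq:continuityodometer}, relative to $P_r$); I would chain these ratios along $j$ from $0$ to $m_r-1$.

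I expect this part to be delicate. If it does not close, the fallback is simpler. Since $\eta_i\to 1$ and $\mu_i(m_i-1)\leq 1$, the statement only needs a limsup. It would therefore suffice to find infinitely many $i$ with $\mu_{i-1}(m_{i-1}-1)\to 0$, that is, infinitely many $i$ whose maximizing digit $a_{i-1}$ differs from $m_{i-1}-1$.

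To get this, suppose instead that $a_i=m_i-1$ for all large $i$. Then $\mu_i(m_i-1)/\mu_i(0)\geq \eta_i/(1-\eta_i)\geq 2$ for all large $i$. Consequently $P_l\to\infty$ exponentially, which contradicts \eqref{eq:continuityodometer} (taking $j=0$, where the extra factor $\mu_l(m_l-1)/\mu_l(0)\geq 1$).

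So for infinitely many $i$ we have $a_{i-1}\neq m_{i-1}-1$, hence $\mu_{i-1}(m_{i-1}-1)\leq 1-\eta_{i-1}\to0$ along them, while $\eta_i\to1$. This gives the limsup condition, and Corollary \ref{cor:fhc1} concludes.
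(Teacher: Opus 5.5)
Your fallback argument is correct and is essentially the paper's proof: you reduce via Corollary~\ref{cor:fhc1} to $\liminf_i \mu_i(m_i-1)=0$ and get this by contradiction with \eqref{eq:continuityodometer}. Otherwise the dominant digit is eventually $m_i-1$, which forces $\mu_i(m_i-1)/\mu_i(0)\geq 2$ for all large $i$ and makes the partial products blow up; you make this blow-up explicit, whereas the paper leaves it implicit.

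You should drop your first route entirely. It assumes $\mu_i(m_i-1)\geq c$ only for infinitely many $i$ and tries to prove the stronger claim $\mu_i(m_i-1)\to 0$, which can fail under \eqref{eq:continuityodometer}. For instance, the last digit may carry almost all the mass along a sparse set of indices while the intermediate ratios keep the products bounded. Only the $\liminf$ statement is needed, and that is exactly what your fallback proves.
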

\begin{proof}
By Corollary \ref{cor:fhc2}, we have just to prove that $\limsup \,(1-\mu_i(m_i-1))=1$. Otherwise, there exists $c>0$ such that $\mu_i(m_i-1)>c$
for all $i\geq 1.$ Since $\eta_i\to 1$ as $i\to\infty$, it follows that $\lim_{i\to\infty} \mu_i(m_i-1)=1$, 
so that $\lim_{i\to\infty} \mu_i(0)=0;$ but this contradicts the boundedness condition (\ref{eq:continuityodometer}). 
\end{proof}

In view of Thorem \ref{EmmaUdayan}, we immediately deduce the following corollary.

\begin{corollary}\label{cor:fhc3}
Assume that the sequence $(m_i)$ is bounded. If $C_\od$ is topologically mixing, then 
it is frequently hypercyclic.
\end{corollary}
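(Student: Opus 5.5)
The statement follows by chaining Theorem \ref{EmmaUdayan} with Corollary \ref{cor:fhc2}. The argument is direct, with no new construction needed.

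Assume that $(m_i)$ is bounded and that $C_\od$ is topologically mixing on $\Lpmu$. By part (2) of Theorem \ref{EmmaUdayan}, the converse direction applies precisely because $(m_i)$ is bounded. Hence $\eta_i\to 1$ as $i\to\infty$.

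Equivalently, one can argue through Theorem \ref{thm:mixing}. Mixing gives $\kappa_i\to 1$. The remark following that theorem shows that, when $(m_i)$ is bounded, $\kappa_i\to 1$ forces $\eta_i\to 1$: otherwise, along a subsequence with a fixed modulus $m$, two points $a<b$ would keep mass at least some $\varepsilon>0$. Then any $D$ with $D\cap(D+(b-a))=\varnothing$ would have $\mu_{i_s}(D)\le 1-\varepsilon$, so $\kappa_{i_s}\le 1-\varepsilon$, a contradiction.

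Once $\eta_i\to 1$ and $(m_i)$ is bounded, Corollary \ref{cor:fhc2} yields that $C_\od$ is frequently hypercyclic. For completeness, the content of that corollary is as follows. The boundedness condition (\ref{eq:continuityodometer}) prevents $\mu_i(m_i-1)\to 1$: that would force $\mu_i(0)\to 0$, and the ratios $\mu_i(m_i-1)/\mu_i(0)$ would blow up. Since $\eta_i\to 1$, some single point carries almost all the mass of $\mu_i$, so $\mu_i(m_i-1)$ takes values near $0$ or near $1$ along any subsequence. It is therefore close to $0$ infinitely often, and hence $\limsup \min(1-\mu_{i-1}(m_{i-1}-1),\eta_i)=1$. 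Corollary \ref{cor:fhc1} (via Theorem \ref{thm:fhc} with $\kappa<1/M^2$, where $M=\sup_i m_i$) then gives frequent hypercyclicity.

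There is no real obstacle. The only point requiring care is that the converse part of Theorem \ref{EmmaUdayan} genuinely needs $(m_i)$ bounded, which is exactly the hypothesis assumed here.
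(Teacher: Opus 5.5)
Your proposal is correct and follows the paper's route: the converse part of Theorem~\ref{EmmaUdayan}(2), which applies because $(m_i)$ is bounded, gives $\eta_i\to 1$, and Corollary~\ref{cor:fhc2} then yields frequent hypercyclicity. Your alternative derivation of $\eta_i\to 1$ via Theorem~\ref{thm:mixing} and the remark after it is also accurate, as is your unpacking of Corollary~\ref{cor:fhc2} through Corollary~\ref{cor:fhc1}.
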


\smallskip
\begin{example}\label{ex:mixingfhc}
Assume that $\Omega_i=\{0,1\}$ for all $i\geq 1$, and let $\mu_i(0):=1-\frac 1{i+1}=\frac{i}{i+1}$ and $\mu_i(1):=\frac 1{i+1}\cdot$ 
Then $C_\od$ is  frequently hypercyclic on $\Lpmu$.
\end{example}

\begin{proof}
The boundedness of $C_\od$ follows from the identity
$$\prod_{i=1}^{l-1}\frac{\mu_i(1)}{\mu_i(0)}\times \max\left(\frac{\mu_l(0)}{\mu_l(1)}, \frac{\mu_l(1)}{\mu_l(0)}\right)= \frac{l}{(l-1)!}$$
which is valid for all $l\in\NN$,  whereas  frequent hypercyclicity is a consequence of Corollary \ref{cor:fhc2}.
\end{proof}

%
%
\smallskip
Theorem \ref{thm:fhc} allows us to exhibit very simple examples of chaotic and frequently hypercyclic Hilbert space operators that are not topologically mixing. 
The first construction of such operators dates back to \cite{BadGri07}, and other examples can be found in \cite{GMM21}. Composition operators induced by odometers are arguably ``easier'' examples.

\begin{example}
Let $\Omega_i=\{0,1\}$ for all $i\geq 1$, and define the measures $\mu_i$ as follows: $\mu_{3k+1}(0)=\mu_{3k+2}(0):=1-\frac 1{k+1}$ and $\mu_{3k+3}(0):=1/2$ for all $k\geq 0$. Then $C_\od$ is  frequently hypercyclic on $\Lpmu$, but not topologically mixing.
\end{example}
\begin{proof} The boundedness of $C_\od$ is easily checked, as in Example \ref{ex:mixingfhc}. As for frequent hypercyclicity, observe that if we set $n_k:=3k+2$, then  $\gamma_{n_k}=1-1/(k+1)$ and $\omega_{n_k-1}(1/5)=1/(k+1)$ for all $k\geq 0$ (see Remark \ref{01}), so Theorem \ref{thm:fhc} applies. Finally, $C_\od$ is not topologically mixing by Theorem \ref{EmmaUdayan}, since $\eta_{3k+3}=1/2$ for all $k\geq 0$. 
\end{proof}

\smallskip Our examples leave open the following basic question. 

\begin{question}
Does there exist a composition operator induced by an odometer which is hypercyclic but not frequently hypercyclic ?  
\end{question}

If the answer to this question was ``yes'', this would give natural examples of chaotic operators which are not frequently hypercyclic. The existence of such operators was a longstanding problem in linear dynamics, which was solved by Q. Menet  \cite{Men17}. Since the construction of  \cite{Men17} is rather difficult, it would be nice to have a ``simple'' example.   It seems that there is some flexibility in the odometer setting. Natural candidates are  $\Omega_i=\{0,1\},$ $\mu_i(0)=3/4$ for all $i\geq 1$, or  an odometer for which $m_i\to\infty$. In both cases, the assumptions of Theorem \ref{thm:fhc} are clearly not satisfied: we have $\gamma_i=3/4$ for all $i\geq 1$ in the first case (as well as $\omega_i(\kappa)\geq 1/4$ for any $\kappa$), whereas in the second case, $\omega_i(\kappa)$ is eventually equal to $1$ for any $\kappa\in(0,1).$


\subsection{$\mathcal U$-frequent hypercyclicity}

If we want to prove that the composition operator induced by an odometer is $\mathcal U$-frequently hypercyclic, we can slightly weaken the assumptions of Theorem \ref{thm:fhc}.

\begin{theorem}\label{thm:ufhc}
Assume that there exists $\kappa>0$ such that the following holds true:  for every $\delta>0$, there exist $i\geq 1$ arbitrarily large, $j\in\llbracket 1,m_{i}-1\rrbracket$  and $D\subset\Omega_{i}$ such that 
$$\mu_{i}(D)>1-\delta\;,\;\ \mu_{i}(D+j)<\delta\qquad{\rm and}$$
\begin{equation}\label{eq:thmufhc}
\mu_{i-1}\bigl(\llbracket m_{i-1}-\kappa j m_{i-1},m_{i-1}-1\rrbracket\bigr)<\delta.
\end{equation}
Then $C_\od$ is $\mathcal U$-frequently hypercyclic on $\Lpmu$.
\end{theorem}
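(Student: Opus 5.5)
The plan is to apply Corollary \ref{cor:ufhcgroup}\,(1) in the form of Remark \ref{hum}, with the roles of $B$ and $\od^{-k}(B)$ exchanged. Concretely, with $\alpha:=\kappa/(1+\kappa)$, I want to show: for every $\veps>0$ there are a Borel set $B$ with $\mu(\Omega\setminus B)\leq\veps$ and arbitrarily large $m$ with $\#\{k\in\llbracket 1,m\rrbracket:\ \mu(\od^{-k}(B))\leq\veps\}\geq\alpha m$. Since $(\Omega,\pmb+)$ is a metrizable compact zero-dimensional group and $\od$ is a translation, density of the periodic indicator functions is automatic, as explained before Corollary \ref{cor:ufhcgroup}. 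Shrinking $\kappa$ only weakens (\ref{eq:thmufhc}), so I may assume $\kappa\leq 1/2$.

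Given $\veps$, set $\delta:=\veps/2$ and take $i$ large with $j$ and $D$ as in the hypothesis. Write $M:=m_1\cdots m_{i-1}$ and $B:=[\Omega_1,\dots,\Omega_{i-1},D]$, so that $\mu(\Omega\setminus B)<\delta$. The window of times I will use is $k=jM+r$ with $0\leq r\leq \kappa jM$, and I will take $m:=\lfloor(1+\kappa)jM\rfloor$. This window contains about $\kappa jM\geq \alpha m$ integers, and $m\geq M\geq 2^{i-1}$ is as large as we wish.

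The key estimate is $\mu(\od^{-k}(B))\leq 2\delta$ for $k$ in this window. By Fact \ref{addition}, adding $k$ means adding $r$ in the low digits and $j$ at digit $i$. Write $r=qM_{i-1}+r'$ with $0\leq r'<M_{i-1}$, so that $q\leq \kappa j m_{i-1}$.

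There are two cases. If no carry reaches coordinate $i$, then $(\od^k x)_i=x_i+j$, and $\od^k x\in B$ forces $x_i\in D-j$. Since the coordinates are independent and $\mu_i$ is only evaluated on $D+j$ up to the translation, I need to handle this carefully. The cleanest route is to note that $\od^{-k}(B)\cap\{\text{no carry}\}\subset\{x:\ x_i+j\in D\}$; I expect to then use the symmetric formulation, i.e.\ take $B:=[\Omega_1,\dots,\Omega_{i-1},D+j]$ and compare with $\od^{-k}$ of $[\dots,D]$. If this sign bookkeeping does not close, the fallback is to use $\od^{-(n+k)}$ as in the proof of Theorem \ref{thm:fhc}, with $n=jM$ playing the role of the shift.

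If a carry does reach coordinate $i$, then since $r<M$ (because $\kappa j\leq \kappa(m_i-1)$ and only a single carry is possible), the carry into digit $i$ requires $x_{i-1}\geq m_{i-1}-1-q$, roughly. This event lies in $\{x_{i-1}\in\llbracket m_{i-1}-\kappa j m_{i-1},m_{i-1}-1\rrbracket\}$, whose measure is $<\delta$ by (\ref{eq:thmufhc}). Altogether $\mu(\od^{-k}(B))\leq\mu_i(D+j)+\delta<2\delta=\veps$ in the no-carry-mismatch formulation.

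The main obstacle is precisely this carry bookkeeping. First, I must justify that at most one unit of carry enters digit $i$; this needs $r<M$, which fails if $\kappa j m_{i-1}\geq m_{i-1}$, and then I would cap $r$ at $\min(\kappa jM,M-1)$ and adjust $\alpha$. Second, I must choose correctly between $B$ built on $D$ and on $D+j$, so that the large set is $B$ and the small sets are its $k$-th preimages. I would follow the pattern of the proof of Theorem \ref{thm:fhc} line by line: take $B'=[\dots,D]$ as the large set and estimate $\mu(\od^{-k}B')$ through $\mu_i(D+j)$, with the carry event controlled by (\ref{eq:thmufhc}).
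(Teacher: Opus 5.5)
Your overall architecture is the right one and matches the paper's: times $k=jM+r$ with $0\le r\le\kappa jM$, $m\approx(1+\kappa)jM$, $\alpha=\kappa/(1+\kappa)$, and carries into coordinate $i$ controlled by (\ref{eq:thmufhc}). However, the key estimate fails for the set you chose. Take $B=[\Omega_1,\dots,\Omega_{i-1},D]$ and $k=jM+r$. A point $x$ for which no carry reaches coordinate $i$ satisfies $(\od^k x)_i=x_i+j$, so $x\in\od^{-k}(B)$ if and only if $x_i\in D-j$. What you must bound is therefore $\mu_i(D-j)$, but the hypothesis only controls $\mu_i(D+j)$. The quantity $\mu_i(D-j)$ can be close to $1$. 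For $m_i=3$, $\mu_i=(1-2\tau,\tau,\tau)$, $D=\{0,1\}$ and $j=1$, one has $\mu_i(D)=1-\tau$ and $\mu_i(D+1)=2\tau$, but $\mu_i(D-1)=\mu_i(\{0,2\})=1-\tau$. So the inequality $\mu(\od^{-k}(B))\le\mu_i(D+j)+\delta$ you write down is false for your $B$. The proposal leaves this sign question unresolved. The plan in your last paragraph (take $[\Omega_1,\dots,\Omega_{i-1},D]$ as the large set and bound its $k$-th preimages through $\mu_i(D+j)$) has exactly the same defect.

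The missing step is to put $D+j$ rather than $D$ into the cylinder. The paper uses the unflipped criterion of Theorem \ref{thm:ufhcgeneral} with the small set $B:=[\Omega_1,\dots,\Omega_{i-1},D+j]$, so $\mu(B)<\delta$, and with $n:=jM$. Then $\od^{-n}(B)=B':=[\Omega_1,\dots,\Omega_{i-1},D]$. For $0\le r\le\kappa n$, the set $\od^{-(n+r)}(B)=\od^{-r}(B')$ contains every $x\in B'$ for which computing $\od^r(x)$ produces no carry into coordinate $i$. Hence $\mu(\od^{-(n+r)}(B))>1-2\delta$, and one concludes with $m:=(1+\kappa)n$ and $\alpha:=\kappa/(1+\kappa)$. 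If you want to keep the flipped form of Remark \ref{hum}, take the complement $[\Omega_1,\dots,\Omega_{i-1},\Omega_i\setminus(D+j)]$ as your large set.

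Two smaller points. First, your worry that $r<M$ could fail is unfounded, and your justification via $\kappa j\le\kappa(m_i-1)$ is not the right one. If $\kappa j\ge 1$, the interval in (\ref{eq:thmufhc}) is all of $\Omega_{i-1}$, so for $\delta<1$ the hypothesis itself forces $\kappa j<1$, hence $r\le\kappa jM<M$. The paper makes the same observation in the proof of Theorem \ref{thm:fhc}. Capping $r$ and adjusting $\alpha$ would be suspect anyway, since $\alpha$ must not depend on $\veps$. Second, a carry into coordinate $i$ really requires only $x_{i-1}\ge m_{i-1}-1-q$, which can be one point to the left of the interval in (\ref{eq:thmufhc}). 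The paper's own sketch has the same slack (compare the $-1$ in the definition of $\omega_{i-1}(\kappa)$), so this concerns the exact form of the hypothesis rather than your strategy.
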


\begin{remark}
In Theorem \ref{thm:fhc}, the assumptions  are exactly the same except that \eqref{eq:thmufhc} is replaced by the stronger inequality
$$\mu_{i-1}\bigl(\llbracket m_{i-1}-\kappa m_{i} m_{i-1},m_{i-1}-1\rrbracket\bigr)<\delta.$$
\end{remark}
\begin{proof}[\it Proof of Theorem \ref{thm:ufhc}]
The proof is almost identical to that of Theorem \ref{thm:fhc}. Given $\varepsilon>0$, we set 
\[
B:=[\Omega_1,\dots,\Omega_{N-1},D_N+j_N],\quad\quad 
n:=j_N\prod_{i=1}^{N-1}m_i
\]
for a very large $N=i,$ where $j_N\in\llbracket 1, m_N-1\rrbracket$ and $D_N\subset \Omega_{N}$ are such that $\mu(D_{N}+j_N)<\veps/2,$  $\mu(D_{N})>1-\veps/2$ and 
$\mu_{N-1}\bigl(\llbracket m_{N-1}-\kappa j_N m_{N-1},m_{N-1}-1\rrbracket\bigr)<\veps/2.$ Then $\mu(B)<\veps$; and for any $k\in\llbracket 0,\kappa n\rrbracket,$ the proof of Theorem \ref{thm:fhc} shows that  $\mu(\od^{-(n+k)}(B))>1-\veps.$ 
Hence the assumptions of Theorem \ref{thm:ufhcgeneral} are satisfied with $m:=(1+\kappa n)$ and $\alpha:=\kappa/(1+\kappa).$
\end{proof}

Here is a corollary to be compared with Corollary \ref{cor:fhc0}.
\begin{corollary}\label{cor:ufhc0}
If 
$\limsup_{i\to\infty} \min\left(1-\mu_{i-1}\bigl(\llbracket\kappa m_{i-1},m_{i-1}-1\rrbracket\bigr),\eta_{i}\right)=1$ for some $\kappa<1$, 
then $C_\od$ is $\mathcal U$-frequently hypercyclic on $\Lpmu$. This holds in particular if $\mu_i(0)\to 1$ as $i\to\infty$ or, more generally, if there exists an increasing sequence of integers $(i_s)$ such that $\mu_{i_s-1}(0)\to 1$.
\end{corollary}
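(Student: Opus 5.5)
The plan is to derive this directly from Theorem \ref{thm:ufhc}, taking $j=1$ and $D=\{a\}$ with $\mu_i(a)=\eta_i$. Under the hypothesis, for every $\delta>0$ there are arbitrarily large indices $i$ with $\eta_i>1-\delta$ and $\mu_{i-1}\bigl(\llbracket\kappa m_{i-1},m_{i-1}-1\rrbracket\bigr)<\delta$.

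Fix such an $i$, with $\delta<1/2$ so that $1-\delta>\delta$. Take $D=\{a\}$, where $a\in\Omega_i$ satisfies $\mu_i(a)=\eta_i$, and take $j=1$. Then $\mu_i(D)=\eta_i>1-\delta$. Since $a+1\neq a$ in $\ZZ/m_i\ZZ$ (because $m_i\geq 2$), we also get $\mu_i(D+1)\leq 1-\mu_i(a)<\delta$.

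It remains to check \eqref{eq:thmufhc} with $j=1$ and with some constant $\kappa'>0$ in place of the $\kappa$ of Theorem \ref{thm:ufhc}. With $j=1$, condition \eqref{eq:thmufhc} reads $\mu_{i-1}\bigl(\llbracket m_{i-1}-\kappa' m_{i-1},m_{i-1}-1\rrbracket\bigr)<\delta$. Choose $\kappa':=1-\kappa>0$; this is where $\kappa<1$ is used. Then $m_{i-1}-\kappa' m_{i-1}=\kappa m_{i-1}$, so the interval in \eqref{eq:thmufhc} is exactly $\llbracket\kappa m_{i-1},m_{i-1}-1\rrbracket$, and its $\mu_{i-1}$-measure is $<\delta$ by the choice of $i$. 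If $\kappa\leq 0$, replace $\kappa$ by a small positive number: this only shrinks the interval, so the hypothesis still holds. Theorem \ref{thm:ufhc} then applies with constant $\kappa'$ and yields $\mathcal U$-frequent hypercyclicity.

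For the particular cases, it suffices to show that $\mu_{i_s-1}(0)\to1$ implies the limsup hypothesis with, say, $\kappa=1/2$. Since $\llbracket m/2,m-1\rrbracket$ does not contain $0$ for $m\geq2$, we have $1-\mu_{i_s-1}\bigl(\llbracket m_{i_s-1}/2,m_{i_s-1}-1\rrbracket\bigr)\geq\mu_{i_s-1}(0)\to1$.

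We also need $\eta_{i_s}\to1$, and this is the only genuine obstacle. I would get it from the boundedness condition \eqref{eq:continuityodometer}. Applying it with $l=i_s$ and $j=0$, the bracket contains the factor $\frac{\mu_{i_s-1}(m_{i_s-1}-1)}{\mu_{i_s-1}(0)}\cdot\frac{\mu_{i_s}(m_{i_s}-1)}{\mu_{i_s}(0)}$. However, the remaining earlier factors are not controlled, so this does not directly force $\eta_{i_s}\to 1$.

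If that route fails, I would instead re-index so as to use $\mu_{i_s-1}(0)\to1$ as the $\eta$ term. Since $\mu_{i_s-1}(0)\to 1$ gives $\eta_{i_s-1}\to1$, the statement requires $\eta$ to be large at the same index where the preceding coordinate has little mass near its top. I expect the intended reading is that $\mu_{i_s}(0)\to1$ along the sequence supplies $\eta_{i_s}\to1$, while $\mu_{i_s-1}(0)\to 1$ supplies the first term, so that both hold along the sequence. In the case $\mu_i(0)\to1$ this is immediate for all $i$. For the general claim I would check carefully whether the hypothesis must control both $\mu_{i_s-1}$ and $\mu_{i_s}$.
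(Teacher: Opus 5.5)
Your reduction of the main implication to Theorem~\ref{thm:ufhc} is exactly the argument the paper leaves implicit when it presents this corollary as the analogue of Corollary~\ref{cor:fhc0}. That reduction takes $j=1$, $D=\{a\}$ with $\mu_i(a)=\eta_i$, and constant $1-\kappa$. Your treatment of the case $\mu_i(0)\to1$ is also fine.

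The gap is the last clause, which you leave open. Your suspicion is correct, and the clause cannot be closed as stated. The existence of $(i_s)$ with $\mu_{i_s-1}(0)\to1$ just says $\limsup_i\mu_i(0)=1$. This controls neither $\eta_{i_s}$ nor the top of $\mu_{i_s-2}$ (your re-indexing idea), even under \eqref{eq:continuityodometer}.

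For a counterexample, take $m_i=2$ and $\mu_i(0)=\frac{i}{i+1}$ for odd $i$, and take $\mu_i$ uniform on $\Omega_i$ with $m_i=i$ for even $i$.
The bracket in \eqref{eq:continuityodometer} is at most $1$ for even $l$, and equals $l/(1\cdot 3\cdots(l-2))\le 3$ for odd $l\ge3$. Moreover $\mu$ is non-atomic, and $\mu_{i-1}(0)\to1$ along even $i$.
Yet $\eta_i=1/m_i\to0$ for even $i$. For odd $i$, the first entry of the minimum is $\lceil\kappa m_{i-1}\rceil/m_{i-1}\to\kappa$ when $\kappa>0$, and it is $0$ when $\kappa\le 0$. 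So the $\limsup$ is $<1$ for every $\kappa<1$.
Theorem~\ref{thm:ufhc} does not apply either. At even $i$ no $D$ works, since $\mu_i(D+j)=\mu_i(D)$. At odd $i$ one must take $j=1$, and then the left side of \eqref{eq:thmufhc} is about the theorem's constant, not small.

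So the clause needs an extra assumption, e.g.\ $\mu_{i_s-1}(0)\to1$ \emph{and} $\eta_{i_s}\to1$ (for instance $\mu_{i_s}(0)\to1$). With it, your argument for $\mu_i(0)\to1$ runs verbatim along $(i_s)$. You should state and prove that corrected version rather than leave the question open.

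One further caution, inherited from the paper rather than from you, concerns the carry estimate in the proof of Theorem~\ref{thm:fhc}, which the proof of Theorem~\ref{thm:ufhc} invokes. That estimate shows that a carry into coordinate $i$ forces $x_{i-1}\ge m_{i-1}-1-\kappa j m_{i-1}$, not $x_{i-1}\ge m_{i-1}-\kappa j m_{i-1}$ (compare the definition of $\omega_i(\kappa)$). So the interval in \eqref{eq:thmufhc} should start one unit lower.
With $j=1$ the corrected interval always contains $m_{i-1}-1$. By contrast, $\llbracket\kappa m_{i-1},m_{i-1}-1\rrbracket$ is empty when, say, $m_{i-1}=2$ and $\kappa>1/2$.
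This is harmless in the case $\mu_i(0)\to1$ provided you take $\kappa\in(1/2,1)$ instead of $\kappa=1/2$. The corrected interval $\llbracket\kappa m_{i-1}-1,m_{i-1}-1\rrbracket$ then still avoids $0$.
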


\smallskip
\begin{example}
Let $(m_i)$ be any sequence of integers with $m_i\geq 2$ for all $i\in\mathbb N.$ There exists a sequence of probability measures $(\mu_i)$ such that $C_\od$ is $\mathcal U$-frequently hypercyclic on $\Lpmu.$
\end{example}
\begin{proof}
The odometer exhibited in Example \ref{ex:mixing} does the job since $ \mu_i(0)\to 1.$
\end{proof}

\section{Diagonal translation operators} \label{sec:sumoperator}

\subsection{Notation} In this section, we use the notation of Section \ref{subsec:sumoperators}. So we still have $\Omega=\prod_{i\geq 1} \ZZ/m_i\ZZ$, but this time addition is performed coordinatewise. We consider the composition operator $C_{\trans}$ defined by the translation $\trans(x)=x+a$, where $a=(1,1,\dots )$. We view $C_\trans$ as acting on $\Lpmu$, $1\leq p<\infty$, so we assume that Condition (\ref{eq:continuity}) is satisfied. Recall that to get hypercyclic examples, the sequence $(m_i)$ must be unbounded. Note also that the periodic vectors of $C_{\trans}$ are dense in $\Lpmu$, so that $C_\trans$ is chaotic as soon as it is hypercyclic. This follows from Lemma \ref{periodicgeneral}, or just by observing that if $n\in\NN$ and if we set $N=\prod_{i=1}^n m_i$, then $\trans^N(x)_i=x_i$ for all $x\in\Omega$ and $i=1,\dots ,n$, from which it follows that $\mathbf 1_B$ is a periodic vector of $C_{\trans}$ for any cylinder set $B\subset\Omega$.

\subsection{Hypercyclicity and mixing}
 Let us introduce (again) some useful quantities. For $i,n\geq 1$, we set 
\[ \alpha_{i,n}:= \sup\bigl\{\mu_i(D):\ D\subset\Omega_i,\ (D+n)\cap D=\varnothing\bigr\}\]
where addition is performed in $\ZZ/ m_i\ZZ$, and we define 
\[ \beta_i:= \sup_{n\geq 1} \,\alpha_{i,n}\qquad{\rm and}\qquad \gamma_n:=\sup_{i\geq 1} \,\alpha_{i,n}.\]

\begin{lemma}\label{betagamma}
We have $\limsup_{i\to\infty}\beta_i=1$ if and only if $\limsup_{n\to\infty}\gamma_n=1.$
\end{lemma}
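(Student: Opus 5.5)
The plan is to prove the two implications separately, using two elementary observations about $\alpha_{i,n}$.

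\emph{Periodicity.} Since addition in the definition of $\alpha_{i,n}$ is performed in $\ZZ/m_i\ZZ$, the quantity $\alpha_{i,n}$ depends only on the residue of $n$ modulo $m_i$. Hence $\alpha_{i,n+km_i}=\alpha_{i,n}$ for all $k\geq 0$, and
\[ \beta_i=\max\bigl\{\alpha_{i,n}:\ 1\leq n\leq m_i\bigr\}. \]
So $\beta_i$ is a maximum over finitely many $n$, each $\alpha_{i,n}$ being in turn a maximum over the finitely many subsets $D\subset\Omega_i$.

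\emph{Strict bound.} For every $i$ we have $\beta_i<1$. Indeed, if $n\equiv 0\ [m_i]$, the condition $(D+n)\cap D=\varnothing$ forces $D=\varnothing$, so $\alpha_{i,n}=0$. Otherwise, any admissible $D$ must be a proper subset of $\Omega_i$, because $D=\Omega_i$ would give $D+n=D$. Since $\mu_i(j)>0$ for every $j\in\Omega_i$, this yields $\mu_i(D)\leq 1-\delta_i<1$. Taking the maximum over finitely many pairs $(n,D)$ gives $\beta_i\leq 1-\delta_i<1$.

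\emph{Proof that $\limsup\beta_i=1$ implies $\limsup\gamma_n=1$.} Fix $\veps>0$ and choose $i$ with $\beta_i>1-\veps$. By the finiteness above, there is some $n_0\in\llbracket 1,m_i\rrbracket$ with $\alpha_{i,n_0}>1-\veps$. By periodicity, for every $k\geq 0$,
\[ \gamma_{n_0+km_i}\geq \alpha_{i,n_0+km_i}=\alpha_{i,n_0}>1-\veps. \]
This produces arbitrarily large $n$ with $\gamma_n>1-\veps$. Since also $\gamma_n\leq 1$ trivially, we get $\limsup_n\gamma_n=1$. This direction in fact only needs $\sup_i\beta_i=1$.

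\emph{Proof that $\limsup\gamma_n=1$ implies $\limsup\beta_i=1$.} We argue by contraposition. Suppose $\limsup_i\beta_i<1$, so there are $c<1$ and $i_0$ with $\beta_i\leq c$ for all $i\geq i_0$. Let $c':=\max\bigl(c,\beta_1,\dots,\beta_{i_0-1}\bigr)$; by the strict bound, $c'<1$. For every $n\geq 1$,
\[ \gamma_n=\sup_i\alpha_{i,n}\leq \sup_i\beta_i\leq c'. \]
Hence $\limsup_n\gamma_n\leq c'<1$.

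The only point requiring any care is the strict inequality $\beta_i<1$ for each fixed $i$, which is needed to handle the finitely many small indices in the second implication. It relies on two facts: $\beta_i$ is a genuine maximum over a finite set, not merely a supremum, and every atom $\mu_i(j)$ is positive. Everything else is bookkeeping.
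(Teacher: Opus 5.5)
Your proof is correct, and it departs from the paper's in one of the two implications.

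For $\limsup_n\gamma_n=1\Rightarrow\limsup_i\beta_i=1$ you argue as the paper does: $\gamma_n\le\sup_i\beta_i$, combined with $\beta_i<1$ for each fixed $i$. The paper simply calls $\beta_i<1$ clear; your bound $\beta_i\le 1-\min_j\mu_i(j)$ justifies it. That bound holds for every admissible pair $(n,D)$, so the finiteness you stress is not actually needed.

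The converse is where you genuinely differ. The paper reduces both conditions to $\sup_{i,n}\alpha_{i,n}=1$. To pass from $\sup_n\gamma_n=1$ back to $\limsup_n\gamma_n=1$ it needs $\gamma_n<1$ for every $n$. It gets this from the boundedness of $C_\trans$, via $\mu_i(D)\le K^n\mu_i(D+n)$, which yields $\gamma_n\le K^n/(1+K^n)$.

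You replace this with the $m_i$-periodicity of $n\mapsto\alpha_{i,n}$. A single pair $(i,n_0)$ with $\alpha_{i,n_0}>1-\veps$ already forces $\gamma_n>1-\veps$ along the whole progression $n_0+m_i\ZZ_+$. Applied to an arbitrary $n$, the same argument shows that $\limsup_n\gamma_n=\sup_n\gamma_n$ holds unconditionally.

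What each route buys: yours is more elementary and proves the lemma for any product measure with positive atoms, without the boundedness condition \eqref{eq:continuity}. The paper's route uses that hypothesis, which is in force in that section anyway. In exchange it gives the explicit bound $\gamma_n\le K^n/(1+K^n)$, extra quantitative information that the lemma itself does not need.
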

\begin{proof} It is clear that $\beta_i<1$ for all $i\geq 1$. Hence,
\[ \limsup \beta_i=1\iff \sup_i \beta_i=1\iff \sup_i\,\sup_n \alpha_{i,n}=1.\] 

Moreover, it is also true that $\gamma_n< 1$
 for all $n\geq 1$. 
 Indeed, since $C_{\trans}$ is bounded on $\Lpmu$, there exists a constant $K$ such that $\mu_i(D-1)\leq K \mu_i(D)$ for all $i\geq 1$ and every $D\subset\Omega_i$. Then $\mu_i(D)\leq K^n \mu_i(D+n)$ for all $i$ and every $D\subset\Omega_i$, so that $\mu_i(D)\leq K^n(1-\mu_i(D))$ if $D\cap (D+n)=\varnothing$; and this shows that $\gamma_n\leq\frac{ K^n}{1+K^n}\cdot$ Therefore,
 \[ \limsup \gamma_n=1\iff\sup_{n} \gamma_n=1\iff \sup_n\,\sup_i \alpha_{i,n}=1.\]
\end{proof}

\smallskip
\begin{proposition}\label{hcsum} If $\limsup_{n\to\infty} \gamma_n=1$, then $C_{\trans}$ is hypercyclic on $\Lpmu$; and if $ \gamma_n\to 1$, then $C_{\trans}$ is topologically mixing. 
\end{proposition}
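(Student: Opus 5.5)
We will use Corollary \ref{lem:topologicallytransitive}, since $\mu$ is a probability measure and $\trans$ is a homeomorphism, hence one-to-one and bimeasurable. For hypercyclicity we must show: for every $\veps>0$ there are $k\in\NN$ and $B\in\mathcal B$ with $\mu(\Omega\setminus B)<\veps$ and $B\cap\trans^k(B)=\varnothing$. For mixing, the same must hold for all $k\geq k_0$.

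The construction is a single cylinder set on one coordinate. Fix $\veps>0$. Suppose $n\geq1$ satisfies $\gamma_n>1-\veps$. By definition of $\gamma_n$ and $\alpha_{i,n}$, there are an index $i\geq1$ and a set $D\subset\Omega_i$ with
\[ \mu_i(D)>1-\veps \quad\text{and}\quad (D+n)\cap D=\varnothing, \]
where addition is taken in $\ZZ/m_i\ZZ$. Set
\[ B:=[\Omega_1,\dots,\Omega_{i-1},D], \]
so that $\mu(\Omega\setminus B)=1-\mu_i(D)<\veps$. Since $\trans^n(x)=x+(n,n,\dots)$, we have $(\trans^n(x))_i=x_i+n \pmod{m_i}$. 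Hence, if $x\in B$, then $(\trans^n(x))_i\in D+n$, which is disjoint from $D$, so $\trans^n(x)\notin B$. Therefore $B\cap\trans^n(B)=\varnothing$.

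If $\limsup_n\gamma_n=1$, such an $n$ exists for every $\veps$, so condition (i) of Corollary \ref{lem:topologicallytransitive}(1) holds and $C_\trans$ is hypercyclic. If $\gamma_n\to1$, choose $n_0$ with $\gamma_n>1-\veps$ for all $n\geq n_0$. The construction above, run separately for each $n\geq n_0$ with its own $i=i(n)$ and $D=D(n)$, gives condition (i) of Corollary \ref{lem:topologicallytransitive}(2), so $C_\trans$ is topologically mixing.

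There is no real obstacle here. The only point to check is that the carry-free coordinatewise addition really gives $(\trans^n x)_i=x_i+n$ modulo $m_i$ exactly, and it does: this is precisely why the diagonal translation is simpler than the odometer, whose carries forced the extra sets $E_s$ in the proof of Theorem \ref{thm:hc}. One small remark: the index $i$ depends on $n$, but this is harmless because Corollary \ref{lem:topologicallytransitive} allows $B$ to depend on $n$.
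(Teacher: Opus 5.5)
Your proof is correct and is essentially the paper's argument: a single-coordinate cylinder built from a set $D\subset\Omega_i$ with $(D+n)\cap D=\varnothing$ and $\mu_i(D)$ close to $1$, fed into Corollary~\ref{lem:topologicallytransitive}. The only difference is cosmetic. You verify condition (i) using the cylinder over $D$, whereas the paper verifies the equivalent condition (ii) using the cylinder over $D+n$, whose measure is at most $1-\mu_i(D)$ and whose preimage under $\trans^{n}$ is the cylinder over $D$.
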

\begin{proof} Assume that $\limsup\, \gamma_n=1$, and let us show that $C_{\trans}$ is hypercyclic.  By Corollary \ref{lem:topologicallytransitive} (1), it is enough to show that for any $\varepsilon >0$, one can find $B\subset \Omega$ and $n\in\NN$ such that $\mu(B)\leq \varepsilon$ and $\mu(B-na)\geq 1-\varepsilon$. By assumption, there exist $i\geq 1$, $D\subset\Omega_i$ and $n\in\NN$ such that $\mu_i(D)\geq 1-\veps$ and $(D+n)\cap D=\varnothing$. We take that $n$, and $B:=\Omega_1\times\cdots\times\Omega_{i-1}\times (D+n)\times\Omega_{i+1}\times \cdots.$

\smallskip The proof of the mixing case is the same, using Corollary \ref{lem:topologicallytransitive} (2).
\end{proof}

We now provide some examples.

\begin{example} \ \label{ex:trans} Assume that the sequence $(m_i)$ is unbounded. 
\begin{enumerate}
\item[(a)]  There exists a sequence of measures $(\mu_i)$ such that $C_\trans$ is hypercyclic on $\Lpmu.$
\item[(b)]  If $\sum_{i=1}^\infty m_i^{-1}<\infty$ and $m_{i+1}=O(m_i),$ then there exists a sequence of measures $(\mu_i)$ such that $C_\trans$ is topologically mixing on $L_p(\Omega,\mu).$
\end{enumerate}
\end{example}
\begin{proof}(a) 
Let \((i_s)_{s\geq 1}\) be an increasing sequence of integers such that 
$ \sum_{s=1}^\infty m_{i_s}^{-1}<\infty$, 
and choose a sequence of positive numbers $(\delta_s)$ such that 
\[ \delta_s m_{i_s}\to\infty\qquad{\rm and}\qquad \sum_{s=1}^\infty \delta_s<\infty.\] We define \((\mu_i)\) as follows. If \(i \neq i_s\) for all $s$, then \(\mu_i\) is the uniform distribution on \(\llbracket 0, m_i - 1 \rrbracket\). If \(i = i_s\) for some \(s\), we set \(n_{i_s} := \lfloor m_{i_s}/2 \rfloor\) and we write \(\llbracket 0, m_{i_s} - 1 \rrbracket\) as \(J_{s,1} \cup J_{s,2}\), where \(J_{s,1}\) and \(J_{s,2}\) are two consecutive intervals  with \(\# J_{s,2} = n_{i_s}\) (so that \(\# J_{s,1} = n_{i_s}\) or \(n_{i_s} + 1\)). We set \(\rho_s := 1 + \delta_s\), and we choose \(\varepsilon_s\) such that
\begin{equation}\label{eq:sumhc}
\# J_{s,1} \varepsilon_s + \frac{\rho_s^{n_{i_s}} - 1}{\rho_s - 1} \varepsilon_s = 1.
\end{equation}
The measure \(\mu_{i_s}\) is defined by
\[
\begin{cases}
\mu_{i_s}(k) :=\varepsilon_s & \text{if } k \in J_{s,1}, \\
\mu_{i_s}(m_{i_s} - 1 - n_{i_s} + k) :=\rho_s^{n_{i_s} - k} \varepsilon_s & \text{if } k=1,\dots,n_{i_s}.
\end{cases}
\]
It is indeed a probability measure on \(\llbracket 0, m_{i_s} - 1\rrbracket\).
Moreover, we have
\[
\sup_{j \in \Omega_{i_s}} \frac{\mu_{i_s}(j - 1)}{\mu_{i_s}(j)} = \rho_s\qquad\hbox{for all $s\geq 1$},
\]
whereas $\sup_{j\in\Omega_i} \frac{\mu_{i}(j - 1)}{\mu_{i}(j)} =1$ if $i$ is not an $i_s$. So the boundedness condition \eqref{eq:continuity} is satisfied since the infinite product \(\prod_{s \geq 1} \rho_s\) is convergent. Note also that the measure $\mu$ is non-atomic since $\eta_{i_s}=\rho_s^{n_{i_s}-1}\varepsilon_s\leq \delta_s +\varepsilon_s$ by \eqref{eq:sumhc} and hence $\eta_{i_s}\to 0$ as $s\to\infty$.

Now, we observe that $\mu_{i_s}(J_{s,2})\to 1$ as $s\to\infty$. Indeed, we have $(1+\delta_s)^{n_{i_s}}\gg n_{i_s}\delta_s$ since $n_{i_s}\delta_s\to \infty$, so that 
\[ \frac{\mu_{i_s}(J_{s,2})}{\mu_{i_s}(J_{s,1})}=\frac{(1+\delta_s)^{n_{i_s}}-1}{n_{i_s}\delta_s}\to \infty.\]
Hence, if we set $D_{i_s}:=J_{s,2}$, then $\mu_{i_s}(D_{i_s})\to 1$
and $D_{i_s}\cap (D_{i_s}+n_{i_s})=\varnothing.$ It follows that 
 $\limsup_i \beta_i\geq \limsup_s \mu_{i_s}(D_{i_s})=1$, so that $C_\trans$ is topologically transitive by Proposition \ref{hcsum} and Lemma \ref{betagamma}.

\smallskip (b) Let $\kappa>0$ be such that $\kappa m_{i+1}+1\leq (1-\kappa) m_i$ for all $i\geq 1$. 
We set \(n_{i} := \lfloor \kappa m_{i} \rfloor\) and we split \(\llbracket 0, m_i - 1 \rrbracket\) into two consecutive intervals \(J_{i,1} \cup J_{i,2}\)  with \(\# J_{i,2} = n_{i}\). We choose a sequence of positive numbers $(\delta_i)$ such that $m_i\delta_i\to\infty$ and $\sum_{i=1}^\infty \delta_i<\infty$, and we set $\rho_i:=1+\delta_i.$ We define $\veps_i$ by 
\[\# J_{i,1} \varepsilon_i + \frac{\rho_i^{n_{i} }- 1}{\rho_i - 1} \varepsilon_i = 1,\]
and $\mu_i$ by
\[
\begin{cases}
\mu_{i}(k) :=\varepsilon_i & \text{if } k \in J_{i,1}, \\
\mu_{i}(m_{i} - 1 - n_{i} + k) :=\rho_i^{n_{i} - k} \varepsilon_i & \text{if } k=1,\dots,n_{i}.
\end{cases}
\]
As before, the convergence of $\prod_{i\geq 1} \rho_i$ ensures the boundedness of $C_\trans$, and the measure $\mu$ is non-atomic. By Proposition \ref{hcsum}, to show that $C_\trans$ is mixing it is enough to find a sequence $(D_i)_{i\geq 1}$ with $D_i\subset\Omega_i$ such that $\mu_i(D_i)\to 1$ as $i\to\infty$ and, for any $i_0\geq 1$, every large enough integer $n$ is such that $D_i\cap (D_i+n)=\varnothing$ for some $i\geq i_0$.

We set $D_i:=J_{i,2}$ and observe that $\mu_i(D_i)\to 1.$ Moreover, if $n\in\llbracket n_i,m_i-n_i-1\rrbracket$, then $(D_i+n)\cap D_i=\varnothing.$ Now,
$$m_i-n_i-1\geq (1-\kappa)m_i-1\geq  \kappa m_{i+1}\geq n_{i+1};$$
so, for any $i_0\in\NN,$ the set $\bigcup_{i\geq i_0} \llbracket n_i, m_i-n_i-1\rrbracket$ has the form $\llbracket n_0,\infty)$ for some $n_0\in\ZZ_+$. Hence, every large enough integer $n$ is such that $D_i\cap (D_i+n)=\varnothing$ for some $i\geq i_0$.
\end{proof}

\begin{question} Assuming only that the sequence $(m_i)$ is unbounded, is it possible to find a sequence of measures $(\mu_i)$ such that $C_\trans$ is topologically mixing on $\Lpmu$?
\end{question}

\smallskip
Let us now give another criterion for hypercyclicity or mixing, which is based on the behaviour of $\trans$ on several coordinates. For $i, n\geq 1$  we  set 
\[
\theta_{i,n}:=\sup\bigl\{\mu_i(D)-\mu_i(D+n):\ D\subset \Omega_i
\bigr\}\]
where addition is understood in $\Omega_i$, 
and 
\[ \widetilde \gamma_n:=\sup\left\{ \frac1{\#I}\,\Biggl(\sum_{i\in I}\theta_{i,n}\Biggr)^2\,:\, I\subset\NN\;{\rm finite}\right\}.
\]

\smallskip
\begin{theorem}\label{Hoeffbis}  If $\limsup_{n\to\infty} \widetilde\gamma_n=\infty,$ then $C_{\trans}$ is hypercyclic on $\Lpmu$; and if $\widetilde\gamma_n\to \infty,$ then $C_{\trans}$ is topologically mixing.
\end{theorem}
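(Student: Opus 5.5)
The plan is to mimic the Hoeffding argument of Theorem \ref{thm:hc}. It is simpler here because $\trans$ acts coordinatewise, so there are no carries and no sets $E_s$ to remove. We will verify condition (i) of Corollary \ref{lem:topologicallytransitive}: for $\varepsilon>0$ we must find $n$ and $B$ with $\mu(\Omega\setminus B)<\varepsilon$ and $B\cap\trans^n(B)=\varnothing$. For mixing we need this for every large $n$; that is condition (i) of part (2).

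Fix $\varepsilon>0$ and an $n$ with $\widetilde\gamma_n$ large. In the hypercyclic case such $n$ exist by the $\limsup$ assumption; in the mixing case every $n\geq n_0$ qualifies. Choose a finite set $I\subset\NN$ with
\[\exp\Bigl(-\frac{2}{9\,\#I}\Bigl(\sum_{i\in I}\theta_{i,n}\Bigr)^2\Bigr)<\varepsilon/2.\]
This is possible because $\widetilde\gamma_n$ is a supremum over finite $I$. For each $i\in I$ pick $D_i\subset\Omega_i$ nearly attaining $\theta_{i,n}$, say with $\mu_i(D_i)-\mu_i(D_i+n)\geq \theta_{i,n}'$, where $\theta'_{i,n}$ is close enough to $\theta_{i,n}$ that the displayed inequality still holds with $\theta'$. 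The supremum is actually attained since $\Omega_i$ is finite, so we may simply take $\theta'_{i,n}=\theta_{i,n}$. Define the independent Bernoulli variables $X_i(x)=\mathbf 1_{D_i}(x_i)$ and $Y_i(x)=\mathbf 1_{D_i+n}(x_i)$, and set
\[B_X=\Bigl\{\sum_{i\in I}X_i\geq\sum_{i\in I}\bigl(\mu_i(D_i)-\tfrac{\theta_{i,n}}3\bigr)\Bigr\},\qquad B_Y=\Bigl\{\sum_{i\in I}Y_i\leq\sum_{i\in I}\bigl(\mu_i(D_i+n)+\tfrac{\theta_{i,n}}3\bigr)\Bigr\},\]
and take $B:=B_X\cap B_Y$.

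Since $\trans^n(x)_i=x_i+n$ in $\Omega_i$, we have $Y_i(\trans^n x)=X_i(x)$ exactly. Hence for $x\in B$,
\[\sum_{i\in I} Y_i(\trans^n x)=\sum_{i\in I} X_i(x)\geq \sum_{i\in I}\bigl(\mu_i(D_i)-\tfrac{\theta_{i,n}}3\bigr)>\sum_{i\in I}\bigl(\mu_i(D_i+n)+\tfrac{\theta_{i,n}}3\bigr).\]
The strict inequality holds because $\sum_{i\in I}\theta_{i,n}>0$, which follows from $\widetilde\gamma_n$ being large. So $\trans^n(x)\notin B_Y$, and therefore $B\cap\trans^n(B)=\varnothing$. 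Hoeffding's inequality applied to the deviations of size $\frac13\sum_{i\in I}\theta_{i,n}$ gives $\mu(\Omega\setminus B_X)$ and $\mu(\Omega\setminus B_Y)$ both below $\varepsilon/2$, hence $\mu(\Omega\setminus B)<\varepsilon$.

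Corollary \ref{lem:topologicallytransitive} applies because $\mu$ is finite, $\trans$ is a bimeasurable bijection, and $L_p$ satisfies (H1), (H2), (C1), (C2). It yields hypercyclicity in the first case. In the second case, $\widetilde\gamma_n\to\infty$ means the construction works for all $n\geq n_0(\varepsilon)$, which gives mixing. The only delicate point is bookkeeping: ensuring that the chosen $I$ makes the Hoeffding exponent small, and noting that $\widetilde\gamma_n$ may be infinite. If $\widetilde\gamma_n=\infty$, we simply pick a finite $I$ with a large ratio.
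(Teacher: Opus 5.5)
Your proof is correct and is essentially the paper's argument. You choose a finite set of coordinates that makes the Hoeffding exponent small, use $Y_i\circ\trans^n=X_i$ (there are no carries) to get $B\cap\trans^n(B)=\varnothing$, and conclude with Corollary \ref{lem:topologicallytransitive}, the mixing case following because the construction works for every $n\geq n_0(\veps)$; you also rightly do not impose the extra requirement $(D_s+n)\cap D_s=\varnothing$ that the paper states when choosing $D_s$, which is not needed and need not be compatible with attaining $\theta_{i_s,n}$.
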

\begin{proof}
We only prove the hypercyclic case, the mixing case being completely similar. The proof follows that of Theorem \ref{thm:hc}, but it is easier since we do not have to worry about carries.

By Corollary \ref{lem:topologicallytransitive}, it is enough to show that given $\varepsilon >0$, one can find a Borel set $B\subset\Omega$ and $n\in\NN$ such that $\mu(B)\geq 1-\varepsilon$ and $B\cap \trans^n(B)=\varnothing$. 

Let $\varepsilon>0.$ By assumption, there exist $n,N\geq 1$ and  $1\leq i_1<\cdots <i_N$ such that 
$$\exp\left(-\frac{2}{9N}\left(\sum_{s=1}^N \theta_{i_s,n}\right)^2\right)<\veps.$$
For $s=1,\dots ,N$, let $D_s\subset \Omega_{i_s}$ be such that $(D_s+n)\cap D_s=\varnothing$ and $\mu_{i_s}(D_s)-\mu_{i_s}(D_s+n)=\theta_{i_s,n}.$ Define random variables $X_s,Y_s:(\Omega,\mathcal B,\mu)\to \{ 0 , 1\}$ by
\[ X_s:=\mathbf 1_{\{x_{i_s}\in D_s\}}\qquad{\rm and}\qquad Y_s:=\mathbf 1_{\{x_{i_s}\in D_s+n\}}.\]

Now, consider 
\begin{align*}
    B_X&:=\left\{ \sum_{s=1}^N X_s\geq \sum_{s=1}^N \left(\mu_{i_s}(D_s)-\frac {\theta_{i_s,n}}3\right)\right\},\\
    B_Y&:=\left\{\sum_{s=1}^N Y_s\leq \sum_{k=1}^N \left(\mu_{i_s}(D_s+n)+\frac {\theta_{i_s,n}}3\right)\right\},\\
\end{align*}
and
\[    B=B_X\cap B_Y.\]

Observe that since $\mu_{i_s}(D_s)-\frac {\theta_{i_s,n}}3>\mu_{i_s}(D_s+n)+\frac {\theta_{i_s,n}}3$ (by the choice of $D_s$) and  $Y_s(\trans^n x)=X_s(x)$ for all $x\in\Omega$, we have $\trans^n(B)\cap B=\varnothing.$ Moreover,
\begin{align*}
    \mu(B_X)&=1-\mu \left( \sum_{s=1}^N X_s< \sum_{s=1}^N \left(\mu_{i_s}(D_s)-\frac {\theta_{i_s,n}}3\right)\right)\\
    &=1-\mu\left( \sum_{s=1}^N \bigl( X_s-\mathbb E(X_s)\bigr)<-\frac{1}3\sum_{s=1}^N \theta_{i_s,n}\right).
\end{align*}
Since $X_1,\dots ,X_s$ are independent Bernoulli variables, it follows (by Hoeffding's inequality) that
\[ \mu(B_X)\geq 1-\exp\left(-\frac2{9N}\left(\sum_{s=1}^N \theta_{i_s,n}\right)^2\right)\geq 1-\veps.\]
Similarly $\mu(B_Y)\geq 1-\varepsilon$, and hence $\mu(B)\geq 1-2\veps.$
\end{proof}

\smallskip  We point out the following consequence of Theorem \ref{Hoeffbis}, to be compared with Theorem \ref{thm:hc}. Recall the notation 
\[ \theta_i=\sup\bigl\{\mu_i(D)-\mu_i(D+k):\ D\subset \Omega_i,\ k\in\ZZ
\bigr\}.\]

\begin{corollary}\label{coprime} Assume that the integers $m_i$ are pairwise coprime, and that 
\[ \sup\,\left\{ \frac1{\# I}\left(\sum_{i\in I} \theta_i\right)^2\,:\, I\subset\NN\;\,{\rm finite}\right\}= \infty.\]

Then $C_\trans$ is hypercyclic on $\Lpmu$, $1\leq p<\infty$.
\end{corollary}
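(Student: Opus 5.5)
The plan is to deduce Corollary \ref{coprime} from Theorem \ref{Hoeffbis} by showing that $\limsup_{n\to\infty}\widetilde\gamma_n=\infty$. Since $\widetilde\gamma_n\geq \frac1{\#I}\bigl(\sum_{i\in I}\theta_{i,n}\bigr)^2$ for every finite $I\subset\NN$, it suffices to show the following: for each finite set $I$ and each $M\in\NN$, there is some $n\geq M$ with $\theta_{i,n}=\theta_i$ for all $i\in I$. Then $\sup_n\widetilde\gamma_n=\infty$ follows from the hypothesis. It also implies $\limsup_n\widetilde\gamma_n=\infty$, because we may take $n$ arbitrarily large.

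The key step is the Chinese Remainder Theorem. First observe that in the definition of $\theta_i$ the shift $k$ only matters modulo $m_i$. Since $\Omega_i$ is finite, the supremum is attained, so for each $i$ there are $D_i\subset\Omega_i$ and $k_i\in\llbracket 0,m_i-1\rrbracket$ with $\theta_i=\mu_i(D_i)-\mu_i(D_i+k_i)$. Next, $\theta_{i,n}$ depends only on $n \bmod m_i$, and $\theta_{i,n}\leq\theta_i$. Because the $m_i$, $i\in I$, are pairwise coprime, the system $n\equiv k_i\ [m_i]$, $i\in I$, has a solution. The solutions form a residue class modulo $\prod_{i\in I}m_i$, so we can choose $n\geq M$, and even $n\geq1$, which handles the case where all $k_i=0$. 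For such $n$ we have $\theta_{i,n}=\theta_i$ for every $i\in I$.

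To finish, given $R>0$, we pick a finite $I$ with $\frac1{\#I}(\sum_{i\in I}\theta_i)^2>R$. For infinitely many $n$ this gives $\widetilde\gamma_n\geq R$, so $\limsup_n\widetilde\gamma_n=\infty$ and Theorem \ref{Hoeffbis} yields hypercyclicity. There is no real obstacle; the only points needing care are that $n$ must be a positive integer, which the residue-class freedom provides, and that $\theta_i$ is indeed achieved by some shift $k_i$, which follows from finiteness of $\Omega_i$ together with the periodicity of $D+k$ in $k$.
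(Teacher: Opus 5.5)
Your proposal is correct and follows essentially the same route as the paper: pick a finite $I$ with $\frac1{\#I}\bigl(\sum_{i\in I}\theta_i\bigr)^2$ large and maximizing shifts $k_i$, use the Chinese Remainder Theorem to get arbitrarily large $n$ with $n\equiv k_i\ [m_i]$, so that $\theta_{i,n}=\theta_i$ for $i\in I$, and conclude via Theorem \ref{Hoeffbis}. Your extra remarks on attainment of the supremum and on $n\geq 1$ are harmless refinements of the same argument.
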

\begin{proof} It is enough to show that given $A>0$ and $n_0\in\NN$, one can find $n\geq n_0$ such that $\widetilde\gamma_n>A$. By assumption, one can find a finite set $I\subset\NN$ such that 
\[  \frac1{\# I}\left(\sum_{i\in I} \theta_i\right)^2>A.\]
For $i\in I$, choose a set $D_i\subset\Omega_i$ and an integer $k_i$ such that 
$\mu_i(D_i)-\mu_i(D_i+k_i)=\theta_i$. By the Chinese Remainder Theorem, one can find an integer $n\geq n_0$ such that $n\equiv k_i\; [m_i]$ for all $i\in I$. Then $\theta_{i,n}=\theta_i$ for all $i\in I$; hence $\widetilde \gamma_n\geq  \frac1{\# I}\left(\sum_{i\in I} \theta_i\right)^2>A$. 
\end{proof}

\smallskip

Theorem \ref{Hoeffbis} allows us to give examples of hypercyclic $C_\trans$ for which Proposition \ref{hcsum} cannot be applied.
\begin{example} Let $m_i=2^{l+1}$ if $l^2\leq i<(l+1)^2$ 
for some $l\geq 1$. Then, one can find a sequence of measures $(\mu_i)$ such that $C_\trans$ is hypercyclic on $\Lpmu$ and $\limsup \beta_i<1$.
\end{example}
\begin{proof}
We write $m_i=2n_i$, so $n_i=2^l$ if $l^2\leq i<(l+1)^2$ 
for some $l\geq 1$.  Let $\delta_i:=1/n_i$ and $\rho_i:=1+\delta_i$, let 
$\varepsilon_i$ be such that 
\[ n_i\varepsilon _i+ \frac{\rho_i^{n_i}-1}{\rho_i-1}\,\varepsilon_i=1,\]
and define a probability measure $\mu_i$ on $\Omega_i$ as follows:
\[\left\{
\begin{array}{rcll}
\mu_i(k)&:=&\veps_i&\textrm{ for }k=0,\dots,n_i-1,\\
\mu_i(n_i+k)&:=&\rho_i^{n_i-1-k}\veps_i&\textrm{ for }k=0,\dots,n_i-1.
\end{array}
\right.\]
First, observe that 
$$\sum_{i= 1}^\infty \delta_i=\sum_{l= 1}^\infty\sum_{l^2\leq i <(l+1)^2} 2^{-l}<\infty,$$
so that $C_{\trans}$ is bounded on $\Lpmu$.

Next, let $l\geq 1$ and consider $l^2\leq i<(l+1)^2$. 
Taking $D:=\llbracket 2^l,2^{l+1}-1\rrbracket\subset\Omega_i$, we see that 
\begin{align*}
\theta_{i,2^l}&\geq\left(\frac{\rho_{i}^{n_{i}}-1}{\rho_{i}-1}-n_{i}\right)\veps_i\\
&\geq\left(\left(1+2^{-l}\right)^{2^l}-2\right)2^l\veps_i.
\end{align*}
Since we also have
$$\left(\frac{\rho_i^{n_i}-1}{\rho_i-1}+n_i\right)\veps_i=1,$$
we get
$$\left(1+2^{-l}\right)^{2^l}2^l\veps_i=1$$
which yields
$$\theta_{i,2^l}\geq\frac{\left(1+2^{-l}\right)^{2^l}-2}{\left(1+2^{-l}\right)^{2^l}}\xrightarrow{l\to\infty} \frac{e-2}{e}\cdot$$

So, there exists $c>0$ such that $\theta_{i,2^l}\geq c$ for any $l\geq 1$ and $l^2\leq i<(l+1)^2$. Taking $n:=2^l$, $N:=2l+1$ and $i_1:=l^2, \dots ,i_N:=l^2+N-1$ in the definition of $\widetilde \gamma_{n}$, it follows that $\widetilde\gamma_{2^l}\to \infty$ as $l\to\infty$. Hence, $C_{\trans}$ is hypercyclic by Theorem \ref{Hoeffbis}.  

To see that Proposition \ref{hcsum} does not apply, observe that if $l^2\leq i<(l+1)^2$ and if $D\subset \Omega_i=\llbracket 0, 2^{l+1}-1\rrbracket$ is such that $D\cap (D+n)=\varnothing$ for some $n\in\ZZ_+$, then $\# D\leq 2^l$, and that the set $D\subset \llbracket 0, 2^{l+1}-1\rrbracket$ with cardinality $2^l$ and greatest $\mu_i\,$-$\,$measure is $D=\llbracket 2^l,2^{l+1}-1\rrbracket$, for which 
\[ \mu_i (\llbracket 2^l,2^{l+1}-1\rrbracket)=1-n_i\varepsilon_i=\frac{\left(1+2^{-l}\right)^{2^l}-1}{\left(1+2^{-l}\right)^{2^l}}\cdot\]Hence, we have 
\begin{align*}
\beta_i&\leq \frac{\left(1+2^{-l}\right)^{2^l}-1}{\left(1+2^{-l}\right)^{2^l}} \qquad \hbox{if} \quad l^2\leq i<(l+1)^2,
\end{align*}
so that $\limsup_i \beta_i\leq(e-1)/e<1$.
\end{proof}

\subsection{Frequent hypercyclicity}
The next theorem provides a simple sufficient condition for frequent hypercyclicity of $C_\trans$.
\begin{theorem}\label{thm:fhcsumoperator}
  Let $(d_i)_{i\geq 1}$ be an increasing sequence of integers such that $d_i$ is a multiple of $m_1,\dots ,m_i$. Assume that there exists $\kappa>0$ such that for any $\varepsilon >0$, the following holds true: 
{for all $i_0\in\NN,$ there exist $i\geq i_0,$  }
$n\in\NN$  and $D\subset\Omega_i$ such that 
\[\forall 0\leq k\leq \kappa d_i\;:\; \mu_i(D-k)\leq \varepsilon\quad{\rm and}\quad \mu_i(D-(n+k))\geq 1-\varepsilon.\]
Then $C_{\trans}$ is frequently hypercyclic.
\end{theorem}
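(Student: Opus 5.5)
We reduce the statement to Corollary \ref{cor:fhcgroup}, applied to the compact, metrizable, zero-dimensional group $(\Omega,+)$ with coordinatewise addition, the element $a=(1,1,\dots)$ and the translation $\tau_a=\trans$. Boundedness of $C_\trans$ on $\Lpmu$ is the standing assumption (\ref{eq:continuity}), so only the hypotheses specific to the corollary need checking.

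The first step is to show that $a^{d_i}\to e$, i.e. $d_i a\to 0$ in $\Omega$. Since $d_i$ is a multiple of $m_1,\dots,m_i$, the coordinates of $d_ia$ are $(d_ia)_r=d_i \bmod m_r=0$ for all $r\le i$. Thus $d_i a$ agrees with $0$ on the first $i$ coordinates, and $d(d_ia,0)\le 2^{-i-1}\to0$. The sequence $(d_i)$ is increasing, as the corollary requires.

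The second step translates the one-coordinate hypothesis into the set condition of Corollary \ref{cor:fhcgroup}. Given $\veps>0$ and $i_0$, the hypothesis provides $i\ge i_0$, $n$ and $D\subset\Omega_i$. We take the cylinder
\[ B:=\Omega_1\times\cdots\times\Omega_{i-1}\times D\times\Omega_{i+1}\times\cdots. \]
With $a^{-k}B$ in additive notation meaning $B-ka$, we have $\trans^{-k}(B)=\{x:\ x_i+k\in D\}=[\Omega_1,\dots,\Omega_{i-1},D-k]$, where $D-k$ is computed in $\ZZ/m_i\ZZ$. Since $\mu$ is a product measure, this gives
\[ \mu\bigl(\trans^{-k}(B)\bigr)=\mu_i(D-k). \]
The hypothesis therefore yields $\mu(a^{-k}B)\le\veps$ and $\mu(a^{-(n+k)}B)\ge1-\veps$ for all $0\le k\le\kappa d_i$. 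Finally, $a^{-d_i}B=B-d_ia=B$, because $m_i$ divides $d_i$, so $D-d_i=D$ in $\ZZ/m_i\ZZ$.

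All hypotheses of Corollary \ref{cor:fhcgroup} then hold with the same $\kappa$, and $C_\trans$ is frequently hypercyclic on $\Lpmu$. No step is a genuine obstacle. The only points needing care are the sign conventions: $a^{-k}B$ is read as $B-ka$, and the $\mu$-measure of this cylinder is $\mu_i(D-k)$, matching exactly the quantity in the hypothesis. The other point is that $d_i$ must be divisible by $m_i$ itself, and not only by $m_1,\dots,m_{i-1}$, to obtain $\trans^{-d_i}(B)=B$; the statement's assumption that $d_i$ is a multiple of $m_1,\dots,m_i$ provides exactly this.
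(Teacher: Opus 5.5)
Your proposal is correct and is essentially the paper's proof: both apply Corollary~\ref{cor:fhcgroup} to the cylinder $B=[\Omega_1,\dots,\Omega_{i-1},D]$, using that $d_ia\to 0$ and that $B-d_ia=B$ because $m_i$ divides $d_i$. You also spell out the identity $\mu(B-ka)=\mu_i(D-k)$ and the sign conventions, which the paper leaves implicit.
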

\begin{proof} We apply Corollary \ref{cor:fhcgroup}. The assumption on $d_i$ implies that $d_ia\to 0$. 
For $i\geq 1$, let $B_i:=[\Omega_1,\dots,\Omega_{i-1},D_i]$, the cylinder defined by $D_i$. Then $B_i-d_ia=B_i$ because $d_i$ is a multiple of $m_i$. So the assumptions of Corollary \ref{cor:fhcgroup} are satisfied.
\end{proof}

\smallskip
{
\begin{example}\label{ex:fhcnotmixing}
Assume $m_{i+1}$ is a multiple of $m_i$ for all $i\geq 1$.
\begin{enumerate}
\item[(a)] If $m_i\to\infty$, there exists a sequence of measures $(\mu_i)$ such that $C_\trans$ is frequently hypercyclic on $L_p(\Omega,\mu)$. 
\item[(b)] If the sequence $(m_{i+1}/m_i)$ is unbounded, there exists a sequence of measures $(\mu_i)$ such that $C_\trans$ is frequently hypercyclic on $L_p(\Omega,\mu)$ and \emph{topologically rigid} along $(m_i)$, \textit{i.e.} $C_\trans^{m_i}f\to f$ for all $f\in\Lpmu$. In particular, $C_\trans$ is not topologically mixing.
\end{enumerate}
\end{example}
\begin{proof} (a) Since $m_i\to\infty$, one can choose a sequence of positive numbers $(\delta_i)$ such that $\sum_{i=1}^\infty \delta_i<\infty$ and $\limsup \delta_im_i=\infty$. We set $\rho_i:=1+\delta_i$. For $i< 5$, we take as $\mu_i$ any probability measure on $\Omega_i$. For $i\geq 5$, we define $\mu_i$ as follows. Let $n_i:=\lfloor m_{i}/5\rfloor$. We split $\Omega_{i}$ into three consecutive intervals, 
$\Omega_i=J_{i,1}\cup J_{i,2}\cup J_{i,3}$ with $\#J_{i,2}=\#J_{i,3}=n_{i}$. This implies that $3n_{i}\leq \#J_{i,1}\leq 3n_{i}+4.$ We now choose $\veps_i$ such that
\[(\#J_{i,1}+\#J_{i,3})\veps_i+\frac{\rho_i^{n_{i}}-1}{\rho_i-1}\veps_i=1,\]
and we define $\mu_{i}$ by
\[
\begin{cases}
\mu_{i}(k) :=\varepsilon_i & \text{if } k \in J_{i,1}\cup  J_{i,3}, \\
\mu_{i}(\# J_{i,1} + k) :=\rho_s^{n_{i} - k-1} \varepsilon_s & \text{if } k=0,\dots,n_{i}-1.
\end{cases}
\]

As in the proof of  Example \ref{ex:trans}, the convergence of the infinite product $\prod_{i\geq 1} \rho_i$ ensures that $C_\trans$ is bounded on $\Lpmu$; and the measure $\mu$ is non-atomic because $\eta_i\leq \delta_i+\varepsilon_i\leq \delta_i+1/2n_i$ for all $i\geq 5$ and hence $\eta_i\to 0$. Moreover, since $\limsup \delta_i n_i=\infty$, we have $\limsup \mu_i(J_{i,2})=1$, again as in the proof of Example \ref{ex:trans}.

Let $d_i:=m_i$ and $D_{i}:= J_{i,2}\cup  J_{i,3}$. For $k=0,\dots ,n_{i}-1$, we see that \[ D_i-k\supset J_{i,2}\qquad{\rm and}\qquad D_i-(2n_i+k)\subset J_{i,1}.\]
Since $\limsup \mu_i(J_{i,2})=1$ and $n_i\geq m_i/6$, it follows that given $\varepsilon>0$, one can find $i$ arbitrarily large such 
\[ \forall 0\leq k\leq d_i/6\;:\; \mu_i(D_i-k)\geq 1-\varepsilon\qquad{\rm and}\qquad \mu_i(D_i-(2n_i+k)\bigr)\leq \varepsilon.\]
Hence, $C_\trans$ is frequently hypercyclic by the ``flipped'' version of Theorem \ref{thm:fhcsumoperator} (see Remark \ref{rem:fhcco}).

\smallskip
(b) Since the sequence $(m_{i}/m_{i-1})$ is unbounded, one can choose a sequence of positive numbers $\delta_i$ such that $\limsup \delta_i m_i=\infty$ and $\sum_{i=2}^\infty \delta_i m_{i-1}<\infty$. We take as $(\mu_i)$ the same sequence as in (a), so that $C_\trans$ is frequently hypercyclic on $\Lpmu$.  As for topological rigidity, note that since $m_{i+1}$ is a multiple of $m_i$ for all $i$, we already know that $C_\trans^{m_i} \mathbf 1_B\to \mathbf 1_B$ for every cylinder set $B\subset\Omega$. Hence, to show that $C_\trans$ is topologically rigid along $(m_i)$,   it is enough to show that $\sup_{i\geq 1} \Vert C_\trans^{m_i}\Vert <\infty$.

Let us first observe that 
\[ K:=\sup_{i\geq 1} \prod_{j=i}^{\infty}\rho_j^{m_{i-1}}<\infty.\]
Indeed, since $\log(\rho_j)\leq \delta_j$, we have for all $i\geq 2$:
\begin{align*}
\log\left( \prod_{j=i}^{\infty}\rho_j^{m_{i-1}}\right)\leq m_{i-1} \sum_{j\geq i} \delta_j\leq \sum_{j\geq 2} \delta_j m_{j-1}.
\end{align*} 

Now, let us show that $\|C_{\trans}^{m_{i-1}}\|\leq K$  for all $i\geq 1$. It is enough to check that $\mu\bigl( \trans^{-m_{i-1}}(B)\bigr)\leq K\mu(B)$ for every basic cylinder $B=[x_1,\dots,x_n]\subset \Omega$. Indeed, it then follows that $\mu(\trans^{-m_{i-1}}(B))\leq K\mu(B)$ for all Borel sets $B\subset\Omega$, which gives $\|C_{\trans}^{m_{i-1}}\|\leq K$.

If $n\leq {i-1}$,  then $\trans^{-m_{i-1}}(B)=B$ and there is nothing to do. If $n\geq i$, then  
\[ \trans^{-m_{i-1}}(B)=[x_1,\dots,x_{i-1},x_i-m_{i-1},\dots,x_n-m_{i-1}],\] so that 
\begin{align*}
\mu\big(\trans^{-m_{i-1}}(B)\big)&\leq \mu([x_1,\dots,x_n])\times \prod_{j=i}^{\infty}\sup_{k\in\Omega_j}\frac{\mu_j(k-m_{i-1})}{\mu_j(k)}\\
&\leq\prod_{j=i}^{\infty}\rho_j^{m_{i-1}} \times \mu(B)\leq K\mu(B).
\end{align*}

This concludes the proof.
\end{proof}

}

\begin{remark}
In Example \ref{ex:fhcnotmixing} (b), the translation $\trans$ is conservative, and hence $C_\trans$ cannot satisfy the Frequent Hypercyclicity Criterion if $p\geq 2$ by \cite{DP21}. Indeed, the proof has shown that $\mu(B)\leq K\mu(\trans^{m_{i-1}}(B))$ for all $i\geq 1$ and any Borel set $B\subset\Omega$.  In particular, if $\mu(B)>0$ then the series $\sum \mu(\trans^n(B))$ is divergent. Since $\mu$ is a probability measure, it follows that for any such $B$, one can find $n<n'$ such that $\trans^n(B)\cap \trans^{n'}(B)\neq\varnothing$; which implies that $\trans$ is conservative. 
\end{remark}

\smallskip
{Example \ref{ex:fhcnotmixing} (b) calls for some comments. It follows from the main results of \cite{SophieTanja}  that if $(m_i)$ is a sequence of integers such that $m_{i+1}$ is a multiple of $m_i$ for all $i\geq 1$ and the sequence $(m_{i+1}/m_i)$ is unbounded, then there exists a Hilbert space operator $T$ which is  weakly mixing with respect to some nondegenerate invariant Gaussian measure (hence frequently hypercyclic) and uniformly rigid along $(m_i)$, \textit{i.e.} $\Vert T^{m_i}-{\rm Id}\Vert \to 0$. The operator $T$ in \cite{SophieTanja} is specifically constructed in order to satisfy these requirements, and the construction is quite nontrivial. We find it interesting that the very simply defined operator $C_\trans$ from Example \ref{ex:fhcnotmixing} (b) happens to have similar properties. Incidentally, it is plausible that in fact, $C_\trans$ is weakly mixing with respect to some nondegenerate invariant Gaussian measure. Since (for Hilbert spaces operators at least) this property is equivalent to having a perfectly spanning set of unimodular eigenvectors, this leads to the following question, that could  be asked for odometers as well.
\begin{question} What are the eigenvalues and the eigenvectors of  $C_\trans$? 
\end{question}

Concerning this question, the two extreme situations are \textit{a priori} possible: it may be that all eigenvalues of $C_\trans$ are roots of unity (regardless of the measure $\mu$), or that $C_\trans$ has a perfectly spanning set of unimodular eigenvectors as soon as it is hypercyclic. We are not ready to bet on any of the two alternatives. However, one can observe the following: if $\lambda$ is an eigenvalue of $C_\trans$ and has an associated eigenvector which happens to be ($\mu$-almost everywhere equal to) a continuous function, then $\lambda=\gamma(a)$ for some continuous character $\gamma$ of $(\Omega, +)$; and hence $\lambda$ has to be a root of unity due to the form of $\Omega$. Indeed, if $f(x+a)=f(x)$ $\mu$-almost everywhere for some non-zero continuous function $f:\Omega\to \CC$, then in fact $f(x+a)=f(x)$ everywhere because the measure $\mu$ has full support (recall that $\mu_i(j)>0$ for all $i$ and every $j\in\Omega_i$). It follows that $\gamma(a)\widehat f(\gamma)=\lambda \widehat f(\gamma)$ for every character $\gamma$, which gives the result. But this seems far from telling the whole story.
}

\subsection{$\mathcal U$-frequent hypercyclicity}
In our context, Corollary \ref{cor:ufhcgroup} implies the following easy sufficient condition.
\begin{theorem}\label{ufhcsum}
Assume as usual that $C_{\trans}$ is bounded on $\Lpmu$.
\begin{enumerate}
\item[\rm (1)] Suppose that there exists $\alpha>0$ such that the following holds true: for any $\veps>0$, there exists an arbitrarily large $m\geq 1,$ $i\geq 1$ and $D\subset\Omega_i$ such that 
$\mu_i(D)<\veps$ and $\#\bigl\{k\in\llbracket 1,m\rrbracket:\ \mu_i(D-k)>1-\veps\bigr\}\geq\alpha m.$
Then $C_{\trans}$ is $\mathcal U$-frequently hypercyclic.
\item[\rm (2)] Suppose that for any $A\subset \NN$ with $\overline{\rm dens}(A)>0$, there exists $\alpha>0$ such that: for any $\veps>0$, there exists an arbitrarily large $m\geq 1,$ $i\geq 1$ and $D\subset\Omega_i$ such that 
$\mu_i(D)<\veps$ and $\#\bigl\{k\in\llbracket 1,m\rrbracket:\ k\in A\textrm{ and }\mu_i(D-k)>1-\veps\bigr\}\geq\alpha m.$
Then $C_{\trans}$ is hereditarily $\mathcal U$-frequently hypercyclic.
\end{enumerate}
\end{theorem}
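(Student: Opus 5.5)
The plan is to deduce both parts of Theorem \ref{ufhcsum} directly from Corollary \ref{cor:ufhcgroup}. We apply it to the compact metrizable zero-dimensional group $(\Omega,+)$ with $a=(1,1,\dots)$, so that $\tau_a=\trans$. Boundedness of $C_\trans$ on $\Lpmu$ is assumed, and Corollary \ref{cor:ufhcgroup} needs no density hypothesis on periodic cylinders, since that follows from Lemma \ref{periodicgeneral}. The whole task is therefore to turn a set $D\subset\Omega_i$ into a Borel set $B\subset\Omega$ with the required measure estimates.

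Given $\veps>0$, take $m$ (arbitrarily large), $i$ and $D\subset\Omega_i$ as in the hypothesis of (1), and set
\[ B:=[\Omega_1,\dots,\Omega_{i-1},D]=\{x\in\Omega:\ x_i\in D\}. \]
Since $\mu$ is a product measure, $\mu(B)=\mu_i(D)<\veps$. Because addition is coordinatewise, $\trans^{-k}(B)=B-ka=\{x:\ x_i+k\in D\}=\{x:\ x_i\in D-k\}$, with subtraction in $\ZZ/m_i\ZZ$. Hence $\mu(a^{-k}B)=\mu(B-ka)=\mu_i(D-k)$.

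It follows that
\[ \#\bigl\{k\in\llbracket 1,m\rrbracket:\ \mu(a^{-k}B)\geq 1-\veps\bigr\}\geq \#\bigl\{k\in\llbracket 1,m\rrbracket:\ \mu_i(D-k)>1-\veps\bigr\}\geq\alpha m, \]
which is exactly hypothesis (1) of Corollary \ref{cor:ufhcgroup}, with the same $\alpha$. Part (2) goes the same way: for each $A$ with positive upper density, keep the $\alpha$ given by the hypothesis and intersect the counted set of $k$ with $A$; the identity $\mu(a^{-k}B)=\mu_i(D-k)$ is unchanged.

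No step is a genuine obstacle. The only point needing care is the sign convention, namely that $\trans^{-k}$ corresponds to $D-k$ rather than $D+k$, which the coordinatewise formula above settles.
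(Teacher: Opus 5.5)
Your proposal is correct and is essentially the paper's own argument: the paper proves Theorem \ref{ufhcsum} by applying Corollary \ref{cor:ufhcgroup} to the cylinder set $B=[\Omega_1,\dots,\Omega_{i-1},D]$, exactly as in the proof of Theorem \ref{thm:fhcsumoperator}, using $\mu(B-ka)=\mu_i(D-k)$. Your check of the sign convention ($a^{-k}B$ corresponds to $D-k$) and of the two measure estimates is precisely what that reduction requires.
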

\begin{proof}
Both proofs follow the same argument as the proof of Theorem \ref{thm:fhcsumoperator}.
\end{proof}

{
\begin{example}
Assume that the sequence $(m_i)$ is unbounded.  Then there exists a sequence of measures $(\mu_i)$ such that $C_\trans$ is $\mathcal U$-frequently hypercyclic on $L_p(\Omega,\mu).$
\end{example}
\begin{proof}
We follow the proof of Example \ref{ex:trans} but we now choose $n_{i_s}:=\lfloor m_{i,s}/3\rfloor.$ Then $(D_{i_s}-k)\cap D_{i_s}=\varnothing$ for $n_{i_s}\leq k\leq 2n_{i_s}$ so that we can apply the ``flipped'' version of Theorem \ref{ufhcsum} with $m:=n_{i_s}$ and $\alpha:=1/2$; see Remark \ref{hum}.
\end{proof}
}
 
\smallskip
Theorem \ref{ufhcsum} also allows us to give new examples of hereditarily $\mathcal U$-frequently hypercyclic operators. We first need 
an elementary lemma.
\begin{lemma}
Let $A\subset\NN$ have positive upper density. There exists $\delta>0$ such that, for any $N\geq 1,$ there exists $n\geq N$ such that
$$\frac{\#\left(\llbracket 2^n,2^{n+1}\llbracket\,\cap \,A\right)}{2^n}\geq\delta.$$
\end{lemma}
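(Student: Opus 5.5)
The plan is to argue by contradiction, using the dyadic decomposition of $\llbracket 1,m\rrbracket$. Set $c:=\overline{\rm dens}(A)>0$ and take $\delta:=c/4$. Suppose the conclusion fails for this $\delta$. Then there is some $N\geq 1$ such that
\[ \#\bigl(\llbracket 2^n,2^{n+1}\llbracket\,\cap\, A\bigr)<\delta\, 2^n \qquad\hbox{for all } n\geq N. \]
The goal is to show that this forces $\overline{\rm dens}(A)\leq 2\delta=c/2$, which contradicts the choice of $\delta$.

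Fix a large integer $m$ and let $L$ be the integer with $2^L\leq m<2^{L+1}$. Split $\llbracket 1,m\rrbracket$ into three pieces: the initial segment $\llbracket 1,2^N\llbracket$, the dyadic blocks $\llbracket 2^n,2^{n+1}\llbracket$ for $N\leq n\leq L-1$, and the last partial block $\llbracket 2^L,m\rrbracket$, which is contained in $\llbracket 2^L,2^{L+1}\llbracket$. Counting the points of $A$ in each piece gives
\[ \#\bigl(A\cap\llbracket 1,m\rrbracket\bigr)\leq 2^N+\sum_{n=N}^{L}\delta\,2^n\leq 2^N+\delta\,2^{L+1}\leq 2^N+2\delta m. \]
Dividing by $m$ and letting $m\to\infty$, the term $2^N/m$ tends to $0$, so $\overline{\rm dens}(A)\leq 2\delta=c/2<c$. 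This is the required contradiction.

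There is no real obstacle here. The only point that needs care is the last partial dyadic block. Bounding it by the full block $\llbracket 2^L,2^{L+1}\llbracket$ is what costs the factor $2$: relative to $m$, the total dyadic mass up to index $L$ can be as large as about $2m$. That is why $\delta$ is chosen as $c/4$ rather than $c$; any choice with $\delta<c/2$ would work equally well.
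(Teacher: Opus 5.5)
Your proof is correct and follows essentially the same route as the paper: argue by contradiction, split $\llbracket 1,m\rrbracket$ into an initial segment plus dyadic blocks, and let the last partial block cost a factor $2$, giving $\overline{\rm dens}(A)\leq 2\delta$. The only difference is cosmetic. You fix $\delta=\overline{\rm dens}(A)/4$ at the outset, which yields an explicit admissible $\delta$ (any $\delta<\overline{\rm dens}(A)/2$ works), whereas the paper negates the statement for every $\delta>0$ and concludes $\overline{\rm dens}(A)\leq 2\delta$ for all $\delta$.
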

\begin{proof}
Assume that this is not the case. Let $\delta>0$. By assumption, there exists $N\geq 1$ such that
\[ \forall n\ge N\,:\,\frac{\#\left(\llbracket 2^n,2^{n+1}\llbracket\,\cap\, A\right)}{2^n}<\delta.\]
Let $m\geq 2^N$ and let $n\geq N$ be such that $2^n\leq m<2^{n+1}$. Then
\begin{align*}
\frac{\#(\llbracket 1,m\rrbracket\cap A)}{m}&\leq \frac{\#(\llbracket 1,2^{n+1}\llbracket\,\cap\, A)}{2^n}\\
&=  \frac{\#(\llbracket 1,2^{N}-1\rrbracket\cap A)}{2^n}
+\sum_{j=N}^n  \frac{\#(\llbracket 2^j,2^{j+1}\llbracket\, \cap\, A)}{2^j}\times\frac 1{2^{n-j}}\\
&\leq \frac{2^{N+1}}m+\sum_{l=0}^{\infty}\frac{\delta}{2^l}\cdot
\end{align*}
Since $m\geq 2^N$ is arbitrary, it follows that $\overline{\textrm{dens}}(A)\leq 2\delta$ for any $\delta >0$, a contradiction.
\end{proof}

\begin{example}
Let $n_i:=2^i$ and set $m_i:=3n_i$. Write $\Omega_i=\llbracket 0,m_i-1\rrbracket$ 
as $J_{1,i}\cup J_{2,i}$ where $J_{1,i}$ and $J_{2,i}$ are consecutive intervals with
respective length $2n_i$ and $n_i$. As in Example \ref{ex:trans}, it is possible to define $\mu_i$ on $\Omega_i$ such that $C_{\trans}$ is bounded on $\Lpmu$  and $\mu_i(J_{2,i})\to 1.$ Then, $C_{\trans}$ is hereditarily $\mathcal U$-frequently hypercyclic.
\end{example}
\begin{proof} 
Let $A\subset \NN$ have positive upper density, and choose $\delta >0$ according to the above lemma, \textit{i.e.}  such that for arbitrarily large $n\in\NN$, we have
\begin{equation}
    \frac{\#(\llbracket 2^n,2^{n+1}\llbracket\, \cap\, A)}{2^n}\geq\delta.\label{eq:hufhc}
\end{equation}
We show that the assumption of the ``flipped'' version of Theorem \ref{ufhcsum} is satisfied with $\alpha:=\delta/2$; see Remark \ref{hum}.

Let $\veps>0$, and consider $n$ arbitrarily large so that 
\eqref{eq:hufhc} is satisfied. Set also $m:=2^{n+1}$, $i:=n$ and $D:=J_{2,i}$. Then
 $J_{2,i}-k\subset J_{1,i}$ for any $k\in\llbracket2^n,2^{n+1}\rrbracket$, 
so that $\mu_i(D)\geq 1-\veps$ and $\mu_i(D-k)\leq \veps$ provided $n$ is large enough.
Therefore,
\begin{align*}
    \#\bigl\{k\in\llbracket 1,m\rrbracket:\ k\in A\textrm{ and }\mu_i(D-k)\leq \veps\bigr\}&\geq \#(\llbracket 2^n,2^{n+1}\llbracket\, \cap\, A)\geq \delta 2^n\geq \alpha m.
\end{align*}
\end{proof}

\subsection{A semigroup variant}
We can give a semigroup variant of all what we have done, in the following way. 
Assume now that \[ \Omega=\prod_{i=1}^{\infty}\RR/m_i\ZZ=:\prod_{i=1}^\infty \Omega_i,\] and let $\nu=\prod_{i=1}^{\infty}\nu_i$ be a product probability measure on $\Omega$, where each $\nu_i$ is equivalent to $\mathcal L_i$, the normalized Lebesgue measure on $\Omega_i=\RR/m_i\ZZ$. Define $T_tf(x)=f(x+ta)$ for $f\in\Lpmu$ and $t>0$. Then provided 
$$\prod_{i=1}^{\infty}\sup_{A\subset\Omega_i,\ t\in[0,1]}\frac{\nu_i(A-t)}{\nu_i(A)}<\infty,$$
the semigroup $(T_t)$ is (well-defined and) strongly continuous  on $\Lpmu$.

All of our previous statements admit semigroup versions. In the other direction, it is easy to adapt the examples given in this section so that they fit into this semigroup context. Starting from  probability measures $\mu_i$ on $\mathbb Z/m_i\mathbb Z,$ we simply define for any Borel set $A\subset[0,m_i),$
\[ \nu_i(A)=\sum_{j=0}^{m_i-1}\mu_i(j)\, \mathcal L_i\bigl(A\cap [j,j+1)\bigr).\]
In particular, if we start from Example \ref{ex:fhcnotmixing}, we get an example of a chaotic and frequently hypercyclic strongly continuous semigroup which fails to be topologically mixing. Previous examples have been obtained in \cite{BadGri07}, with a more difficult construction based on a renorming of some weighted $\ell_2\,$-$\,$space.

\providecommand{\bysame}{\leavevmode\hbox to3em{\hrulefill}\thinspace}
\providecommand{\MR}{\relax\ifhmode\unskip\space\fi MR }
\providecommand{\MRhref}[2]{%
  \href{http://www.ams.org/mathscinet-getitem?mr=#1}{#2}
}
\providecommand{\href}[2]{#2}

\end{document}